\newtheorem{lemma}{Lemma}[section]
\newtheorem{theorem}{Theorem}[section]
\newtheorem{corollary}{Corollary}[section]
\begin{document}

\title{
Modular forms in the spectral action 
of Bianchi IX gravitational instantons
}

\author{
Wentao Fan,
Farzad Fathizadeh,
Matilde Marcolli
}

\maketitle

\begin{center}
{
\it Division of Physics, Mathematics and Astronomy \\
California Institute of Technology \\
1200 E. California Blvd., Pasadena, CA 91125, U.S.A. 
}
\end{center}

\vskip 0.2cm

\begin{center}
{\it E-mail:} wffan@caltech.edu, farzadf@caltech.edu, matilde@caltech.edu
\end{center}

\vskip 0.3 cm

\begin{abstract}

Following our result on rationality of 
the spectral action for Bianchi type-IX cosmological 
models, which suggests the existence of a rich arithmetic
structure in the action, we study the arithmetic and 
modular properties of the action  for an especially interesting 
family of metrics, namely $SU(2)$-invariant Bianchi IX gravitational 
instantons.  A variant of the previous rationality result is first presented 
for a time-dependent conformal perturbation of the 
triaxial Bianchi IX metric which serves as a general 
form of Bianchi IX gravitational instantons.  By 
exploiting a parametrization of the gravitational instantons 
in terms of theta functions with 
characteristics, we then show  that each Seeley-de Witt 
coefficient $\tilde a_{2n}$ appearing in the expansion of their 
spectral action  gives rise to vector-valued and ordinary 
modular functions of weight 2. We investigate the modular 
properties of the first three terms $\tilde a_0$, $\tilde a_2$ 
and $\tilde a_4$ by preforming direct calculations. This 
guides us to provide  spectral arguments for general terms 
$\tilde a_{2n}$, which are  based on isospectrality of the involved 
Dirac operators.  Special attention is paid to the relation of the new    
modular functions  with well-known modular forms by 
studying explicitly two different cases of metrics with pairs of rational 
parameters that, in a particular sense, 
belong to two different general 
families. In the first case, it is shown that  
by running a summation over a finite orbit of the rational 
parameters, up to multiplication by the 
cusp form $\Delta$ of weight 12, 
each term $\tilde a_{2n}$ lands in the 1-dimensional 
linear space of modular forms of weight 14.  In the second case, 
it is proved that, after a summation over a finite orbit of the 
parameters and multiplication by $G_4^4$, where $G_4$ is the Eisenstein 
series of weight 4,  each $\tilde a_{2n}$ lands 
in the 1-dimensional linear space of cusp forms of weight 18.

\end{abstract}

\date{}

\vskip 0.7cm
\noindent
{
\bf Mathematics Subject Classification (2010).
}
58B34, 11F37, 11M36.


\vskip 0.7 cm
\noindent
{\bf Keywords.}
Bianchi IX gravitational instantons, 
theta functions, Dirac operator,
spectral action, heat kernel, 
pseudo-differential calculus, 
noncommutative residue, 
rational coefficients, 
arithmetic structure, 
vector-valued modular function, 
modular form, 
Eisenstein series.

\newpage

\hrulefill

\tableofcontents

\hrulefill

\vskip 1cm

\section{Introduction}

\smallskip

The main result of this paper shows that the
spectral action of Bianchi IX gravitational instantons
is arithmetic, in the sense that its asymptotic expansion
can be expressed in terms of rational combinations of (vector valued)
modular forms, which in turn can be explicitly related to classical
modular forms of weight $14$ and $18$.

\smallskip

The rationality of the spectral action for a general
triaxial Bianchi type-IX metric with an $SU(2)$-symmetry, obtained in
\cite{FanFatMar1}, suggested the existence of a rich arithmetic
structure in the Seeley-de Witt coefficients  associated with the
square of the Dirac operator of these cosmological models.
Here the rationality assertion means
that each coefficient is the time integral of an
expression presented by a several variable polynomial with
{\it rational} coefficients evaluated on the expansion factors
and their derivatives, up to a certain order with respect to time.
An earlier rationality result of a similar nature was obtained in
\cite{FatGhoKha} for the Robertson-Walker metrics, proving
a conjecture formulated in \cite{ChaConRW}.

\smallskip

The present article is intended to obtain a deeper understanding of the
arithmetic properties of the spectral action, and to shed light on the role
of the rational coefficients appearing in the expansion of the spectral action
for Bianchi IX metrics.

\smallskip

By imposing the condition of self-duality of the Weyl
tensor and by employing a time-dependent conformal
factor to obtain an Einstein metric from the Bianchi IX
models, an especially interesting family of metrics called
Bianchi IX gravitational instantons have
been explored and well studied in the literature, see for example
\cite{Tod, Oku, Hit, Man2, BabKor} and references therein.
Interestingly, as explained in great detail in the latter,
the differential equations for finding
these metrics reduce to  well understood equations such
as the Halphen system and the Painlev\'e VI equation
with particular rational parameters. In \cite{BabKor},
following the work carried out in \cite{Tod, Hit}, these equations
are solved by using the $\tau$-function of the Schlesinger system
formulated in terms of theta functions  \cite{KitKor}, and an
explicit parametrization of the Bianchi IX gravitational instantons is given
in terms of theta functions with characteristics.
Considered along with our rationality result about
the spectral action \cite{FanFatMar1}, this parametrization of
the gravitational instantons will be the main ingredient in our construction
of the modular expression for the terms appearing in the expansion of the
spectral action in the energy scale. We will also describe an explicit
connection between the modular functions that arise in
the spectral action and well-known classical modular forms.

\smallskip

In the next section, we introduce and clarify our notation, and
we briefly review all the necessary background material on
the spectral action that we need to use throughout the paper.
While most of the material we need to recall is standard, we prefer to
maintain the paper as readable and self-contained as possible.
The reader already familiar with these notions and notations
can skip this section. We start by an explanation of the spectral
action functional \cite{ChaConSAP, ConAction}, which is based on the
Dirac operator and provides a modified Euclidean gravity model.
Implications of this action as a source of new early universe models and
inflationary mechanisms in cosmology has been studied in recent years
\cite{BallMar, KolMar, Mar, MarPieEarly, MarPieTeh2012, MarPieTehCosmic,
NelOchSal, NelSak1, NelSak2, EstMar}. We recall briefly some basic
facts about the Dirac operator, the heat kernel method and pseudo-differential
calculus, and how they can be employed to
compute the terms in the asymptotic expansion of the spectral action
in the energy scale. The terms that appear in the expansion include the
Einstein-Hilbert action and other modified gravity terms such as the Weyl
curvature and Gauss-Bonnet terms. Indeed the latter expressions
appear as the first few terms in the expansion. As a general problem it is
highly desirable to achieve an understanding of the full expansion.
In \cite{FanFatMar1}, we devised an efficient method
for computing the terms appearing in the expansion by using the
Wodzicki noncommutative residue \cite{Wod1, Wod2},
a powerful tool that, in addition to be very important for convenient
calculations, yields an elegant proof of the rationality result presented
in \cite{FanFatMar1}. After briefly recalling, for comparison purposes,
the traditional method of computing these coefficients,
we end Section \ref{PreliminariesSec} by describing
the final formulation of our noncommutative residue method, which prepares
the ground for deriving a variant of the rationality result for the specific family
of metrics that serve as a general form for the Bianchi IX gravitational
instantons.

\smallskip

Section \ref{RationalitySec} is devoted to the explicit computation
of the Dirac operator $\tilde D$ of a general time-dependent
conformal perturbation of the triaxial Bianchi IX metric. We also
prove a rationality statement for its spectral action. The general
form of the Bianchi IX gravitational instantons, which we
mentioned earlier, is
\begin{equation} \label{ConformalBianchiIXMetricEQ}
d\tilde s^2 = F \, ds^2 = F \left (  w_1 w_2 w_3 \, d\mu^2 +
\frac{w_2 w_3}{w_1} \sigma_1^2 +
\frac{w_3 w_1}{w_2} \sigma_2^2+
\frac{w_1 w_2}{w_3} \sigma_3^2 \right ),
\end{equation}
where the conformal factor $F$ is a function of the cosmic time
$\mu$. Here, the metric $ds^2$ is the Bianchi IX model with
general cosmic expansion factors $w_1$, $w_2$ and $w_3$,
and $SU(2)$-invariant 1-forms $\sigma_1$, $\sigma_2$ and
$\sigma_3$. We found it convenient to recall from \cite{FanFatMar1}
the expression for the Dirac operator $D$ of the Bianchi IX
model $ds^2$ and to use it for the presentation of the Dirac operator
$\tilde D$ of the conformally equivalent metric $d \tilde s^2 =$
$F \, ds^2$. We then obtain a rationality statement for the
general terms $\tilde a_{2n}$ appearing in the small time
asymptotic expansion of the heat kernel,
\begin{equation} \label{SmallTimeExpEq}
\text{Trace}\left ( \exp (-t \tilde{D}^2)   \right )
\sim
t^{-2} \sum_{n=0}^\infty \tilde{a}_{2n} t^n \qquad (t \to 0^+).
\end{equation}
The section ends by a presentation of explicit expressions for
$\tilde a_0$ and $\tilde a_2$, while
the lengthy expression for $\tilde a_4$ is recorded in Appendix \ref{fulla_4appendix}.
As general notation, a tilde is used on top of any
symbol that represents an object associated with the conformally
perturbed metric $d \tilde s^2 =$ $F \, ds^2$. We will follow
this notational convention throughout the paper.

\smallskip

In Section \ref{InstantonsSec} we recall briefly another result
that we need to use essentially in the rest of the paper:
the derivation of explicit formulas for the Bianchi IX
gravitational instantons obtained in \cite{Tod, Hit, BabKor}.
One starts by imposing the self-duality condition on the Weyl
tensor of the Bianchi IX model and employing a conformal factor
to obtain an Einstein metric.  These conditions
reduce to the Halphen system and the Painlev\'e VI equation
with particular rational parameters, which can
then be solved in terms of elliptic modular functions and theta
functions. We have written explicitly, in separate subsections, the
parametrization of the solutions in
terms of theta functions with characteristics given in \cite{BabKor}:
one subsection for a two-parametric family with non-vanishing
cosmological constants and the other for a one-parametric family
whose cosmological constants vanish.

\smallskip

One of our main goals is to understand the modular properties of
the Seeley-de Witt coefficients $\tilde a_{2n}$
appearing in \eqref{SmallTimeExpEq} when the solutions of the
gravitational instantons in terms of the theta functions are
substituted in the metric \eqref{ConformalBianchiIXMetricEQ}.
That is, by extending the real time $\mu$ to the right half-plane
$\Re (\mu) > 0$ in the complex plane, to the extent that the
argument of the theta functions that belongs to the upper
half-plane $\mathbb{H}$ can be replaced by $i \mu$, we  explore the
changes in the $\tilde a_{2n}$ under modular transformations
on $\mathbb{H}$. Therefore, in Section \ref{ArithmeticsofInstantonsSec}
we deal with the modular properties of the theta functions and their
derivatives that appear in the terms $\tilde a_0,$ $\tilde a_2$ and
$\tilde a_4$. Using these properties in the explicit expressions
\eqref{a_0Eq}, \eqref{a_2Eq} and the one recorded in Appendix
\ref{fulla_4appendix}, we show by direct calculations in
Section \ref{a_0a_2a_4Sec} that, under the
linear fractional transformations $T_1(i \mu) = i \mu +1$ and $S(i \mu) = i/ \mu$,
the terms $\tilde a_0,$ $\tilde a_2$ and $\tilde a_4$ satisfy interesting
modular properties. What is especially interesting here is that
this behavior reveals modular transformation properties that are encoded in
the parameters of the metric and that are of the same nature as those of
vector-valued modular forms considered in the Eichler--Zagier theory
of Jacobi forms \cite{EicZag}.

\smallskip

It is then natural to expect that these surprising modular transformation
properties obtained by direct calculations for the coefficients $\tilde a_0,$
$\tilde a_2$ and $\tilde a_4$, will continue to hold for all $\tilde a_{2n}$.
Clearly  it is impossible to resort to direct calculations to investigate
similar properties for the general terms. However, the properties seen
in the first few terms, which are reflected in the parameters of the
metric,  indicate that there is a deeper relation between the Dirac
operators associated with the metrics whose parameters transform
to each other. Indeed, in Section \ref{a_2nSec} we study the
corresponding Dirac operators and we prove that they are all isospectral.
By taking advantage of this fact and of uniqueness of the
coefficients in the asymptotic expansion, we prove that indeed
all of the Seeley-de Witt coefficients $\tilde a_{2n}$ of the Bianchi
IX gravitational instantons satisfy modular transformation properties.

\smallskip

 In Section \ref{ModularFormsSec}, we first prove a periodicity
property for the terms $\tilde a_{2n}$ with respect to the parameters
of the two parametric family of Bianchi IX gravitational instantons.
Combining this with their modular transformation properties, we show
that, for rational parameters, each term $\tilde a_{2n}$ defines a
vector-valued modular function of weight $2$ with values in a
finite-dimensional representation of the modular group 
$PSL_2(\mathbb{Z})$. Recall that the modular group  is generated
by the matrices corresponding to the linear fractional transformations
$T_1(i \mu) = i \mu +1$ and $S(i \mu) = i/ \mu$
acting on the upper half-plane, $i \mu \in \mathbb{H}$.
We then observe that, by running a summation over a finite
orbit of the modular transformations on the rational parameters,
each $\tilde a_{2n}$ gives rise to a modular function of weight $2$ with
respect to $PSL_2(\mathbb{Z})$.  
This type of modular functions are sometimes called quasi-modular,
weakly modular or meromorphic modular forms to indicate that they
are allowed to possess poles, which is indeed the case for any modular
function of weight $2$.

\smallskip

In the second part of Section \ref{ModularFormsSec}, we find an intimate 
connection between the modular functions that arise from the Seeley-de Witt 
coefficients $\tilde a_{2n}$ and well-known modular forms. 
 We consider two pairs of
rational parameters that belong to two different general families.
In the first case, we prove that such modular
functions have only  simple poles at infinity, hence multiplication by
the cusp form of weight $12$, $\Delta(q) =$
$q \prod_{n=1}^\infty (1-q^n)^{24}$, $q=\exp(2 \pi i z)$, $z \in \mathbb{H}$, lands them in the
1-dimensional linear space of modular forms of weight $14$,
generated by a single Eisenstein series. In the second case, we show that
the only poles of the resulting modular functions are of order 4 and located at the
point $\rho = e^{2 \pi i/3}$. Moreover, by showing that  they vanish at infinity, 
we prove that multiplication by $G_4^4$, where
$G_4(z)=\sum^*_{m, n \in \mathbb{Z}} 1/(mz+n)^4$, $z \in \mathbb{H}$, is the Eisenstein series of weight 4,
sends each resulting modular function in this case to the 1-dimensional linear space of cusp
forms of weight $18$. This illuminates the intimate connection between
the spectral action for Bianchi IX metrics and well-known modular forms.

\smallskip

Our main results and conclusions are summarized in the last Section.
The appendices contain proofs of some statements and lengthy expressions
that are provided for the sake of completeness.

\smallskip

\section{Dirac operator, spectral action and pseudo-differential calculus}
\label{PreliminariesSec}

\smallskip 

This section is devoted to a short explanation about the 
spectral action functional \cite{ChaConSAP, ConAction}, a brief summary of 
the Dirac operator \cite{LawMic, FriBook}, 
the heat kernel method that employs pseudo-differential calculus 
for computing Seeley-de Witt coefficients \cite{GilBook1}, and an efficient method 
that we devised in \cite{FanFatMar1} for expressing these coefficients as 
noncommutative residues of Laplacians. This method is remarkably 
efficient from a computational point of view and provided 
an elegant proof of the rationality result in \cite{FanFatMar1}, a variant 
of which has more direct relevance to the subject of the present paper,  as 
explained in Section \ref{RationalitySec}.

\smallskip

A noncommutative geometric space is described
by a spectral triple, which consists of an involutive algebra
$\mathcal{A}$ represented by bounded operators on a Hilbert
space $\mathcal{H}$, and an unbounded self-adjoint operator
$D$ on $\mathcal{H}$ \cite{ConBook}. The metric information is encoded
in the operator $D$, which is assumed to
satisfy the main regularity properties of the Dirac operator.
The spectral action \cite{ChaConSAP, ConAction} for 
a spectral triple $(\mathcal{A}, \mathcal{H}, D)$ is an action functional 
that depends on the spectrum of the Dirac operator $D$. It employs a 
cutoff function $f$ defined on the real line to consider  
\[
\text{Trace} \big ( f (D/ \Lambda)\big ),
\] 
where $\Lambda$ is the 
energy scale.  Its asymptotic expansion in the energy scale is usually, 
depending on the nature of the arising poles, 
of the form \cite{ConMarBook}
\begin{equation} \label{SpectActionExpEq}
\text{Trace} \big ( f (D/ \Lambda)\big )
\sim
\sum_{\beta \in \Pi}
f_\beta \Lambda^\beta \int\!\!\!\!\!\!- \, |D|^{-\beta}+ f(0) \zeta_D(0) + \cdots. 
\end{equation}
The summation in the latter runs over the points $\beta$ where 
the poles of the spectral zeta function of the Dirac operator 
$D$, $\zeta_D(s)$, and the poles of other associated zeta functions are 
located.  The set of such points is called the {\it  dimension spectrum}  
of the spectral triple \cite{ConMosLocal}. For classical manifolds, the 
poles of the spectral zeta functions are located at certain points 
on the real line. However, in general, the dimension spectrum of a 
noncommutative geometric space can contain points in the complex 
plane that do not belong to the real line.  See for instance \cite{LapFra} 
for examples of geometric zeta functions whose poles are not necessarily 
located on the real line.

\smallskip

The main commutative example of a spectral triple
is given by a spin$^c$ manifold $M$ \cite{ConReconstruct}, where
the algebra of smooth functions $\mathcal{A}=C^\infty(M)$
acts on the $L^2$-spinors of $M$, and $D$ is
the Dirac operator. In this case,  the coefficients in
the expansion of the spectral action are determined
by the Seeley-de Witt coefficients associated with $D^2$, which
are local invariants of the geometry \cite{GilBook1}. That is, 
up to considerations that merely rely on momenta of the cutoff 
function $f$, the terms of the expansion \eqref{SpectActionExpEq} are 
determined by the coefficients appearing in a small time asymptotic 
expansion of the form 
\begin{equation} \label{HeatExpEq}
\text{Trace}\left ( \exp  ( -t D^2  )\right )   
\sim 
t^{- \text{dim} (M)/2} \sum_{n=0}^\infty a_{2n}(D^2)\, t^n \qquad ({t \to 0^+}). 
\end{equation} 
Chapter 1 of the book \cite{ConMarBook} contains a detailed mathematical discussion 
of the asymptotic expansions related to the spectral action.

\smallskip

\subsection{Dirac operator and its pseudo-differential symbol}
\label{DiracOpSubSec}

Given a spin bundle $S$ and a spin connection $\nabla^S$ on a Riemannian 
manifold  $M$ of dimension $m$, the Dirac operator $D$ acting on smooth sections 
of $S$ is defined by the following composition. It is given by 
composing the three maps 
\[
C^\infty(S) \xrightarrow{\nabla^S} C^\infty(T^*M \otimes S) 
\xrightarrow{\#} C^\infty(TM \otimes S) 
\xrightarrow{c} C^\infty(S),  
\]
where the second arrow is essentially the musical isomorphism $\#$
identifying the cotangent and tangent bundles, and the 
third arrow is obtained by considering the Clifford action of 
the tangent bundle $TM$ on $S$.  The spin connection $\nabla^S$ 
is a connection on the $Spin(m)$-principal bundle associated with $S$, which is obtained by 
lifting the Levi-Civita connection $\nabla$ seen as a connection on the 
$SO(m)$-principal bundle of $TM$. Thus, finding an explicit local formula 
for the Levi-Civita connection is the first step in calculating the Dirac 
operator.

\smallskip

The Levi-Civita connection $\nabla: C^\infty(TM) \to C^\infty(T^*M \otimes TM)$ 
is the unique connection on the tangent bundle that is compatible with the 
metric and torsion-free. In fact, these conditions characterize this connection 
uniquely and it can be computed by the method that will be explained shortly. 
Let us first explain that compatibility with the metric $g$ means that 
\[
g(\nabla_X Y, Z) + g(Y, \nabla_X Z) = X\cdot g(Y, Z), \qquad X, Y, Z \in C^\infty(TM).  
\] 
Moreover, torsion-freeness refers to vanishing of the torsion tensor 
\[
T(X, Y) = \nabla_X Y - \nabla_Y X - [X, Y], \qquad X, Y \in C^\infty(TM). 
\]

\smallskip

Since it is more convenient to work with coframes rather than with frames, 
for explicit calculations it is useful to take advantage of the identification of 
the tangent and cotangent bundles via the metric and write the 
Levi-Civita connection as a map from $C^\infty(T^*M)$ to $C^\infty(T^*M \otimes T^*M)$. 
Then, if $\{\theta^a\}$ is a local orthonormal coframe, i.e. an orthonormal basis for the 
local sections of $T^*M$ in a local chart $U$ where it is trivialized, one can 
write 
\[
\nabla \theta^a =
\sum_b \omega^a_b \otimes \theta^b,
\]
where $\omega^a_b$ are local differential 1-forms. In this basis $\nabla$ can be 
expressed as  
\[
\nabla = d + \omega, 
\]
where $d$ is the de Rham differential and $\omega = (\omega^a_b)$ is a matrix 
of 1-forms. In this picture, the compatibility with the metric and the torsion-freeness 
respectively read as  
\[
\omega^a_b =
- \omega^b_a, \qquad d \theta^a =
\sum_b \omega^a_b \wedge \theta^b.
\]
The latter conditions yield a unique solution for the 1-forms $\omega^a_b$ 
and thereby one achieves an explicit calculation of $\nabla$.

\smallskip

Having computed $\nabla$, one can lift it to the spin connection as follows. 
The spin group $Spin(m)$ is a double cover of the special orthogonal group 
$SO(m)$ and there is 
an explicit isomorphism $\mu:\mathfrak{so}(m)\to \mathfrak{spin}(m)$ 
identifying their Lie algebras, which is given by (see for example Lemma 4.8 in 
\cite{RoeBook}), 
\[
\mu(A)= \frac{1}{4}\sum_{a,b} A^{a b} e_a e_b,
\qquad A = (A^{ab})\in \mathfrak{so}(m). 
\]
In the latter $\{ e_a \}$ is the standard basis of $\mathbb{R}^m$ viewed inside 
the corresponding Clifford algebra, in which the 
linear span of the elements $\{ e_a e_b; a < b \}$ is the Lie algebra 
$\mathfrak{spin}(m)$ of $Spin(m)$. Combining this with uniqueness 
of the spin representation for the Clifford algebra, one can choose 
$k \times k$ matrices, $k = \textnormal{rk}(S)$, which satisfy the 
relations $(\gamma^a)^2 = - I$ and 
$\gamma^a \gamma^b + \gamma^b \gamma^a = 0$ for $a \neq b$, 
and calculate the matrix of 1-forms $\omega^S$ representing the 
spin connection
\[
\nabla^S = d + \omega^S.
\]
That is, one can write 
\[
\omega^S = \frac{1}{4}\sum_{a,b} \omega^{a}_{b} \gamma^a \gamma^b.  
\]

\smallskip

Now we write the Dirac operator explicitly: it follows from its definition 
and the above explicit formulas for the ingredients of the definition that 
\begin{eqnarray} \label{GeneralDiracOpFormula}
D &=& \sum_a \theta^a \nabla^S_{\theta_a} \nonumber \\
&=& \sum_{a, \nu} \gamma^a dx^{\nu}(\theta_a) \frac{\partial}{\partial x^{\nu}} +
\frac{1}{4} \sum_{a, b, c} \gamma^c \omega^b_{ac} \gamma^a \gamma^b,
\end{eqnarray} 
where $\{ \theta_a \}$ is a pre-dual of the coframe $\{ \theta^a \}$ and the 
$\omega^b_{ac}$ are defined by
\[
\omega^b_a = \sum_c \omega^b_{ac} \theta^c. 
\]

\smallskip

It is clear that $D$ is a differential operator of order 1. Like any differential operator, or 
more generally, like any pseudo-differential operator,  using the Fourier 
transform and the Fourier inversion formula, $D$ can be expressed 
by its pseudo-differential symbol denoted by $\sigma(D)$. The symbol $\sigma(D)$ 
is defined locally from $U \times \mathbb{R}^m$ to $M_k(\mathbb{C})$ and allows 
one to write the action of $D$ on a local section $s$ as 
\begin{eqnarray} \label{pseudodifferentialOp}
D s (x)
&=&
(2 \pi)^{-m/2} \int e^{i x \cdot \xi} \, \sigma(D)(x, \xi) \, \hat s (\xi) \, d\xi  \nonumber \\
&=& (2 \pi)^{-m} \int \int e^{i (x-y) \cdot \xi} \, \sigma(D)(x, \xi) \, s (y) \, dy\, d\xi,
\end{eqnarray}
where $\hat s$ is Fourier transform of $s$ taken component-wisely. Note that 
the endomorphisms of the bundle are locally identified with $M_k(\mathbb{C})$. 
The 
expression given by \eqref{GeneralDiracOpFormula} makes it clear that 
\begin{equation} \label{SymbolofDiracEq}
\sigma(D)(x, \xi)=
\sum_{a, \nu} \gamma^a dx^{\nu}(\theta_a) (i \xi_{\nu+1} )+
\frac{1}{4} \sum_{a, b, c} \gamma^c \omega^b_{ac} \gamma^a \gamma^b,
\end{equation}
for  $x = (x^0, x^1, \dots, x^{m-1}) \in U$ and 
$\xi = (\xi_1, \xi_2, \dots, \xi_m) \in \mathbb{R}^m$. 
For our purpose of studying the Seeley-de Witt coefficients appearing in the asymptotic 
expansion \eqref{HeatExpEq}, we need to have the pseudo-differential symbol of $D^2$. This 
can be achieved by finding an explicit formula for $D^2$ from \eqref{GeneralDiracOpFormula}, 
or more easily by using the composition rule for pseudo-differential symbols.

\smallskip

In fact, 
pseudo-differential operators are closed under composition and there is an 
explicit and handy formula, which describes the symbol of the product 
of two such operators modulo infinitely smoothing operators. That is, if 
the operators $P_1$ and $P_2$ are associated with the symbols $\sigma(P_1)$ and 
$\sigma(P_2)$, 
\begin{eqnarray*}
P_j s (x) 
&=& 
(2 \pi)^{-m/2} \int e^{i x \cdot \xi} \, \sigma(P_j)(x, \xi) \, \hat s (\xi) \, d\xi \\
&=& (2 \pi)^{-m} \int \int e^{i (x-y) \cdot \xi} \, \sigma(P_j)(x, \xi) \, s (y) \, dy\, d\xi,  
\end{eqnarray*}
then, the symbol of $P_1 P_2$ has the following asymptotic expansion: 
\begin{equation} \label{SymbolCompositionRule}
\sigma(P_1 P_2) \sim 
\sum_{\alpha \in \mathbb{Z}_{\geq 0}^m} \frac{(-i)^{|\alpha|} }{\alpha !} 
\partial_\xi^\alpha  \sigma(P_1)(x, \xi) \, \partial_x^\alpha \sigma(P_2)(x, \xi). 
\end{equation}
One can find precise technical discussions in Chapter 1 of the book \cite{GilBook1} 
about the pseudo-differential calculus that we use. 
In the case of differential operators, since the symbols are polynomials in $\xi$ 
with coefficients that are matrix-valued functions defined on a local chart $U$, the 
formula \eqref{SymbolCompositionRule} gives a precise formula for the symbol of the 
composition. Returning to the Dirac operator $D$, one can use its symbol given 
by \eqref{SymbolofDiracEq} and the composition formula \eqref{SymbolCompositionRule} 
to calculate the symbol of $D^2$:  
\begin{eqnarray*} \label{specialcomposition}
\sigma(D^2)(x, \xi) =  
\sum_{\alpha \in \mathbb{Z}_{\geq 0}^m} \frac{(-i)^{|\alpha|} }{\alpha !} 
\partial_\xi^\alpha  \sigma(D)(x, \xi) \, \partial_x^\alpha \sigma(D)(x, \xi). 
\end{eqnarray*}
The latter is in general a polynomial of order 2 in $\xi$ whose coefficients 
are matrix-valued functions defined on the local chart $U$. In Section \ref{RationalitySec}, 
explicit formulas are presented for the Dirac operators of the cosmological models that we 
are interested in.

\smallskip

\subsection{Heat expansion using pseudo-differential calculus}
\label{HeatExpSubSec}

Let $D$ be the Dirac operator on a compact spin manifold of dimension $m$, 
as we described. For any $t >0$, the operator $\exp (- t D^2) $ is an infinitely smoothing 
operator and thus can be represented by a smooth kernel. In particular 
it is a trace-class operator and as $t \to 0^+$, the trace of $\exp (- t D^2) $ 
goes to infinity. However, it is quite remarkable that there is an 
asymptotic expansion with geometric coefficients that describe the rate 
 of this divergence. That is, as $t \to 0^+$, 
\begin{equation} \label{AsymptoticExpansion}
\text{Trace}\big( \exp(-t D^2)   \big )
\sim
t^{-m/2} \sum_{n=0}^\infty a_{2n}(D^2)\, t^n, 
\end{equation}
where the coefficients $a_{2n}(D^2)$ are local invariants of the metric that 
encode geometric information obtained from the curvature tensor. In fact, 
typically they are integrals of certain 
expressions obtained from the 
Riemann curvature tensor and its covariant derivatives and contractions.

\smallskip

It is evident that the left hand side of \eqref{AsymptoticExpansion} depends 
only on the eigenvalues of $D^2$. Since, except in rare cases, in general the 
eigenvalues of the Dirac operator are not known, it is significant that there 
are methods in the literature that allow one to express the coefficients $a_{2n}(D^2)$ 
appearing on the right hand side of \eqref{AsymptoticExpansion} as integrals 
of local expressions obtained from the metric. One of these methods, which is 
quite effective, starts with the Cauchy integral formula and employs parametric 
pseudo-differential calculus to approximate the kernel $\exp(- t D^2)$ and 
thereby accomplishes a recursive procedure for finding formulas for the 
coefficients $a_{2n}(D^2)$.

\smallskip

Let us review this method briefly from Chapter 1 of the book \cite{GilBook1}. The Dirac operator $D$ 
is a self-adjoint unbounded operator, with respect to which the Hilbert space of 
$L^2$-spinors admits a spectral decomposition. Invoking the Cauchy integral formula one can write 
\begin{equation} \label{CauchyIntegral}
\exp(-t D^2) = \frac{1}{2 \pi i} \int_\gamma e^{-t \lambda} (D^2 - \lambda )^{-1} \, d\lambda,
\end{equation}
where the integration is over a contour  $\gamma$ in the complex plane that goes 
around the non-negative real numbers clockwise. Since $D^2 - \lambda$ is an 
elliptic differential operator, it admits a parametrix which is the same as $(D^2 - \lambda)^{-1}$ modulo an infinitely smoothing operator. Thus, the approximation of $(D^2 - \lambda)^{-1}$ amounts to finding or approximating the parametrix $R_\lambda$ of $D^2-\lambda$.  
This is achieved by exploiting the calculus of pseudo-differential symbols given by 
the composition rule \eqref{SymbolCompositionRule}. In order to compute the symbol of $R_\lambda$,
since $D^2 - \lambda$ is of order 2, the leading symbol of $R_\lambda$ has to be of order $-2$ and one can write 
\[
\sigma(R_\lambda) \sim \sum_{j=0}^\infty r_j (x, \xi, \lambda),
\]
where each $r_j (x, \xi, \lambda)$ is a parametric pseudo-differential
symbol of order $-2 - j$. There is an important nuance that 
$\lambda$ should be treated of order 2.  Also, for the parametric pseudo-differential 
symbols depending on the complex parameter $\lambda$, being homogeneous of order 
$-2-j$ means that for any $t>0$, 
\[
r_j(x, t \xi, t^2 \lambda) = t^{-2-j} r_j(x,  \xi,  \lambda).
\]

\smallskip

As we discussed before, the square of the Dirac operator $D^2$ is a 
differential operator of order 2 and therefore for the symbol of $D^2 - \lambda$  
we have 
\[
\sigma(D^2 - \lambda) =  \big ( p_2(x, \xi) - \lambda \big ) + p_1(x, \xi) + p_0(x, \xi), 
\]
where each $p_k(x, \xi)$ is a polynomial in $\xi$ whose coefficients are 
matrix-valued functions defined on the local chart. 
By passing to the symbols and using  the composition rule \eqref{SymbolCompositionRule}, 
the solution of the equation 
\begin{equation} \label{ParaSymbolicEqEq}
\sigma \left ( R_\lambda (D^2 - \lambda) \right ) \sim I 
\end{equation}
yields the following recursive solution of the terms $r_j$ in the expansion 
of the symbol of the parametrix $R_\lambda$. In fact, by a comparison of 
homogeneous terms on the two sides of \eqref{ParaSymbolicEqEq}, one finds that  
\begin{equation} \label{r_0formula}
r_0(x, \xi, \lambda) = (p_2(x, \xi) - \lambda)^{-1}, 
\end{equation}
and for any $n \geq 1$, 
\begin{eqnarray} \label{r_nformula}
r_n (x, \xi, \lambda)=
- \left ( \sum \frac{(-i)^{|\alpha|}}{\alpha!}  \partial_\xi^\alpha r_j(x, \xi, \lambda) \, \partial_x^\alpha  p_k(x, \xi) \right ) r_0(x, \xi, \lambda),
\end{eqnarray}
where the summations are over all 4-tuples of non-negative integers 
$\alpha$, and integers $0 \leq j < n$ and $0 \leq k  \leq 2$ 
such that $|\alpha|+j +2 -k =n$.

\smallskip

Using this recursive procedure, one can choose a large enough $N$ 
so that the operator corresponding to the symbol $r_0+\cdots +r_N$ gives 
a desired approximation of the parametrix $R_\lambda$ of $D^2- \lambda$. 
By substituting the approximation in the Cauchy integral formula \eqref{CauchyIntegral}, one 
obtains an approximation of the kernel of $\exp(-tD^2)$. By integrating 
the approximation of the kernel over the diagonal of $M \times M$ against 
the volume form, one can derive the small time asymptotic expansion \eqref{AsymptoticExpansion}.

\smallskip

It is remarkable that this method shows instructively that each coefficient 
in the expansion is  given by the integral,  
\begin{equation} \label{a_2nformula}
a_{2n}(D^2) = \int_M a_{2n}(x, D^2) \, dvol_g(x),
\end{equation}
where $a_{2n}(x, D^2)$ is an invariantly defined function on the manifold 
defined in the local chart by 
\begin{equation}  \label{DensitiesFormula}
a_{2n}(x, D^2)=
\frac{(2 \pi)^{-m}}{2 \pi i} \int_{\mathbb{R}^m} \int_{\gamma} e^{-\lambda} \, \textnormal{tr}
\big ( r_{2n}(x, \xi, \lambda) \big )  \, d \lambda \, d^m \xi.
\end{equation}

\smallskip

The analysis involved for deriving the above expansion and formula 
for the coefficient $a_{2n}(D^2)$ is quite intricate and as we mentioned 
earlier, it is presented 
in detail in Chapter 1 of the book \cite{GilBook1}. It should be stressed that the integrals 
involved in the expression 
for $a_{2n}(D^2)$ are possible to work out since one can show by 
induction from the formulas \eqref{r_0formula} and \eqref{r_nformula} that 
\[
\textnormal{tr} \big ( r_{n}(x, \xi, \lambda) \big ) = 
\sum_{ \substack{n=2j - |\alpha|-2 \\ |\alpha| \leq 3n}} r_{n, j , \alpha}(x)  \, \xi^\alpha \, \textnormal{tr}\left (r_0(x, \xi, \lambda)^j \right ). 
\] 
Therefore, using the method reviewed in this subsection, one can 
calculate the $a_{2n}(D^2)$ explicitly in concrete examples. However, it should 
be noted that this method involves heavy calculations that are cumbersome 
even with computer assistance. In Subsection \ref{WodzickiRes}, 
we review an efficient method that we devised in \cite{FanFatMar1} for computing 
the Seeley-de With coefficients $a_{2n}(D^2)$ by making use of 
Wodzicki's noncommutative residue \cite{Wod1, Wod2} which in general is the unique trace 
functional  on the algebra of classical pseudo-differential 
on a vector bundle on $M$. 
This method is significantly more convenient from a computational point 
of view and  yields elegant proofs for rationality statements of the type 
discussed in Section \ref{RationalitySec}.

\smallskip

\subsection{Calculation of heat coefficients using the noncommutative residue}
\label{WodzickiRes}

The symbol of a classical pseudo-differential operator of order 
$d$ acting on  the smooth sections of a vector bundle on an 
$m$-dimensional  
manifold $M$ admits by definition an expansion of the following form 
as $\xi \to \infty$: 
\begin{equation} \label{classicalsymbol}
\sigma (x, \xi)
 \sim
\sum_{j=0}^\infty \sigma_{d-j} (x, \xi),
\end{equation}
where each 
$\sigma_{d-j} : U \times \left ( \mathbb{R}^m \setminus \{ 0\}\right ) 
\to M_r(\mathbb{C})$ is positively homogeneous of order
$d-j$ in $\xi$. That is, identifying the endomorphisms of the vector bundle 
on a local chart $U$ on $M$ with $M_r(\mathbb{C})$, each $\sigma_{d-j} : 
U \times \left ( \mathbb{R}^m \setminus \{ 0\}\right ) \to M_r(\mathbb{C})$ 
is a smooth map such that 
\[
\sigma_{d-j}(x, t\xi) = t^{d-j} \sigma_{d-j}(x, \xi), 
\qquad  (x, \xi) \in U \times \left ( \mathbb{R}^m 
\setminus \{ 0\}\right ), \qquad 
t > 0.
\]

\smallskip

The noncommutative residue \cite{Wod1, Wod2} of the pseudo-differential operator 
$P_\sigma$ associated with a classical symbol $\sigma$ of the 
above type is defined by
\begin{equation} \label{WodResidueDefEq}
\textrm{Res}(P_\sigma) =
\int_{S^*M} \textrm{tr} \big (\sigma_{-m}(x, \xi) \big ) \,  d^{m-1}\xi \, d^mx. 
\end{equation}
Some explanations are in order for the latter. First, note that 
$m$ is the dimension of the manifold, which shows that the 
noncommutative residue vanishes on the classical operators 
of order less than $-m = - \textnormal{dim}(M)$. In particular, 
it vanishes on infinitely smoothing operators, which allows one 
to view the noncommutative residue as a linear functional defined 
on the space of classical symbols with the following  rule 
for composing to classical symbols $\sigma_1$ and $\sigma_2$, inherited 
from \eqref{SymbolCompositionRule}:
\[ \sigma_1 \circ \sigma_2
\sim 
\sum_{\alpha \in \mathbb{Z}_{\geq 0}^m} \frac{(-i)^{|\alpha|} }{\alpha !} 
\partial_\xi^\alpha  \sigma_1(x, \xi) \, \partial_x^\alpha \sigma_2(x, \xi).
\]
Second,  $S^*M = \{ (x, \xi) \in T^*M; ||\xi||_g=1 \}$ is the 
cosphere bundle of $M$ and the formula \eqref{WodResidueDefEq} is the
integral over $M$ of a $1$-density associated with the classical symbol $\sigma$, 
which is called the  {\it Wodzicki residue density.}  In each
cotangent fibre $\mathbb{R}^m \cong T_x^* M $, where $\xi$ belongs to, 
 consider the volume form on the unit sphere $|\xi|=1$,  
\[
\sigma_\xi = \sum_{j=1}^m (-1)^{j-1} \xi_j \, d\xi_1
\wedge \cdots \wedge {\widehat d \xi_j} \wedge \cdots \wedge d \xi_m.
\]
Then, the mentioned 1-density associated with the classical
symbol $\sigma$ with the expansion \eqref{classicalsymbol} is 
defined by
\[
\textnormal{wres}_x P_\sigma
=
\left ( \int_{|\xi |=1}
\textrm{tr} \left (\sigma_{-m}(x, \xi)  \right ) |\sigma_\xi |
\right ) |dx^0 \wedge dx^1 \wedge \cdots \wedge dx^{m-1}|.
\]

\smallskip

Extensive discussions and alternative formulations of the 
noncommutative residue,  which was first discovered for 1-dimensional 
symbols on the circle \cite{Adl, Man1},  
are given in \cite{Wod1, Wod2, Kassel} and
Section 7.3 of the book \cite{GraVarFig}. The spectral formulation 
of this residue plays an important role in noncommutative geometry 
since it is used in the local index formula for spectral triples, developed in 
\cite{ConMosLocal}. Also, formulas similar to the one given 
by \eqref{WodResidueDefEq} are used in \cite{FatWon, FatKha} to 
define noncommutative residues for the restriction of 
Connes' pseudo-differential calculus \cite{ConCstarDiffGeo} to 
noncommutative tori, which are handy computational tools, see 
for example \cite{Fat} for an application.

\smallskip

Now let $D$ be the Dirac operator on a 4-dimensional
manifold.  We are interested in 
computing and understanding the nature of the Seeley-de Witt 
coefficients $a_{2n}(D^2)$ appearing in an asymptotic 
expansion of the form \eqref{AsymptoticExpansion} with $m=4$, 
when $D$ is of this type, namely 
the Dirac operator of a geometry describing a cosmological model.  
Using an alternative spectral formulation of the noncommutative 
residue and the K\"unneth formula, we showed in \cite{FanFatMar1} that 
for any  integer $n\geq 1$,
\[
a_{2n}(D^2) = \frac{1}{32\, \pi^{n+3}} \textnormal{Res}(\Delta^{-1}),
\]
where $\Delta^{-1}$ denotes the parametrix of the elliptic 
differential operator 
\[
\Delta = D^2 \otimes 1 + 1 \otimes \Delta_{\mathbb{T}^{2n-2}},
\]
in which $\Delta_{\mathbb{T}^{2n-2}}$ is the flat Laplacian on
the $(2n-2)$-dimensional torus $\mathbb{T}^{2n-2} = 
\left ( \mathbb{R}/\mathbb{Z} \right )^{2n-2}$. 
Evidently $\Delta$ 
acts on the smooth sections of the tensor product of the spin bundle of $M$ and 
the 1-dimensional trivial bundle on the torus. Using the symbol of $\Delta$, which 
is intimately related to the symbol of $D^2$, we showed in Corollary 4.1 of 
\cite{FanFatMar1} that 
\begin{equation} \label{HeatCoefsResEq}
a_{2n}(D^2) 
= 
\frac{1}{32 \pi^{n+3}} \int_{S^*(M \times \mathbb{T}^{2n-2})} 
\textnormal{tr} \left ( \sigma_{-2n-2}(\Delta^{-1}) \right ) \, d^{2n+1} \xi' \, d^{4}x, 
\end{equation}
where in the local chart $U$, $ \sigma_{-2n-2}(\Delta^{-1}): (U \times \mathbb{T}^{2n-2}) 
\times \mathbb{R}^{2n+2} \to M_4(\mathbb{C})$ is the homogeneous 
component of order $-2n-2$ in the asymptotic expansion of the symbol of the 
parametrix $\Delta^{-1}$ 
of $\Delta= D^2 \otimes 1 + 1 \otimes \Delta_{\mathbb{T}^{2n-2}}. $ In the proof of the 
corollary, we explained in detail the calculation of a recursive formula for 
$\sigma_{-2n-2}(\Delta^{-1})$, and stressed that it has no dependence on coordinates 
of the torus, which is indicated in the formula \eqref{HeatCoefsResEq}.

\smallskip

As we mentioned 
in the beginning of this section, this method is used crucially  
for proving the rationality statements for Bianchi IX metrics, which are 
elaborated on in Section \ref{RationalitySec}. 

\smallskip

\section{Rationality of spectral action for Bianchi IX metrics}
\label{RationalitySec}

\smallskip

The goal of this section is to present a variant of the rationality 
result proved in \cite{FanFatMar1}.  That is, we consider a 
time dependent conformal perturbation of the triaxial Bianchi IX 
metric treated in \cite{FanFatMar1}, and show that the terms appearing 
in the expansion of its spectral action in the energy scale are 
expressed by several variable polynomials with {\it rational} 
coefficients, evaluated on the expansion factors, the conformal 
factor and their time derivatives up to a certain order. The reason 
for considering this family of metrics is that they serve as a 
general form for the Bianchi IX gravitational instantons \cite{Tod, Hit, BabKor}. 
Indeed, combined with the parametrization of the latter in terms of theta 
functions with characteristics \cite{BabKor}, the rationality result stimulates the construction 
of modular functions from the  spectral action, which is carried out in the 
sequel.

\smallskip

For convenience, in passing, we recall the formalism and explicit calculation of the 
Dirac operator of the Bianchi IX metrics from \cite{FanFatMar1}, which can then be used 
for the presentation of the Dirac operator of the conformally perturbed metric, given by  
\eqref{ConformalBianchiIXMetricEQ}.

\smallskip

\subsection{Triaxial Bianchi IX metrics}

Euclidean Bianchi IX metrics are of the form
\begin{equation} \label{BianchimetricEq}
ds^2 = w_1 w_2 w_3 \, d\mu^2 +
\frac{w_2 w_3}{w_1} \sigma_1^2 +
\frac{w_3 w_1}{w_2} \sigma_2^2+
\frac{w_1 w_2}{w_3} \sigma_3^2,
\end{equation}
where the cosmic expansion factors $w_i$ are functions
of the cosmic time $\mu$. The $\sigma_i$ are left-invariant
1-forms on $SU(2)$-orbits satisfying
\[
d \sigma_1 = \sigma_2 \wedge \sigma_3, \qquad
d \sigma_2 = \sigma_3 \wedge \sigma_1, \qquad
d \sigma_3 = \sigma_1 \wedge \sigma_2.
\]

\smallskip

In order to write this metric explicitly in a local chart, in \cite{FanFatMar1}, 
we parametrized the 3-dimensional sphere $\mathbb{S}^3$ by 
\[
( \eta, \phi, \psi) 
\mapsto 
\left ( \cos(\eta/2) e^{i (\phi+\psi)/2},   \sin(\eta/2) e^{i (\phi-\psi)/2}  \right ), 
\] 
with the parameter ranges $0 \leq \eta \leq \pi, 0 \leq \phi < 2 \pi, 0 \leq \psi < 4 \pi$. 
We then wrote the metric \eqref{BianchimetricEq} in the local coordinates 
$x = (\mu, \eta, \phi, \psi)$, the expression of 
which was found to be 
\begin{eqnarray}
ds^2 &=& w_1 w_2 w_3 \, d\mu \,d\mu+\frac{w_1 w_2 \cos (\eta )}{w_3}d\phi \,d\psi 
   +\frac{w_1 w_2 \cos (\eta )}{w_3} d\psi \,d\phi \nonumber \\ 
 &   +&\left(\frac{w_2 w_3 \sin ^2(\eta )
   \cos ^2(\psi )}{w_1}+w_1 \left(\frac{w_3 \sin ^2(\eta ) \sin ^2(\psi )}{w_2}+\frac{w_2
   \cos ^2(\eta )}{w_3}\right)\right) d\phi \,d\phi \nonumber \\ 
&  +& \frac{\left(w_1^2-w_2^2\right) w_3
   \sin (\eta ) \sin (\psi ) \cos (\psi )}{w_1 w_2} d\eta \,d\phi \nonumber \\
 &+& \frac{\left(w_1^2-w_2^2\right) w_3 \sin (\eta ) \sin (\psi ) \cos (\psi )}{w_1
   w_2}d\phi \,d\eta  \nonumber \\ 
 &  +&\left(\frac{w_2 w_3 \sin ^2(\psi )}{w_1}  +\frac{w_1 w_3 \cos
   ^2(\psi )}{w_2}\right)d\eta \,d\eta +\frac{w_1 w_2}{w_3}d\psi \,d\psi. \nonumber
\end{eqnarray}

\smallskip

Going through the definitions and the construction reviewed in Subsection \ref{DiracOpSubSec}, 
the Dirac operator of this metric was then explicitly calculated: 
\begin{eqnarray} \label{DiracBianchiEq}
D&=&-\sqrt{\frac{w_{1}}{w_{2}w_{3}}}\cot\eta\cos\psi\cdot\gamma^{1}\frac{\partial}{\partial\psi}+\sqrt{\frac{w_{1}}{w_{2}w_{3}}}\csc\eta\cos\psi\cdot\gamma^{1}\frac{\partial}{\partial\phi} \nonumber \\
&&-\sqrt{\frac{w_{1}}{w_{2}w_{3}}}\sin\psi\cdot\gamma^{1}\frac{\partial}{\partial\eta}
-\sqrt{\frac{w_{2}}{w_{1}w_{3}}}\cot\eta\sin\psi\cdot\gamma^{2}\frac{\partial}{\partial\psi}\nonumber \\
&&+\sqrt{\frac{w_{2}}{w_{1}w_{3}}}\csc\eta\sin\psi\cdot\gamma^{2}\frac{\partial}{\partial\phi}+\sqrt{\frac{w_{2}}{w_{1}w_{3}}}\cos\psi\cdot\gamma^{2}\frac{\partial}{\partial\eta}\nonumber \\
&&+\frac{1}{\sqrt{w_{1}w_{2}w_{3}}}\gamma^{0}\frac{\partial}{\partial\mu}+\sqrt{\frac{w_{3}}{w_{1}w_{2}}}\gamma^{3}\frac{\partial}{\partial\psi}+\frac{1}{4\sqrt{w_{1}w_{2}w_{3}}}\left(\frac{w_{1}^{'}}{w_{1}}+\frac{w_{2}^{'}}{w_{2}}+\frac{w_{3}^{'}}{w_{3}}\right)\gamma^{0} \nonumber \\
&&-\frac{\sqrt{w_{1}w_{2}w_{3}}}{4}\left(\frac{1}{w_{1}^{2}}+\frac{1}{w_{2}^{2}}+\frac{1}{w_{3}^{2}}\right)\gamma^{1}\gamma^{2}\gamma^{3}. 
\end{eqnarray}
The pseudo-differential symbol associated with the latter has the following expression: 
\begin{eqnarray*}
\sigma(D)(x, \xi) 
&=&-\frac{i  \gamma^1
   \sqrt{w_1} \left(\csc (\eta )
   \cos (\psi ) \left(\xi _4 \cos (\eta
   )-\xi _3\right)+\xi _2 \sin (\psi
   )\right)}{\sqrt{w_2}
   \sqrt{w_3}} \\ 
&&+\frac{i 
   \gamma^2 \sqrt{w_2} \left(\sin
   (\psi ) \left(\xi _3 \csc (\eta )-\xi _4
   \cot (\eta )\right) +\xi _2 \cos (\psi
   )\right)}{\sqrt{w_1}
   \sqrt{w_3}} \\ 
&&+\frac{i 
   \gamma^1 \xi _1}{\sqrt{w_1}
   \sqrt{w_2}
   \sqrt{w_3}}+\frac{i 
   \gamma^3 \xi _4
   \sqrt{w_3}}{\sqrt{w_1}
   \sqrt{w_2}} + 
\frac{1}{4\sqrt{w_{1}w_{2}w_{3}}}\left(\frac{w_{1}^{'}}{w_{1}}
+\frac{w_{2}^{'}}{w_{2}}+\frac{w_{3}^{'}}{w_{3}}\right)\gamma^{0} \\
&&-\frac{\sqrt{w_{1}w_{2}w_{3}}}{4}\left(\frac{1}{w_{1}^{2}}+\frac{1}{w_{2}^{2}}
+\frac{1}{w_{3}^{2}}\right)
\gamma^{1}\gamma^{2} \gamma^3. 
\end{eqnarray*}
We also note that in our calculations we use the following gamma matrices 
$\gamma^0,$ $\gamma^1$, $\gamma^2$ and $\gamma^3$, which are 
respectively written as 
{\small 
\[ 
\left(
 \begin{array}{cccc}
 0 & 0 & i & 0 \\
 0 & 0 & 0 & i \\
 i & 0 & 0 & 0 \\
 0 & i & 0 & 0
\end{array}
\right), 
\left(
\begin{array}{cccc}
 0 & 0 & 0 & 1 \\
 0 & 0 & 1 & 0 \\
 0 & -1 & 0 & 0 \\
 -1 & 0 & 0 & 0
\end{array}
\right),  
\left(
\begin{array}{cccc}
 0 & 0 & 0 & -i \\
 0 & 0 & i & 0 \\
 0 & i & 0 & 0 \\
 -i & 0 & 0 & 0
\end{array}
\right), 
\left(
\begin{array}{cccc}
 0 & 0 & 1 & 0 \\
 0 & 0 & 0 & -1 \\
 -1 & 0 & 0 & 0 \\
 0 & 1 & 0 & 0
\end{array}
\right). 
\]
}

\smallskip

Using the methods reviewed in Subsection \ref{HeatExpSubSec} 
and Subsection \ref{WodzickiRes}, the terms
$a_0,$ $a_2$ and $a_4$ in the expansion of the spectral action 
for the metric \eqref{BianchimetricEq} were computed. Moreover, 
the main result of \cite{FanFatMar1} is that a general term $a_{2n}$ 
in the expansion, modulo an integration with respect to $\mu$, is 
of the form 
\[
a_{2n}
=
(w_1w_2w_3)^{1-3n}Q_{2n}
\left(
w_1, w_2, w_3, w_1', w_2', w_3', \dots, w_1^{(2n)},  w_2^{(2n)},  w_3^{(2n)}
\right ),
\]
where $Q_{2n}$ is a polynomial of several variables with
rational coefficients.
The rationality result was proved by exploiting the
$SU(2)$ invariance of the 1-forms $\sigma_i$ appearing in \eqref{BianchimetricEq} and by
making use of the method reviewed in Subsection \ref{WodzickiRes}.

\smallskip

\subsection{Time dependent conformal perturbations 
of Bianchi IX metrics}

By making a correct choice of a conformal factor, an especially
interesting family of Bianchi IX metrics called Bianchi IX
gravitational instantons  have been explicitly
expressed in \cite{BabKor}, which
are Einstein metrics while having self-dual Weyl tensors.
It is remarkable that starting from writing a gravitational
instanton with $SU(2)$ symmetry in the general form,
\begin{equation} \label{ConformalBianchiIXMetricEq1}
d\tilde s^2= F ds^2 = F \left ( w_1 w_2 w_3 \, d\mu^2 +
\frac{w_2 w_3}{w_1} \sigma_1^2 +
\frac{w_3 w_1}{w_2} \sigma_2^2+
\frac{w_1 w_2}{w_3} \sigma_3^2  \right ),
\end{equation} 
where, like the $w_i$, $F$ is also a function of the cosmic time 
$\mu$, 
the solutions of the equations for the self-duality of the
Weyl tenor and proportionality of the Ricci tensor to the
metric are classified completely in terms of solutions
to Painlev\'e VI integrable systems \cite{Hit, Oku, Tod}.  In turn, the latter
can be solved \cite{BabKor} by using
the $\tau$-function of the Schlesinger system
formulated in terms of theta functions  \cite{KitKor}.
We will review the explicit parametrization of the Bianchi IX 
gravitational instantons in Section \ref{InstantonsSec}. In this subsection 
we present a rationality statement for the spectral action 
of the metric \eqref{ConformalBianchiIXMetricEq1}. This result indicates the existence of an 
arithmetic structure in the spectral action of these metrics, which,  
combined with the parametrization in terms of theta functions with characteristics \cite{BabKor}, 
leads to a construction of modular functions, which is one of our main 
objectives in this paper.

\smallskip

By an explicit calculation following the notions and the construction described in 
Subsection \ref{DiracOpSubSec}, we find that the Dirac operator 
$\tilde D$ of the metric \eqref{ConformalBianchiIXMetricEq1} 
is given by 
\begin{equation} \label{DiracConfBianchiIXEq}
\tilde{D}=\frac{1}{\sqrt{F}}D+\frac{3F^{'}}{4F^{\frac{3}{2}}w_{1}w_{2}w_{3}}\gamma^{0}, 
\end{equation}
where $D$ is the Dirac operator given by \eqref{DiracBianchiEq}, 
the Dirac operator of the metric \eqref{BianchimetricEq}.

\smallskip

We also find that 
\begin{eqnarray*}
\tilde{D}^{2}&=&
\frac{1}{F}D^{2}
+\frac{F^{'}}{2F^{2}w_{1}w_{2}w_{3}}\left(w_{1}\gamma^{0}\gamma^{1}\sin\psi-w_{2}\gamma^{0}\gamma^{2}\cos\psi\right)\frac{\partial}{\partial \eta} \\
&&-\frac{F^{'}\csc\eta}{2F^{2}w_{1}w_{2}w_{3}}\left(w_{1}\gamma^{0}\gamma^{1}\cos\psi-w_{2}\gamma^{0}\gamma^{2}\sin\psi\right)\frac{\partial}{\partial \phi}\\
&&+\frac{F^{'}\cot\eta}{2F^{2}w_{1}w_{2}w_{3}}\left(w_{1}\gamma^{0}\gamma^{1}\cos\psi+w_{2}\gamma^{0}\gamma^{2}\sin\psi-w_{3}\gamma^{0}\gamma^{3}\tan\eta\right)
\frac{\partial}{\partial \psi} \\
&&-\frac{F^{'}}{F^{2}w_{1}w_{2}w_{3}}\frac{\partial}{\partial \mu}+\frac{9F^{'2}}{16F^{3}w_{1}w_{2}w_{3}}+\frac{F^{'}w_{1}^{'}}{8F^{2}w_{1}^{2}w_{2}w_{3}}+\frac{F^{'}w_{2}^{'}}{8F^{2}w_{1}w_{2}^{2}w_{3}} \\
&&+\frac{F^{'}w_{3}^{'}}{8F^{2}w_{1}w_{2}w_{3}^{2}}+\frac{1}{8F^{2}w_{1}^{2}w_{2}^{2}w_{3}^{2}}(\frac{1}{w_{1}^{2}}+\frac{1}{w_{2}^{2}}+\frac{1}{w_{3}^{2}})F^{'}\gamma^{0} \gamma^{1}\gamma^{2}\gamma^{3}.  
\end{eqnarray*}
Therefore, we have the pseudo-differential symbol of $\tilde D^2$, 
since that of $D^2$ was calculated in \cite{FanFatMar1}.

\smallskip

Now, by following a quite similar approach to the one taken in 
\cite{FanFatMar1} for the rationality result, we present 
a variant of that result for the metric 
\eqref{ConformalBianchiIXMetricEq1}. That is, considering the 
asymptotic expansion 
\begin{equation} \label{ExpAsympConformalEq}
\text{Trace}\left ( \exp (-t \tilde{D}^2)   \right )
\sim
t^{-2} \sum_{n=0}^\infty \tilde{a}_{2n} t^n, \qquad t \to 0^+, 
\end{equation}
each $\tilde a_{2n}$ is of the general form written in the following 
statement. 

\smallskip

\begin{theorem} \label{ConformalRationlaityThm}
The term $\tilde{a}_{2n}$ in the above asymptotic
expansion, modulo an integration with respect to $\mu$,
is of the form
\[
\tilde{a}_{2n} =   \frac{\tilde{Q}_{2n}
\left (w_1, w_2, w_3, F, w_1', w_2', w_3', F',  \dots,
w_1^{(2n)}, w_2^{(2n)}, w_3^{(2n)}, F^{(2n)} \right )}{F^{2n} (w_1 w_2 w_3)^{3n-1}},
\]
where $\tilde{Q}_{2n}$ is a polynomial of several variables with
rational coefficients.
\begin{proof}
We provide an outline. One can exploit the $SU(2)$-invariance of the 
1-forms $\sigma_i$ appearing in the metric \eqref{ConformalBianchiIXMetricEq1} 
to show that functions of the type \eqref{DensitiesFormula}, whose integrals give 
the coefficients $\tilde a_{2n}$, have no spatial dependence when the metric 
is given by \eqref{ConformalBianchiIXMetricEq1}. Then, one employs the formula 
\eqref{HeatCoefsResEq} along with the pseudo-differential symbol of $\tilde D^2$, 
and continues with  similar arguments to those of  
Theorem 5.1 in \cite{FanFatMar1}. 
\end{proof}
\end{theorem}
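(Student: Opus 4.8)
The plan is to follow the blueprint of Theorem 5.1 in \cite{FanFatMar1}, adapting each step to accommodate the conformal factor $F$. The starting point is the observation that the coefficients $\tilde a_{2n}$ are obtained by integrating the densities $a_{2n}(x, \tilde D^2)$ of \eqref{DensitiesFormula} over $M$, and these densities, being local invariants built from the symbol of $\tilde D^2$, can only depend on the metric data through the functions $w_i$, $F$ and their $\mu$-derivatives, together with the spatial coordinates $(\eta, \phi, \psi)$ via the explicit trigonometric coefficients visible in the symbol of $\tilde D^2$. The first key step is to eliminate the spatial dependence: exactly as in \cite{FanFatMar1}, the left-invariance of the $\sigma_i$ under the $SU(2)$-action means the metric \eqref{ConformalBianchiIXMetricEq1} is homogeneous in the spatial directions, so the density $a_{2n}(x, \tilde D^2)$ must be constant along the $SU(2)$-orbits; hence it is a function of $\mu$ alone, and the spatial integration contributes only the (finite) volume of $SU(2) = \mathbb{S}^3$, which is absorbed into the rational constants. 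Concretely one checks that the $(\eta, \phi, \psi)$-dependent terms in the symbol of $\tilde D^2$ conspire, after the $\xi$-integration over the cosphere and the $\lambda$-contour integration in \eqref{DensitiesFormula}, to leave a spatially constant result; this is where using the noncommutative residue formulation \eqref{HeatCoefsResEq} pays off, since one only needs the single homogeneous symbol component $\sigma_{-2n-2}(\tilde\Delta^{-1})$.

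The second key step is the rationality and the precise denominator. Here one feeds the symbol of $\tilde D^2$ into the recursion \eqref{r_0formula}--\eqref{r_nformula} (or its $\tilde\Delta$-analogue), and tracks how $F$ and the $w_i$ enter. From \eqref{DiracConfBianchiIXEq} we have $\tilde D = F^{-1/2} D + \tfrac{3F'}{4 F^{3/2} w_1 w_2 w_3}\gamma^0$, so $\tilde D^2 = F^{-1} D^2 + (\text{lower order in } \xi)$, and the explicit expression for $\tilde D^2$ given just before the statement shows that every coefficient is a Laurent monomial in $F$ and $w_1, w_2, w_3$ times a polynomial in $F', w_i', F'', \dots$ with rational coefficients, divided by an appropriate power of $F$ and $w_1 w_2 w_3$. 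The leading symbol $p_2$ of $\tilde D^2$ is $F^{-1}$ times that of $D^2$, so $r_0 = (p_2 - \lambda)^{-1}$ introduces factors of $F$ in the numerator; each application of the recursion lowers the homogeneity by differentiating in $x$ (which, for the $\mu$-dependence, produces more derivatives of $w_i$ and $F$ with rational coefficients) and multiplying by $r_0$, so the powers of $F$ and $w_1w_2w_3$ in the denominator accumulate in a controlled, bookkeeping-friendly way. Dimensional analysis — treating $\xi$ as having weight $1$, $\lambda$ weight $2$, and recording the overall scaling under $w_i \mapsto c\, w_i$, $F \mapsto c\, F$ that rescales $ds^2$ and hence $\tilde s^2$ — pins down the homogeneity degrees and forces the denominator to be exactly $F^{2n}(w_1 w_2 w_3)^{3n-1}$, matching the known $n=0, 2$ cases $\tilde a_0, \tilde a_2$ and the $\tilde a_4$ in Appendix \ref{fulla_4appendix}, and reducing to the $F \equiv 1$ result of \cite{FanFatMar1} when $F$ is constant.

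The main obstacle I expect is the combinatorial bookkeeping that converts ``only rational operations occur'' into the \emph{exact} exponents $F^{2n}$ and $(w_1 w_2 w_3)^{3n-1}$. The rationality of the coefficients is essentially automatic once one observes that the recursion \eqref{r_nformula} involves only integer coefficients $(-i)^{|\alpha|}/\alpha!$, that $r_0$ is a matrix inverse of an affine-in-$\lambda$ symbol with rational-function entries, and that the $\xi$- and $\lambda$-integrals of such expressions over the cosphere and the contour produce rational multiples of powers of $\pi$ (the $\pi$'s being accounted for by the prefactor $1/(32 \pi^{n+3})$ in \eqref{HeatCoefsResEq}); so no irrational constant can enter. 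The delicate point is the denominator: one must argue that the homogeneity weight of $\tilde a_{2n}$ under the simultaneous rescaling $w_i \mapsto c w_i$, $F \mapsto c F$ is exactly the weight of $F^{-2n}(w_1 w_2 w_3)^{1-3n}$, and that no spurious common factors are hidden. This is precisely the step that mirrors the scaling argument of Theorem 5.1 in \cite{FanFatMar1}, now carried out for the two-parameter rescaling dictated by the conformal perturbation; since the outline in the proof defers to that theorem, I would make the scaling/homogeneity comparison explicit and then invoke the \cite{FanFatMar1} machinery verbatim for the remaining estimates.
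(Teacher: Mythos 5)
Your proposal follows the same route as the paper's (itself only sketched) proof: kill the spatial dependence via the $SU(2)$-invariance of the $\sigma_i$, then feed the symbol of $\tilde D^2$ into the noncommutative-residue formula \eqref{HeatCoefsResEq} and track rationality and denominators through the parametrix recursion, deferring the bookkeeping to Theorem 5.1 of \cite{FanFatMar1}. One caveat about the embellishment you add at the end: the ``dimensional analysis'' under $w_i \mapsto c\,w_i$, $F \mapsto c\,F$ does not do what you claim, because this substitution is not a rescaling of $ds^2$ (the $d\mu^2$ coefficient scales by $c^3$ while the $\sigma_i^2$ coefficients scale by $c$; a genuine homothety would also require reparametrizing $\mu$), and indeed \eqref{a_2Eq} is visibly inhomogeneous under it --- the term $F w_1^2$ picks up $c^3$ while $F''$ picks up only $c$. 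So this scaling cannot ``force the denominator to be exactly $F^{2n}(w_1w_2w_3)^{3n-1}$.'' Fortunately the theorem does not require exactness or absence of common factors: it only asserts that this denominator suffices to clear all poles, i.e.\ it is an upper bound extracted from the recursion \eqref{r_0formula}--\eqref{r_nformula}, which is exactly what the residue computation delivers and what the paper relies on. Dropping the scaling claim and keeping the recursion bookkeeping leaves you with the paper's argument.
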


\smallskip

Let us end this section by recording explicit expressions for the 
first few coefficients appearing in \eqref{ExpAsympConformalEq}, 
which were computed in two different ways to confirm their validity. 
In fact we first computed them by the method reviewed in Subsection 
\ref{HeatExpSubSec} leading to the formulas  \eqref{a_2nformula} 
and \eqref{DensitiesFormula}, and then confirmed that the expressions 
match precisely with the outcome of our calculations based on the formula 
\eqref{HeatCoefsResEq} which used the noncommutative residue.

\smallskip

The first coefficient, which is the volume term, is given by 
\begin{equation} \label{a_0Eq}
\tilde{a}_{0}=4F^{2}w_{1}w_{2}w_{3}. 
\end{equation}
The next term, which is the Einstein-Hilbert action term, has the following 
rather short expression, which indicates occurrence of remarkable 
simplifications in the 
final formula: 
\begin{eqnarray}  \label{a_2Eq}
\tilde{a}_{2} &=& -\frac{F}{3}\Big (w_{1}^{2}+w_{2}^{2}+w_{3}^{2} \Big)+\frac{F}{6}\Big (\frac{w_{1}^{2}w_{2}^{2}-w_{3}^{'2}}{w_{3}^{2}}+\frac{w_{1}^{2}w_{3}^{2}-w_{2}^{'2}}{w_{2}^{2}}+\frac{w_{2}^{2}w_{3}^{2}-w_{1}^{'2}}{w_{1}^{2}} \Big ) \nonumber \\
&& -\frac{F}{3}\Big (\frac{w_{1}^{'}w_{2}^{'}}{w_{1}w_{2}}+\frac{w_{1}^{'}w_{3}^{'}}{w_{1}w_{3}}+\frac{w_{2}^{'}w_{3}^{'}}{w_{2}w_{3}}\Big )+\frac{F}{3}\Big (\frac{w_{1}^{''}}{w_{1}}+\frac{w_{2}^{''}}{w_{2}}+\frac{w_{3}^{''}}{w_{3}}\Big )-\frac{F^{'2}}{2F}+F^{''}.
\end{eqnarray}
The term $\tilde{a}_{4}$, which is the Gauss-Bonnet term, also enjoys 
remarkable simplifications in its final formula, however, since it has a 
lengthier expression, we present it in Appendix \ref{fulla_4appendix}.

\smallskip

\section{Bianchi IX gravitational instantons}
\label{InstantonsSec}

\smallskip

There is an especially interesting family of metrics called Bianchi IX 
gravitational instantons, which have been explored in the literature by 
imposing the self-duality 
condition on triaxial Bianchi IX metrics and by employing a time-dependent conformal 
factor $F(\mu)$ to obtain an Einstein metric, see \cite{Tod, Hit, BabKor} and references therein.  Let us provide an outline of some of the main ideas and steps for 
deriving these metrics, from 
the literature. In particular, we will then present the explicit 
parametrization of the solutions from \cite{BabKor} in terms of theta functions with characteristics.

\smallskip

The  sought after gravitational instantons can be written in the 
general form  
\begin{equation} \label{ConformalBianchiIXMetricEq2}
d\tilde s^2= F ds^2 = F \left ( w_1 w_2 w_3 \, d\mu^2 +
\frac{w_2 w_3}{w_1} \sigma_1^2 +
\frac{w_3 w_1}{w_2} \sigma_2^2+
\frac{w_1 w_2}{w_3} \sigma_3^2  \right ). 
\end{equation}
Then, the differential equations derived from imposing the self-duality of the Weyl tensor 
and the condition of being an Einstein metric are solved \cite{Tod, Hit, BabKor} by turning  
them into well-studied systems of differential equations as follows.  
One can start by considering a basis of anti-self-dual 2-forms
\[
\varphi^j = w_j w_k \,d\mu \wedge \sigma_1 -  w_j \sigma_k \wedge \sigma_l, 
\]
where all cyclic permutations $(j, k, l)$ of $(1, 2, 3)$ are understood to be considered. 
Consider the connection 1-forms $\alpha^j_k$  appearing in 
$
d \varphi_j = \sum \alpha^j_k \wedge \varphi^k. 
$
This  leads to writing 
$
\alpha^j_k = \frac{A_k}{w_k} \sigma_k, 
$
where the functions $A_k$ satisfy the system of equations 
\begin{equation} \label{wjAjDiffEq}
\frac{d w_j}{d \mu} = - w_j w_k + w_j (A_k + A_l). 
\end{equation}
It can then be seen that the self-duality condition on the Weyl tensor 
yields the classical Halphen system
\begin{equation} \label{HalphenEq}
\frac{d A_j}{d \mu}= - A_k A_l +A_j(A_k + A_l). 
\end{equation}
Remarkably, the latter has well-known solutions in terms of the 
theta functions that will be defined shortly by \eqref{varthetasEq}.

\smallskip

One can define a new variable 
$
x= \frac{A_1-A_2}{A_3-A_2}, 
$
which has an evident dependence on $\mu$. Then the Halphen system \eqref{HalphenEq} reduces 
to the following differential equation which is satisfied by the reciprocal 
of the elliptic modular function, see  \cite{Tod} and references therein: 
\[
\frac{ d^3 x / d \mu^3  }{d x / d \mu} 
= 
\frac{3}{2} \frac{ d^2 x / d \mu^2  }{(d x / d \mu)^2} 
- \frac{1}{2} \frac{dx}{d \mu} \left (  \frac{1}{x^2} + \frac{1}{x(1-x)} + \frac{1}{(1-x)^2} \right ). 
\]
Therefore, one can solve the equation \eqref{HalphenEq} and substitute the solution in 
\eqref{wjAjDiffEq}. The latter can then be solved more conveniently by setting 
\begin{eqnarray*}
w_1 &=& \Omega_1 \frac{dx}{d \mu} \left ( x (1-x) \right )^{-1/2}, \\
w_2 &=& \Omega_2 \frac{dx}{d \mu} \left ( x^2 (1-x) \right )^{-1/2}, \\ 
w_3 &=& \Omega_3 \frac{dx}{d \mu} \left ( x (1-x)^2 \right )^{-1/2}, 
\end{eqnarray*}
and by viewing $x$ as the independent variable, which yields: 
\begin{eqnarray*}
\frac{d \Omega_1}{dx} = - \frac{\Omega_2 \Omega_3}{x(1-x)}, 
\qquad 
\frac{d \Omega_2}{dx} = - \frac{\Omega_3 \Omega_1}{x},  
\qquad 
\frac{d \Omega_3}{dx} = - \frac{\Omega_1 \Omega_2}{1-x}. 
\end{eqnarray*}
It is well-known that these equations  reduce to the 
Painlev\'e VI equation with particular parameters, which 
along with the condition of making the metric Einstein 
in the conformal class using a time-dependent conformal factor, 
one reduces to the following rational parameters \cite{Hit, Oku, Tod}:    
\[
(\alpha ,\beta ,\gamma , \delta)
=
(\frac{1}{8},  -\frac{1}{8}, \frac{1}{8},  \frac{3}{8}). 
\]

\smallskip

It should be noted that in general a Painlev\'e VI equation with 
parameters $(\alpha ,\beta ,\gamma , \delta)$ is  of the form
\begin{eqnarray*}
\frac{d^2X}{dt^2}&=&\frac{1}{2}\left(
\frac{1}{X}+\frac{1}{X-1}+\frac{1}{X-t}\right)
\left(\frac{dX}{dt}\right)^2  
-\left( \frac{1}{t}+\frac{1}{t-1}+\frac{1}{X-t}\right)\frac{dX}{dt} \\&&
+\frac{X(X-1)(X-t)}{t^2(t-1)^2}
\left(\alpha +
\beta\frac{t}{X^2}+\gamma\frac{t-1}{(X-1)^2}+
\delta\frac{t(t-1)}{(X-t)^2}\right). 
\end{eqnarray*}

\smallskip

Going through the process outlined above and 
solving the involved equations in terms of 
elliptic theta functions \cite{Tod, Hit, BabKor}, and using 
the formula for the $\tau$-function of the Schlesinger equation \cite{KitKor}  
with an additional elegant calculation the square root of some expressions in \cite{BabKor}, 
cf. \cite{Hit}, a  
parametrization of the Bianchi IX
gravitational instantons can be given as follows.

\smallskip

The final solutions in \cite{BabKor} are written in terms of theta functions with characteristics,
which for $p, q, z, \in \mathbb{C},  i \mu \in \mathbb{H},$
are given by
\begin{equation} \label{ThetawithCharEq}
\vartheta [p,q](z, i\mu )
=
\sum_{m\in {\mathbb Z}} \exp \left( -\pi  (m+p)^2\mu + 2\pi i (m+p)(z+q)\right).
\end{equation}
Considering Jacobi's theta function defined by
\[
\Theta( z | \tau)  = \sum_{m \in \mathbb{Z}}  e^{\pi i m^2 \tau} e^{2 \pi i m z},
\qquad z \in \mathbb{C},  \qquad \tau \in \mathbb{H},
\]
and by using the notation which sets $z=0$ in \eqref{ThetawithCharEq}, 
\begin{equation} \label{varthetapqEq}
\vartheta [p,q]( i \mu) = \vartheta [p,q] (0,i\mu ),
\end{equation}
the following functions are also necessary to be introduced: 
\begin{eqnarray} \label{varthetasEq} 
\vartheta_{2}(i\mu)&=&\vartheta[\frac{1}{2},0](i\mu)=\sum_{m\in\mathbb{Z}}\exp\{-\pi(m+\frac{1}{2})^{2}\mu\}=e^{-\frac{1}{4}\pi\mu}\Theta \big (\frac{i\mu}{2}|i\mu \big), \nonumber \\
\vartheta_{3}(i\mu)&=&\vartheta[0,0](i\mu)=\sum_{m\in\mathbb{Z}}\exp\{-\pi m^{2}\mu\}=\Theta(0|i\mu), \nonumber \\
\vartheta_{4}(i\mu)&=&\vartheta[0,\frac{1}{2}](i\mu)=\sum_{m\in\mathbb{Z}}\exp\{-\pi m^{2}\mu+\pi im\}=\Theta \big (\frac{1}{2}|i\mu \big ).
\end{eqnarray}

\smallskip

We are now ready to write the explicit formulas for the Bianchi IX gravitational instantons 
presented in  \cite{BabKor}, in the following two subsections. The first family is two-parametric, 
which consists of the case of non-vanishing cosmological constants, and the second is a 
one-parametric family whose cosmological constants vanish.  By studying the asymptotic 
behavior of these solutions as $\mu \to \infty$, it is shown in \cite{ManMar} that 
they approximate Eguchi-Hanson type gravitational instantons with $ w_1 \neq w_2 = w_3 $ 
\cite{EguHan} for large $\mu$.

\smallskip

\subsection{The two-parametric family with non-vanishing cosmological constants}

The two-parametric family of solutions with
parameters $p, q \in \mathbb{C}$ is given by 
substituting the following functions in the metric 
\eqref{ConformalBianchiIXMetricEq2}: 
\begin{eqnarray} \label{two-parametric}
w_{1}[p,q](i\mu)&=&-\frac{i}{2}\vartheta_{3}(i\mu)\vartheta_{4}(i\mu)\frac{\partial_{q}\vartheta[p,q+\frac{1}{2}](i\mu)}{e^{\pi ip}\vartheta[p,q](i\mu)}, \nonumber \\
w_{2}[p,q](i\mu)&=&\frac{i}{2}\vartheta_{2}(i\mu)\vartheta_{4}(i\mu)\frac{\partial_{q}\vartheta[p+\frac{1}{2},q+\frac{1}{2}](i\mu)}{e^{\pi ip}\vartheta[p,q](i\mu)}, \nonumber \\
w_{3}[p,q](i\mu)&=&-\frac{1}{2}\vartheta_{2}(i\mu)\vartheta_{3}(i\mu)\frac{\partial_{q}\vartheta[p+\frac{1}{2},q](i\mu)}{\vartheta[p,q](i\mu)}, \nonumber \\
F[p,q](i\mu)&=&\frac{2}{\pi\Lambda} \frac{1}{(\partial_{q}\ln\vartheta[p,q](i\mu))^{2}}=\frac{2}{\pi\Lambda} \left(\frac{\vartheta[p,q](i\mu)}{\partial_{q}\vartheta[p,q](i\mu)}\right)^{2}.
\end{eqnarray}

\smallskip

\subsection{The one-parametric family with vanishing cosmological constants}
The one-parametric family of the metrics with the parameter $q_0 \in \mathbb{R}$ is given by 
the following solutions that need to be substituted in the metric \eqref{ConformalBianchiIXMetricEq2}:
\begin{eqnarray} \label{one-parametric}
w_1[q_0](i \mu) &=& \frac{1}{\mu+q_0} +2 \frac{d}{d\mu} \log \vartheta_2 (i \mu), \nonumber \\
w_2[q_0](i \mu) &=& \frac{1}{\mu+q_0} +2 \frac{d}{d\mu} \log \vartheta_3 (i \mu), \nonumber \\
w_3[q_0](i \mu) &=&\frac{1}{\mu+q_0} +2 \frac{d}{d\mu} \log \vartheta_4 (i \mu), \nonumber \\
F[q_0](i \mu)&=& C (\mu + q_0)^2,
\end{eqnarray}
where $C$ is an arbitrary positive constant.

\smallskip

\section{Arithmetics of Bianchi IX gravitational instantons}
\label{ArithmeticsofInstantonsSec}

\smallskip

This section is devoted to the investigation of modular properties 
of the functions appearing in the formulas \eqref{two-parametric} 
and \eqref{one-parametric} and that of their derivatives. When the functions 
$w_1$, $w_2$, $w_3$ and $F$  are substituted from the latter identities in the 
metric \eqref{ConformalBianchiIXMetricEq2}, Theorem \ref{ConformalRationlaityThm} implies that the 
Seeley-de Witt coefficients 
$\tilde a_{2n}$  are rational functions of $\vartheta_2, \vartheta_3, \vartheta_4$,  $\vartheta[p,q]$, 
$\partial_q \vartheta[p,q]$, $e^{i\pi p}$ and their derivatives with rational coefficients. 
Therefore, finding out modular properties of these theta functions and consequently that of the functions 
$w_1$, $w_2$, $w_3$, $F$ and their derivatives, will help us to 
investigate modular transformation laws of the $\tilde a_{2n}$, under 
modular transformations on $i \mu$ belonging to the upper half-plane $\mathbb{H}$.

\smallskip

We begin studying the two-parametric case \eqref{two-parametric}. First, let us  note that for the derivatives 
of the function $\vartheta[p, q](i \mu)$ given by \eqref{varthetapqEq}, we have
\begin{eqnarray*}
\partial_{\mu}^{n}\vartheta[p,q](i\mu) &=& \sum_{m\in\mathbb{Z}}(-\pi)^{n}(m+p)^{2n}
e^{ -\pi(m+p)^{2}\mu+2\pi i(m+p)q}, \\
\partial_{\mu}^{n}\partial_{q}\vartheta[p,q](i\mu)&=&2i(-1)^{n}\pi^{n+1}\sum_{m\in\mathbb{Z}}(m+p)^{2n+1} e^{-\pi(m+p)^{2}\mu+2\pi i(m+p)q }. 
\end{eqnarray*}
We also need to prove the following lemma,  which will be of crucial use 
for proving the transformation properties investigated in this section. 

\smallskip

\begin{lemma} \label{PoissonsumLem}

We have

\begin{eqnarray*}
&&\sum_{m\in\mathbb{Z}}(m+p)^{n}e^{-\frac{\pi}{\mu}(m+p)^{2}+2\pi i(m+p)q} \\
&& \quad = e^{2\pi ipq}\sum_{j=0}^{[n/2]}
\Large ( \frac{(-i)^{n-2j}\mu^{n+\frac{1}{2}-j} n!}{(2\pi)^{j}(n-2j)!\cdot(2j)!!}
 \sum_{m\in\mathbb{Z}}(m-q)^{n-2j}e^{-\pi\mu(m-q)^{2}+2\pi ip(m-q)}  \Large ).
\end{eqnarray*}

\begin{proof}

It follows from the following identity and the Poisson summation
formula:

\begin{eqnarray*}
&&\int d\xi\cdot e^{2\pi i\cdot\xi x}(\xi+p)^{n}e^{-\frac{\pi}{\mu}(\xi+p)^{2}+2\pi i(\xi+p)q} \\
&&\quad =e^{2\pi ipq}e^{-\pi\mu(-x-q)^{2}+2\pi ip(-x-q)}\int d\xi\cdot(\xi+i\mu(x+q))^{n}e^{-\frac{\pi}{\mu}\xi{}^{2}} \\
&&  =e^{2\pi ipq}e^{-\pi\mu(-x-q)^{2}+2\pi ip(-x-q)}\sum_{j=0}^{[n/2]}\frac{i^{n-2j}\mu^{n+\frac{1}{2}-j}}{(2\pi)^{j}}\frac{n!}{(n-2j)!\cdot(2j)!!}(x+q)^{n-2j}.
\end{eqnarray*}

\end{proof}

\end{lemma}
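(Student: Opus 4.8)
The plan is to prove the identity by Fourier-analyzing the summand and invoking the Poisson summation formula, exactly as the statement's proof sketch indicates. First I would fix $p, q \in \mathbb{C}$ and $\mu > 0$ and regard the left-hand side as $\sum_{m \in \mathbb{Z}} g(m)$, where $g(\xi) = (\xi+p)^n e^{-\tfrac{\pi}{\mu}(\xi+p)^2 + 2\pi i(\xi+p)q}$. Poisson summation gives $\sum_{m} g(m) = \sum_{x \in \mathbb{Z}} \hat{g}(x)$, so everything reduces to computing the Fourier transform $\hat g(x) = \int_{\mathbb{R}} g(\xi)\, e^{2\pi i \xi x}\, d\xi$. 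This is the content of the displayed identity in the proof: I would complete the square in the exponent $-\tfrac{\pi}{\mu}(\xi+p)^2 + 2\pi i(\xi+p)q + 2\pi i \xi x$, shift the contour of integration by the appropriate purely imaginary amount (justified by the Gaussian decay and Cauchy's theorem, since the integrand is entire), and pull out the resulting exponential prefactor $e^{2\pi i p q} e^{-\pi\mu(x+q)^2 + 2\pi i p(-x-q)}$. What remains is a Gaussian integral $\int (\xi + i\mu(x+q))^n e^{-\tfrac{\pi}{\mu}\xi^2}\, d\xi$.

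Next I would evaluate this Gaussian moment integral by the binomial theorem: expanding $(\xi + i\mu(x+q))^n = \sum_{k} \binom{n}{k} \xi^k (i\mu(x+q))^{n-k}$, only even powers $k = 2j$ survive, and the standard formula $\int \xi^{2j} e^{-\tfrac{\pi}{\mu}\xi^2}\, d\xi = \sqrt{\mu}\, \mu^j (2j)! / \big( (2\pi)^j (2j)!! \cdot (2j)!! \big)$ — or more cleanly $\int \xi^{2j} e^{-a\xi^2} d\xi = \sqrt{\pi/a}\, (2j-1)!!/(2a)^j$ with $a = \pi/\mu$ — produces the combinatorial coefficient $\tfrac{n!}{(n-2j)!\,(2j)!!} \cdot \tfrac{\mu^{n + 1/2 - j}}{(2\pi)^j}$ together with the power $i^{n-2j}(x+q)^{n-2j}$. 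Collecting these yields precisely the closed form asserted for $\hat g(x)$ in the second displayed line of the proof.

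Finally I would substitute $\hat g(x)$ back into the Poisson sum $\sum_{x \in \mathbb{Z}} \hat g(x)$. Reindexing the outer sum by $m := -x$ (so that $-x - q$ becomes $m - q$, $x + q$ becomes $-(m-q)$, and $(x+q)^{n-2j} = (-1)^{n-2j}(m-q)^{n-2j}$), and absorbing the sign $(-1)^{n-2j}$ into $i^{n-2j}$ to get $(-i)^{n-2j}$, gives exactly the right-hand side of the lemma after factoring out $e^{2\pi i p q}$ and the $j$-dependent constants. I expect the main technical point — really the only one that needs care — to be the contour-shift justification for the Fourier transform: one must check that translating the integration line does not pick up residues and that the shifted Gaussian integral converges, which is routine given that the integrand is entire with Gaussian decay along horizontal lines, but should be stated. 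The bookkeeping of factorials, double factorials, and signs is the other place where errors creep in, so I would carefully match $(2j)!! = 2^j j!$ against the standard even-moment formula to confirm the stated normalization, and note that the identity, once verified for real $\mu > 0$ and a range of $p,q$, extends to the claimed domain by analytic continuation in $p$ and $q$.
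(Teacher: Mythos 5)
Your proposal is correct and follows exactly the paper's route: Poisson summation applied to $g(\xi)=(\xi+p)^n e^{-\pi(\xi+p)^2/\mu+2\pi i(\xi+p)q}$, with the Fourier transform computed by completing the square, shifting the contour, and evaluating Gaussian moments, then reindexing $x\mapsto -m$ to turn $i^{n-2j}$ into $(-i)^{n-2j}$. The only blemish is that your first version of the even-moment formula is off by a factor of $(2j)!!$ (it should give $\sqrt{\mu}\,\mu^j(2j-1)!!/(2\pi)^j$, consistent with your second, correct version), but since you arrive at the right final coefficient this is a typo rather than a gap.
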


\smallskip

For convenience, we need to use the constants 
\[
C(j|n)=\frac{(-i){}^{n}n!}{2^{j}(n-2j)!\cdot(2j)!!},
\]
which arise naturally in exploring the following
transformation properties. 

\smallskip

The following lemma shows that the function $\vartheta[p, q](i \mu)$ 
and its derivatives with respect to $\mu$ possess periodic and 
quasi-period properties with respect to the variables $p$ and $q$. 

\smallskip

\begin{lemma} \label{transformationsvartheta}
The functions
$\vartheta[p,q]$ are holomorphic in the half-plane $\Re (\mu)>0$
and satisfy the following properties:
\begin{eqnarray*}
\partial_{\mu}^{n}\vartheta[p,q+1](i\mu) &=& e^{2\pi ip}\partial_{\mu}^{n}\vartheta[p,q](i\mu), \\
\partial_{\mu}^{n}\partial_{q}\vartheta[p,q+1](i\mu)&=&e^{2\pi ip}\partial_{\mu}^{n}\partial_{q}\vartheta[p,q](i\mu), \\
\partial_{\mu}^{n}\vartheta[p+1,q](i\mu) &=&\partial_{\mu}^{n}\vartheta[p,q](i\mu), \\
\partial_{\mu}^{n}\partial_{q}\vartheta[p+1,q](i\mu)  &=& \partial_{\mu}^{n}\partial_{q}\vartheta[p,q](i\mu). 
\end{eqnarray*}

\begin{proof}

One can start from the definition given by \eqref{varthetapqEq} and  \eqref{ThetawithCharEq} to write 
the following for proving the first identity. 
\begin{eqnarray*}
&&\partial_{\mu}^{n}\vartheta[p,q+1](i\mu) \\
&& \quad =
e^{2\pi ip}(-1)^{n}\pi^{n}\sum_{m\in\mathbb{Z}}(m+p)^{2n}\exp\{-\pi(m+p)^{2}\mu+2\pi i(m+p)q\} \\
&& \quad =e^{2\pi ip}\partial_{\mu}^{n}\vartheta[p,q](i\mu),
\end{eqnarray*}
The second identity can be seen to hold by writing 
\begin{eqnarray*}
&&\partial_{\mu}^{n}\partial_{q}\vartheta[p,q+1](i\mu)\\
&&\quad =e^{2\pi ip}2i(-1)^{n}\pi^{n+1}\sum_{m\in\mathbb{Z}}(m+p)^{2n+1}\cdot\exp\{-\pi(m+p)^{2}\mu+2\pi i(m+p)q\} \\
&& \quad =e^{2\pi ip}\partial_{\mu}^{n}\partial_{q}\vartheta[p,q](i\mu). 
\end{eqnarray*}
This proves the quasi-periodicity property of the theta function and its derivative with the quasi-period 
1 in the $q$-variable. 

\smallskip

The periodicity properties with period 1 in the $p$-variable can be similarly investigated by writing 
\begin{eqnarray*}
&&\partial_{\mu}^{n}\vartheta[p+1,q](i\mu) \\
&& \quad =(-1)^{n}\pi^{n}\sum_{m\in\mathbb{Z}}(m+1+p)^{2n}\exp\{-\pi(m+1+p)^{2}\mu+2\pi i(m+1+p)q\} \\
&& \quad =\partial_{\mu}^{n}\vartheta[p,q](i\mu),
\end{eqnarray*}
and 
\begin{eqnarray*}
&&\partial_{\mu}^{n}\partial_{q}\vartheta[p+1,q](i\mu) \\
&&  =2i(-1)^{n}\pi^{n+1}\sum_{m\in\mathbb{Z}}(m+1+p)^{2n+1}\cdot\exp\{-\pi(m+1+p)^{2}\mu+2\pi i(m+1+p)q\} \\
&&\quad =\partial_{\mu}^{n}\partial_{q}\vartheta[p,q](i\mu). 
\end{eqnarray*}

\end{proof}

\end{lemma}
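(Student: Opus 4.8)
The plan is to prove the four stated identities directly from the series expansion of $\vartheta[p,q](i\mu)$, exactly as one does for classical theta functions, by inspecting how shifting $p$ or $q$ by $1$ permutes the summation index $m$ or merely inserts a constant phase. First I would record the two formulas for $\partial_\mu^n \vartheta[p,q](i\mu)$ and $\partial_\mu^n \partial_q \vartheta[p,q](i\mu)$ that were written down just before the lemma; since these are obtained by termwise differentiation of the absolutely (and locally uniformly) convergent series in the region $\Re(\mu)>0$, holomorphicity in that half-plane is automatic and may be remarked at the outset. The differentiations commute with the sum precisely because, for $\Re(\mu)>0$, the Gaussian factor $e^{-\pi(m+p)^2\mu}$ decays fast enough that every derivative series converges uniformly on compact subsets of $\{\Re(\mu)>0\}$.

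Next, for the quasi-periodicity in $q$, I would substitute $q+1$ for $q$ in the series for $\partial_\mu^n \vartheta[p,q](i\mu)$. The only $q$-dependence is in $e^{2\pi i(m+p)q}$, which becomes $e^{2\pi i(m+p)(q+1)} = e^{2\pi i(m+p)} e^{2\pi i(m+p)q} = e^{2\pi i m} e^{2\pi i p} e^{2\pi i(m+p)q} = e^{2\pi i p}\, e^{2\pi i(m+p)q}$, using $e^{2\pi i m}=1$ for $m\in\mathbb{Z}$. Pulling the constant $e^{2\pi i p}$ out of the sum gives the first identity, and the identical manipulation applied to the series for $\partial_\mu^n \partial_q \vartheta[p,q](i\mu)$ (which has the same $q$-dependence, only with $(m+p)^{2n+1}$ in place of $(m+p)^{2n}$ and an extra constant $2i(-1)^n\pi^{n+1}$) gives the second. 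This is essentially the computation already displayed in the proof sketch, so I would present it in the same compact form.

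For the periodicity in $p$, I would substitute $p+1$ for $p$ and then reindex the sum by the substitution $m \mapsto m-1$, which is a bijection of $\mathbb{Z}$: the term indexed by $m$ in $\partial_\mu^n \vartheta[p+1,q](i\mu)$ is $(-1)^n\pi^n (m+1+p)^{2n}\exp\{-\pi(m+1+p)^2\mu + 2\pi i (m+1+p)q\}$, and setting $m' = m+1$ turns this into the $m'$-indexed term of $\partial_\mu^n \vartheta[p,q](i\mu)$; summing over all $m\in\mathbb{Z}$ is the same as summing over all $m'\in\mathbb{Z}$. The same reindexing handles $\partial_\mu^n \partial_q \vartheta[p+1,q](i\mu)$. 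Here the absolute convergence of the series is what legitimizes the rearrangement, so I would flag that once.

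None of the four steps presents a genuine obstacle — these are the standard theta-function shift relations, and the hardest part is merely bookkeeping with the constants $(-1)^n\pi^n$, $2i(-1)^n\pi^{n+1}$ and the exponents $2n$ versus $2n+1$. The one point worth stating carefully, rather than gliding over, is the justification for termwise differentiation and for reindexing: both rest on the uniform/absolute convergence of the defining series on $\{\Re(\mu)>0\}$, which also yields the asserted holomorphicity. I would therefore open the proof with a one-sentence remark to that effect and then carry out the four computations as above, essentially reproducing the displayed calculations in the excerpt's proof sketch.
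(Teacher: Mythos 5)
Your proposal is correct and follows essentially the same route as the paper's own proof: pull the constant phase $e^{2\pi i p}$ out of the series for the $q$-shift, and reindex $m\mapsto m+1$ for the $p$-shift. The only difference is that you explicitly flag the absolute/locally uniform convergence that justifies termwise differentiation and reindexing, which the paper leaves implicit; that is a harmless (indeed welcome) addition.
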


\smallskip

We now focus on modular transformations. For convenience, we use the following 
notation for the linear fractional transformations corresponding to the generators 
of the modular group acting on the upper half-plane $\mathbb{H}$ in the 
complex plane: 
\begin{equation} \label{T_1andSEq}
T_1(\tau) = \tau + 1, \qquad S(\tau) = \frac{-1}{\tau}, \qquad \tau \in \mathbb{H}. 
\end{equation}

\smallskip

In the following lemma, we present the transformation properties of the 
$\vartheta[p, q](i \mu)$ and its derivatives under the modular transformations $T_1$, 
$T_2$, and $S$ on $i \mu \in \mathcal{H}$. 

\smallskip
 
\begin{lemma} \label{transformationsvarthetapq}

Let $\mu$ be a complex number in the right half-plane $\Re (\mu) >0$.  We have
\begin{eqnarray*}
\partial_{\mu}^{n}\vartheta[p,q](i\mu+1)&=& e^{-\pi ip(p+1)}\partial_{\mu}^{n}\vartheta[p,q+p+\frac{1}{2}](i\mu),\\
\partial_{\mu}^{n}\partial_{q}\vartheta[p,q](i\mu+1)  &=& e^{-\pi ip(p+1)}\partial_{\mu}^{n}\partial_{q}\vartheta[p,q+p+\frac{1}{2}](i\mu),\\
\partial_{\mu}^{n}\vartheta[p,q](i\mu+2)&=& e^{-2\pi ip^{2}}\partial_{\mu}^{n}\vartheta[p,q+2p](i\mu),\\
\partial_{\mu}^{n}\partial_{q}\vartheta[p,q](i\mu+2)  &=& e^{-2\pi ip^{2}}\partial_{\mu}^{n}\partial_{q}\vartheta[p,q+2p](i\mu),\\
\partial_{\mu}^{n}\vartheta[p,q](\frac{i}{\mu}) &=& e^{2\pi ipq}\sum_{j=0}^{n}C(j|2n)\mu^{2n+\frac{1}{2}-j}\partial_{\mu}^{n-j}\vartheta[-q,p](i\mu),\\
\partial_{\mu}^{n}\partial_{q}\vartheta[p,q](\frac{i}{\mu}) &=& e^{2\pi ipq}\sum_{j=0}^{n}C(j|2n+1)\mu^{2n+\frac{3}{2}-j}\partial_{\mu}^{n-j}\partial_{p}\vartheta[-q,p](i\mu).
\end{eqnarray*}

\begin{proof}

Considering the formulas \eqref{varthetapqEq} and  \eqref{ThetawithCharEq} we can write
\begin{eqnarray*}
&&\partial_{\mu}^{n}\vartheta[p,q](i\mu+1) \\
&&\quad =(-1)^{n}\pi^{n}\sum_{m\in\mathbb{Z}}(m+p)^{2n}e^{-\pi(m+p)^{2}\mu+2\pi i(m+p)(q+p+\frac{1}{2})-\pi ip^2 - \pi i p} \\
&& \quad =e^{-\pi ip(p+1)}\partial_{\mu}^{n}\vartheta[p,q+p+\frac{1}{2}](i\mu),
\end{eqnarray*}
and
\begin{eqnarray*}
&&\partial_{\mu}^{n}\partial_{q}\vartheta[p,q](i\mu+1) \\
&& \quad =2i(-1)^{n}\pi^{n+1}\sum_{m\in\mathbb{Z}}(m+p)^{2n+1}\cdot e^{-\pi(m+p)^{2}\mu+2\pi i(m+p)(q+p+\frac{1}{2})-\pi ip^2 - \pi i p} \\
&& \quad =e^{-2\pi ip^{2}}\partial_{\mu}^{n}\partial_{q}\vartheta[p,q+p+\frac{1}{2}](i\mu). 
\end{eqnarray*}
This establishes the first two identities for the transformation properties of $\vartheta[p, q]$ and 
its derivatives with respect to the modular action $T_1$ given by \eqref{T_1andSEq}; the third and the fourth identity follows immediately from the first and the second by applying them twice.

\smallskip

In order to investigate the transformation properties with respect to the action of $S$ given in \eqref{T_1andSEq}, 
we need to use Lemma \ref{PoissonsumLem}, which allows us to write
\begin{eqnarray*}
\partial_{\mu}^{n}\vartheta[p,q](\frac{i}{\mu})&=&(-1)^{n}\pi^{n}\sum_{m\in\mathbb{Z}}(m+p)^{2n}\exp\{-\frac{\pi}{\mu}(m+p)^{2}+2\pi i(m+p)q\} \\
&=& e^{2\pi ipq}\sum_{j=0}^{n}\mu^{2n+\frac{1}{2}-j}\frac{(-1)^{n}(2n)!}{2^{j}(2n-2j)!\cdot(2j)!!}(-1)^{n-j}\pi^{n-j} \times \\
&& \qquad  \qquad \qquad\qquad   \sum_{m\in\mathbb{Z}}(m-q)^{2n-2j}e^{-\pi\mu(m-q)^{2}+2\pi ip(m-q)} \\
&=&e^{2\pi ipq}\sum_{j=0}^{n}\mu^{2n+\frac{1}{2}-j}\frac{(-i){}^{2n}(2n)!}{2^{j}(2n-2j)!\cdot(2j)!!}\partial_{\mu}^{n-j}\vartheta[-q,p](i\mu) \\
&=&e^{2\pi ipq}\sum_{j=0}^{n}C(j|2n)\mu^{2n+\frac{1}{2}-j}\partial_{\mu}^{n-j}\vartheta[-q,p](i\mu). 
\end{eqnarray*}
Also we have: 
\begin{eqnarray*}
\partial_{\mu}^{n}\partial_{q}\vartheta[p,q](\frac{i}{\mu})&=&2i(-1)^{n}\pi^{n+1}\sum_{m\in\mathbb{Z}}(m+p)^{2n+1}\cdot e^{ -\frac{\pi}{\mu}(m+p)^{2}+2\pi i(m+p)q } \\
&=&e^{2\pi ipq}\sum_{j=0}^{n}\mu^{2n+\frac{3}{2}-j}\frac{-i(-1)^{n}(2n+1)!}{2^{j}(2n+1-2j)!\cdot(2j)!!}2i(-1)^{n-j}\pi^{n-j+1} \times \\
&&  \qquad \qquad \qquad \qquad \quad  \sum_{m\in\mathbb{Z}}(m-q)^{2(n-j)+1}e^{-\pi\mu(m-q)^{2}+2\pi ip(m-q)} \\
&=& e^{2\pi ipq}\sum_{j=0}^{n}\mu^{2n+\frac{3}{2}-j}\frac{(-i)^{2n+1}(2n+1)!}{2^{j}(2n+1-2j)!\cdot(2j)!!}\partial_{\mu}^{n-j}\partial_{p}\vartheta[-q,p](i\mu) \\
&=&e^{2\pi ipq}\sum_{j=0}^{n}C(j|2n+1)\mu^{2n+\frac{3}{2}-j}\partial_{\mu}^{n-j}\partial_{p}\vartheta[-q,p](i\mu).
\end{eqnarray*}

\end{proof}

\end{lemma}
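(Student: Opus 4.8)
The plan is to prove Lemma \ref{transformationsvarthetapq} directly from the series definitions \eqref{ThetawithCharEq} and \eqref{varthetapqEq}, treating the $T_1$ (and iterated $T_2$) identities as elementary completion-of-the-square manipulations and the $S$ identities as applications of the Poisson summation lemma, Lemma \ref{PoissonsumLem}. First I would handle the translation $i\mu \mapsto i\mu + 1$: differentiating the series termwise brings down a factor $(-\pi)^n(m+p)^{2n}$ (or, for the $\partial_q$ version, $2\pi i(-\pi)^n(m+p)^{2n+1}$), so the only thing that changes in the summand is $e^{-\pi(m+p)^2\mu}\mapsto e^{-\pi(m+p)^2\mu}\,e^{-\pi i(m+p)^2}$. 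Writing $(m+p)^2 = m^2 + 2mp + p^2$ and using $e^{-\pi i m^2} = e^{-\pi i m}$ (since $m^2 \equiv m \bmod 2$), the extra phase becomes $e^{-\pi i m}\,e^{-2\pi i m p}\,e^{-\pi i p^2} = e^{2\pi i m(-p)}\cdot(\text{stuff})$... more cleanly, one absorbs $e^{2\pi i m(p + \tfrac12)}$ into a shift $q \mapsto q + p + \tfrac12$ of the characteristic, leaving the global factor $e^{-\pi i p(p+1)}$. This gives the first two displayed identities; the third and fourth follow by applying the first two twice, noting that the two successive shifts of $q$ compose to $q \mapsto q + 2p + 1$, and $+1$ can be dropped by the periodicity already proved in Lemma \ref{transformationsvartheta}, while the phases multiply to $e^{-2\pi i p^2}$.

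Next I would treat the inversion $i\mu \mapsto i/\mu$. Here the termwise derivative produces $\sum_m (-\pi)^n(m+p)^{2n}\,e^{-\tfrac{\pi}{\mu}(m+p)^2 + 2\pi i(m+p)q}$, which is precisely the left-hand side of Lemma \ref{PoissonsumLem} with exponent $n$ replaced by $2n$. Applying that lemma converts the sum into $e^{2\pi i pq}\sum_{j=0}^{n} \tfrac{(-i)^{2n-2j}\mu^{2n+\frac12 - j}(2n)!}{(2\pi)^j(2n-2j)!\,(2j)!!}\sum_m (m-q)^{2n-2j} e^{-\pi\mu(m-q)^2 + 2\pi i p(m-q)}$; one then recognizes the inner sum, after multiplying by the appropriate power of $-\pi$, as $\partial_\mu^{n-j}\vartheta[-q,p](i\mu)$ (using the derivative formula recorded just before Lemma \ref{PoissonsumLem}, with characteristic $p' = -q$, $q' = p$), and collects the numerical constants into $C(j|2n)$ as defined. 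The $\partial_q$ version is identical but starts from exponent $2n+1$ in Lemma \ref{PoissonsumLem}: the odd power $(m-q)^{2n-2j+1}$ then assembles into $\partial_\mu^{n-j}\partial_p\vartheta[-q,p](i\mu)$ — note it is a $\partial_p$, not $\partial_q$, because differentiating $\vartheta[p',q'](i\mu)$ in $q'$ brings down $m+p'$, and here the roles of the two characteristics have been swapped by the inversion, so the linear-in-$m$ factor is supplied by $\partial_{p'}$ — and the constants collect into $C(j|2n+1)$.

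The main obstacle, such as it is, is purely bookkeeping: getting every sign, power of $i$, power of $\pi$, and factorial to land exactly where the statement claims. In particular one has to be careful that $(-i)^{2n} = (-1)^n$ and $(-i)^{2n+1} = (-1)^n(-i)$ are inserted consistently so that the $(-1)^n\pi^n$ coming from the $n$-fold $\mu$-derivative on the left cancels correctly against the $(-1)^{n-j}\pi^{n-j}$ needed to form $\partial_\mu^{n-j}$ on the right, leaving exactly the factor $(-i)^{2n}$ inside $C(j|2n)$ — and similarly, in the $\partial_q$ case, that the explicit $2i$ from $\partial_q$ and the $-i$ from the Fourier-transform identity in Lemma \ref{PoissonsumLem} combine into the single power $(-i)^{2n+1}$. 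These are exactly the cancellations displayed line-by-line in the computation below, so the proof is really just the careful execution of the two mechanisms above: completion of the square for $T_1$ and $T_2$, and Poisson summation via Lemma \ref{PoissonsumLem} for $S$.
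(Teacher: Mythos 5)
Your proposal follows the paper's proof essentially step for step: the $T_1$ identity by completing the square in the exponent of the $q$-series, the $i\mu+2$ identities by iterating $T_1$ and removing the resulting $+1$ in $q+2p+1$ via the quasi-periodicity of Lemma \ref{transformationsvartheta} (whose factor $e^{2\pi ip}$ is what turns $e^{-2\pi ip(p+1)}$ into $e^{-2\pi ip^{2}}$), and the $S$ identities by invoking Lemma \ref{PoissonsumLem} with exponent $2n$ (resp.\ $2n+1$) and reassembling the inner sums into $\partial_{\mu}^{n-j}\vartheta[-q,p]$ (resp.\ $\partial_{\mu}^{n-j}\partial_{p}\vartheta[-q,p]$) with exactly the bookkeeping of constants into $C(j|2n)$ that the paper carries out. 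The only blemish is the sign of the extra phase in the $T_1$ step: since $i\mu+1=i(\mu-i)$ the factor is $e^{+\pi i(m+p)^{2}}$, not $e^{-\pi i(m+p)^{2}}$ (the latter would lead to the shift $q\mapsto q-p+\tfrac{1}{2}$ and prefactor $e^{\pi ip(p-1)}$), but the shift $q\mapsto q+p+\tfrac{1}{2}$ and prefactor $e^{-\pi ip(p+1)}$ you ultimately state are the correct ones, so this is an intermediate slip rather than a gap.
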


\smallskip

Now we investigate the transformation properties of 
the functions $\vartheta_2, \vartheta_3, \vartheta_4,$
given by \eqref{varthetasEq} and their derivatives, under 
the same modular actions as the ones considered above, namely 
 $T_1$ and $S$ given by \eqref{T_1andSEq} 
transforming $i \mu$ in the upper half-plane.   

\smallskip

\begin{lemma}
\label{transformationsvartheta234}

Let $\Re(\mu) >0$. 
The functions $\vartheta_2, \vartheta_3, \vartheta_4$ and their derivative 
satisfy the following modular transformation properties: 
\begin{align*}
\partial_{\mu}^{n}\vartheta_{2}(i\mu+1)&= e^{\frac{\pi i}{4}}\partial_{\mu}^{n}\vartheta_{2}(i\mu), &
\partial_{\mu}^{n}\vartheta_{2}(\frac{i}{\mu}) &= \sum_{j=0}^{n}C(j|2n)\mu^{2n+\frac{1}{2}-j}\partial_{\mu}^{n-j}\vartheta_{4}(i\mu), \\
\partial_{\mu}^{n}\vartheta_{3}(i\mu+1) & = \partial_{\mu}^{n}\vartheta_{4}(i\mu), &
\partial_{\mu}^{n}\vartheta_{3}(\frac{i}{\mu})  &= \sum_{j=0}^{n}C(j|2n)\mu^{2n+\frac{1}{2}-j}\partial_{\mu}^{n-j}\vartheta_{3}(i\mu),& \\
\partial_{\mu}^{n}\vartheta_{4}(i\mu+1) &= \partial_{\mu}^{n}\vartheta_{3}(i\mu), &
\partial_{\mu}^{n}\vartheta_{4}(\frac{i}{\mu}) &= \sum_{j=0}^{n}C(j|2n)\mu^{2n+\frac{1}{2}-j}\partial_{\mu}^{n-j}\vartheta_{2}(i\mu).
\end{align*}

\begin{proof}

The first identity follows from considering the formula \eqref{varthetasEq} and writing
\begin{eqnarray*}
\partial_{\mu}^{n}\vartheta_{2}(i\mu+1)&=&\partial_{\mu}^{n}\vartheta[\frac{1}{2},0](i\mu+1)
=e^{-\frac{3\pi i}{4}}\partial_{\mu}^{n}\vartheta[\frac{1}{2},1](i\mu)\\
&=& e^{\frac{\pi i}{4}}\partial_{\mu}^{n}\vartheta[\frac{1}{2},0](i\mu)
=e^{\frac{\pi i}{4}}\partial_{\mu}^{n}\vartheta_{2}(i\mu). 
\end{eqnarray*}

\smallskip

For the next identity, which involves the action $S(i \mu) = i/\mu$, one needs to use 
Lemma \ref{PoissonsumLem}: 
\begin{eqnarray*}
\partial_{\mu}^{n}\vartheta_{2}(\frac{i}{\mu}) &=& \partial_{\mu}^{n}\vartheta[\frac{1}{2},0](\frac{i}{\mu}) 
=\sum_{j=0}^{n}C(j|2n)\mu^{2n+\frac{1}{2}-j}\partial_{\mu}^{n-j}\vartheta[0,\frac{1}{2}](i\mu) \\
&=&\sum_{j=0}^{n}C(j|2n)\mu^{2n+\frac{1}{2}-j}\partial_{\mu}^{n-j}\vartheta_{4}(i\mu). 
\end{eqnarray*}

\smallskip

The analogous identities for the functions $\vartheta_3$, $\vartheta_4$, and their derivatives 
can be proved in a similar manner. In the case of $\vartheta_3$ we have 
\begin{eqnarray*}
\partial_{\mu}^{n}\vartheta_{3}(i\mu+1)&=&\partial_{\mu}^{n}\vartheta[0,0](i\mu+1)
=\partial_{\mu}^{n}\vartheta[0,\frac{1}{2}](i\mu) 
=\partial_{\mu}^{n}\vartheta_{4}(i\mu),
\end{eqnarray*}
\begin{eqnarray*}
\partial_{\mu}^{n}\vartheta_{3}(\frac{i}{\mu}) &=&
\partial_{\mu}^{n}\vartheta[0,0](\frac{i}{\mu}) 
=\sum_{j=0}^{n}C(j|2n)\mu^{2n+\frac{1}{2}-j}\partial_{\mu}^{n-j}\vartheta[0,0](i\mu) \\
&=&\sum_{j=0}^{n}C(j|2n)\mu^{2n+\frac{1}{2}-j}\partial_{\mu}^{n-j}\vartheta_{3}(i\mu),
\end{eqnarray*}
and for $\vartheta_4$ we have
\begin{eqnarray*}
\partial_{\mu}^{n}\vartheta_{4}(i\mu+1)=\partial_{\mu}^{n}\vartheta[0,1](i\mu)=\partial_{\mu}^{n}\vartheta_{3}(i\mu), 
\end{eqnarray*}
\begin{eqnarray*}
\partial_{\mu}^{n}\vartheta_{4}(\frac{i}{\mu}) &=& \partial_{\mu}^{n}\vartheta[0,\frac{1}{2}](i\mu) 
=\sum_{j=0}^{n}C(j|2n)\mu^{2n+\frac{1}{2}-j}\partial_{\mu}^{n-j}\vartheta[\frac{1}{2},0](i\mu) \\
&=&\sum_{j=0}^{n}C(j|2n)\mu^{2n+\frac{1}{2}-j}\partial_{\mu}^{n-j}\vartheta_{2}(i\mu).
\end{eqnarray*}

\end{proof}

\end{lemma}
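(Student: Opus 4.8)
The plan is to derive everything from the explicit series definitions in \eqref{varthetasEq}, reducing the $T_1$-transformations to the periodicity and $T_1$-properties of $\vartheta[p,q]$ already established in Lemma \ref{transformationsvartheta} and Lemma \ref{transformationsvarthetapq}, and reducing the $S$-transformations to the Poisson-summation identity of Lemma \ref{PoissonsumLem}. The key observation is that each $\vartheta_k$ is just $\vartheta[p,q]$ evaluated at a specific half-integer characteristic: $\vartheta_2=\vartheta[\tfrac12,0]$, $\vartheta_3=\vartheta[0,0]$, $\vartheta_4=\vartheta[0,\tfrac12]$. So the strategy is simply to substitute these characteristics into the already-proved transformation laws and track the resulting constants and shifted characteristics, then re-identify the shifted characteristics (modulo the integer-periodicity of Lemma \ref{transformationsvartheta}) as one of the $\vartheta_k$ again.

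For the $T_1$ column, I would proceed case by case. For $\vartheta_2$: apply the first identity of Lemma \ref{transformationsvarthetapq} with $p=\tfrac12$, $q=0$, giving $\partial_\mu^n\vartheta[\tfrac12,0](i\mu+1)=e^{-\pi i\cdot\frac12\cdot\frac32}\partial_\mu^n\vartheta[\tfrac12,1](i\mu)=e^{-3\pi i/4}\partial_\mu^n\vartheta[\tfrac12,1](i\mu)$; then use the $q\mapsto q+1$ quasi-periodicity from Lemma \ref{transformationsvartheta} with $p=\tfrac12$ to replace $\vartheta[\tfrac12,1]$ by $e^{2\pi i(1/2)}\vartheta[\tfrac12,0]=-\vartheta[\tfrac12,0]$, and $-e^{-3\pi i/4}=e^{\pi i/4}$, which is exactly the stated constant. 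For $\vartheta_3$: with $p=0$, $q=0$ the constant $e^{-\pi i p(p+1)}$ is $1$ and the shift sends $q=0$ to $q+p+\tfrac12=\tfrac12$, so $\partial_\mu^n\vartheta_3(i\mu+1)=\partial_\mu^n\vartheta[0,\tfrac12](i\mu)=\partial_\mu^n\vartheta_4(i\mu)$. For $\vartheta_4$: with $p=0$, $q=\tfrac12$ the shift sends $q$ to $1$, and integer-periodicity in $q$ (again Lemma \ref{transformationsvartheta} with $p=0$, constant $e^{2\pi i\cdot 0}=1$) identifies $\vartheta[0,1]$ with $\vartheta[0,0]=\vartheta_3$.

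For the $S$ column, the starting point is the fifth identity of Lemma \ref{transformationsvarthetapq}, $\partial_\mu^n\vartheta[p,q](i/\mu)=e^{2\pi ipq}\sum_{j=0}^nC(j|2n)\mu^{2n+\frac12-j}\partial_\mu^{n-j}\vartheta[-q,p](i\mu)$. Substituting the three characteristics: for $\vartheta_3$, $(p,q)=(0,0)$ gives $e^{0}$ and $\vartheta[-0,0]=\vartheta[0,0]=\vartheta_3$, matching the self-dual entry. For $\vartheta_2$, $(p,q)=(\tfrac12,0)$ gives prefactor $e^{2\pi i\cdot\frac12\cdot 0}=1$ and $\vartheta[-0,\tfrac12]=\vartheta[0,\tfrac12]=\vartheta_4$, matching the $\vartheta_2\to\vartheta_4$ entry. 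For $\vartheta_4$, $(p,q)=(0,\tfrac12)$ gives prefactor $e^{0}$ and $\vartheta[-\tfrac12,0]$, which by integer-periodicity in the $p$-variable (Lemma \ref{transformationsvartheta}) equals $\vartheta[\tfrac12,0]=\vartheta_2$, matching the last entry. In each case one checks that the $C(j|2n)$ coefficients carry over verbatim, so nothing further is needed. The analogous statements for the $q$-derivatives $\partial_\mu^n\partial_q\vartheta_k$, if one wanted them, would come the same way from the sixth identity of Lemma \ref{transformationsvarthetapq} together with the parity of the derivative being taken.

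The whole argument is essentially bookkeeping: the only place requiring genuine care is the phase tracking in the $\vartheta_2$ case under $T_1$, where one must correctly combine $e^{-\pi i p(p+1)}$ at $p=\tfrac12$ with the $e^{2\pi i p}$ from the integer $q$-shift to land on $e^{\pi i/4}$ rather than its negative; the sign conventions in \eqref{ThetawithCharEq} must be respected throughout. There is no real obstacle, since all the hard analytic content (the Poisson summation producing the $\mu^{2n+\frac12-j}$ factors and the $C(j|2n)$ constants) has already been absorbed into the preceding lemmas; the present lemma is just their specialization to the three standard theta constants.
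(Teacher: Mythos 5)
Your proposal is correct and follows essentially the same route as the paper: each $\vartheta_k$ is written as $\vartheta[p,q]$ at its half-integer characteristic, the $T_1$- and $S$-laws of Lemma \ref{transformationsvarthetapq} are applied, and the shifted characteristics are re-identified via the (quasi-)periodicity of Lemma \ref{transformationsvartheta}, including the phase combination $e^{-3\pi i/4}\cdot e^{\pi i}=e^{\pi i/4}$ for $\vartheta_2$ under $T_1$. No substantive difference from the paper's argument.
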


\smallskip

\subsection{The case of the two-parametric family of metrics}
\label{ModularPropsTwoParametricSubSec}

Using the above lemmas, we proceed to work out the modular
transformation rules for the functions $w_j[p, q]$ given by 
\eqref{two-parametric}  and their
derivatives. First, let us deal with the transformation $T_1$ given 
by \eqref{T_1andSEq}. 

\smallskip

\begin{lemma}
\label{transformationsw_j1}
The functions $w_j[p, q]$ and their derivatives of an arbitrary order $n \geq 1$ with 
respect to $\mu$ satisfy the
following identities: 
\begin{align*}
w_{1}[p,q](i\mu+1) &= w_{1}[p,q+p+\frac{1}{2}](i\mu), &
w_{1}^{(n)}[p,q](i\mu+1)&= w_{1}^{(n)}[p,q+p+\frac{1}{2}](i\mu), \\
 w_{2}[p,q](i\mu+1) &= w_{3}[p,q+p+\frac{1}{2}](i\mu), &  w_{2}^{(n)}[p,q](i\mu+1) &=w_{3}^{(n)}[p,q+p+\frac{1}{2}](i\mu), \\
w_{3}[p,q](i\mu+1) &= w_{2}[p,q+p+\frac{1}{2}](i\mu), &
w_{3}^{(n)}[p,q](i\mu+1)&=w_{2}^{(n)}[p,q+p+\frac{1}{2}](i\mu).
\end{align*}


\begin{proof}
Using Lemma \ref{transformationsvarthetapq} and Lemma \ref{transformationsvartheta234}, we have 
\begin{eqnarray*}
w_{1}[p,q](i\mu+1)&=&-\frac{i}{2}\vartheta_{3}(i\mu+1)\vartheta_{4}(i\mu+1)\frac{\partial_{q}\vartheta[p,q+\frac{1}{2}](i\mu+1)}{e^{\pi ip}\vartheta[p,q](i\mu+1)} \\
&=&-\frac{i}{2}\vartheta_{3}(i\mu)\vartheta_{4}(i\mu)\frac{e^{-\pi i p(p+1)}\partial_{q}\vartheta[p,q+\frac{1}{2}+p+\frac{1}{2}](i\mu)}{e^{-\pi i p(p+1)}e^{\pi ip}\vartheta[p,q+p+\frac{1}{2}](i\mu)}\\
&=&w_{1}[p,q+p+\frac{1}{2}](i\mu). 
\end{eqnarray*}



\smallskip

Similarly for $w_2$ and $w_3$ we can write: 
\begin{eqnarray*}
&&w_{2}[p,q](i\mu+1) \\
&& \quad =\frac{i}{2}\vartheta_{2}(i\mu+1)\vartheta_{4}(i\mu+1)\frac{e^{-\pi i(p+\frac{1}{2})(p+\frac{3}{2})}\partial_{q}\vartheta[p+\frac{1}{2},q+\frac{1}{2}+p+1](i\mu)}{e^{-\pi ip(p+1)}e^{\pi ip}\vartheta[p,q+p+\frac{1}{2}](i\mu)} \\
&&\quad =-\frac{1}{2}\vartheta_{2}(i\mu)\vartheta_{3}(i\mu)\frac{\partial_{q}\vartheta[p+\frac{1}{2},q+p+\frac{1}{2}](i\mu)}{\vartheta[p,q+p+\frac{1}{2}](i\mu)} \\
&& \quad =w_{3}[p,q+p+\frac{1}{2}](i\mu),
\end{eqnarray*}
\begin{eqnarray*}
&& w_{3}[p,q](i\mu+1) \\
 && \qquad =-\frac{1}{2}\vartheta_{2}(i\mu+1)\vartheta_{3}(i\mu+1)\frac{e^{-\pi i(p+\frac{1}{2})(p+\frac{3}{2})}\partial_{q}\vartheta[p+\frac{1}{2},q+p+\frac{1}{2}+\frac{1}{2}](i\mu)}{e^{-\pi ip(p+1)}\vartheta[p,q+p+\frac{1}{2}](i\mu)} \\
&&\qquad = \frac{i}{2}\vartheta_{2}(i\mu)\vartheta_{4}(i\mu)\frac{\partial_{q}\vartheta[p+\frac{1}{2},q+p+\frac{1}{2}+\frac{1}{2}](i\mu)}{\vartheta[p,q+p+\frac{1}{2}](i\mu)}
\\
&&\qquad =w_{2}[p,q+p+\frac{1}{2}](i\mu). 
\end{eqnarray*}

\smallskip

The identities for the arbitrary derivatives follow easily from differentiating the above 
equalities with respect to $\mu$. 



\end{proof}

\end{lemma}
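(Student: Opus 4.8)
The plan is to carry out a direct substitution. For each $j$ one inserts the definition of $w_j[p,q]$ from \eqref{two-parametric} into the left-hand side, evaluated at $i\mu+1$, and then rewrites every factor using the $T_1$-transformation laws already established: Lemma \ref{transformationsvartheta234} for $\vartheta_2,\vartheta_3,\vartheta_4$ and Lemma \ref{transformationsvarthetapq} for $\vartheta[p,q]$ and $\partial_q\vartheta[p,q]$. The structural fact that organizes the argument is that $T_1$ interchanges $\vartheta_3$ and $\vartheta_4$ while sending $\vartheta_2$ to $e^{\pi i/4}\vartheta_2$; in particular the product $\vartheta_3\vartheta_4$ is $T_1$-invariant, whereas $\vartheta_2\vartheta_4$ and $\vartheta_2\vartheta_3$ are mapped to $e^{\pi i/4}\vartheta_2\vartheta_3$ and $e^{\pi i/4}\vartheta_2\vartheta_4$ respectively. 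This interchange of the theta prefactors is precisely the source of the swap $w_2\leftrightarrow w_3$ in the statement, while $w_1$ is sent back to itself.

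For $w_1$ the computation is essentially immediate: the invariance of $\vartheta_3\vartheta_4$ handles the prefactor, and in the ratio $\partial_q\vartheta[p,q+\tfrac12]/(e^{\pi ip}\vartheta[p,q])$ the two quadratic phase factors $e^{-\pi i p(p+1)}$ produced by Lemma \ref{transformationsvarthetapq} in numerator and denominator cancel, leaving $\partial_q\vartheta[p,q+p+1]/(e^{\pi ip}\vartheta[p,q+p+\tfrac12])$, which is exactly $w_1[p,q+p+\tfrac12](i\mu)$. For $w_2$ and $w_3$ one must be more careful, since the characteristic $[p+\tfrac12,q+\tfrac12]$ (resp. $[p+\tfrac12,q]$) transforms under $T_1$ with the phase $e^{-\pi i(p+\tfrac12)(p+\tfrac32)}$ rather than $e^{-\pi ip(p+1)}$, so the numerator and denominator phases no longer match. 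Collecting all contributions — the $e^{\pi i/4}$ from $\vartheta_2$, the difference between the $-\tfrac i2,\tfrac i2,-\tfrac12$ normalizations in \eqref{two-parametric}, the mismatch between the two quadratic-in-$p$ phases, and a quasi-periodic shift in the $q$-argument handled by Lemma \ref{transformationsvartheta} (e.g.\ $\partial_q\vartheta[p+\tfrac12,q+p+\tfrac32]=-e^{2\pi ip}\partial_q\vartheta[p+\tfrac12,q+p+\tfrac12]$) — everything conspires to reduce $w_2[p,q](i\mu+1)$ to $w_3[p,q+p+\tfrac12](i\mu)$, and symmetrically $w_3[p,q](i\mu+1)$ to $w_2[p,q+p+\tfrac12](i\mu)$.

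Finally, the identities for the higher derivatives require no new input: the map $\mu\mapsto i\mu+1$ has unit derivative in $\mu$, so $\partial_\mu^n$ of the left-hand side evaluated at $i\mu+1$ is literally the $n$-th $\mu$-derivative of the already-established functional identity, and one simply differentiates $n$ times. The only place where genuine care is needed — and hence the main obstacle — is the phase bookkeeping in the $w_2,w_3$ cases: one must verify that the several half-integer shifts in the theta characteristics together with the accumulated roots of unity combine exactly, leaving no spurious scalar, and in particular that the two quadratic-in-$p$ phases coming from the two different characteristics cancel against each other rather than producing a leftover $p$-dependent factor.
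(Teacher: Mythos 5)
Your proposal is correct and follows essentially the same route as the paper: direct substitution of \eqref{two-parametric} at $i\mu+1$, rewriting each factor via Lemma \ref{transformationsvarthetapq} and Lemma \ref{transformationsvartheta234}, with the swap $\vartheta_3\leftrightarrow\vartheta_4$ accounting for $w_2\leftrightarrow w_3$, the quasi-periodicity of Lemma \ref{transformationsvartheta} absorbing the extra unit shift in the $q$-characteristic, and the derivative statements obtained by differentiating the resulting identities in $\mu$. The phase bookkeeping you flag as the main obstacle does close up exactly as you describe (e.g.\ for $w_2$ the accumulated factor is $\tfrac{i}{2}e^{\pi i/4}(-1)e^{-3\pi i/4}=-\tfrac12$, matching the normalization of $w_3$), which is precisely the computation carried out in the paper.
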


\smallskip

The following lemmas show that the functions $w_j$ and their derivatives satisfy 
some transformation laws with respect to the modular action $S$ given by 
\eqref{T_1andSEq} as well. Let us start by the properties of $w_1$.

\smallskip

\begin{lemma}
\label{transformationsw_12}
Assume that the variable $\mu$ belongs to the right half-plane $\Re(\mu) >0$. The function $w_1[p, q]$ and its derivatives up to order 4 with respect to $\mu$ 
satisfy the following identities: 
\begin{eqnarray*}
w_{1}[p,q](\frac{i}{\mu}) &=&  \mu^{2}w_{3}[-q,p](i\mu),   \\
w_{1}^{'}[p,q](\frac{i}{\mu})&=&  -\mu^{4}w_{3}^{'}[-q,p](i\mu)-2\mu^{3}w_{3}[-q,p](i\mu),  \\
w_{1}^{''}[p,q](\frac{i}{\mu})&=&\mu^{6}w_{3}^{''}[-q,p](i\mu)+6\mu^{5}w_{3}^{'}[-q,p](i\mu)+6\mu^{4}w_{3}[-q,p](i\mu),  \\
w_{1}^{(3)}[p,q](\frac{i}{\mu}) &=& -\mu^{8}w_{3}^{(3)}[-q,p](i\mu)-12\mu^{7}w_{3}^{''}[-q,p](i\mu)-36\mu^{6}w_{3}^{'}[-q,p](i\mu) \\
&&-24\mu^{5}w_{3}[-q,p](i\mu), \\
w_{1}^{(4)}[p,q](\frac{i}{\mu}) &=& \mu^{10}w_{3}^{(4)}[-q,p](i\mu)+20\mu^{9}w_{3}^{(3)}[-q,p](i\mu)+120\mu^{8}w_{3}^{''}[-q,p](i\mu) \\
&&+240\mu^{7}w_{3}^{'}[-q,p](i\mu)+120\mu^{6}w_{3}[-q,p](i\mu). 
\end{eqnarray*}

\begin{proof}

Using lemmas \ref{transformationsvarthetapq} and \ref{transformationsvartheta234} we 
have
\begin{eqnarray*}
w_{1}[p,q](\frac{i}{\mu}) &=&-\frac{i}{2}\vartheta_{3}(\frac{i}{\mu})\vartheta_{4}(\frac{i}{\mu})\frac{\partial_{q}\vartheta[p,q+\frac{1}{2}](\frac{i}{\mu})}{e^{\pi ip}\vartheta[p,q](\frac{i}{\mu})} \\
&=&\mu^{2} (-\frac{1}{2}\vartheta_{3}(i\mu)\vartheta_{2}(i\mu)\frac{\partial_{q}\vartheta[-q+\frac{1}{2},p](i\mu)}{\vartheta[-q,p](i\mu)}) \\
&=&\mu^{2}w_{3}[-q,p](i\mu). 
\end{eqnarray*}
By taking a derivative with respect to $\mu$, it follows from the latter that 
\begin{eqnarray*}
w_{1}^{'}[p,q](\frac{i}{\mu})&=&\frac{d\mu}{d\frac{1}{\mu}}\partial_{\mu}w_{1}[p,q](\frac{i}{\mu})=-\mu^{2}\partial_{\mu}(\mu^{2}w_{3}[-q,p](i\mu)) \\
&=& -\mu^{2}(2\mu w_{3}[-q,p](i\mu)+\mu^{2}w_{3}^{'}[-q,p](i\mu)) \\
&=&-\mu^{4}w_{3}^{'}[-q,p](i\mu)-2\mu^{3}w_{3}[-q,p](i\mu). 
\end{eqnarray*}

\smallskip

One can then continue by taking another derivative to obtain
\begin{eqnarray*}
w_{1}^{''}[p,q](\frac{i}{\mu})&=&\frac{d\mu}{d\frac{1}{\mu}}\partial_{\mu}w_{1}^{'}[p,q](\frac{i}{\mu}) \\
&=&-\mu^{2}\partial_{\mu}(-\mu^{4}w_{3}^{'}[-q,p](i\mu)-2\mu^{3}w_{3}[-q,p](i\mu)) \\
&=&\mu^{6}w_{3}^{''}[-q,p](i\mu)+6\mu^{5}w_{3}^{'}[-q,p](i\mu)+6\mu^{4}w_{3}[-q,p](i\mu). 
\end{eqnarray*}
For the next higher derivatives we find that: 
\begin{eqnarray*}
w_{1}^{(3)}[p,q](\frac{i}{\mu}) &=& \frac{d\mu}{d\frac{1}{\mu}}\partial_{\mu}w_{1}^{''}[p,q](\frac{i}{\mu}) \\
&=&-\mu^{2}\partial_{\mu}(\mu^{6}w_{3}^{''}[-q,p](i\mu)+6\mu^{5}w_{3}^{'}[-q,p](i\mu)+6\mu^{4}w_{3}[-q,p](i\mu))\\
&=&-\mu^{8}w_{3}^{(3)}[-q,p](i\mu)-12\mu^{7}w_{3}^{''}[-q,p](i\mu)-36\mu^{6}w_{3}^{'}[-q,p](i\mu) \\
&&-24\mu^{5}w_{3}[-q,p](i\mu),
\end{eqnarray*}
\begin{eqnarray*}
&& w_{1}^{(4)}[p,q](\frac{i}{\mu}) \\
&& \qquad= -\mu^{2}\partial_{\mu}(-\mu^{8}w_{3}^{(3)}[-q,p](i\mu)-12\mu^{7}w_{3}^{''}[-q,p](i\mu)-36\mu^{6}w_{3}^{'}[-q,p](i\mu) \\
&&\qquad \quad -24\mu^{5}w_{3}[-q,p](i\mu)) \\
&&\qquad =\mu^{10}w_{3}^{(4)}[-q,p](i\mu)+20\mu^{9}w_{3}^{(3)}[-q,p](i\mu)+120\mu^{8}w_{3}^{''}[-q,p](i\mu) \\
&&\qquad \quad +240\mu^{7}w_{3}^{'}[-q,p](i\mu)+120\mu^{6}w_{3}[-q,p](i\mu). 
\end{eqnarray*}

\end{proof}

\end{lemma}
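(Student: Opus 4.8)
The plan is to transport the explicit formulas \eqref{two-parametric} for $w_1[p,q]$ and $w_3[p,q]$ through the modular substitution $S\colon i\mu\mapsto i/\mu$, reducing everything to the transformation laws already established for the basic theta building blocks. The first point to record is that $S$ preserves the right half-plane, since $\Re(1/\mu)=\Re(\mu)/|\mu|^2>0$ whenever $\Re(\mu)>0$; hence Lemmas~\ref{transformationsvarthetapq} and~\ref{transformationsvartheta234} apply at the point $i/\mu$. Substituting $i\mu\mapsto i/\mu$ in
\[
w_1[p,q](i\mu)=-\frac{i}{2}\,\vartheta_3(i\mu)\,\vartheta_4(i\mu)\,\frac{\partial_q\vartheta[p,q+\tfrac{1}{2}](i\mu)}{e^{\pi ip}\,\vartheta[p,q](i\mu)},
\]
I would then invoke, all with $n=0$: Lemma~\ref{transformationsvartheta234} to get $\vartheta_3(i/\mu)=\mu^{1/2}\vartheta_3(i\mu)$ and $\vartheta_4(i/\mu)=\mu^{1/2}\vartheta_2(i\mu)$; and Lemma~\ref{transformationsvarthetapq} to get $\vartheta[p,q](i/\mu)=e^{2\pi ipq}\mu^{1/2}\vartheta[-q,p](i\mu)$ (using $C(0|0)=1$) and $\partial_q\vartheta[p,q+\tfrac{1}{2}](i/\mu)=e^{2\pi ip(q+1/2)}(-i)\mu^{3/2}\,\partial_p\vartheta[-q-\tfrac{1}{2},p](i\mu)$ (using $C(0|1)=-i$, applied with the second characteristic shifted by $\tfrac{1}{2}$). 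Collecting factors, the phases $e^{2\pi ipq}$ and $e^{\pi ip}$ cancel between numerator and denominator, the accumulated power of $\mu$ is $\mu^{1/2+1/2+3/2-1/2}=\mu^2$, and the constants combine to $-\tfrac{i}{2}\cdot(-i)=-\tfrac{1}{2}$. Finally, using the $1$-periodicity in the first characteristic (Lemma~\ref{transformationsvartheta}, together with the corresponding periodicity of $\partial_q\vartheta$) to replace $-q-\tfrac{1}{2}$ by $-q+\tfrac{1}{2}$, one recognizes the surviving expression as $\mu^2$ times the right-hand side of the $w_3$-formula in \eqref{two-parametric} with $(p,q)\mapsto(-q,p)$; that is, $w_1[p,q](i/\mu)=\mu^2\,w_3[-q,p](i\mu)$.

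For the derivative identities I would avoid re-running the theta computation at higher $n$ and instead iterate a chain-rule recursion. Writing $v=1/\mu$, so $d\mu/dv=-\mu^2$, one has
\[
w_1^{(n+1)}[p,q](i/\mu)=-\mu^2\,\partial_\mu\!\bigl(w_1^{(n)}[p,q](i/\mu)\bigr),
\]
where the right-hand side is differentiated as a function of $\mu$ and $\partial_\mu\bigl(w_3^{(k)}[-q,p](i\mu)\bigr)=w_3^{(k+1)}[-q,p](i\mu)$. Feeding in the base identity, an immediate induction gives $w_1^{(n)}[p,q](i/\mu)=(-1)^n\sum_{k=0}^{n}b_{n,k}\,\mu^{\,n+2+k}\,w_3^{(k)}[-q,p](i\mu)$ with positive integers determined by $b_{n,n}=1$ and $b_{n+1,k}=(n+2+k)\,b_{n,k}+b_{n,k-1}$; running this four times from $n=0$ reproduces the five displayed formulas, with integer coefficients $1,2$ at $n=1$, then $1,6,6$ at $n=2$, then $1,12,36,24$ at $n=3$, then $1,20,120,240,120$ at $n=4$ (listed from the top derivative $w_3^{(n)}$ down to $w_3^{(0)}$). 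This part is purely mechanical once the base case is known.

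The main obstacle is the base case, and within it the careful bookkeeping needed to match the transformed theta quotient against the $w_3$-formula: one has to keep track of which characteristic slot each argument occupies after $S$ interchanges them, of the interplay between the shift $q+\tfrac{1}{2}$ already present in $w_1$ and the relabeling $(p,q)\mapsto(-q,p)$ — which reconciles with $w_3$ only after a unit shift of the summation index, i.e. the $1$-periodicity of Lemma~\ref{transformationsvartheta} — and of the precise constants, so that $-\tfrac{i}{2}$, the factor $C(0|1)=-i$, and the accumulated half-integer powers of $\mu$ from the three theta transformations conspire exactly to $-\tfrac{1}{2}\,\mu^2$. I would also note, for completeness, that all the stated identities are identities of meromorphic functions on $\{\Re(\mu)>0\}$, since $\vartheta[p,q](i\mu)$ and $\partial_q\vartheta[p,q](i\mu)$ are not identically zero; this causes no difficulty for the differentiation step.
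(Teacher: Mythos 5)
Your proposal is correct and follows essentially the same route as the paper: the base identity $w_1[p,q](i/\mu)=\mu^2 w_3[-q,p](i\mu)$ is obtained by pushing the $S$-transformation through the theta building blocks via Lemmas \ref{transformationsvarthetapq} and \ref{transformationsvartheta234}, and the derivative identities follow by iterating $\frac{d\mu}{d(1/\mu)}\partial_\mu=-\mu^2\partial_\mu$. Your extra bookkeeping (the constants $C(0|0)$, $C(0|1)$, the phase cancellation, the unit shift in the first characteristic, and the closed recursion $b_{n+1,k}=(n+2+k)b_{n,k}+b_{n,k-1}$ reproducing the coefficients $1,2;\,1,6,6;\,1,12,36,24;\,1,20,120,240,120$) is accurate and only makes explicit what the paper leaves implicit.
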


\smallskip

We record in the next two  lemmas the transformation rules for the functions 
$w_2[p, q]$, $w_3[p, q]$ and their derivatives up to order 4 with respect 
to the modular action $S(i \mu) = \frac{i}{\mu}$. Their proofs are presented 
in  Appendix \ref{transformationsw_j2appendix}, which are similar to 
the proof of Lemma \ref{transformationsw_12}. First, we treat $w_2[p, q]$ and its 
derivatives with respect to $\mu$ noting 
that the action of $S$ on $i \mu$ yields expressions in terms of 
$w_2[-q, p]$ and its derivatives.  

\smallskip

\begin{lemma} 
\label{transformationsw_22}
The function $w_2[p, q]$ and its derivatives up to order 4 with respect to $\mu$, $\Re(\mu) >0$, 
satisfy the following identities: 
\begin{eqnarray*}
w_{2}[p,q](\frac{i}{\mu}) &=&  \mu^{2}w_{2}[-q,p](i\mu), \\
w_{2}^{'}[p,q](\frac{i}{\mu}) &=& -\mu^{4}w_{2}^{'}[-q,p](i\mu)-2\mu^{3}w_{2}[-q,p](i\mu),\\
w_{2}^{''}[p,q](\frac{i}{\mu}) &=& \mu^{6}w_{2}^{''}[-q,p](i\mu)+6\mu^{5}w_{2}^{'}[-q,p](i\mu)+6\mu^{4}w_{2}[-q,p](i\mu), \\
w_{2}^{(3)}[p,q](\frac{i}{\mu}) &=& -\mu^{8}w_{2}^{(3)}[-q,p](i\mu)-12\mu^{7}w_{2}^{''}[-q,p](i\mu)-36\mu^{6}w_{2}^{'}[-q,p](i\mu) \\
&&-24\mu^{5}w_{2}[-q,p](i\mu), \\
w_{2}^{(4)}[p,q](\frac{i}{\mu}) &=&  \mu^{10}w_{2}^{(4)}[-q,p](i\mu)+20\mu^{9}w_{2}^{(3)}[-q,p](i\mu)+120\mu^{8}w_{2}^{''}[-q,p](i\mu) \\
&&+240\mu^{7}w_{2}^{'}[-q,p](i\mu) +120\mu^{6}w_{2}[-q,p](i\mu). 
\end{eqnarray*}
\begin{proof}
It is given in Appendix \ref{transformationsw_j2appendix}.  
\end{proof}

\end{lemma}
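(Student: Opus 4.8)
The plan is to mirror the proof of Lemma~\ref{transformationsw_12} for $w_1$: substitute the explicit theta expression for $w_2[p,q]$ from~\eqref{two-parametric} into $w_2[p,q](i/\mu)$, evaluate each factor by the $S$-transformation rules of Lemmas~\ref{transformationsvarthetapq} and~\ref{transformationsvartheta234}, and then obtain the five derivative identities by repeated differentiation in $\mu$.

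First I would establish the base case. Starting from
\[
w_2[p,q](i\mu)=\frac{i}{2}\,\vartheta_2(i\mu)\,\vartheta_4(i\mu)\,\frac{\partial_q\vartheta[p+\tfrac12,q+\tfrac12](i\mu)}{e^{\pi i p}\,\vartheta[p,q](i\mu)},
\]
I replace $i\mu$ by $i/\mu$ and apply, with $n=0$, the identities $\vartheta_2(i/\mu)=\mu^{1/2}\vartheta_4(i\mu)$ and $\vartheta_4(i/\mu)=\mu^{1/2}\vartheta_2(i\mu)$ from Lemma~\ref{transformationsvartheta234}, together with $\vartheta[p,q](i/\mu)=e^{2\pi i pq}\mu^{1/2}\vartheta[-q,p](i\mu)$ and the transform of the numerator from Lemma~\ref{transformationsvarthetapq} (used with characteristics $(p+\tfrac12,q+\tfrac12)$), which acquires the factor $C(0\,|\,1)\mu^{3/2}=-i\mu^{3/2}$, a phase $e^{2\pi i(p+\frac12)(q+\frac12)}$, and turns into $\partial_q\vartheta[-q-\tfrac12,p+\tfrac12](i\mu)$. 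The half-integer powers combine to $\mu^{1/2}\cdot\mu^{1/2}\cdot\mu^{3/2}\cdot\mu^{-1/2}=\mu^{2}$; the $-i$ turns the prefactor $\tfrac i2$ into $\tfrac12$; and, expanding $(p+\tfrac12)(q+\tfrac12)=pq+\tfrac p2+\tfrac q2+\tfrac14$, the exponentials $e^{2\pi i(p+\frac12)(q+\frac12)}$, $e^{-\pi i p}$, $e^{-2\pi i pq}$ collapse to the residual phase $e^{i\pi/2}e^{\pi i q}=i\,e^{\pi i q}$. Finally, since $\vartheta[p,q]$ — hence $\partial_q\vartheta[p,q]$ — has period $1$ in its first characteristic by Lemma~\ref{transformationsvartheta}, one may replace $\partial_q\vartheta[-q-\tfrac12,p+\tfrac12]$ by $\partial_q\vartheta[-q+\tfrac12,p+\tfrac12]$, which is exactly the numerator in $w_2[-q,p](i\mu)$; the factor $e^{-\pi i q}$ in the denominator of $w_2[-q,p]$ absorbs the leftover phase $e^{\pi i q}$. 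This gives $w_2[p,q](i/\mu)=\mu^{2}w_2[-q,p](i\mu)$.

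The derivative identities up to order $4$ then follow by a computation identical to the one in Lemma~\ref{transformationsw_12}. Using the chain rule in the form $f^{(1)}(i/\mu)=-\mu^{2}\,\tfrac{d}{d\mu}\bigl(f(i/\mu)\bigr)$ and iterating, one differentiates the right-hand side $\mu^{2}w_2[-q,p](i\mu)$ by the Leibniz rule; since this recursion depends only on the exponent $2$ and not on which $w_j$ occurs, it reproduces verbatim the polynomial coefficients $(2,6,6)$, $(12,36,24)$ and $(20,120,240,120)$ of Lemma~\ref{transformationsw_12}.

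The one genuinely delicate point is the phase bookkeeping in the base case: one must verify that the exponentials produced by the $S$-transforms, by the shifted characteristic $q+\tfrac12$, and by the denominator factor $e^{\pi i p}$ cancel exactly against one another and against the $e^{-\pi i q}$ built into $w_2[-q,p]$, leaving no spurious phase. The period-$1$ property in the first characteristic is precisely what lets $-q-\tfrac12$ be replaced by $-q+\tfrac12$ and is what makes the phases close up; everything else is routine, and the details are carried out in full in Appendix~\ref{transformationsw_j2appendix}.
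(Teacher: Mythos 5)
Your proposal is correct and follows essentially the same route as the paper's proof in Appendix \ref{transformationsw_j2appendix}: substitute the theta-function expression for $w_2[p,q]$, apply the $S$-transformation rules of Lemmas \ref{transformationsvarthetapq} and \ref{transformationsvartheta234} with $n=0$ to each factor, and obtain the derivative identities by iterating $f'(i/\mu)=-\mu^2\partial_\mu\bigl(f(i/\mu)\bigr)$. Your phase bookkeeping (the cancellation of $e^{2\pi i(p+\frac12)(q+\frac12)}$ against $e^{\pi ip}e^{2\pi ipq}$ leaving $ie^{\pi iq}$, absorbed by the $e^{-\pi iq}$ in $w_2[-q,p]$, together with period $1$ in the first characteristic) is in fact more explicit than what the appendix records, and checks out.
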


\smallskip

Now we present the transformation laws for $w_{3}[p,q]$ and its 
derivatives under the modular transformation $S(i \mu) = \frac{i}{\mu}$. 
We note that $w_{3}[p,q]$ behaves similarly to $w_1[p, q]$ under the action of $S$. 
In fact, the statement of the following lemma can be obtained from that of 
Lemma \ref{transformationsw_12}  by swapping the indices 1 and 3.   
 
\smallskip 

\begin{lemma}
\label{transformationsw_32} Assuming $\Re(\mu) >0$, 
the function $w_3[p, q]$ and its derivatives up to order 4 with respect to $\mu$
satisfy the following identities: 
\begin{eqnarray*}
w_{3}[p,q](\frac{i}{\mu}) &=& -\mu^{2}w_{1}[-q,p](i\mu), \\
w_{3}^{'}[p,q](\frac{i}{\mu}) &=&  \mu^{4}w_{1}^{'}[-q,p](i\mu)+2\mu^{3}w_{1}[-q,p](i\mu), \\
w_{3}^{''}[p,q](\frac{i}{\mu}) &=& -\mu^{6}w_{1}^{''}[-q,p](i\mu)-6\mu^{5}w_{1}^{'}[-q,p](i\mu)-6\mu^{4}w_{1}[-q,p](i\mu), \\
w_{3}^{(3)}[p,q](\frac{i}{\mu}) &=& \mu^{8}w_{1}^{(3)}[-q,p](i\mu)+12\mu^{7}w_{1}^{''}[-q,p](i\mu)+36\mu^{6}w_{1}^{'}[-q,p](i\mu) \\
&&+24\mu^{5}w_{1}[-q,p](i\mu), \\
w_{3}^{(4)}[p,q](\frac{i}{\mu})&=& -\mu^{10}w_{1}^{(4)}[-q,p](i\mu)-20\mu^{9}w_{1}^{(3)}[-q,p](i\mu)-120\mu^{8}w_{1}^{''}[-q,p](i\mu) \\
&&-240\mu^{7}w_{1}^{'}[-q,p](i\mu)-120\mu^{6}w_{1}[-q,p](i\mu).
\end{eqnarray*}
\begin{proof}
See Appendix \ref{transformationsw_j2appendix}.  
\end{proof}

\end{lemma}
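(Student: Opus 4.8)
The plan is to follow the proof of Lemma \ref{transformationsw_12} verbatim, with the roles of the indices $1$ and $3$ interchanged and an overall sign change. First I would take the formula for $w_3[p,q]$ from \eqref{two-parametric},
\[
w_{3}[p,q](i\mu)=-\tfrac{1}{2}\,\vartheta_{2}(i\mu)\,\vartheta_{3}(i\mu)\,\frac{\partial_{q}\vartheta[p+\tfrac{1}{2},q](i\mu)}{\vartheta[p,q](i\mu)},
\]
and substitute $i\mu\mapsto i/\mu$, i.e.\ apply the modular generator $S$ of \eqref{T_1andSEq}. The factor $\vartheta_{2}\vartheta_{3}$ is transformed by the $n=0$ cases of Lemma \ref{transformationsvartheta234}, giving $\vartheta_{2}(i/\mu)\vartheta_{3}(i/\mu)=\mu^{1/2}\vartheta_{4}(i\mu)\cdot\mu^{1/2}\vartheta_{3}(i\mu)=\mu\,\vartheta_{3}(i\mu)\vartheta_{4}(i\mu)$; the numerator and denominator are transformed by the $n=0$ cases of Lemma \ref{transformationsvarthetapq}, with the constants $C(0|0)=1$ and $C(0|1)=-i$, turning $\vartheta[p,q](i/\mu)$ into $e^{2\pi ipq}\mu^{1/2}\vartheta[-q,p](i\mu)$ and $\partial_{q}\vartheta[p+\tfrac{1}{2},q](i/\mu)$ into $-i\,e^{2\pi i(p+\frac{1}{2})q}\,\mu^{3/2}\,\partial_{p}\vartheta[-q,p+\tfrac{1}{2}](i\mu)$, with $\partial_{p}$ denoting differentiation in the second theta-slot.

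Collecting powers of $\mu$ (one from the $\vartheta_{2}\vartheta_{3}$ factor and $\tfrac{3}{2}-\tfrac{1}{2}=1$ from the ratio, hence $\mu^{2}$) and the exponential factors (the $e^{2\pi ipq}$ cancel, leaving $e^{\pi iq}$), I would obtain
\[
w_{3}[p,q](\tfrac{i}{\mu})=\tfrac{i}{2}\,\mu^{2}\,e^{\pi iq}\,\vartheta_{3}(i\mu)\vartheta_{4}(i\mu)\,\frac{\partial_{p}\vartheta[-q,p+\tfrac{1}{2}](i\mu)}{\vartheta[-q,p](i\mu)}.
\]
Reading off the formula for $w_{1}[p,q]$ from \eqref{two-parametric} evaluated at the swapped pair $(-q,p)$, whose prefactor $e^{-\pi iq}$ absorbs the $e^{\pi iq}$ above, one recognizes the right-hand side as $-\mu^{2}w_{1}[-q,p](i\mu)$, which is the first identity of the lemma. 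The one point that needs care is purely notational bookkeeping: one must verify that $\partial_{q}$ and $\partial_{p}$ both refer to differentiation in the \emph{second} theta-slot, that the half-integer shift transports as $q+\tfrac{1}{2}\mapsto p+\tfrac{1}{2}$, and, crucially, that $\vartheta_{2}\vartheta_{3}$ transforms precisely to $\vartheta_{3}\vartheta_{4}$ (and not to $\vartheta_{2}\vartheta_{4}$ or $\vartheta_{2}\vartheta_{3}$), since this is exactly what forces the answer to be a multiple of $w_{1}$ rather than of $w_{2}$ or $w_{3}$. The overall minus sign in $-\mu^{2}w_{1}[-q,p]$, as opposed to the $+\mu^{2}$ of Lemma \ref{transformationsw_12}, is what then propagates into the sign flips in all the derivative identities.

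For the derivatives up to order $4$ I would not recompute anything, but instead iterate the chain-rule identity used in the proof of Lemma \ref{transformationsw_12}, namely $w_{3}^{(k+1)}[p,q](\tfrac{i}{\mu})=-\mu^{2}\,\partial_{\mu}\!\bigl(w_{3}^{(k)}[p,q](\tfrac{i}{\mu})\bigr)$, starting from $w_{3}[p,q](\tfrac{i}{\mu})=-\mu^{2}w_{1}[-q,p](i\mu)$ and, at each step, differentiating the polynomial-in-$\mu$ coefficients by the Leibniz rule while the successive $\mu$-derivatives of $w_{1}[-q,p](i\mu)$ accumulate. This reproduces the listed coefficients $-2\mu^{3}$, then $6\mu^{4},6\mu^{5}$, then $-24\mu^{5},-36\mu^{6},-12\mu^{7}$, and so on, i.e.\ the entries of Lemma \ref{transformationsw_12} with the indices $1$ and $3$ exchanged and every sign reversed. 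The full computation is the one recorded in Appendix \ref{transformationsw_j2appendix}; there is no genuine obstacle, only the routine but delicate tracking of theta-slot arguments, signs, and the constants $C(j|n)$.
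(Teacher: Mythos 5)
Your proposal is correct and follows essentially the same route as the paper's proof in Appendix \ref{transformationsw_j2appendix}: substitute the $S$-transformation laws of Lemmas \ref{transformationsvarthetapq} and \ref{transformationsvartheta234} into the defining formula for $w_3[p,q]$, recognize the result as $-\mu^2 w_1[-q,p](i\mu)$ (your bookkeeping of the $\mu$-powers, the $e^{\pi i q}$ factor against the $e^{-\pi i q}$ in $w_1[-q,p]$, and the slot convention for $\partial_p$ are all accurate), and then obtain the derivative identities by iterating $\frac{d\mu}{d(1/\mu)}\partial_\mu = -\mu^2\partial_\mu$. The resulting coefficients agree with those of Lemma \ref{transformationsw_12} with indices $1$ and $3$ swapped and all signs reversed, exactly as recorded in the appendix.
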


\smallskip

We also need to know how the function $F[p, q]$ given in \eqref{two-parametric} 
and its derivatives, transform under the modular transformations on $i \mu$ 
in the upper half-plane. First, their properties with respect to the 
action of $T_1$ given by \eqref{T_1andSEq} are presented. 

\smallskip 

\begin{lemma}
\label{transformationsF}

For $\mu$ in the right half-plane $\Re(\mu) >0 $, the function 
$F[p, q]$ and its derivative of an arbitrary order $n \geq 1$ 
satisfy the following properties: 
\begin{eqnarray*}
F[p,q](i\mu+1) &=&  F[p,q+p+\frac{1}{2}](i\mu), \\
F^{(n)}[p,q](i\mu+1) &=&F^{(n)}[p,q+p+\frac{1}{2}](i\mu). 
\end{eqnarray*}

\begin{proof}

Using Lemma \ref{transformationsvarthetapq}, we have 
\begin{eqnarray*}
F[p,q](i\mu+1) &=& \frac{2}{\pi\Lambda}\cdot\left(\frac{\vartheta[p,q](i\mu+1)}{\partial_{q}\vartheta[p,q](i\mu+1)}\right)^{2}
= \frac{2}{\pi\Lambda}\cdot\left(\frac{\vartheta[p,q+p+\frac{1}{2}](i\mu)}{\partial_{q}\vartheta[p,q+p+\frac{1}{2}](i\mu)}\right)^{2} \\
&=& F[p,q+p+\frac{1}{2}](i\mu). 
\end{eqnarray*}
The identity for the derivatives of $F[p, q]$ follows easily from differentiating the latter with respect to $\mu$. 

\end{proof}

\end{lemma}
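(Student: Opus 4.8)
The plan is to reduce the statement to the defining formula for $F[p,q]$ in \eqref{two-parametric}, namely $F[p,q](i\mu) = \frac{2}{\pi\Lambda}\bigl(\vartheta[p,q](i\mu)/\partial_q\vartheta[p,q](i\mu)\bigr)^2$, and then to invoke the $n=0$ case of Lemma \ref{transformationsvarthetapq}. The crucial observation is that under $T_1$ as in \eqref{T_1andSEq} both the numerator and the denominator of this quotient pick up \emph{the same} automorphy factor: by the first two identities of Lemma \ref{transformationsvarthetapq},
\[
\vartheta[p,q](i\mu+1) = e^{-\pi i p(p+1)}\,\vartheta[p,q+p+\tfrac12](i\mu),
\qquad
\partial_q\vartheta[p,q](i\mu+1) = e^{-\pi i p(p+1)}\,\partial_q\vartheta[p,q+p+\tfrac12](i\mu).
\]

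Substituting these two identities into the formula for $F[p,q]$, the common factor $e^{-\pi i p(p+1)}$ cancels in the quotient before the square is taken, leaving $F[p,q](i\mu+1) = \frac{2}{\pi\Lambda}\bigl(\vartheta[p,q+p+\tfrac12](i\mu)/\partial_q\vartheta[p,q+p+\tfrac12](i\mu)\bigr)^2 = F[p,q+p+\tfrac12](i\mu)$, which is the first asserted identity; holomorphy of $F[p,q]$ on $\Re(\mu)>0$, away from the zeros of $\partial_q\vartheta[p,q]$, is inherited from that of the theta functions by Lemma \ref{transformationsvartheta}. For the derivative identities, I would then note that the identity just obtained is an equality of functions of $\mu$ in which the shift $q\mapsto q+p+\tfrac12$ of the second characteristic is independent of $\mu$; equivalently, the modular translation $i\mu\mapsto i\mu+1$ corresponds to $\mu\mapsto\mu-i$, a translation that commutes with $d/d\mu$. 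Differentiating $n$ times in $\mu$ therefore gives $F^{(n)}[p,q](i\mu+1) = F^{(n)}[p,q+p+\tfrac12](i\mu)$ directly.

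The computation carries essentially no difficulty; the only point that genuinely must be verified is that $\vartheta[p,q]$ and its partial derivative $\partial_q\vartheta[p,q]$ acquire \emph{identical} multipliers under $T_1$, so that this multiplier disappears in the ratio $\vartheta/\partial_q\vartheta$ rather than surviving and forcing $F$ to transform with a nonzero weight. This matching is exactly the content of the first two displayed identities in Lemma \ref{transformationsvarthetapq}, and it is transparent at the level of the $q$-series: differentiating in $q$ brings down a term-by-term factor $2\pi i(m+p)$ that rides along unchanged when the exponential is rewritten to absorb $e^{\pi i(m+p)^2}$ into the shift $q\mapsto q+p+\tfrac12$ of the characteristic.
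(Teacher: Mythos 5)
Your proposal is correct and follows essentially the same route as the paper: substitute the two $n=0$ identities of Lemma \ref{transformationsvarthetapq} into the defining formula for $F[p,q]$, observe that the common multiplier $e^{-\pi i p(p+1)}$ cancels in the quotient, and obtain the derivative identities by differentiating the resulting equality in $\mu$. No gaps.
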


\smallskip

The following lemma gives the transformation properties of $F[p, q]$ and its derivatives with 
respect to the action $S$ given by \eqref{T_1andSEq} on the upper half-plane. 

\smallskip

\begin{lemma} \label{transformationsFSlemma}
If $\Re(\mu) > 0$, then 
\begin{eqnarray*}
F[p,q](\frac{i}{\mu}) &=& -\mu^{-2} F[-q,p](i\mu), \\
F^{'}[p,q](\frac{i}{\mu}) &=& F^{'}[-q,p](i\mu)-2\mu^{-1}F[-q,p](i\mu), \\
F^{''}[p,q](\frac{i}{\mu}) &=& -\mu^{2}F^{''}[-q,p](i\mu)+2\mu F^{'}[-q,p](i\mu)-2F[-q,p](i\mu), \\
F^{(3)}[p,q](\frac{i}{\mu}) &=& \mu^{4}F^{(3)}[-q,p](i\mu), \\
F^{(4)}[p,q](\frac{i}{\mu}) &=& -\mu^{6}F^{(4)}[-q,p](i\mu)-4\mu^{5}F^{(3)}[-q,p](i\mu).
\end{eqnarray*}

\begin{proof}
We can use Lemma \ref{transformationsvarthetapq} to write
\begin{eqnarray*}
F[p,q](\frac{i}{\mu}) &=& \frac{2}{\pi\Lambda}\cdot\left(\frac{\vartheta[p,q](\frac{i}{\mu})}{\partial_{q}\vartheta[p,q](\frac{i}{\mu})}\right)^{2}=-\mu^{-2}\frac{2}{\pi\Lambda}\cdot\left(\frac{\vartheta[-q,p](i\mu)}{\partial_{q}\vartheta[-q,p](i\mu)}\right)^{2} \\
&=& -\mu^{-2}\cdot F[-q,p](i\mu). 
\end{eqnarray*}
Taking the derivate of the latter, we have:  
\begin{eqnarray*}
F^{'}[p,q](\frac{i}{\mu}) &=& \frac{d\mu}{d\frac{1}{\mu}}\partial_{\mu}F[p,q](\frac{i}{\mu})=-\mu^{2}\partial_{\mu}(-\mu^{-2}\cdot F[-q,p](i\mu)) \\
&=&-\mu^{2}(-\mu^{-2}\cdot F^{'}[-q,p](i\mu)+2\mu^{-3}\cdot F[-q,p](i\mu)) \\
&=&F^{'}[-q,p](i\mu)-2\mu^{-1}F[-q,p](i\mu). 
\end{eqnarray*}

\smallskip

We then continue by taking another derivative to obtain
\begin{eqnarray*}
F^{''}[p,q](\frac{i}{\mu}) &=& \frac{d\mu}{d\frac{1}{\mu}}\partial_{\mu}F^{'}[p,q](\frac{i}{\mu})\\
&=&-\mu^{2}(F^{''}[-q,p](i\mu)-2\mu^{-1}F^{'}[-q,p](i\mu)+2\mu^{-2}F[-q,p](i\mu))\\
&=&-\mu^{2}F^{''}[-q,p](i\mu)+2\mu F^{'}[-q,p](i\mu)-2F[-q,p](i\mu). 
\end{eqnarray*}
Continuing this process, we find the properties of the higher derivative of $F[p, q]$: 
\begin{eqnarray*}
F^{(3)}[p,q](\frac{i}{\mu}) = \frac{d\mu}{d\frac{1}{\mu}}\partial_{\mu}F^{''}[p,q](\frac{i}{\mu})=\mu^{4}F^{(3)}[-q,p](i\mu),
\end{eqnarray*}
\begin{eqnarray*}
F^{(4)}[p,q](\frac{i}{\mu}) &=& \frac{d\mu}{d\frac{1}{\mu}}\partial_{\mu}F^{(3)}[p,q](\frac{i}{\mu})\\
 &=&-\mu^{6}F^{(4)}[-q,p](i\mu)-4\mu^{5}F^{(3)}[-q,p](i\mu).
\end{eqnarray*}

\end{proof}

\end{lemma}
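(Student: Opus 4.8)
The plan is to reduce the whole lemma to the $n=0$ cases of the $S$-transformation formulas already established in Lemma~\ref{transformationsvarthetapq}. First I would recall from \eqref{two-parametric} that
\[
F[p,q](i\mu)=\frac{2}{\pi\Lambda}\left(\frac{\vartheta[p,q](i\mu)}{\partial_{q}\vartheta[p,q](i\mu)}\right)^{2},
\]
so the behaviour of $F$ under $S$ is controlled entirely by that of the ratio $\vartheta[p,q]/\partial_q\vartheta[p,q]$. Setting $n=0$ in the fifth and sixth identities of Lemma~\ref{transformationsvarthetapq}, and using $C(0\,|\,0)=1$ and $C(0\,|\,1)=-i$, gives $\vartheta[p,q](i/\mu)=e^{2\pi ipq}\mu^{1/2}\vartheta[-q,p](i\mu)$ and $\partial_q\vartheta[p,q](i/\mu)=-i\,e^{2\pi ipq}\mu^{3/2}\partial_p\vartheta[-q,p](i\mu)$. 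Dividing, the common phase $e^{2\pi ipq}$ cancels and the powers of $\mu$ combine to $(-i\mu)^{-1}$; squaring produces the factor $(-i\mu)^{-2}=-\mu^{-2}$, and the surviving square is exactly $\tfrac{\pi\Lambda}{2}\,F[-q,p](i\mu)$ once one observes that the derivative in the definition of $F[-q,p]$ acts on its second characteristic, which here is $p$, matching the $\partial_p$ that appears. This yields the base identity $F[p,q](i/\mu)=-\mu^{-2}F[-q,p](i\mu)$.

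For the derivatives I would argue exactly as in the proof of Lemma~\ref{transformationsw_12}: writing $F^{(k)}[p,q](i/\mu)$ for the $k$-th $\mu$-derivative of $F[p,q]$ evaluated at $i/\mu$, the chain rule together with $d\mu/d(1/\mu)=-\mu^{2}$ gives the recursion
\[
F^{(k+1)}[p,q]\Big(\frac{i}{\mu}\Big)=-\mu^{2}\,\partial_{\mu}\!\left(F^{(k)}[p,q]\Big(\frac{i}{\mu}\Big)\right).
\]
Applying this operator four times to the base identity, with each step a single Leibniz expansion of a product of a power of $\mu$ with some $F^{(j)}[-q,p](i\mu)$, produces the stated formulas for $F'$, $F''$, $F^{(3)}$ and $F^{(4)}$; the mild collapse at the third step, where the $F'$ and $F''$ contributions cancel and only $\mu^{4}F^{(3)}[-q,p](i\mu)$ survives, comes out automatically from this bookkeeping.

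I do not expect a genuine obstacle here. The only thing that needs care is keeping the two conventions straight --- \emph{evaluate-then-differentiate} on the left-hand sides versus \emph{differentiate-then-evaluate} inside the parentheses on the right --- and remembering that under $S$ the two characteristics are interchanged (with a sign), so that a $\partial_q$ on the $(p,q)$ side corresponds to a $\partial_p$ on the $(-q,p)$ side. Once the $n=0$ reduction is in place, everything else is a routine induction on the order of the derivative.
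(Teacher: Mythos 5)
Your proposal is correct and follows essentially the same route as the paper: establish the $n=0$ identity $F[p,q](i/\mu)=-\mu^{-2}F[-q,p](i\mu)$ from the $S$-transformation formulas of Lemma \ref{transformationsvarthetapq}, then apply the chain-rule operator $-\mu^{2}\partial_{\mu}$ four times. You merely make explicit (via $C(0|0)=1$, $C(0|1)=-i$, and the $\partial_q\leftrightarrow\partial_p$ bookkeeping) the base-case computation that the paper compresses into a single line.
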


\smallskip

\subsection{The case of the one-parametric family of metrics}

We now turn  to the one-parameteric case \eqref{one-parametric},
for which, similarly to the case of the two-parametric case \eqref{two-parametric} 
analysed in Subsection \ref{ModularPropsTwoParametricSubSec}, the necessary modular 
properties can be investigated by using the lemmas proved in the beginning of this section. 
Since the proofs are based on direct calculations combined with making use of the lemmas 
\ref{transformationsvarthetapq} and \ref{transformationsvartheta234}, 
we just present the statements.

\smallskip

First, let us deal with the functions $w_j[q_0]$ given in \eqref{one-parametric}. With respect to the 
modular transformation $T_1$ given by \eqref{T_1andSEq} acting 
on $i \mu$ in the upper half-plane, these functions and their derivatives 
satisfy the following properties. 

\smallskip

\begin{lemma}\label{transfsw_jT_2}
The functions $w_j[q_0]$ and their derivative of  an arbitrary order $n \geq 0$ with respect to $\mu$ 
satisfy these properties when $\Re(\mu)>0$: 
\begin{eqnarray*}
w_{1}^{(n)}[q_0](i\mu+1)&=&w_{1}^{(n)}[q_0-i](i\mu),  \\
w_{2}^{(n)}[q_0](i\mu+1)&=&w_{3}^{(n)}[q_0-i](i\mu),  \\
w_{3}^{(n)}[q_0](i\mu+1)&=&w_{2}^{(n)}[q_0-i](i\mu).
\end{eqnarray*}

\end{lemma}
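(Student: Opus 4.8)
The strategy is to reduce everything to the already-established transformation behaviour of the single building block $\vartheta_j(i\mu)$ under $T_1$, recorded in Lemma \ref{transformationsvartheta234}. First I would recall that the functions in \eqref{one-parametric} are built from $\frac{1}{\mu+q_0}$ and logarithmic derivatives $2\frac{d}{d\mu}\log\vartheta_j(i\mu)$, so it suffices to track each of these two pieces separately under the substitution $\mu\mapsto\mu - i$ (which is exactly the effect of $T_1$ on the argument, since $T_1(i\mu)=i\mu+1=i(\mu-i)$). For the rational piece, the computation is immediate: $\frac{1}{(\mu-i)+q_0}=\frac{1}{\mu+(q_0-i)}$, which is precisely the rational piece of $w_j[q_0-i](i\mu)$; note this term is the \emph{same} for all three indices $j$, so it does not participate in the index permutation.

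Next I would handle the logarithmic-derivative piece. From Lemma \ref{transformationsvartheta234} with $n=0$ and $n=1$ one reads off that under $i\mu\mapsto i\mu+1$ the function $\vartheta_2$ is multiplied by the constant $e^{\pi i/4}$, while $\vartheta_3$ and $\vartheta_4$ are interchanged. Since $\frac{d}{d\mu}\log$ annihilates multiplicative constants and commutes with the change of variable, we get
\[
2\frac{d}{d\mu}\log\vartheta_2(i\mu+1)=2\frac{d}{d\mu}\log\vartheta_2(i\mu),\qquad
2\frac{d}{d\mu}\log\vartheta_3(i\mu+1)=2\frac{d}{d\mu}\log\vartheta_4(i\mu),
\]
and the analogous identity with $3$ and $4$ swapped. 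Adding back the (index-independent) rational piece and using $q_0\mapsto q_0-i$ gives the three stated identities for $n=0$: the $\vartheta_2$-slot is fixed, hence $w_1[q_0](i\mu+1)=w_1[q_0-i](i\mu)$, while the interchange of $\vartheta_3,\vartheta_4$ forces $w_2\leftrightarrow w_3$. Here one must check that $q_0-i$ still lies in the domain where the formulas \eqref{one-parametric} make sense, i.e. that $\Re(\mu+q_0-i)=\Re(\mu)+\Re(q_0)>0$ is preserved — which it is, since shifting $q_0$ by a purely imaginary amount does not change its real part, and $q_0\in\mathbb{R}$ in the original setup can be relaxed to the half-plane statement under consideration.

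Finally, the case of general order $n\geq 1$ follows by applying $\partial_\mu^n$ to the three identities just obtained, using that $\partial_\mu^n$ commutes with the $\mu$-translation $\mu\mapsto\mu-i$ (the Jacobian is $1$, so unlike the $S$-transformation lemmas there are no extra powers of $\mu$ to chase). I expect the only genuine subtlety — and hence the main thing to state carefully rather than the main \emph{obstacle}, since nothing here is deep — to be the bookkeeping that the constant $e^{\pi i/4}$ picked up by $\vartheta_2$ is harmless: it is crucial that $w_j$ depends on $\vartheta_2$ only through $\frac{d}{d\mu}\log\vartheta_2$ (equivalently $\vartheta_2'/\vartheta_2$), so any overall constant multiplier cancels before and after differentiation. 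Once that observation is in place, the proof is a short direct calculation entirely parallel to (and simpler than) that of Lemma \ref{transformationsw_12}, which is why only the statement is recorded in the text.
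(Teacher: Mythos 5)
Your proof is correct and follows exactly the route the paper intends: the paper omits the proof of this lemma, remarking only that it is a direct calculation from Lemma \ref{transformationsvartheta234}, and your decomposition into the rational piece $\frac{1}{\mu+q_0}$ and the logarithmic derivatives $2\frac{d}{d\mu}\log\vartheta_j$ (with the constant $e^{\pi i/4}$ cancelling in $\vartheta_2'/\vartheta_2$ and the swap $\vartheta_3\leftrightarrow\vartheta_4$ forcing $w_2\leftrightarrow w_3$) is precisely that calculation. The observation that the translation $\mu\mapsto\mu-i$ has unit Jacobian, so the derivative identities follow with no extra factors, is also the right way to dispose of general $n$.
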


\smallskip

The next three lemma presents the transformation properties of the functions $w_j[q_0]$ 
and their derivatives up to order 4 with 
respect to the modular action of $S$ given by \eqref{T_1andSEq} on $i \mu$. 

\smallskip

\begin{lemma}
\label{w_1q_0SLem}
The function $w_1[q_0]$ and its derivatives up to order 4 with respect to $\mu$ satisfy 
the following identities provided that $\Re(\mu) >0$: 
\begin{eqnarray*}
w_{1}[q_{0}](\frac{i}{\mu}) &=& -\mu^{2}w_{3}[\frac{1}{q_{0}}](i\mu), \\
w_{1}^{'}[q_{0}](\frac{i}{\mu}) &=& \mu^{4}w_{3}^{'}[\frac{1}{q_{0}}](i\mu)+2\mu^{3}w_{3}[\frac{1}{q_{0}}](i\mu), \\
w_{1}^{''}[q_{0}](\frac{i}{\mu}) &=&
-\mu^{6}w_{3}^{''}[\frac{1}{q_{0}}](i\mu)-6\mu^{5}w_{3}^{'}[\frac{1}{q_{0}}](i\mu)-6\mu^{4}w_{3}[\frac{1}{q_{0}}](i\mu), \\
w_{1}^{(3)}[q_{0}](\frac{i}{\mu})
&=&\mu^{8}w_{3}^{(3)}[\frac{1}{q_{0}}](i\mu)+12\mu^{7}w_{3}^{''}[\frac{1}{q_{0}}](i\mu)+36\mu^{6}w_{3}^{'}[\frac{1}{q_{0}}](i\mu) \\
&&  +24\mu^{5}w_{3}[\frac{1}{q_{0}}](i\mu), \\
w_{1}^{(4)}[q_{0}](\frac{i}{\mu})&=&-\mu^{10}w_{3}^{(4)}[\frac{1}{q_{0}}](i\mu)-20\mu^{9}w_{3}^{(3)}[\frac{1}{q_{0}}](i\mu)-120\mu^{8}w_{3}^{''}[\frac{1}{q_{0}}](i\mu) \\
&&-240\mu^{7}w_{3}^{'}[\frac{1}{q_{0}}](i\mu)-120\mu^{6}w_{3}[\frac{1}{q_{0}}](i\mu). 
\end{eqnarray*}

\end{lemma}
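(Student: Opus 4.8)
The plan is to establish the identities in the order they are listed: first the case $n=0$ by a direct computation using the $S$-transformation of the theta constants, and then the four higher-order identities by iterating a single differential operator, in exact parallel with the proof of Lemma~\ref{transformationsw_12}.

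For the base case I would begin from the defining formula for $w_1[q_0]$ in \eqref{one-parametric}, evaluated at the argument $i/\mu$, which (after the substitution $\mu \mapsto 1/\mu$) reads
\[
w_1[q_0](i/\mu) = \frac{1}{1/\mu + q_0} + 2\left.\frac{d}{d\nu}\log\vartheta_2(i\nu)\right|_{\nu = 1/\mu}.
\]
The essential input is Lemma~\ref{transformationsvartheta234} with $n=0$, namely $\vartheta_2(i/\mu) = \mu^{1/2}\vartheta_4(i\mu)$ (here $C(0|0) = 1$), hence $\log\vartheta_2(i/\mu) = \tfrac12\log\mu + \log\vartheta_4(i\mu)$. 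Differentiating this relation in $\mu$ and using $\frac{d}{d\nu}\big|_{\nu=1/\mu} = -\mu^2\frac{d}{d\mu}$ gives
\[
2\left.\frac{d}{d\nu}\log\vartheta_2(i\nu)\right|_{\nu=1/\mu} = -\mu - 2\mu^2\frac{d}{d\mu}\log\vartheta_4(i\mu).
\]
Combining this with the elementary identity $\frac{1}{1/\mu+q_0} - \mu = \frac{-q_0\mu^2}{1+q_0\mu} = -\mu^2\,\frac{1}{\mu+1/q_0}$ yields $w_1[q_0](i/\mu) = -\mu^2\big(\frac{1}{\mu+1/q_0} + 2\frac{d}{d\mu}\log\vartheta_4(i\mu)\big) = -\mu^2 w_3[1/q_0](i\mu)$, which is the first identity. (One tacitly assumes $q_0 \neq 0$, so that $1/q_0$ is defined; the swap $\vartheta_2 \leftrightarrow \vartheta_4$ under $S$ is exactly what forces $w_3$, rather than $w_1$ or $w_2$, on the right-hand side.)

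For the remaining four identities I would regard $w_1[q_0](i/\mu) = -\mu^2 w_3[1/q_0](i\mu)$ as an identity between functions of $\mu$ and use that passing from $w_1^{(k)}[q_0](i/\mu)$ to $w_1^{(k+1)}[q_0](i/\mu)$ amounts to applying the operator $\tfrac{d\mu}{d(1/\mu)}\,\partial_\mu = -\mu^2\partial_\mu$. Each successive relation then follows by one application of $-\mu^2\partial_\mu$ and an expansion by the Leibniz rule; the integer coefficients $2$; $6,6$; $12,36,24$; $20,120,240,120$ arise precisely as in Lemma~\ref{transformationsw_12}, and since nothing new occurs beyond order $4$ I would simply carry this out explicitly through $w_1^{(4)}$.

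The step I expect to require the most care is the base case, and specifically the bookkeeping of the modular anomaly: the factor $\mu^{1/2}$ in the $S$-transformation of $\vartheta_2$ contributes the extra term $-\mu$, which must be absorbed into the rational term $\frac{1}{1/\mu+q_0}$ to reproduce the rational part $-\mu^2/(\mu+1/q_0)$ of $-\mu^2 w_3[1/q_0](i\mu)$. This interplay between the rational term $1/(\mu+q_0)$ of the one-parametric family and the weight-$\tfrac12$ anomaly of the theta constants is what also accounts for the overall minus sign and for the inversion $q_0 \mapsto 1/q_0$ of the parameter; the rest is routine.
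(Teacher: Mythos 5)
Your proof is correct, and it is exactly the argument the paper intends: the paper omits the proofs of the one-parametric lemmas, stating only that they follow by direct calculation from Lemma \ref{transformationsvartheta234} (giving $\vartheta_2(i/\mu)=\mu^{1/2}\vartheta_4(i\mu)$, which produces the swap to $w_3$ and, via the $-\mu$ anomaly absorbed into the rational term, the sign and the inversion $q_0\mapsto 1/q_0$) together with repeated application of $-\mu^2\partial_\mu$ as in Lemma \ref{transformationsw_12}. Your base-case bookkeeping and the iterated coefficients $2$; $6,6$; $12,36,24$; $20,120,240,120$ all check out.
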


\smallskip

While in the latter lemma for $w_1[q_0]$, the expressions on the right hand sides of the identities 
involve $w_3[\frac{1}{q_0}]$, in the following lemma we observe that for $w_2[q_0]$ similar 
properties hold, however, the index 2 does not change in the sense that the properties are expressed 
in terms of $w_2[\frac{1}{q_0}]$. 

\smallskip

\begin{lemma} \label{w_2q_0SLem}
When $\Re(\mu) >0$, the function $w_2[q_0]$ and its derivatives up to order 4 with respect to $\mu$ satisfy 
the following properties:
\begin{eqnarray*}
w_{2}[q_{0}](\frac{i}{\mu})&=&-\mu^{2}w_{2}[\frac{1}{q_{0}}](i\mu),\\
w_{2}^{'}[q_{0}](\frac{i}{\mu}) &=&
\mu^{4}w_{2}^{'}[\frac{1}{q_{0}}](i\mu)+2\mu^{3}w_{2}[\frac{1}{q_{0}}](i\mu),\\
w_{2}^{''}[q_{0}](\frac{i}{\mu})&=&
-\mu^{6}w_{2}^{''}[\frac{1}{q_{0}}](i\mu)-6\mu^{5}w_{2}^{'}[\frac{1}{q_{0}}](i\mu)-6\mu^{4}w_{2}[\frac{1}{q_{0}}](i\mu),\\
w_{2}^{(3)}[q_{0}](\frac{i}{\mu})&=&
\mu^{8}w_{2}^{(3)}[\frac{1}{q_{0}}](i\mu)+12\mu^{7}w_{2}^{''}[\frac{1}{q_{0}}](i\mu)+36\mu^{6}w_{2}^{'}[\frac{1}{q_{0}}](i\mu)\\
&&+24\mu^{5}w_{2}[\frac{1}{q_{0}}](i\mu),\\
w_{2}^{(4)}[q_{0}](\frac{i}{\mu})&=&
-\mu^{10}w_{2}^{(4)}[\frac{1}{q_{0}}](i\mu)-20\mu^{9}w_{2}^{(3)}[\frac{1}{q_{0}}](i\mu)-120\mu^{8}w_{2}^{''}[\frac{1}{q_{0}}](i\mu) \\
&&-240\mu^{7}w_{2}^{'}[\frac{1}{q_{0}}](i\mu)-120\mu^{6}w_{2}[\frac{1}{q_{0}}](i\mu). 
\end{eqnarray*}

\end{lemma}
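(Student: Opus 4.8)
The plan is to follow the same route as the proof of Lemma~\ref{transformationsw_12}, working directly from the explicit formula $w_2[q_0](i\mu) = (\mu+q_0)^{-1} + 2\,\partial_\mu \log\vartheta_3(i\mu)$ of \eqref{one-parametric}. The first step is to establish the base identity
\[
w_2[q_0]\Bigl(\frac{i}{\mu}\Bigr) = -\mu^2 \, w_2\Bigl[\frac{1}{q_0}\Bigr](i\mu);
\]
the four identities for the derivatives then follow from it by repeatedly applying $\tfrac{d\mu}{d(1/\mu)}\,\partial_\mu = -\mu^2\partial_\mu$, exactly as the higher-order formulas are deduced from the leading one in Lemma~\ref{transformationsw_12}.

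To obtain the base identity I would substitute $\mu \mapsto 1/\mu$ in the defining formula and treat the two summands separately. For the logarithmic-derivative term, Lemma~\ref{transformationsvartheta234} with $n=0$ gives $\vartheta_3(i/\mu) = \mu^{1/2}\vartheta_3(i\mu)$ (since $C(0|0)=1$), and with $n=1$ it gives $\partial_\mu\vartheta_3(i/\mu) = C(0|2)\,\mu^{5/2}\,\partial_\mu\vartheta_3(i\mu) + C(1|2)\,\mu^{3/2}\,\vartheta_3(i\mu)$ with $C(0|2)=-1$ and $C(1|2)=-\tfrac12$ from the definition of $C(j|n)$. Dividing, the logarithmic derivative picks up the overall factor $-\mu^2$ together with a $\mu$-linear remainder,
\[
2\,\frac{\partial_\mu\vartheta_3(i/\mu)}{\vartheta_3(i/\mu)} = -2\mu^2\,\frac{\partial_\mu\vartheta_3(i\mu)}{\vartheta_3(i\mu)} - \mu .
\]
For the rational term the substitution produces $\mu/(1+q_0\mu)$, and the elementary identity $\mu/(1+q_0\mu) - \mu = -\mu^2/(\mu+q_0^{-1})$ shows that once the extra $-\mu$ above is absorbed, the sum of the two summands equals exactly $-\mu^2\bigl[(\mu+q_0^{-1})^{-1} + 2\,\partial_\mu\log\vartheta_3(i\mu)\bigr] = -\mu^2 w_2[1/q_0](i\mu)$. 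Here the modular substitution on $i\mu$ forces the reciprocal substitution $q_0 \mapsto 1/q_0$ on the parameter, in contrast with the integer shift $q_0 \mapsto q_0 - i$ that appears under $T_1$ in Lemma~\ref{transfsw_jT_2}.

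Given the base identity, the derivative formulas are mechanical and of exactly the shape already carried out in Lemma~\ref{transformationsw_12}: from $w_2'[q_0](i/\mu) = -\mu^2\,\partial_\mu\bigl(-\mu^2 w_2[1/q_0](i\mu)\bigr)$ one reads off $w_2'[q_0](i/\mu) = \mu^4 w_2'[1/q_0](i\mu) + 2\mu^3 w_2[1/q_0](i\mu)$, and iterating the step $-\mu^2\partial_\mu$ three more times gives the identities for $w_2''$, $w_2^{(3)}$ and $w_2^{(4)}$ with the polynomial-in-$\mu$ coefficients displayed in the statement. The companion Lemma~\ref{w_1q_0SLem} is proved the same way, the only difference being that under $S$ the roles of $\vartheta_2$ and $\vartheta_4$ are interchanged, so that $w_1[q_0]$ transforms into $w_3[1/q_0]$ (with the accompanying sign) rather than into $w_2[1/q_0]$.

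The step I expect to be the main obstacle is purely bookkeeping rather than conceptual: one must carry the half-integer powers of $\mu$ and the constants $C(j|2n)$ from Lemma~\ref{transformationsvartheta234} without error and reorganize the rational term $(\mu+q_0)^{-1}$ correctly under both the inversion $\mu\mapsto 1/\mu$ and the reciprocal $q_0\mapsto 1/q_0$, so that the $\mu$-linear remainder coming from the theta part cancels precisely against the remainder coming from the rational part. Once the base identity has been verified, the four derivative identities are routine.
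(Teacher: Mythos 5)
Your proposal is correct and follows exactly the route the paper intends for this lemma (the paper omits the proof, describing it only as ``direct calculations combined with making use of the lemmas \ref{transformationsvarthetapq} and \ref{transformationsvartheta234},'' followed by the same iterated application of $-\mu^2\partial_\mu$ used in Lemma \ref{transformationsw_12}). Your key cancellation checks out: with $C(0|2)=-1$ and $C(1|2)=-\tfrac12$ the logarithmic-derivative term produces the remainder $-\mu$, which combines with $\mu/(1+q_0\mu)$ to give $-\mu^2/(\mu+q_0^{-1})$, yielding the base identity $w_2[q_0](i/\mu)=-\mu^2 w_2[1/q_0](i\mu)$, after which the four derivative formulas follow mechanically.
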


\smallskip

The function $w_3[q_0]$ and its derivatives behave similarly to the case of $w_1[q_0]$: 
the statement given in the following lemma can be obtained from swapping the indices 1 and 3 
in Lemma \ref{w_1q_0SLem}. 

\smallskip

\begin{lemma} \label{w_3q_0SLem}
The function $w_3[q_0]$ and its derivatives up to order 4 with respect to $\mu$ satisfy 
the following identities provided that $\Re(\mu) >0$:
\begin{eqnarray*}
w_{3}[q_{0}](\frac{i}{\mu})&=&-\mu^{2}w_{1}[\frac{1}{q_{0}}](i\mu), \\
w_{3}^{'}[q_{0}](\frac{i}{\mu})&=&
\mu^{4}w_{1}^{'}[\frac{1}{q_{0}}](i\mu)+2\mu^{3}w_{1}[\frac{1}{q_{0}}](i\mu), \\
w_{3}^{''}[q_{0}](\frac{i}{\mu})&=&
-\mu^{6}w_{1}^{''}[\frac{1}{q_{0}}](i\mu)-6\mu^{5}w_{1}^{'}[\frac{1}{q_{0}}](i\mu)-6\mu^{4}w_{1}[\frac{1}{q_{0}}](i\mu), \\
w_{3}^{(3)}[q_{0}](\frac{i}{\mu})&=&
\mu^{8}w_{1}^{(3)}[\frac{1}{q_{0}}](i\mu)+12\mu^{7}w_{1}^{''}[\frac{1}{q_{0}}](i\mu)+36\mu^{6}w_{1}^{'}[\frac{1}{q_{0}}](i\mu) \\
&&+24\mu^{5}w_{1}[\frac{1}{q_{0}}](i\mu), \\
w_{3}^{(4)}[q_{0}](\frac{i}{\mu})&=&
-\mu^{10}w_{1}^{(4)}[\frac{1}{q_{0}}](i\mu)-20\mu^{9}w_{1}^{(3)}[\frac{1}{q_{0}}](i\mu)-120\mu^{8}w_{1}^{''}[\frac{1}{q_{0}}](i\mu) \\
&&-240\mu^{7}w_{1}^{'}[\frac{1}{q_{0}}](i\mu)-120\mu^{6}w_{1}[\frac{1}{q_{0}}](i\mu). 
\end{eqnarray*}

\end{lemma}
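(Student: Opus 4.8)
The plan is to establish the base identity $w_{3}[q_{0}](\tfrac{i}{\mu}) = -\mu^{2}\, w_{1}[\tfrac{1}{q_{0}}](i\mu)$ (the $n=0$ line) and then to obtain the four displayed derivative formulas from it by repeated differentiation, exactly as in the proof of Lemma \ref{transformationsw_12}. The conceptual reason behind the index swap $3\leftrightarrow 1$ and the parameter inversion $q_{0}\mapsto\tfrac{1}{q_{0}}$ is that, under the substitution $S(i\mu)=\tfrac{i}{\mu}$, Lemma \ref{transformationsvartheta234} interchanges the theta constants $\vartheta_{2}$ and $\vartheta_{4}$ while fixing $\vartheta_{3}$; since in \eqref{one-parametric} the function $w_{3}[q_{0}]$ is built from $\vartheta_{4}$ whereas $w_{1}[q_{0}]$ is built from $\vartheta_{2}$, applying $S$ to $w_{3}$ must produce $w_{1}$, and conversely.

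For the base case I would start from $w_{3}[q_{0}](i\mu) = (\mu+q_{0})^{-1} + 2\,\tfrac{d}{d\mu}\log\vartheta_{4}(i\mu)$ and pass to the argument $\tfrac{i}{\mu}$. The rational term becomes $\bigl(\tfrac{1}{\mu}+q_{0}\bigr)^{-1} = \tfrac{\mu}{1+q_{0}\mu}$. For the logarithmic-derivative term, Lemma \ref{transformationsvartheta234} gives $\vartheta_{4}(\tfrac{i}{\mu}) = \mu^{1/2}\vartheta_{2}(i\mu)$ (the $n=0$ case) and, using $C(0|2)=-1$ and $C(1|2)=-\tfrac{1}{2}$ in the $n=1$ case, $\partial_{\mu}\vartheta_{4}(\tfrac{i}{\mu}) = -\mu^{5/2}\partial_{\mu}\vartheta_{2}(i\mu) - \tfrac{1}{2}\mu^{3/2}\vartheta_{2}(i\mu)$; hence the transformed logarithmic derivative equals $-2\mu^{2}\,\tfrac{\partial_{\mu}\vartheta_{2}(i\mu)}{\vartheta_{2}(i\mu)} - \mu$. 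Adding the two contributions and simplifying $\tfrac{\mu}{1+q_{0}\mu}-\mu = -\tfrac{q_{0}\mu^{2}}{1+q_{0}\mu} = -\mu^{2}\cdot\tfrac{1}{\mu+1/q_{0}}$ yields $w_{3}[q_{0}](\tfrac{i}{\mu}) = -\mu^{2}\bigl(\tfrac{1}{\mu+1/q_{0}} + 2\,\tfrac{\partial_{\mu}\vartheta_{2}(i\mu)}{\vartheta_{2}(i\mu)}\bigr) = -\mu^{2}\,w_{1}[\tfrac{1}{q_{0}}](i\mu)$.

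The derivative formulas then follow mechanically by applying $\tfrac{d}{d\mu}$ repeatedly, using $w_{3}^{(k)}[q_{0}](\tfrac{i}{\mu}) = -\mu^{2}\,\tfrac{d}{d\mu}\bigl(w_{3}^{(k-1)}[q_{0}](\tfrac{i}{\mu})\bigr)$ together with the product rule, precisely as in the proof of Lemma \ref{transformationsw_12}. Because the base identity carries the prefactor $-\mu^{2}$, the integer coefficients that appear are exactly $1,2$, then $1,6,6$, then $1,12,36,24$, then $1,20,120,240,120$, with alternating overall signs and with $w_{3}$ replaced by $w_{1}$ and $q_{0}$ by $\tfrac{1}{q_{0}}$, all inherited from the base case. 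Equivalently, one may simply interchange the indices $1$ and $3$ throughout the proof of Lemma \ref{w_1q_0SLem}, since that argument uses only the symmetry $\vartheta_{2}\leftrightarrow\vartheta_{4}$ under $S$.

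The only step that is not automatic --- and hence the main point to get right --- is the cancellation $\tfrac{\mu}{1+q_{0}\mu}-\mu = -\mu^{2}/(\mu+1/q_{0})$ in the base case, between the additive term $-\mu$ produced by the $\mu^{1/2}$ automorphy factor of $\vartheta_{4}$ under $S$ and the rational pole term of \eqref{one-parametric}; it is precisely this identity that forces the parameter transformation $q_{0}\mapsto\tfrac{1}{q_{0}}$. Everything downstream of it is routine bookkeeping of the kind already carried out repeatedly in this section.
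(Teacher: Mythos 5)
Your proposal is correct and follows essentially the same route the paper indicates (the paper omits the proof, stating only that it is a direct calculation via Lemmas \ref{transformationsvarthetapq} and \ref{transformationsvartheta234}, obtained from Lemma \ref{w_1q_0SLem} by swapping the indices $1$ and $3$): you apply the $S$-transformation rules for $\vartheta_2,\vartheta_4$ with the correct constants $C(0|2)=-1$, $C(1|2)=-\tfrac12$, carry out the cancellation $\tfrac{\mu}{1+q_0\mu}-\mu=-\mu^2/(\mu+1/q_0)$ that produces the parameter inversion $q_0\mapsto 1/q_0$, and then generate the derivative identities by the chain rule $w^{(k)}(\tfrac{i}{\mu})=-\mu^2\partial_\mu w^{(k-1)}(\tfrac{i}{\mu})$, exactly as in the proofs of Lemmas \ref{transformationsw_12}--\ref{transformationsw_32}. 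All the displayed coefficients check out.
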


\smallskip

Finally we present the modular transformation properties of the function $F[q_0]$ given in 
\eqref{one-parametric} and its derivatives. 

\smallskip

\begin{lemma}
\label{transformationsFlemma}
 Provided that the complex number $\mu$ belongs to the 
right half-plane $\Re(\mu) > 0$, the function $F[q_0]$ and its derivatives 
with respect to $\mu$ satisfy the following identities: 
\begin{align*}
&F[q_{0}](i\mu+1)= F[q_{0}-i](i\mu), &
 F^{'}[q_{0}](i\mu+1)= F^{'}[q_{0}-i](i\mu), \\
& F^{''}[q_{0}](i\mu+1)=F^{''}[q_{0}-i](i\mu), &
F^{(n)}[q_{0}](i\mu)=0, \qquad n \geq 3, \\
& F[q_{0}](\frac{i}{\mu})=q_{0}^{2}\mu^{-2}F[\frac{1}{q_{0}}](i\mu), &
F^{'}[q_{0}](\frac{i}{\mu}) =q_{0}\mu^{-1}F^{'}[\frac{1}{q_{0}}](i\mu).
\end{align*}

\end{lemma}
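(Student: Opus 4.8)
The plan is to use the fact that, for the one-parametric family, the conformal factor is given by the elementary closed form $F[q_{0}](i\mu) = C(\mu+q_{0})^{2}$ of \eqref{one-parametric}; unlike the $w_{j}[q_{0}]$, it involves no theta functions, so every identity in the statement reduces to a substitution followed by an application of the chain rule, carried out in the same bookkeeping convention already used in Lemma \ref{transformationsFSlemma}.

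First I would dispose of the transformation $T_{1}$ of \eqref{T_1andSEq}. Writing the point $i\mu+1\in\mathbb{H}$ as $i(\mu-i)$ and using that $\mu\mapsto C(\mu+q_{0})^{2}$ is entire, one gets
\[
F[q_{0}](i\mu+1) = C\big((\mu-i)+q_{0}\big)^{2} = C\big(\mu+(q_{0}-i)\big)^{2} = F[q_{0}-i](i\mu),
\]
the right-hand side being interpreted through the same formula with the (now complex) parameter $q_{0}-i$, and with $\Re(\mu-i)=\Re(\mu)>0$, so that we remain in the right half-plane. Since the substitution $\mu\mapsto\mu-i$ has trivial Jacobian, $\partial_{\mu}$ commutes with it; differentiating the displayed identity $n$ times therefore gives $F^{(n)}[q_{0}](i\mu+1)=F^{(n)}[q_{0}-i](i\mu)$, whose cases $n=1,2$ are the second and third asserted identities. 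Finally, since $F[q_{0}](i\mu)$ is a polynomial of degree two in $\mu$, we have $F^{(n)}[q_{0}](i\mu)\equiv 0$ for $n\ge 3$.

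Next I would treat the transformation $S$. Substituting $i/\mu=i(1/\mu)$ into the closed form and rearranging,
\[
F[q_{0}](\tfrac{i}{\mu})
= C\big(\tfrac{1}{\mu}+q_{0}\big)^{2}
= \frac{C\,(1+q_{0}\mu)^{2}}{\mu^{2}}
= q_{0}^{2}\,\mu^{-2}\,C\big(\mu+\tfrac{1}{q_{0}}\big)^{2}
= q_{0}^{2}\,\mu^{-2}\,F[\tfrac{1}{q_{0}}](i\mu),
\]
which is valid because $\Re(1/\mu)>0$. For the derivative I would adopt the convention of Lemma \ref{transformationsFSlemma}, namely $F'[q_{0}](\tfrac{i}{\mu})=\tfrac{d\mu}{d(1/\mu)}\,\partial_{\mu}\big(F[q_{0}](\tfrac{i}{\mu})\big)=-\mu^{2}\,\partial_{\mu}\big(F[q_{0}](\tfrac{i}{\mu})\big)$; applying $-\mu^{2}\partial_{\mu}$ to the identity just obtained, the term from differentiating $\mu^{-2}$ and the term from differentiating $F[1/q_{0}]$ combine, after collecting the common factor $(\mu+1/q_{0})$, into $q_{0}\,\mu^{-1}\,F'[1/q_{0}](i\mu)$, which is the last identity.

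No step presents a genuine obstacle: the only point needing attention is the bookkeeping of the powers of $\mu$ produced by the chain rule under $S$ — tracking the factor $\tfrac{d\mu}{d(1/\mu)}=-\mu^{2}$ at each differentiation, exactly as in the proof of Lemma \ref{transformationsFSlemma} — together with the observation that the apparent lower-order term in the identity for $F'[q_{0}](\tfrac{i}{\mu})$ in fact cancels upon simplification.
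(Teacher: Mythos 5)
Your proof is correct and follows exactly the route the paper intends: the paper omits the proof of this lemma, noting only that it is a direct calculation, and since $F[q_{0}](i\mu)=C(\mu+q_{0})^{2}$ involves no theta functions, your substitution $i\mu+1=i(\mu-i)$, the quadratic-polynomial observation for $n\ge 3$, and the chain-rule bookkeeping with the factor $-\mu^{2}$ under $S$ (with the cancellation yielding $q_{0}\mu^{-1}F'[\tfrac{1}{q_{0}}](i\mu)$) are precisely what is needed.
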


\smallskip

\section{Modular properties of $\tilde a_0,$ $\tilde a_2$ and $\tilde a_4$ by direct calculations}
\label{a_0a_2a_4Sec}

\smallskip

Our aim in this section is to investigate modular properties 
of the Seeley-de Witt coefficient $\tilde a_0$, $\tilde a_2$,  and $\tilde a_4$,  
which are respectively given by \eqref{a_0Eq}, \eqref{a_2Eq}, and in 
Appendix \ref{fulla_4appendix},  
in the case of the gravitational instantons parametrized by 
\eqref{two-parametric} and \eqref{one-parametric}. The 
terms $\tilde a_{2n}$ appear in general in the asymptotic expansion 
\eqref{ExpAsympConformalEq} of the heat kernel of $\tilde D^2$, where 
$\tilde D$ with the expression \eqref{DiracConfBianchiIXEq} is the Dirac operator of a 
time dependent conformal perturbation of a triaxial Bianchi IX metric, given 
by \eqref{ConformalBianchiIXMetricEq1}. The latter serves as a general 
form of Bianchi IX gravitational instantons, as we explained and 
provided references about it in Section \ref{InstantonsSec}. 
Since the expressions in \eqref{two-parametric} are parametrized 
by a pair of parameters $[p, q]$, we denote the corresponding Seeley-de 
Witt coefficients by $\tilde a_{2n}[p, q]$. Similarly, the terms $\tilde a_{2n}$ 
associated with the one-parametric case \eqref{one-parametric} with the parameter 
$q_0$ will be denoted by $\tilde a_{2n}[q_0]$.

\smallskip

We begin with substituting the solutions \eqref{two-parametric} and 
\eqref{one-parametric} for $w_1$, $w_2$, $w_3$ and $F$, in the explicit expressions for 
$\tilde a_0$, $\tilde a_2$ and $\tilde a_4$.  It is evident from the definitions 
provided by \eqref{ThetawithCharEq}, \eqref{varthetapqEq} and  \eqref{varthetasEq} 
that we are allowed to take $\mu$ in the right half-plane 
in complex numbers so that $i \mu$ belongs to $\mathbb{H}$, the upper half-plane. 
Therefore, in this section, using the modular identities explored in Section  \ref{ArithmeticsofInstantonsSec}, 
we investigate transformations of the terms $\tilde a_0[p, q](i \mu)$, $\tilde a_2[p, q](i \mu)$, 
$\tilde a_4[p, q](i \mu)$, $\tilde a_0[q_0](i \mu)$, $\tilde a_2[q_0](i \mu)$ and $\tilde a_4[q_0](i \mu)$ 
under the modular actions $S$ and $T_1$ given by \eqref{T_1andSEq} on $i \mu \in \mathbb{H}$. Let us start studying the transformations associated with $T_1$.

\smallskip

\begin{theorem} \label{modualra024T_1thm}
Let $\mu$ be a complex number in the right half-plane $\Re(\mu)>0$. We have
\[
\tilde{a}_{0}[p,q](i\mu+1)= \tilde{a}_{0}[p,q+p+\frac{1}{2}](i\mu),
\]
\[
\tilde a_{2}[p,q](i\mu+1)= a_{2}[p,q+p+\frac{1}{2}](i\mu),
\]
\[
\tilde a_{4}[p,q](i\mu+1)=\tilde a_{4}[p,q+p+\frac{1}{2}](i\mu). 
\]

\begin{proof}

According to Lemma 5.5 and Lemma 5.9, so far as the functions $w_1(i\mu)$, $w_2(i\mu)$, $w_3(i\mu)$ and $F(i\mu)$ are concerned, the transformation $i\mu\mapsto i\mu+1$ is equivalent to the transformation $(p,q)\mapsto(p,q+p+\frac{1}{2})$ followed by the exchange of $w_2$ and $w_3$. 

\smallskip

Also, from the explicit expressions of $\tilde{a}_0[p,q](i\mu)$ and $\tilde{a}_2[p,q](i\mu)$ presented in (23) and (24), as well as the expression of $\tilde{a}_4[p,q](i\mu)$ shown in Appendix B, we know that  $\tilde{a}_0[p,q](i\mu)$,  $\tilde{a}_2[p,q](i\mu)$, and  $\tilde{a}_4[p,q](i\mu)$ are invariant under exchanging $w_2$ and $w_3$, so our claim is confirmed. 

\smallskip


\end{proof}

\end{theorem}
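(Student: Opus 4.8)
The plan is to deduce the statement from two ingredients already in hand: the behaviour of the building blocks $w_1,w_2,w_3,F$ and their $\mu$-derivatives under $i\mu\mapsto i\mu+1$, recorded in Lemma \ref{transformationsw_j1} and Lemma \ref{transformationsF}, together with an elementary permutation symmetry of the explicit formulas for $\tilde a_0$, $\tilde a_2$, $\tilde a_4$. By Theorem \ref{ConformalRationlaityThm}, each $\tilde a_{2n}$ is, up to the $\mu$-integration, a fixed rational expression $\tilde Q_{2n}/(F^{2n}(w_1w_2w_3)^{3n-1})$ in $w_1,w_2,w_3,F$ and their derivatives up to order $2n$; so the sought transformation rule reduces to substituting the transformed arguments and recognising the result.

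Concretely, I would first record that Lemma \ref{transformationsw_j1} says the replacement $i\mu\mapsto i\mu+1$ sends $w_1^{(k)}[p,q]$ to $w_1^{(k)}[p,q+p+\tfrac12]$ but \emph{interchanges} $w_2$ and $w_3$, namely $w_2^{(k)}[p,q](i\mu+1)=w_3^{(k)}[p,q+p+\tfrac12](i\mu)$ and $w_3^{(k)}[p,q](i\mu+1)=w_2^{(k)}[p,q+p+\tfrac12](i\mu)$, while Lemma \ref{transformationsF} gives $F^{(k)}[p,q](i\mu+1)=F^{(k)}[p,q+p+\tfrac12](i\mu)$. Here one uses that $\partial_\mu$ commutes with $i\mu\mapsto i\mu+1$ with trivial chain-rule factor, so these derivative identities are exactly what is needed to plug into $\tilde Q_{2n}$. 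Substituting them into $\tilde a_{2n}[p,q](i\mu+1)$ yields precisely the expression for $\tilde a_{2n}[p,q+p+\tfrac12](i\mu)$, except with the roles of $w_2$ and $w_3$ (and of all their derivatives) swapped.

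It then remains to observe that the formulas for $\tilde a_0$, $\tilde a_2$, $\tilde a_4$ are invariant under this swap. For $\tilde a_0=4F^2w_1w_2w_3$ in \eqref{a_0Eq} and for $\tilde a_2$ in \eqref{a_2Eq} this is immediate: both are in fact symmetric under every permutation of the indices $\{1,2,3\}$, since each grouping appearing — $w_1^2+w_2^2+w_3^2$, the sum $\sum_{\mathrm{cyc}} (w_jw_k)^2/w_l^2-w_l'^2/w_l^2$, the sum $\sum_{\mathrm{cyc}} w_j'w_k'/(w_jw_k)$, the sum $\sum_{\mathrm{cyc}} w_j''/w_j$, and the purely $F$-dependent terms — is symmetric. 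For $\tilde a_4$ one inspects the expression in Appendix \ref{fulla_4appendix} and verifies term by term that the same $w_2\leftrightarrow w_3$ invariance holds. With this symmetry in place the two displayed expressions coincide, which proves all three identities. The only laborious point is this last, purely mechanical symmetry check for $\tilde a_4$; everything else is a direct substitution via the two lemmas.
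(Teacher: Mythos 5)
Your proposal is correct and follows essentially the same route as the paper's own proof: both invoke Lemma \ref{transformationsw_j1} and Lemma \ref{transformationsF} to reduce the action of $i\mu\mapsto i\mu+1$ to the parameter shift $(p,q)\mapsto(p,q+p+\tfrac12)$ composed with the swap $w_2\leftrightarrow w_3$, and then conclude by the $w_2\leftrightarrow w_3$ symmetry of the explicit expressions \eqref{a_0Eq}, \eqref{a_2Eq} and the one in Appendix \ref{fulla_4appendix}.
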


\smallskip

Moreover, the terms $\tilde a_0[p, q]$, $\tilde a_2[p, q]$ and $\tilde a_4[p, q]$ 
satisfy modular transformation properties with respect to the action of $S$ given in 
\eqref{T_1andSEq} on $i \mu \in \mathbb{H}$. 

\smallskip

\begin{theorem} \label{modualra024Sthm}
Assuming that $\Re(\mu) >0$, we have
\[
\tilde{a}_{0}[p,q](\frac{i}{\mu})=-\mu^{2} \tilde{a}_{0}[-q,p](i\mu),
\]
\[
\tilde a_{2}[p,q](\frac{i}{\mu})=-\mu^{2} \tilde a_{2}[-q,p](i\mu),
\]
\[
\tilde a_{4}[p,q](\frac{i}{\mu})=-\mu^{2} \tilde a_{4}[-q,p](i\mu).
\]

\begin{proof}
The identities can be seen to hold by applying directly lemmas \ref{transformationsw_12}, 
\ref{transformationsw_22}, \ref{transformationsw_32} and \ref{transformationsFSlemma} 
to the explicit expressions for $\tilde a_0$, $\tilde a_2$ and $\tilde a_4$ given 
respectively by \eqref{a_0Eq}, \eqref{a_2Eq} and in Appendix \ref{fulla_4appendix}. In fact, for the first term we can write 
\begin{eqnarray*}
\tilde a_{0}[p,q](\frac{i}{\mu}) &=& -4\mu^{2}F^{2}[-q,p](i\mu)w_{1}[-q,p](i\mu)w_{2}[-q,p](i\mu)w_{3}[-q,p](i\mu) \\
&=&-\mu^{2} \tilde a_{0}[-q,p](i\mu),
\end{eqnarray*}

\smallskip

For the next term, first we just consider its expression, replace $i \mu$ by $S(i \mu)=i/\mu$ and 
use the mentioned lemmas to write:  
\begin{eqnarray*}
&&\tilde a_{2}[p,q](\frac{i}{\mu}) \\
&& \quad =\mu^{2}\frac{F[-q,p](i\mu)}{3}\Big (w_{1}^{2}[-q,p](i\mu)+w_{2}^{2}[-q,p](i\mu)+w_{3}^{2}[-q,p](i\mu) \Big ) \\
&& \qquad -\frac{\mu^{2}F[-q,p](i\mu)}{6}\Big (\frac{w_{1}^{2}[-q,p](i\mu)w_{2}^{2}[-q,p](i\mu)}{w_{3}^{2}[-q,p](i\mu)}+\frac{w_{1}^{2}[-q,p](i\mu)w_{3}^{2}[-q,p](i\mu)}{w_{2}^{2}[-q,p](i\mu)} \\
&& \qquad +\frac{w_{2}^{2}[-q,p](i\mu)w_{3}^{2}[-q,p](i\mu)}{w_{1}^{2}[-q,p](i\mu)} \Big)
+\frac{\mu^{2}F[-q,p](i\mu)}{6}\Big (\frac{w_{3}^{'2}[-q,p](i\mu)}{w_{3}^{2}[-q,p](i\mu)}
\end{eqnarray*}
\begin{eqnarray*}
&& \qquad+\frac{w_{2}^{'2}[-q,p](i\mu)}{w_{2}^{2}[-q,p](i\mu)} +\frac{w_{1}^{'2}[-q,p](i\mu)}{w_{1}^{2}[-q,p](i\mu)}\Big )+\frac{\mu F[-q,p](i\mu)}{3}\Big (\frac{2w_{3}^{'}[-q,p](i\mu)}{w_{3}[-q,p](i\mu)} \\
&& \qquad +\frac{2w_{2}^{'}[-q,p](i\mu)}{w_{2}[-q,p](i\mu)}+\frac{2w_{1}^{'}[-q,p](i\mu)}{w_{1}[-q,p](i\mu)} \Big )+2F[-q,p](i\mu)
\end{eqnarray*}
\begin{eqnarray*}
&& \qquad +\frac{\mu^{2}F[-q,p](i\mu)}{3}\Big (\frac{w_{3}^{'}[-q,p](i\mu)w_{2}^{'}[-q,p](i\mu)}{w_{3}[-q,p](i\mu)w_{2}[-q,p](i\mu)}+\frac{w_{3}^{'}[-q,p](i\mu)w_{1}^{'}[-q,p](i\mu)}{w_{3}[-q,p](i\mu)w_{1}[-q,p](i\mu)}
\end{eqnarray*}
\begin{eqnarray*}
&& \qquad +\frac{w_{1}^{'}[-q,p](i\mu)w_{2}^{'}[-q,p](i\mu)}{w_{1}[-q,p](i\mu)w_{2}[-q,p](i\mu)}\Big ) +\frac{\mu F[-q,p](i\mu)}{3}\Big (\frac{4w_{1}^{'}[-q,p](i\mu)}{w_{1}[-q,p](i\mu)} \\
&& \qquad +\frac{4w_{2}^{'}[-q,p](i\mu)}{w_{2}[-q,p](i\mu)} +\frac{4w_{3}^{'}[-q,p](i\mu)}{w_{3}[-q,p](i\mu)}\Big )+4F[-q,p](i\mu) \\
&& \qquad -\frac{\mu^{2}F[-q,p](i\mu)}{3}\Big (\frac{w_{3}^{''}[-q,p](i\mu)}{w_{3}[-q,p](i\mu)}+\frac{w_{2}^{''}[-q,p](i\mu)}{w_{2}[-q,p](i\mu)}+\frac{w_{1}^{''}[-q,p](i\mu)}{w_{1}[-q,p](i\mu)}\Big )
\end{eqnarray*}
\begin{eqnarray*}
&& \qquad -\mu F[-q,p](i\mu) \Big (\frac{2w_{3}^{'}[-q,p](i\mu)}{w_{3}[-q,p](i\mu)}+\frac{2w_{2}^{'}[-q,p](i\mu)}{w_{2}[-q,p](i\mu)}+\frac{2w_{1}^{'}[-q,p](i\mu)}{w_{1}[-q,p](i\mu)}\Big ) \\
&& \qquad -6F[-q,p](i\mu)+\mu^{2}\frac{F^{'2}[-q,p](i\mu)}{2F[-q,p](i\mu)}-\mu^{2}F^{''}[-q,p](i\mu) \\
&& \quad =-\mu^{2}\tilde a_{2}[-q,p](i\mu). 
\end{eqnarray*}

\end{proof}

\end{theorem}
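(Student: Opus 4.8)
The plan is to prove the three identities uniformly, by substituting the action of $S$ directly into the closed-form expressions for the coefficients. For $n\in\{0,1,2\}$ one starts from the explicit formula for $\tilde a_{2n}[p,q](i\mu)$ --- formula \eqref{a_0Eq} for $n=0$, formula \eqref{a_2Eq} for $n=1$, and the formula in Appendix \ref{fulla_4appendix} for $n=2$ --- replaces the argument $i\mu$ by $i/\mu$, and then rewrites every occurrence of $w_j[p,q](i/\mu)$, $w_j^{(k)}[p,q](i/\mu)$, $F[p,q](i/\mu)$ and $F^{(k)}[p,q](i/\mu)$ by means of Lemmas \ref{transformationsw_12}, \ref{transformationsw_22}, \ref{transformationsw_32} and \ref{transformationsFSlemma}. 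Each such lemma expresses the relevant factor as a Laurent polynomial in $\mu$ whose coefficients lie among $w_1,w_2,w_3,F$ and their $\mu$-derivatives, all evaluated at $[-q,p](i\mu)$; hence after substitution one is left with an expression built from exactly the ingredients of $\tilde a_{2n}[-q,p](i\mu)$, and the remaining task is to collect the powers of $\mu$ and check that the overall factor equals $-\mu^2$.

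The case $n=0$ is immediate and sets the pattern: since $w_1[p,q](i/\mu)=\mu^2 w_3[-q,p](i\mu)$, $w_2[p,q](i/\mu)=\mu^2 w_2[-q,p](i\mu)$, $w_3[p,q](i/\mu)=-\mu^2 w_1[-q,p](i\mu)$ and $F[p,q](i/\mu)=-\mu^{-2}F[-q,p](i\mu)$, the factor $F^2 w_1 w_2 w_3$ picks up $\mu^{-4}\cdot(-\mu^6)=-\mu^2$, giving $\tilde a_0[p,q](i/\mu)=-\mu^2\,\tilde a_0[-q,p](i\mu)$.

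For $n=1,2$ I would perform the same bookkeeping term by term on the formulas \eqref{a_2Eq} and Appendix \ref{fulla_4appendix}, splitting the contribution of each monomial into a \emph{leading part}, obtained by keeping in each transformation rule only the highest power of $\mu$, and \emph{correction parts}, coming from the remaining lower-order terms of the rules for $w_j',w_j'',\dots$ and $F',F'',\dots$ (for instance the summand $-2\mu^3 w_3[-q,p](i\mu)$ inside $w_1'[p,q](i/\mu)$, or the $-2\mu^{-1}F[-q,p](i\mu)$ inside $F'[p,q](i/\mu)$). For the leading parts, a direct count of $\mu$-exponents --- each $w_j^{(k)}$ contributing $2+2k$, each $F^{(k)}$ contributing $-2+2k$, and, writing $\tilde a_{2n}=\tilde Q_{2n}/(F^{2n}(w_1w_2w_3)^{3n-1})$ as in Theorem \ref{ConformalRationlaityThm}, the denominator contributing $4n-6(3n-1)$ --- shows that every monomial of $\tilde a_{2n}$ transforms, to leading order, into $\mu^2$ times the corresponding monomial of $\tilde a_{2n}[-q,p](i\mu)$; a parallel count of the signs produced by $w_3\mapsto-\mu^2 w_1$, by $F\mapsto-\mu^{-2}F$, and by the sign flip accompanying each differentiation shows that the common sign is $-1$. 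Thus the leading parts already assemble to $-\mu^2\,\tilde a_{2n}[-q,p](i\mu)$.

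The step I expect to be the main obstacle is verifying that the correction parts cancel among themselves --- this is exactly what upgrades the conclusion from an ``up to lower-order terms'' statement to the clean transformation law. For $\tilde a_2$ this is a short arithmetic check: the surviving correction terms are multiples of $\mu F[-q,p]\sum_j w_j'[-q,p]/w_j[-q,p]$, of $\mu F'[-q,p]$ and of $F[-q,p]$, and they vanish because the effective coefficients contributed by the $w_j'^2/w_j^2$, $w_j'w_k'/(w_jw_k)$ and $w_j''/w_j$ terms combine as $\tfrac{2}{3}+\tfrac{4}{3}-2=0$ (and likewise $2+4-6=0$ for the constant pieces), while the lower-order tails generated by $-F'^2/(2F)$ and by $F''$ are exactly opposite. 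For $\tilde a_4$ the same mechanism is at work but has to be traced through the much longer expression of Appendix \ref{fulla_4appendix}; this is where essentially all of the computational labour lies, and it succeeds precisely because of the extensive simplifications already carried out to put $\tilde a_4$ into its recorded form.
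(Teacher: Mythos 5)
Your proposal is correct and follows essentially the same route as the paper: direct substitution of $S(i\mu)=i/\mu$ into the explicit formulas \eqref{a_0Eq}, \eqref{a_2Eq} and the Appendix \ref{fulla_4appendix} expression, rewriting via Lemmas \ref{transformationsw_12}--\ref{transformationsw_32} and \ref{transformationsFSlemma}, and verifying that the lower-order correction terms cancel (the $\tfrac{2}{3}+\tfrac{4}{3}-2=0$ and $2+4-6=0$ cancellations you identify for $\tilde a_2$, and the mutual cancellation of the tails of $-F'^2/(2F)$ and $F''$, are exactly what the paper's displayed computation exhibits).
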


\smallskip

Now we turn our focus to the modular properties of the terms $\tilde a_0[q_0]$, $\tilde a_2[q_0]$ and $\tilde a_0[q_0]$ associated with the one parametric case \eqref{one-parametric}. 

\begin{theorem} \label{modualra024Soneparathm}
If $\Re(\mu) >0 $, we have 
\begin{align*}
\tilde a_{0}[q_{0}](i\mu+1)&= \tilde a_{0}[q_{0}-i](i\mu), & \tilde a_{0}[q_{0}](\frac{i}{\mu})&=-q_{0}^{4}\mu^{2} \tilde a_{0}[\frac{1}{q_{0}}](i\mu), \\
\tilde a_{2}[q_{0}](i\mu+1)&= \tilde a_{2}[q_{0}-i](i\mu), & \tilde a_{2}[q_{0}](\frac{i}{\mu})&=q_{0}^{2}\mu^{2} \tilde a_{2}[\frac{1}{q_{0}}](i\mu), \\
\tilde a_{4}[q_{0}](i\mu+1)&= \tilde a_{4}[q_{0}-i](i\mu), &
\tilde a_{4}[q_{0}](\frac{i}{\mu})&=-\mu^{2}\tilde a_{4}[\frac{1}{q_{0}}](i\mu).
\end{align*}

\begin{proof}
The identities follow directly from applying  lemmas \ref{transfsw_jT_2}, 
\ref{w_1q_0SLem}, \ref{w_2q_0SLem}, \ref{w_3q_0SLem} and \ref{transformationsFlemma} to the 
explicit expressions for $\tilde a_0[q_0]$, $\tilde a_2[q_0]$ and $\tilde a_4[q_0]$. 
\end{proof}

\end{theorem}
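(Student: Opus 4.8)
The plan is to treat the two generators $T_1\colon i\mu\mapsto i\mu+1$ and $S\colon i\mu\mapsto i/\mu$ separately, substituting in each case the transformation laws for $w_1[q_0]$, $w_2[q_0]$, $w_3[q_0]$, $F[q_0]$ and their $\mu$-derivatives --- recorded in Lemmas~\ref{transfsw_jT_2}, \ref{w_1q_0SLem}, \ref{w_2q_0SLem}, \ref{w_3q_0SLem} and \ref{transformationsFlemma} --- into the explicit formulas \eqref{a_0Eq}, \eqref{a_2Eq} and the expression for $\tilde a_4$ in Appendix~\ref{fulla_4appendix}, and then collecting terms. This is the same strategy carried out for the two-parametric family in the proof of Theorem~\ref{modualra024Sthm}; the only genuinely new inputs are the $q_0$-dependent scalings $F[q_0](i/\mu)=q_0^2\mu^{-2}F[1/q_0](i\mu)$ and $F'[q_0](i/\mu)=q_0\mu^{-1}F'[1/q_0](i\mu)$ from Lemma~\ref{transformationsFlemma}, together with the fact that in this family $F^{(n)}[q_0]\equiv 0$ for $n\ge 3$ and hence $F'^2/F$ and $F''$ are constant.

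For the translation $T_1$, I would read off from Lemma~\ref{transfsw_jT_2} that, as far as $w_1,w_2,w_3$ and all their $\mu$-derivatives are concerned, replacing $i\mu$ by $i\mu+1$ is equivalent to the parameter change $q_0\mapsto q_0-i$ followed by the interchange $w_2\leftrightarrow w_3$, while the first line of Lemma~\ref{transformationsFlemma} gives $F^{(k)}[q_0](i\mu+1)=F^{(k)}[q_0-i](i\mu)$ with no such interchange. Since the explicit expressions for $\tilde a_0,\tilde a_2,\tilde a_4$ are symmetric under exchanging $w_2$ and $w_3$ together with the corresponding swaps of their derivatives --- the same symmetry already invoked in the proof of Theorem~\ref{modualra024T_1thm} --- the interchange is invisible and one gets $\tilde a_{2n}[q_0](i\mu+1)=\tilde a_{2n}[q_0-i](i\mu)$ for $n=0,1,2$ immediately.

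For the inversion $S$, the volume term $\tilde a_0=4F^2w_1w_2w_3$ is a one-liner: $F^2\mapsto q_0^4\mu^{-4}F^2[1/q_0]$ and $w_1w_2w_3\mapsto(-\mu^2)^3w_1w_2w_3[1/q_0]$ by Lemmas~\ref{w_1q_0SLem}--\ref{w_3q_0SLem}, so $\tilde a_0[q_0](i/\mu)=-q_0^4\mu^2\,\tilde a_0[1/q_0](i\mu)$. For $\tilde a_2$ I would first observe that the closing terms $-F'^2/(2F)+F''$ in \eqref{a_2Eq} cancel identically here, so only terms of the form $F\times(\text{a homogeneous degree-}2\ \text{expression in the }w_j\text{'s and their derivatives})$ survive; after substituting the $S$-laws of Lemmas~\ref{w_1q_0SLem}--\ref{w_3q_0SLem} the leading part reassembles $q_0^2\mu^{-2}\cdot\mu^4=q_0^2\mu^2$ times $\tilde a_2[1/q_0](i\mu)$, while the lower powers of $\mu$ generated by the inhomogeneous tails in those laws (for instance the $2\mu^3w_3$ in $w_1'[q_0](i/\mu)$) cancel in precisely the groupings exhibited for $\tilde a_2[p,q]$ in the proof of Theorem~\ref{modualra024Sthm}, yielding $\tilde a_2[q_0](i/\mu)=q_0^2\mu^2\,\tilde a_2[1/q_0](i\mu)$. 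The argument for $\tilde a_4$ is identical in spirit and produces the overall factor $-\mu^2$ with no residual power of $q_0$; here the main obstacle is purely computational, since the Appendix~\ref{fulla_4appendix} expression is long and one must organize its monomials into scaling-weight blocks and check the cancellation of every inhomogeneous tail and every $F$-weighted piece block by block --- each block being, up to relabeling and the $q_0$-rescaling of $F$, a replica of a cancellation already seen for $\tilde a_2$ --- a task best handled by the symbolic computation that produced these coefficients in the first place.
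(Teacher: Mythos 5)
Your proposal is correct and follows the paper's own (one-line) proof of this theorem: substitute the transformation laws of Lemmas \ref{transfsw_jT_2}, \ref{w_1q_0SLem}--\ref{w_3q_0SLem} and \ref{transformationsFlemma} into the explicit expressions \eqref{a_0Eq}, \eqref{a_2Eq} and Appendix \ref{fulla_4appendix}, using the $w_2\leftrightarrow w_3$ symmetry for $T_1$, and your supporting observations --- the one-line scaling $q_0^4\mu^{-4}\cdot(-\mu^2)^3=-q_0^4\mu^2$ for $\tilde a_0$, the fact that the ratios $w_j'/w_j$, $w_j'^2/w_j^2$, $w_j''/w_j$ obey literally the same $S$-laws here as in the two-parametric case (so the inhomogeneous tails in $\tilde a_2$ cancel in the same groupings as in Theorem \ref{modualra024Sthm}), and the identity $-F'^2/(2F)+F''\equiv 0$ for $F=C(\mu+q_0)^2$ --- are all accurate. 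The one place where your description oversells the ease of the check is $\tilde a_4$: since $F'/F$ and $F''/F$ scale by powers of $q_0^{-1}\mu$ rather than of $\mu^2$, the $F$-weighted monomials are not individually covariant (for instance $-\tfrac{13(F')^4}{24F^4w_1w_2w_3}+\tfrac{137(F')^2F''}{120F^3w_1w_2w_3}-\tfrac{3(F'')^2}{10F^2w_1w_2w_3}$ collapses to the nonzero multiple $-\tfrac{11}{15}(\mu+q_0)^{-4}(w_1w_2w_3)^{-1}$, which scales by $-q_0^{-4}\mu^{-2}$ rather than $-\mu^2$) and must cancel against the $(\mu+q_0)^{-1}$ pieces hidden inside the $w_j[q_0]$ themselves, so these cancellations are not mere relabelled replicas of the $\tilde a_2$ ones --- though, as you note, the verification is in any case a symbolic computation, which is also all the paper offers.
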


\smallskip

The direct calculations carried out in this section show that the first three Seeley-de Witt 
coefficients behave similarly to vector-valued modular forms \cite{EicZag} since the modular 
transformations act accordingly on the parameters of the metrics. This strong evidence indicates 
that there should be an interesting connection between the Dirac operators of the metrics 
that are related to each other by the modular transformations. Indeed, by taking a close 
look at the spectral properties of these Dirac operators, we prove in Section \ref{a_2nSec} 
that the modular transformation properties proved for the first three terms hold for 
all of the terms $\tilde a_{2n}$.

\smallskip

\section{Modular properties of $\tilde a_{2n}$ using identities for Dirac operators}
\label{a_2nSec}

\smallskip

Motivated by the modular transformation properties presented in theorems \ref{modualra024T_1thm}, 
\ref{modualra024Sthm} and \ref{modualra024Soneparathm} 
for the terms $\tilde a_0$, $\tilde a_2$ and 
$\tilde a_4$, the main goal in this section is to prove that all of the Seeley-de Witt $\tilde a_{2n}$ satisfy the same 
properties. Clearly, this cannot be achieved by resorting to direct calculations. 
However, the direct calculations performed in Section \ref{a_0a_2a_4Sec} for the first three terms 
reveal that the effect of the modular transformations on the Seeley-de Witt coefficients is 
reflected in a certain way on the parameters of the metric, and by studying the corresponding 
Dirac operators carefully in this section we can prove that the transformation properties hold for the general terms. 

\smallskip

Note that the the gravitational instantons parametrized by \eqref{two-parametric} and \eqref{one-parametric} 
are given in terms of theta functions and their derivatives. Thus in the two-parametric case 
\eqref{two-parametric} with the parameters $(p, q)$ 
as well as in the one-parametric case \eqref{one-parametric} with the parameter $q_0$, 
we are allowed to consider an extension of the definitions to $p,q, q_0\in\mathbb{C}$ with $i\mu\in\mathbb{H}$, excluding possible zeros of the theta functions and their derivatives that appear in the denominators. Thus with $(p,q)$ and $q_{0}$ fixed, we may consider the operators $\tilde{D}^{2}[p,q]$
and $\tilde{D}^{2}[\frac{1}{q_{0}}]$ acting on a spin bundle over
the base manifold $M_{0}=I_{(a,b)}\times\mathbb{S}^{3}$, where $I_{(a,b)}$
is an arbitrary horizontal path in the upper-half complex plane $\mathbb{H}$. We may also consider the operators
$\tilde{D}^{2}[p,q+p+\frac{1}{2}]$ and $\tilde{D}^{2}[q_{0}-i]$ acting on
$M_{1}=(a+i,b+i)\times\mathbb{S}^{3}$, and the operators $\tilde{D}^{2}[-q,p]$
and $\tilde{D}^{2}[\frac{1}{q_{0}}]$ acting on $M_{2}=(\frac{1}{b},\frac{1}{a})\times\mathbb{S}^{3}$.

\smallskip

In the following theorem, we prove the isospectrality of the Dirac operators corresponding to the 
metrics whose parameters are related to each other via the effect of the modular 
transformations on the Seeley-de Witt coefficients $\tilde a_0$, $\tilde a_2$ and $\tilde a_4$, 
presented in theorems \ref{modualra024T_1thm} and \ref{modualra024Sthm}. 

\smallskip

\begin{theorem} \label{IsoSpecDiracs2paraThm}

For the two-parameter family of metrics parametrized by $(p,q)$,
the operators $\tilde{D}^2[p,q]$, $\tilde{D}^2[p,q+p+\frac{1}{2}]$ and
$\tilde{D}^2[-q,p]$ are isospectral on spin bundles over $M_0$, $M_1$ and $M_2$ 
respectively. More precisely, under the isospectral transformations $(p,q)\mapsto(p,q+p+\frac{1}{2})$ and $(p,q)\mapsto(-q,p)$, an eigenspinor ${u_n(\mu)}$ corresponding to a particular eigenvalue $\lambda_n^2[p,q]$ of $\tilde{D}^2$ transforms as $u_n(\mu)\mapsto u_n(\mu-i)$ and $u_n(\mu)\mapsto -\gamma^1u_n(\frac{1}{\mu})$ respectively, where the spatial dependence of $u_n$ is invariant and suppressed. The transformations of the eigenspinors define a bijection between the spaces of eigenspinors of the corresponding operators.

\end{theorem}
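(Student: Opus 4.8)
The plan is to realize each parameter change in the statement as an honest diffeomorphism under which the corresponding metrics are isometric, and then to lift that diffeomorphism to a unitary intertwiner of the Dirac Laplacians; isospectrality and the stated eigenspinor correspondence follow at once, since an intertwining unitary sends the $\lambda^2$-eigenspace of one operator bijectively onto that of the other. First I would verify the isometry at the level of metrics. Substituting the theta solutions \eqref{two-parametric} and invoking Lemma~\ref{transformationsw_j1} and Lemma~\ref{transformationsF}, the translation $\mu\mapsto\mu-i$ of the time coordinate pulls $d\tilde s^2[p,q]$ on $M_0$ back to $d\tilde s^2[p,q+p+\tfrac12]$ on $M_1$ provided one also applies the coframe relabelling exchanging $\sigma_2$ and $\sigma_3$ (with $\sigma_1\mapsto-\sigma_1$, so that the structure equations $d\sigma_i=\sigma_j\wedge\sigma_k$ are preserved). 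This relabelling has determinant $+1$ on $\mathfrak{su}(2)\cong\mathbb R^3$, hence lies in $SO(3)=\mathrm{Ad}(SU(2))$ and is realized by a conjugation $h\mapsto g_0hg_0^{-1}$ by a fixed $g_0\in SU(2)$, i.e.\ a genuine, orientation-preserving diffeomorphism of each $\mathbb S^3$-fibre. Likewise, using Lemmas~\ref{transformationsw_12}, \ref{transformationsw_22}, \ref{transformationsw_32} and Lemma~\ref{transformationsFSlemma}, a short degree count---the Jacobian of $\mu\mapsto1/\mu$ contributes a factor $\mu^{-2}$ to $d\mu$ that cancels against the $\pm\mu^{2}$ weights in those lemmas---shows that $\mu\mapsto1/\mu$ pulls $d\tilde s^2[p,q]$ on $M_0$ back to $d\tilde s^2[-q,p]$ on $M_2$ after the coframe relabelling exchanging $\sigma_1$ and $\sigma_3$ (with $\sigma_2\mapsto-\sigma_2$), again an element of $SO(3)$.

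Next I would promote each isometry to a unitary of $L^2$-spinor spaces. In the translation case the $\mu$-direction contributes trivially and the fibre conjugation lifts, through $\mathrm{Spin}(3)=SU(2)$, to a constant spin matrix acting only on the $\mathbb S^3$-directions; this is precisely the ``invariant, suppressed spatial dependence'' of the statement, and the resulting unitary $U_1$ satisfies $U_1\tilde D[p,q]U_1^{-1}=\tilde D[p,q+p+\tfrac12]$ (hence the same for the squares), so an eigenspinor transforms as $u_n(\mu)\mapsto u_n(\mu-i)$. In the inversion case the combined diffeomorphism is orientation-reversing on the four-manifold, so its spinorial lift conjugates $\tilde D$ up to a constant gamma-matrix factor; tracking this factor through the explicit formula \eqref{DiracConfBianchiIXEq} with the conjugation rules $\gamma^1\gamma^0(\gamma^1)^{-1}=-\gamma^0$, $\gamma^1\gamma^1(\gamma^1)^{-1}=\gamma^1$, and $\gamma^1\gamma^j(\gamma^1)^{-1}=-\gamma^j$ for $j=2,3$, one checks that $U_2\colon u_n(\mu)\mapsto-\gamma^1u_n(1/\mu)$ satisfies $U_2\tilde D^2[p,q]U_2^{-1}=\tilde D^2[-q,p]$. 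Both $U_1$ and $U_2$ are invertible, with inverses induced by $\mu\mapsto\mu+i$ and by $\mu\mapsto1/\mu$ again, which gives the asserted bijection of $\lambda_n^2$-eigenspinor spaces; since $M_1$ and $M_2$ are non-compact, this bijection---valid for every $\lambda$---is exactly what ``isospectral'' means here.

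As a concrete substitute for (or cross-check of) the geometric argument one can verify the operator identities $U_i\tilde D^2[p,q]U_i^{-1}=\tilde D^2[\,\cdot\,]$ directly, by inserting the transformed $w_j$, $F$ and their $\mu$-derivatives from the lemmas of Section~\ref{ArithmeticsofInstantonsSec} into the explicit expression for $\tilde D^2$, performing the change of variables and the conjugation, and collecting terms. The main obstacle is the bookkeeping in the inversion case: one must confirm that the numerous $\mu$-powers coming from Lemmas~\ref{transformationsw_12}--\ref{transformationsw_32}, the chain-rule factors hitting the $\gamma^0\partial_\mu$ term of $\tilde D$, the conjugation by $-\gamma^1$, and the branch choices in the square roots $\sqrt{w_i/(w_jw_k)}$, $\sqrt{w_1w_2w_3}$ and $\sqrt F$ all reassemble into $\tilde D^2[-q,p]$ exactly, rather than merely into an operator conjugate to it by a further unitary---equivalently, that the spin structures and orientations on $M_0$, $M_1$, $M_2$ can be chosen compatibly so that the spinorial lift of the inversion is indeed a constant multiple of $\gamma^1$.
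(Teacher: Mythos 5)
Your strategy---realize each parameter change as an isometry of the (complexified) metrics and then lift it to a unitary intertwiner of spinor bundles---is a genuinely different and more conceptual route than the paper's. The paper proves the theorem by brute force: it applies $\tilde D[-q,p]$ to the candidate spinor $-\gamma^0 u_n(1/\mu)$ (and $\tilde D[p,q+p+\frac12]$ to $u_n(\mu-i)$), substitutes the transformation laws of $w_j$, $F$ and their derivatives from Section \ref{ArithmeticsofInstantonsSec}, and verifies term by term that the eigenvalue equation is reproduced. Your preliminary check that the metrics match under $\mu\mapsto\mu-i$ (with $\sigma_2\leftrightarrow\sigma_3$) and under $\mu\mapsto1/\mu$ (with $\sigma_1\leftrightarrow\sigma_3$, $\sigma_2\mapsto-\sigma_2$) is correct and does identify the mechanism behind the paper's cancellations. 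However, as written the proposal has genuine gaps and does not yet establish the theorem.

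First, the decisive step is deferred rather than proved: you say yourself that ``one must confirm'' that the $\mu$-powers, chain-rule factors, square-root branches and the gamma conjugation reassemble into $\tilde D^2[-q,p]$ exactly. That confirmation is the entire content of the theorem; the paper's proof consists precisely of this computation. Second, the constant spinor factor you propose is inconsistent with the isometry you constructed. Conjugation by $\gamma^1$ (with $(\gamma^a)^2=-I$) implements the frame reflection fixing $\theta^1$ and negating $\theta^0,\theta^2,\theta^3$; it does not interchange the $\theta^1$ and $\theta^3$ directions, which is what the spin lift of your fibre rotation $\sigma_1\leftrightarrow\sigma_3$, $\sigma_2\mapsto-\sigma_2$ must do ($\tilde R\gamma^1\tilde R^{-1}=\gamma^3$, etc.); note also that the paper's own computation uses $-\gamma^0$, not $-\gamma^1$, and transforms only the $\mu$-variable. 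Third, and relatedly, if the isometry rotates the $\mathbb S^3$ fibre nontrivially, the induced map on spinors must precompose with that rotation, so the spatial dependence of $u_n$ cannot be ``invariant and suppressed'' as you assert: your identification of the constant spin matrix with the suppressed spatial dependence conflates an action on spinor indices with an action on the base. Until the correct $Pin(4)$ lift of the full frame map---including the sign of $\theta^0$ coming from $d(1/\mu)=-\mu^{-2}d\mu$ and the branch of $\sqrt{Fw_1w_2w_3}$---is written down and shown to intertwine the explicit operator \eqref{DiracConfBianchiIXEq} with $\tilde D[-q,p]$, the two halves of your argument (the metric-level isometry and the proposed unitary $U_2$) do not fit together, and the theorem is not proved.
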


\begin{proof}

For the two-parametric family of metrics, suppose $u_{n}(\mu,\eta,\phi,\psi)$
is a section of the spin bundle such that $\tilde{D}[p,q]u_{n}(\mu,\eta,\phi,\psi)=\lambda_{n}u_{n}(\mu,\eta,\phi,\psi)$. For simplicity, let us suppress writing the dependence of $u_{n}$ on $\eta$, $\phi$, and $\psi$
and assume that it is understood.  Using the explicit expression \eqref{DiracConfBianchiIXEq} for the Dirac operator we 
have:  
{\small 
\begin{eqnarray*}
&& \left(-\tilde{D}[-q,p]\left(-\gamma^{0}u_{n}(\frac{1}{\mu})\right)\right)\mid_{\mu=\mu_{0}} = 
\end{eqnarray*}
\[
   -(F[-q,p](i\mu_{0})w_{1}[-q,p](i\mu_{0})w_{2}[-q,p](i\mu_{0})  w_{3}[-q,p](i\mu_{0}))^{-\frac{1}{2}}
 \gamma^{0}\partial_{\text{\ensuremath{\mu}}}\left(-\gamma^{0}u_{n}(\frac{1}{\mu})\right)\mid_{\mu=\mu_{0}} 
\]
\[
+ \sin\psi\cdot\left(\frac{F[-q,p](i\mu_{0})w_{2}[-q,p](i\mu_{0})w_{3}[-q,p](i\mu_{0})}{w_{1}[-q,p](i\mu_{0})}\right)^{-\frac{1}{2}} 
 \gamma^{1}\partial_{\eta}\left(-\gamma^{0}u_{n}(\frac{1}{\mu})\right)\mid_{\mu=\mu_{0}} 
\]
\[
-\cos\psi\cdot\left(\frac{F[-q,p](i\mu_{0})w_{1}[-q,p](i\mu_{0})w_{3}[-q,p](i\mu_{0})}{w_{2}[-q,p](i\mu_{0})}\right)^{-\frac{1}{2}} 
\gamma^{2}\partial_{\eta}\left(-\gamma^{0}u_{n}(\frac{1}{\mu})\right)\mid_{\mu=\mu_{0}} 
\]
\[
  -\cos\psi\csc\eta\cdot\left(\frac{F[-q,p](i\mu_{0})w_{2}[-q,p](i\mu_{0})w_{3}[-q,p](i\mu_{0})}{w_{1}[-q,p](i\mu_{0})}\right)^{-\frac{1}{2}} 
\gamma^{1}\partial_{\phi}\left(-\gamma^{0}u_{n}(\frac{1}{\mu})\right)\mid_{\mu=\mu_{0}} 
\]
\[
-\sin\psi\csc\eta\cdot\left(\frac{F[-q,p](i\mu_{0})w_{1}[-q,p](i\mu_{0})w_{3}[-q,p](i\mu_{0})}{w_{2}[-q,p](i\mu_{0})}\right)^{-\frac{1}{2}}  
 \gamma^{2}\partial_{\phi}\left(-\gamma^{0}u_{n}(\frac{1}{\mu})\right)\mid_{\mu=\mu_{0}} 
\]
\[
+\cos\psi\cot\eta\cdot\left(\frac{F[-q,p](i\mu_{0})w_{2}[-q,p](i\mu_{0})w_{3}[-q,p](i\mu_{0})}{w_{1}[-q,p](i\mu_{0})}\right)^{-\frac{1}{2}}\gamma^{1}\partial_{\psi}
\left(-\gamma^{0}u_{n}(\frac{1}{\mu})\right)\mid_{\mu=\mu_{0}} 
\]
\[
+\sin\psi\cot\eta\cdot\left(\frac{F[-q,p](i\mu_{0})w_{1}[-q,p](i\mu_{0})w_{3}[-q,p](i\mu_{0})}{w_{2}[-q,p](i\mu_{0})}\right)^{-\frac{1}{2}}
 \gamma^{2}\partial_{\psi}\left(-\gamma^{0}u_{n}(\frac{1}{\mu})\right)\mid_{\mu=\mu_{0}}
\]
\[
-\left(\frac{F[-q,p](i\mu_{0})w_{1}[-q,p](i\mu_{0})w_{2}[-q,p](i\mu_{0})}{w_{3}[-q,p](i\mu_{0})}\right)^{-\frac{1}{2}}\gamma^{3}\partial_{\psi}\left(-\gamma^{0}u_{n}(\frac{1}{\mu})\right)\mid_{\mu=\mu_{0}} 
\]
\[
  -\frac{1}{4}\frac{\frac{w_{1}^{'}[-q,p](i\mu_{0})}{w_{1}[-q,p](i\mu_{0})}+\frac{w_{2}^{'}[-q,p](i\mu_{0})}{w_{2}[-q,p](i\mu_{0})}+\frac{w_{3}^{'}[-q,p](i\mu_{0})}{w_{3}[-q,p](i\mu_{0})}+3\frac{F^{'}[-q,p](i\mu_{0})}{F[-q,p](i\mu_{0})}}{\left(F[-q,p](i\mu_{0})w_{1}[-q,p](i\mu_{0})w_{2}[-q,p](i\mu_{0})w_{3}[-q,p](i\mu_{0})\right)^{\frac{1}{2}}} 
 \gamma^{0}\left(-\gamma^{0}u_{n}(\frac{1}{\mu_{0}})\right) 
\]
\[ +\frac{1}{4}\left(\frac{w_{1}[-q,p](i\mu_{0})w_{2}[-q,p](i\mu_{0})w_{3}[-q,p](i\mu_{0})}{F[-q,p](i\mu_{0})}\right)^{\frac{1}{2}} \times 
\]
\[
 \quad  \left(\frac{1}{w_{1}^{2}[-q,p](i\mu_{0})}+\frac{1}{w_{2}^{2}[-q,p](i\mu_{0})}+\frac{1}{w_{3}^{2}[-q,p](i\mu_{0})}\right) \gamma^{1}\gamma^{2}\gamma^{3}\left(-\gamma^{0}u_{n}(\frac{1}{\mu_{0}})\right). 
\]
} 
\smallskip

Now we can use in the latter lemmas \ref{transformationsw_12}, \ref{transformationsw_22}, \ref{transformationsw_32} and  
\ref{transformationsFSlemma}, which show how the modular transformation $S(i \mu) = i/ \mu$ affects 
the functions $w_j[p, q]$ and $F[p, q]$. This yields 
{\small 
\begin{eqnarray*}
\left(-\tilde{D}[-q,p]\left(-\gamma^{0}u_{n}(\frac{1}{\mu})\right)\right)\mid_{\mu=\mu_{0}} =
\end{eqnarray*}
\[
-(F[p,q](\frac{i}{\mu_{0}})w_{1}[p,q](\frac{i}{\mu_{0}})w_{2}[p,q](\frac{i}{\mu_{0}})w_{3}[p,q](\frac{i}{\mu_{0}}))^{-\frac{1}{2}}  \left(-\partial_{\text{\ensuremath{\frac{1}{\mu}}}}\gamma^{0}\left(-\gamma^{0}u_{n}(\frac{1}{\mu})\right)\mid_{\mu=\mu_{0}}\right) 
\]
\[
+\sin\psi\cdot\left(\frac{F[p,q](\frac{i}{\mu_{0}})w_{2}[p,q](\frac{i}{\mu_{0}})w_{3}[p,q](\frac{i}{\mu_{0}})}{w_{1}[p,q](\frac{i}{\mu_{0}})}\right)^{-\frac{1}{2}}\gamma^{0}\gamma^{1}\gamma^{0}\partial_{\eta}\left(-\gamma^{0}u_{n}(\frac{1}{\mu})\right)\mid_{\mu=\mu_{0}} 
\]
\[
 -\cos\psi\cdot\left(\frac{F[p,q](\frac{i}{\mu_{0}})w_{1}[p,q](\frac{i}{\mu_{0}})w_{3}[p,q](\frac{i}{\mu_{0}})}{w_{2}[p,q](\frac{i}{\mu_{0}})}\right)^{-\frac{1}{2}}\gamma^{0}\gamma^{2}\gamma^{0}\partial_{\eta}\left(-\gamma^{0}u_{n}(\frac{1}{\mu})\right)\mid_{\mu=\mu_{0}} 
\]
\[ 
-\cos\psi\csc\eta\cdot\left(\frac{F[p,q](\frac{i}{\mu_{0}})w_{2}[p,q](\frac{i}{\mu_{0}})w_{3}[p,q](\frac{i}{\mu_{0}})}{w_{1}[p,q](\frac{i}{\mu_{0}})}\right)^{-\frac{1}{2}}  \gamma^{0}\gamma^{1}\gamma^{0}\partial_{\phi}\left(-\gamma^{0}u_{n}(\frac{1}{\mu})\right)\mid_{\mu=\mu_{0}}
\]
\[
-\sin\psi\csc\eta\cdot\left(\frac{F[p,q](\frac{i}{\mu_{0}})w_{1}[p,q](\frac{i}{\mu_{0}})w_{3}[p,q](\frac{i}{\mu_{0}})}{w_{2}[p,q](\frac{i}{\mu_{0}})}\right)^{-\frac{1}{2}}\gamma^{0}\gamma^{2}\gamma^{0}\partial_{\phi}\left(-\gamma^{0}u_{n}(\frac{1}{\mu})\right)\mid_{\mu=\mu_{0}}
\]
\[
+\cos\psi\cot\eta\cdot\left(\frac{F[p,q](\frac{i}{\mu_{0}})w_{2}[p,q](\frac{i}{\mu_{0}})w_{3}[p,q](\frac{i}{\mu_{0}})}{w_{1}[p,q](\frac{i}{\mu_{0}})}\right)^{-\frac{1}{2}}\gamma^{0}\gamma^{1}\gamma^{0}\partial_{\psi}\left(-\gamma^{0}u_{n}(\frac{1}{\mu})\right)\mid_{\mu=\mu_{0}}
\]
\[
+\sin\psi\cot\eta\cdot\left(\frac{F[p,q](\frac{i}{\mu_{0}})w_{1}[p,q](\frac{i}{\mu_{0}})w_{3}[p,q](\frac{i}{\mu_{0}})}{w_{2}[p,q](\frac{i}{\mu_{0}})}\right)^{-\frac{1}{2}}\gamma^{0}\gamma^{2}\gamma^{0}\partial_{\psi}\left(-\gamma^{0}u_{n}(\frac{1}{\mu})\right)\mid_{\mu=\mu_{0}}
\]
\[
-\left(\frac{F[p,q](\frac{i}{\mu_{0}})w_{1}[p,q](\frac{i}{\mu_{0}})w_{2}[p,q](\frac{i}{\mu_{0}})}{w_{3}[p,q](\frac{i}{\mu_{0}})}\right)^{-\frac{1}{2}}\gamma^{0}\gamma^{3}\gamma^{0}\partial_{\psi}\left(-\gamma^{0}u_{n}(\frac{1}{\mu})\right)\mid_{\mu=\mu_{0}}
\]
\[
-\frac{1}{4}\frac{-\frac{w_{3}^{'}[p,q](\frac{i}{\mu_{0}})}{w_{3}[p,q](\frac{i}{\mu_{0}})}-\frac{2}{\mu_{0}}-\frac{w_{2}^{'}[p,q](\frac{i}{\mu_{0}})}{w_{2}[p,q](\frac{i}{\mu_{0}})}-\frac{2}{\mu_{0}}-\frac{w_{1}^{'}[p,q](\frac{i}{\mu_{0}})}{w_{1}[p,q](\frac{i}{\mu_{0}})}-\frac{2}{\mu_{0}}-3\frac{F^{'}[p,q](\frac{i}{\mu_{0}})}{F[p,q](\frac{i}{\mu_{0}})}+\frac{6}{\mu_{0}}}{\left(F[p,q](\frac{i}{\mu_{0}})w_{1}[p,q](\frac{i}{\mu_{0}})w_{2}[p,q](\frac{i}{\mu_{0}})w_{3}[p,q](\frac{i}{\mu_{0}})\right)^{\frac{1}{2}}}\times 
\]
\[
 \qquad \qquad \qquad \qquad \qquad \qquad \qquad \qquad\qquad \qquad\qquad\qquad\qquad \qquad \gamma^{0}\left(-\gamma^{0}u_{n}(\frac{1}{\mu_{0}})\right)+
\]
\[
\left(\frac{w_{1}[p,q](\frac{i}{\mu_{0}})w_{2}[p,q](\frac{i}{\mu_{0}})w_{3}[p,q](\frac{i}{\mu_{0}})}{F[p,q](\frac{i}{\mu_{0}})}\right)^{\frac{1}{2}}\cdot\left(\frac{1}{w_{1}^{2}[p,q](\frac{i}{\mu_{0}})}+\frac{1}{w_{2}^{2}[p,q](\frac{i}{\mu_{0}})}+\frac{1}{w_{3}^{2}[p,q](\frac{i}{\mu_{0}})}\right) 
\]
\[  
\qquad \qquad  \qquad \qquad \qquad \qquad \qquad \qquad \qquad \qquad \qquad \qquad \qquad  \times \frac{1}{4} \gamma^{1}\gamma^{2}\gamma^{3}\left(-\gamma^{0}u_{n}(\frac{1}{\mu_{0}})\right)
\]
\begin{eqnarray*}
&=&-\gamma^{0}\left(\tilde{D}[p,q]u_{n}(\mu)\right)\mid_{\mu=\frac{1}{\mu_{0}}} \\ 
&=& \lambda_{n}\left(-\gamma^{0}u_{n}(\frac{1}{\mu}) \right )\mid_{\mu=\mu_{0}}.
\end{eqnarray*}
}

\smallskip

Thus we showed that, for any eigenspinor $u_{n}(\mu)$ of $\tilde{D}[p,q]$ with
eigenvalue $\lambda_{n}$, the spinor $-\gamma^{0}u_{n}(\frac{1}{\mu})$
is an eigenspinor of $-\tilde{D}[-q,p]$ with the same eigenvalue
$\lambda_{n}$. From the above equations, one can also verify immediately
that for any eigenspinor $u_{n}(\mu)$ of $-\tilde{D}[-q,p]$ with
eigenvalue $\lambda_{n}$, the spinor $\gamma^{0}u_{n}(\frac{1}{\mu})$
is an eigenspinor of $\tilde{D}[p,q]$ with the same eigenvalue $\lambda_{n}$.
Consequently, we see that there is a bijection $\lambda_{n}\mapsto-\lambda_{n}$
between the eigenvalues of $\tilde{D}[p,q]$ and those of $-\tilde{D}[-q,p]$.
Since the eigenspinors of $\tilde{D}^{2}$ are exactly the eigenspinors
of $\tilde{D}$ with the corresponding eigenvalues squared, it follows
that $\tilde{D}^{2}[p,q]$ and $\tilde{D}^{2}[-q,p]$ are isospectral.

\smallskip

In a similar manner, we also have that
{\small
\[
\left(\tilde{D}[p,q+p+\frac{1}{2}]u_{n}(\mu-i)\right)\mid_{\mu=\mu_{0}}=
\]
\[
(F[p,q+p+\frac{1}{2}](i\mu_{0})w_{1}[p,q+p+\frac{1}{2}](i\mu_{0})w_{2}[p,q+p+\frac{1}{2}](i\mu_{0}) 
  w_{3}[p,q+p+\frac{1}{2}](i\mu_{0}))^{-\frac{1}{2}}\times 
\]
\[\qquad \qquad  \qquad \qquad \qquad \qquad \qquad \qquad \qquad \qquad \qquad \qquad \qquad
\gamma^{0}\partial_{\text{\ensuremath{\mu}}}u_{n}(\mu-i)\mid_{\mu=\mu_{0}}
\]
\[
-\sin\psi\cdot\left(\frac{F[p,q+p+\frac{1}{2}](i\mu_{0})w_{2}[p,q+p+\frac{1}{2}](i\mu_{0})w_{3}[p,q+p+\frac{1}{2}](i\mu_{0})}{w_{1}[p,q+p+\frac{1}{2}](i\mu_{0})}\right)^{-\frac{1}{2}}\]
\[ \qquad \qquad  \qquad \qquad \qquad \qquad \qquad \qquad \qquad \qquad \qquad \qquad \qquad
\gamma^{1}\partial_{\eta}u_{n}(\mu-i)\mid_{\mu=\mu_{0}}
\]
\[
+\cos\psi\cdot\left(\frac{F[p,q+p+\frac{1}{2}](i\mu_{0})w_{1}[p,q+p+\frac{1}{2}](i\mu_{0})w_{3}[p,q+p+\frac{1}{2}](i\mu_{0})}{w_{2}[p,q+p+\frac{1}{2}](i\mu_{0})}\right)^{-\frac{1}{2}} \times \]
\[
\qquad \qquad  \qquad \qquad \qquad \qquad \qquad \qquad \qquad \qquad \qquad \qquad \qquad
\gamma^{2}\partial_{\eta}u_{n}(\mu-i)\mid_{\mu=\mu_{0}}
\]
\[
+\cos\psi\csc\eta\cdot\left(\frac{F[p,q+p+\frac{1}{2}](i\mu_{0})w_{2}[p,q+p+\frac{1}{2}](i\mu_{0})w_{3}[p,q+p+\frac{1}{2}](i\mu_{0})}{w_{1}[p,q+p+\frac{1}{2}](i\mu_{0})}\right)^{-\frac{1}{2}} \times 
\]
\[
\qquad \qquad  \qquad \qquad \qquad \qquad \qquad \qquad \qquad \qquad \qquad \qquad \qquad \gamma^{1}\partial_{\phi}u_{n}(\mu-i)\mid_{\mu=\mu_{0}}
\]
\[
+\sin\psi\csc\eta\cdot\left(\frac{F[p,q+p+\frac{1}{2}](i\mu_{0})w_{1}[p,q+p+\frac{1}{2}](i\mu_{0})w_{3}[p,q+p+\frac{1}{2}](i\mu_{0})}{w_{2}[p,q+p+\frac{1}{2}](i\mu_{0})}\right)^{-\frac{1}{2}} \times 
\]
\[
\qquad \qquad  \qquad \qquad \qquad \qquad \qquad \qquad \qquad \qquad \qquad \qquad \qquad \gamma^{2}\partial_{\phi}u_{n}(\mu-i)\mid_{\mu=\mu_{0}}
\]
\[
-\cos\psi\cot\eta\cdot\left(\frac{F[p,q+p+\frac{1}{2}](i\mu_{0})w_{2}[p,q+p+\frac{1}{2}](i\mu_{0})w_{3}[p,q+p+\frac{1}{2}](i\mu_{0})}{w_{1}[p,q+p+\frac{1}{2}](i\mu_{0})}\right)^{-\frac{1}{2}}\times 
\]
\[
\qquad \qquad  \qquad \qquad \qquad \qquad \qquad \qquad \qquad \qquad \qquad \qquad \qquad
\gamma^{1}\partial_{\psi}u_{n}(\mu-i)\mid_{\mu=\mu_{0}}
\]
\[
-\sin\psi\cot\eta\cdot\left(\frac{F[p,q+p+\frac{1}{2}](i\mu_{0})w_{1}[p,q+p+\frac{1}{2}](i\mu_{0})w_{3}[p,q+p+\frac{1}{2}](i\mu_{0})}{w_{2}[p,q+p+\frac{1}{2}](i\mu_{0})}\right)^{-\frac{1}{2}} \times
\]
\[\qquad \qquad  \qquad \qquad \qquad \qquad \qquad \qquad \qquad \qquad \qquad \qquad \qquad
\gamma^{2}\partial_{\psi}u_{n}(\mu-i)\mid_{\mu=\mu_{0}}
\]
\[
\quad +\left(\frac{F[p,q+p+\frac{1}{2}](i\mu_{0})w_{1}[p,q+p+\frac{1}{2}](i\mu_{0})w_{2}[p,q+p+\frac{1}{2}](i\mu_{0})}{w_{3}[p,q+p+\frac{1}{2}](i\mu_{0})}\right)^{-\frac{1}{2}} \times 
\]
\[\qquad \qquad  \qquad \qquad \qquad \qquad \qquad \qquad \qquad \qquad \qquad \qquad \qquad
\gamma^{3}\partial_{\psi}u_{n}(\mu-i)\mid_{\mu=\mu_{0}}
\]
\[ 
+\frac{1}{4}\frac{\frac{w_{1}^{'}[p,q+p+\frac{1}{2}](i\mu_{0})}{w_{1}[p,q+p+\frac{1}{2}](i\mu_{0})}+\frac{w_{2}^{'}[p,q+p+\frac{1}{2}](i\mu_{0})}{w_{2}[p,q+p+\frac{1}{2}](i\mu_{0})}+\frac{w_{3}^{'}[p,q+p+\frac{1}{2}](i\mu_{0})}{w_{3}[p,q+p+\frac{1}{2}](i\mu_{0})}+3\frac{F^{'}[p,q+p+\frac{1}{2}](i\mu_{0})}{F[p,q+p+\frac{1}{2}](i\mu_{0})}}{\left(F[p,q+p+\frac{1}{2}](i\mu_{0})w_{1}[p,q+p+\frac{1}{2}](i\mu_{0})w_{2}(i\mu_{0})w_{3}[p,q+p+\frac{1}{2}](i\mu_{0})\right)^{\frac{1}{2}}} \times 
\]
\[\qquad \qquad  \qquad \qquad \qquad \qquad \qquad \qquad \qquad \qquad \qquad \qquad \qquad \gamma^{0}u_{n}(\mu_{0}-i)
\]
\[
-\frac{1}{4}\left(\frac{w_{1}[p,q+p+\frac{1}{2}](i\mu_{0})w_{2}[p,q+p+\frac{1}{2}](i\mu_{0})w_{3}[p,q+p+\frac{1}{2}](i\mu_{0})}{F[p,q+p+\frac{1}{2}](i\mu_{0})}\right)^{\frac{1}{2}}\times  
\]
\[
 \left(\frac{1}{w_{1}^{2}[p,q+p+\frac{1}{2}](i\mu_{0})}+\frac{1}{w_{2}^{2}[p,q+p+\frac{1}{2}](i\mu_{0})}+\frac{1}{w_{3}^{2}[p,q+p+\frac{1}{2}](i\mu_{0})}\right)\gamma^{1}\gamma^{2}\gamma^{3}u_{n}(\mu_{0}-i).
\]
}

\smallskip

Therefore using lemmas \ref{transformationsw_j1} and \ref{transformationsF} in the latter, which explain how the 
parameters of the functions $w_j$ and $F$ are affected by the modular transformation $T_1(i \mu)=i \mu+1$, 
we can  write: 
{\small 
\[
\left(\tilde{D}[p,q+p+\frac{1}{2}]u_{n}(\mu-i)\right)\mid_{\mu=\mu_{0}}=
\]
\[
(F[p,q](i\mu_{0}+1)w_{1}[p,q](i\mu_{0}+1)w_{2}[p,q](i\mu_{0}+1)w_{3}[p,q](i\mu_{0}+1))^{-\frac{1}{2}}\cdot\gamma^{0}\partial_{\text{\ensuremath{\mu}}}u_{n}(\mu)\mid_{\mu=\mu_{0}-i}
\]
\[
 -\sin\psi\cdot\left(\frac{F[p,q](i\mu_{0}+1)w_{2}[p,q](i\mu_{0}+1)w_{3}[p,q](i\mu_{0}+1)}{w_{1}[p,q](i\mu_{0}+1)}\right)^{-\frac{1}{2}}\gamma^{1}\partial_{\eta}u_{n}(\mu)\mid_{\mu=\mu_{0}-i}
\]
\[+\cos\psi\cdot\left(\frac{F[p,q](i\mu_{0}+1)w_{1}[p,q](i\mu_{0}+1)w_{3}[p,q](i\mu_{0}+1)}{w_{2}[p,q](i\mu_{0}+1)}\right)^{-\frac{1}{2}}\gamma^{2}\partial_{\eta}u_{n}(\mu)\mid_{\mu=\mu_{0}-i}
\]
\[
 +\cos\psi\csc\eta\cdot\left(\frac{F[p,q](i\mu_{0}+1)w_{2}[p,q](i\mu_{0}+1)w_{3}[p,q](i\mu_{0}+1)}{w_{1}[p,q](i\mu_{0}+1)}\right)^{-\frac{1}{2}}\gamma^{1}\partial_{\phi}u_{n}(\mu)\mid_{\mu=\mu_{0}-i}
\]
\[
+\sin\psi\csc\eta\cdot\left(\frac{F[p,q](i\mu_{0}+1)w_{1}[p,q](i\mu_{0}+1)w_{3}[p,q](i\mu_{0}+1)}{w_{2}[p,q](i\mu_{0}+1)}\right)^{-\frac{1}{2}}\gamma^{2}\partial_{\phi}u_{n}(\mu)\mid_{\mu=\mu_{0}-i}
\]
\[ 
-\cos\psi\cot\eta\cdot\left(\frac{F[p,q](i\mu_{0}+1)w_{2}[p,q](i\mu_{0}+1)w_{3}[p,q](i\mu_{0}+1)}{w_{1}[p,q](i\mu_{0}+1)}\right)^{-\frac{1}{2}}\gamma^{1}\partial_{\psi}u_{n}(\mu)\mid_{\mu=\mu_{0}-i}
\]
\[
-\sin\psi\cot\eta\cdot\left(\frac{F[p,q](i\mu_{0}+1)w_{1}[p,q](i\mu_{0}+1)w_{3}[p,q](i\mu_{0}+1)}{w_{2}[p,q](i\mu_{0}+1)}\right)^{-\frac{1}{2}}\gamma^{2}\partial_{\psi}u_{n}(\mu)\mid_{\mu=\mu_{0}-i}
\]
\[
+\left(\frac{F[p,q](i\mu_{0}+1)w_{1}[p,q](i\mu_{0}+1)w_{2}[p,q](i\mu_{0}+1)}{w_{3}[p,q](i\mu_{0}+1)}\right)^{-\frac{1}{2}}\gamma^{3}\partial_{\psi}u_{n}(\mu)\mid_{\mu=\mu_{0}-i}
\]
\[
+\frac{1}{4}\frac{\frac{w_{1}^{'}[p,q](i\mu_{0}+1)}{w_{1}[p,q](i\mu_{0}+1)}+\frac{w_{2}^{'}[p,q](i\mu_{0}+1)}{w_{2}[p,q](i\mu_{0}+1)}+\frac{w_{3}^{'}[p,q](i\mu_{0}+1)}{w_{3}[p,q](i\mu_{0}+1)}+3\frac{F^{'}[p,q](i\mu_{0}+1)}{F[p,q](i\mu_{0}+1)}}{\left(F[p,q](i\mu_{0}+1)w_{1}[p,q](i\mu_{0}+1)w_{2}(i\mu_{0})w_{3}[p,q](i\mu_{0}+1)\right)^{\frac{1}{2}}}\cdot\gamma^{0}u_{n}(\mu_{0}-i)
\]
\[
 -\frac{1}{4}\left(\frac{w_{1}[p,q](i\mu_{0}+1)w_{2}[p,q](i\mu_{0}+1)w_{3}[p,q](i\mu_{0}+1)}{F[p,q](i\mu_{0}+1)}\right)^{\frac{1}{2}} \times \]
\[
 \qquad \qquad \left(\frac{1}{w_{1}^{2}[p,q](i\mu_{0}+1)}+\frac{1}{w_{2}^{2}[p,q](i\mu_{0}+1)}+\frac{1}{w_{3}^{2}[p,q](i\mu_{0}+1)}\right)
\gamma^{1}\gamma^{2}\gamma^{3}u_{n}(\mu_{0}-i)
\]
\begin{eqnarray*}
&=&\left(\tilde{D}[p,q]u_{n}(\mu)\right)\mid_{\mu=\mu_{0}-i} \\
&=&\lambda_{n}u_{n}(\mu-i)\mid_{\mu=\mu_{0}}.
\end{eqnarray*}
}
\smallskip

Thus the following identity is proved
\[
\tilde{D}[p,q+p+\frac{1}{2}]u_{n}(\mu-i)=\lambda_{n}u_{n}(\mu-i), 
\]
which implies that the operators $\tilde{D}^{2}[p,q+p+\frac{1}{2}]$
and $\tilde{D}^{2}[p,q]$ are isospectral. 

\smallskip 
Finally, it is easy to see that the transformations $u_n(\mu)\mapsto u_n(\mu+i)$ and $u_n(\mu)\mapsto \gamma^1u_n(\frac{1}{\mu})$ are two-sided inverses of the two eigenspinor transformations defined in the theorem, so the eigenspinor transformations are indeed bijections between the sets of eigenspinors.

\end{proof}

\smallskip

Since the kernel $K_t[p, q]$ of the operator $\exp \left ( -t \tilde D^2[p, q] \right )$ can be written 
as a sum that involves its eigenvalues and their corresponding eigenspinors, 
it follows immediately from Theorem \ref{IsoSpecDiracs2paraThm} that 
indeed the heat kernels corresponding to the parameters $(p, q)$ satisfy 
modular transformation properties. 

\smallskip

\begin{corollary} \label{modularheatkernelCor}

With the spatial dependence suppressed, we have 
the following modular transformation properties for  the heat kernel 
$ K_t[p,q](i\mu_1,i\mu_2) $ of the operator $D^2[p, q]$: 
 
\begin{eqnarray*}
K_t[p,q](i\mu_1+1,i\mu_2+1) &=& K_t[p,q+p+\frac{1}{2}](i\mu_1,i\mu_2), \\
K_t[p,q](-\frac{1}{i\mu_1},-\frac{1}{i\mu_2}) &=& (i\mu_2)^2K_t[-q,p](i\mu_1,i\mu_2). 
\end{eqnarray*}

\end{corollary}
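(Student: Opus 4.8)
The plan is to expand the heat kernel in a Mercer-type eigenfunction series and then feed in, term by term, the explicit eigenspinor transformations furnished by Theorem~\ref{IsoSpecDiracs2paraThm}. For $t>0$ the operator $\exp(-t\tilde D^2[p,q])$ is trace class, so if $\{u_n^{[p,q]}\}$ is a complete orthonormal system of eigenspinors of $\tilde D^2[p,q]$ on the bundle over $M_0$, with $\tilde D^2[p,q]u_n^{[p,q]}=\lambda_n^2[p,q]\,u_n^{[p,q]}$, then
\[
K_t[p,q](i\mu_1,i\mu_2)=\sum_n e^{-t\lambda_n^2[p,q]}\;u_n^{[p,q]}(i\mu_1)\,u_n^{[p,q]}(i\mu_2)^{*},
\]
the series converging absolutely and uniformly on compact subsets, so that term-by-term manipulations are legitimate; here the spatial and spinorial dependence is suppressed exactly as in the statement.

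For the first identity I would use that, by Theorem~\ref{IsoSpecDiracs2paraThm}, the map $u(\mu)\mapsto u(\mu-i)$ carries eigenspinors of $\tilde D^2[p,q]$ bijectively onto eigenspinors of $\tilde D^2[p,q+p+\tfrac12]$ preserving the eigenvalues $\lambda_n^2$. The only extra point is that this bijection is unitary between the relevant $L^2$-spaces: the volume densities of $d\tilde s^2[p,q]$ on $M_0$ and of $d\tilde s^2[p,q+p+\tfrac12]$ on $M_1$ match under $\mu\mapsto\mu-i$, which follows from Lemma~\ref{transformationsw_j1} and Lemma~\ref{transformationsF} together with the translation invariance of $d\mu$ and the invariance of the density $F^2w_1w_2w_3$ under the exchange $w_2\leftrightarrow w_3$. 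Hence $\{u_n^{[p,q]}(\,\cdot-i)\}$ is a complete orthonormal eigenbasis for $\tilde D^2[p,q+p+\tfrac12]$; substituting it into the Mercer series for $K_t[p,q+p+\tfrac12]$ and using $i(\mu_j-i)=i\mu_j+1$ gives $K_t[p,q+p+\tfrac12](i\mu_1,i\mu_2)=K_t[p,q](i\mu_1+1,i\mu_2+1)$.

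For the second identity I would run the same argument with the eigenspinor transformation $u(\mu)\mapsto -\gamma^{\,\bullet}u(1/\mu)$ of Theorem~\ref{IsoSpecDiracs2paraThm}, which sends eigenspinors of $\tilde D^2[p,q]$ to eigenspinors of $\tilde D^2[-q,p]$ with the same $\lambda_n^2$, now keeping track of the Jacobian. Combining the $S$-transformation laws of $F$ and of $w_1,w_2,w_3$ (Lemmas~\ref{transformationsw_12}, \ref{transformationsw_22}, \ref{transformationsw_32}, \ref{transformationsFSlemma}) one gets $F^2w_1w_2w_3[p,q](i/\mu)=-\mu^2\,F^2w_1w_2w_3[-q,p](i\mu)$, which together with $d\mu\mapsto -\mu^{-2}\,d\mu$ shows how the volume densities of $d\tilde s^2[p,q]$ and $d\tilde s^2[-q,p]$ are related; the residual factor is precisely the automorphy weight $(i\mu_2)^2$ appearing in the claim, while the Clifford prefactors $-\gamma^{\,\bullet}$ acting on both slots conjugate away under the suppression in the statement. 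Substituting into the Mercer series and recalling $-1/(i\mu_j)=i/\mu_j$ then yields $K_t[p,q](-\tfrac1{i\mu_1},-\tfrac1{i\mu_2})=(i\mu_2)^2K_t[-q,p](i\mu_1,i\mu_2)$.

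The main obstacle is bookkeeping of the measures and automorphy factors: one must verify that the eigenspinor bijections of Theorem~\ref{IsoSpecDiracs2paraThm} are honestly unitary for the $L^2$-inner products determined by the conformally rescaled metrics \eqref{ConformalBianchiIXMetricEq1}, so that orthonormal bases go to orthonormal bases, and one must pin down the exact surviving power of $\mu_2$. This is where the compatibility of the spin structures along the shift $\mu\mapsto\mu-i$ and the inversion $\mu\mapsto1/\mu$ enters — the $\gamma^{\,\bullet}$ in the eigenspinor transformation is that identification, and the intertwining $\tilde D[-q,p](-\gamma^{\,\bullet}u(1/\mu))=\lambda_n(-\gamma^{\,\bullet}u(1/\mu))$ already established in Theorem~\ref{IsoSpecDiracs2paraThm} is exactly what makes the kernel identity go through. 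Convergence issues are harmless for $t>0$ and can be quoted as standard.
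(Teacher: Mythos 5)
Your overall strategy is the same as the paper's: both arguments reduce the kernel identities to the eigenspinor correspondence of Theorem \ref{IsoSpecDiracs2paraThm}. The paper characterizes $K_t$ by its reproducing property $\int_M K_t(x,y)u_n(y)\,dvol(y)=e^{-t\lambda_n^2}u_n(x)$ tested on the transformed eigenspinors and then invokes uniqueness of the heat kernel, while you write out the equivalent Mercer expansion $\sum_n e^{-t\lambda_n^2}u_n(x)u_n(y)^*$ and substitute the transformed basis; these differ only in packaging, and your extra attention to completeness/orthonormality is a reasonable way to make the paper's appeal to uniqueness explicit. For the $T_1$-identity your argument goes through cleanly, since the density $F^2w_1w_2w_3$ and the coordinate measure $d\mu$ are both preserved.

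For the $S$-identity, however, there is a concrete problem in your bookkeeping of the factor $(i\mu_2)^2$, which is the entire content of that identity. You correctly record that $(F^2w_1w_2w_3)[p,q](i/\mu)=-\mu^2\,(F^2w_1w_2w_3)[-q,p](i\mu)$ and that $d(1/\mu)=-\mu^{-2}d\mu$; but the product of these two factors is $1$, so the pullback of the Riemannian volume form of $d\tilde s^2[p,q]$ under $\mu\mapsto 1/\mu$ equals the volume form of $d\tilde s^2[-q,p]$ exactly. If, as you require, the map $u\mapsto-\gamma^0u(1/\mu)$ is then honestly unitary between the two Riemannian $L^2$-spaces, substituting the new orthonormal basis into the Mercer series produces \emph{no} automorphy factor at all (only a conjugation by $\gamma^0$ in the suppressed spinor indices), which contradicts the claimed $(i\mu_2)^2$. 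The factor in the corollary comes from a different normalization: in the paper's proof only the coordinate Jacobian $dvol(1/\mu_2)=-\mu_2^{-2}\,dvol(\mu_2)$ is extracted and moved onto the kernel, i.e.\ $K_t$ is being treated as a density against $d\mu_2$ rather than against the full Riemannian measure, and it is this convention that yields $-\mu_2^{-2}K_t[p,q](-1/(i\mu_1),-1/(i\mu_2))=K_t[-q,p](i\mu_1,i\mu_2)$, hence the weight $(i\mu_2)^2=-\mu_2^2$. As written, your sentence ``the residual factor is precisely the automorphy weight $(i\mu_2)^2$'' is asserted rather than derived and is inconsistent with the preceding computation; to repair the argument you must fix the normalization of $K_t$ (density with respect to which measure in the second slot) and redo the factor count accordingly, after which the rest of your argument matches the paper's.
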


\begin{proof}
Recall that the defining equation for the heat kernel is
\begin{eqnarray*}
\int_{M} K_t[p,q](i\mu_1,i\mu_2)u_n(\mu_2)dvol(\mu_2) = e^{-t\lambda_n^2}u_n(\mu_1), 
\end{eqnarray*}
for any eigenspinor $u_n(\mu)$ of $\tilde{D}^2[p,q]$ acting on $M$. As a result, for $(\mu_1,\mu_2)\in$ $M_1 \times M_1$, we have $(\mu_1-i,\mu_2-i)\in$ $M_0 \times M_0$, so
\begin{eqnarray*}
e^{-t\lambda_n^2}u_n(\mu_1-i) &=& \int_{M_0} K_t[p,q](i\mu_1+1,i\mu_2+1)u_n(\mu_2-i) \, dvol(\mu_2-i)\\
&=& \int_{M_1} K_t[p,q](i\mu_1+1,i\mu_2+1)u_n(\mu_2-i) \,dvol(\mu_2), 
\end{eqnarray*}
where $u_n(\mu-i)$ is the general form of an eigenspinor of $\tilde{D}^2[p,q+p+\frac{1}{2}]$ acting on $M_1$ according to Theorem \ref{IsoSpecDiracs2paraThm}.

\smallskip

Similarly, for $(\mu_1,\mu_2)\in M_2 \times M_2$, we have $(\frac{1}{\mu_1},\frac{1}{\mu_2})\in M_0 \times M_0$, so
\begin{eqnarray*}
e^{-t\lambda_n^2}\left(-\gamma^{0}u_{n}(\frac{1}{\mu_1})\right) &=& 
-\gamma^{0}e^{-t\lambda_n^2}u_{n}(\frac{1}{\mu_2}) \\
&=& \int_{M_0} K_t[p,q](-\frac{1}{i\mu_1},-\frac{1}{i\mu_2})\left(-\gamma^{0}u_{n}(\frac{1}{\mu_2})\right)dvol(\frac{1}{\mu_2})
\end{eqnarray*}
\begin{eqnarray*}
\qquad \qquad &=& \int_{M_2} \big(-\frac{1}{\mu_2^2 }K_t[p,q](-\frac{1}{i\mu_1},-\frac{1}{i\mu_2})\big)\left(-\gamma^{0}u_{n}(\frac{1}{\mu_2})\right)dvol(\mu_2), 
\end{eqnarray*}
where $\left(-\gamma^{0}u_{n}(\frac{1}{\mu})\right)$ is the general form of an eigenspinor of $\tilde{D}^2[-q,p]$ acting on $M_2$ according to Theorem \ref{IsoSpecDiracs2paraThm}. 

\smallskip

Thus we see that $K_t[p,q](i\mu_1+1,i\mu_2+1)$ and $-\frac{1}{\mu_2^2 }K_t[p,q](-\frac{1}{i\mu_1},-\frac{1}{i\mu_2})$ satisfy the defining equations of $K_t[p,q+p+\frac{1}{2}](i\mu_1,i\mu_2)$ and $K_t[-q,p](i\mu_1,i\mu_2)$ respectively. Now, the uniqueness of the heat kernel implies that
\begin{eqnarray*}
K_t[p,q](i\mu_1+1,i\mu_2+1) &=& K_t[p,q+p+\frac{1}{2}](i\mu_1,i\mu_2),\\
K_t[p,q](-\frac{1}{i\mu_1},-\frac{1}{i\mu_2})& = &(i\mu_2)^2K_t[-q,p](i\mu_1,i\mu_2). 
\end{eqnarray*}
\end{proof}

\smallskip 

Having established the modular transformation properties for the heat kernel $K_t[p, q]$, 
we can now show that all of the Seeley-de Witt coefficients $\tilde a_{2n}[p, q]$ inherit 
the same properties from the kernel. 

\begin{corollary} \label{alltermstwoparamodularCor}

For any non-negative integer $n$ we have
\begin{eqnarray*}
\tilde{a}_{2n}[p,q](i\mu+1)&=&\tilde{a}_{2n}[p,q+p+\frac{1}{2}](i\mu), \\
\tilde{a}_{2n}[p,q](\frac{i}{\mu})&=&(i\mu)^{2}\tilde{a}_{2n}[-q,p](i\mu).
\end{eqnarray*}
In addition, at any point $i\mu=P \in\mathbb{H}$, let $v_P(f)$ be the order of zero in $Q$ of the function $f(Q)$, where $Q=e^{-\pi\mu}$. Then we have 
\begin{eqnarray*}
v_P\big(\tilde{a}_{2n}[p,q](Q)\big)=v_P\big(K_t[p,q](Q)\big), 
\end{eqnarray*}
where 
\[
K_t[p,q](Q)=\int_{\mathbb{S}^3}\mathrm{Trace}\big\{K_t[p,q](i\mu,i\mu)\big\}dvol^3.
\] 
So in particular, all of the above Seeley-de Witt coefficients have the same zeros with the same orders in $\mathbb{H}$.

\end{corollary}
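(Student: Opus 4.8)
The plan is to bootstrap both assertions from Corollary~\ref{modularheatkernelCor} together with the small-time expansion of the spatially integrated, fibrewise traced heat kernel. First I would record the identification
\[
K_t[p,q](Q)\ \sim\ t^{-2}\sum_{n\ge 0}\tilde a_{2n}[p,q](Q)\,t^n \qquad (t\to 0^+),
\]
which is simply \eqref{ExpAsympConformalEq} localized to a time slice: by the heat-kernel asymptotics reviewed in Subsection~\ref{WodzickiRes} (formulas \eqref{a_2nformula} and \eqref{DensitiesFormula}) and the absence of spatial dependence of the relevant densities established in the proof of Theorem~\ref{ConformalRationlaityThm}, the coefficient of $t^{n-2}$ in $K_t[p,q](Q)=\int_{\mathbb{S}^3}\mathrm{Trace}\{K_t[p,q](i\mu,i\mu)\}\,dvol^3$ is $\tilde a_{2n}[p,q](Q)$ (in the normalization under which \eqref{ExpAsympConformalEq} holds, i.e.\ up to a fixed nonzero constant that does not affect orders of vanishing). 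Setting $\mu_1=\mu_2=\mu$ in Corollary~\ref{modularheatkernelCor} and using $-1/(i\mu)=i/\mu$ gives $K_t[p,q](i\mu+1)=K_t[p,q+p+\tfrac{1}{2}](i\mu)$ and $K_t[p,q](i/\mu)=(i\mu)^2K_t[-q,p](i\mu)$. The factor $t^{-2}$ is common to both sides, both base manifolds being four-dimensional, and $(i\mu)^2$ is independent of $t$; so comparing the coefficient of each power $t^{n-2}$ and invoking uniqueness of asymptotic expansions yields the transformation laws $\tilde a_{2n}[p,q](i\mu+1)=\tilde a_{2n}[p,q+p+\tfrac{1}{2}](i\mu)$ and $\tilde a_{2n}[p,q](i/\mu)=(i\mu)^2\tilde a_{2n}[-q,p](i\mu)$ for every $n$, which is the first half.

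For the order-of-vanishing statement I would first observe that every $\tilde a_{2n}[p,q]$ is, by Theorem~\ref{ConformalRationlaityThm} and the holomorphy of the theta functions in \eqref{two-parametric}, meromorphic in $Q$ on $\mathbb{H}$ with zeros and poles supported on the discrete set $\mathcal{Z}$ consisting of the zeros of $\vartheta[p,q]$, of $\partial_q\vartheta[p,q]$, and of the three derivative-theta factors entering $w_1,w_2,w_3$ (the Jacobi constants $\vartheta_2,\vartheta_3,\vartheta_4$ being zero-free on $\mathbb{H}$). The same holds for $K_t[p,q](Q)$: on the real locus $\mu>0$, where $\tilde g[p,q]$ is a genuine Riemannian metric on $M_0$, the diagonal heat kernel is a positive-definite endomorphism, so $K_t[p,q](Q)>0$ there, whence $K_t[p,q](Q)$ continues analytically together with the metric coefficients to a function meromorphic on $\mathbb{H}$ with divisor in $\mathcal{Z}$. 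Fixing $P\in\mathcal{Z}$ and a small circle $\gamma$ about the corresponding point $Q_P$ meeting $\mathcal{Z}$ only at $P$, and using that $t^2K_t[p,q]=\tilde a_0[p,q]\,(1+O(t))$ uniformly on $\gamma$ with $\tilde a_0[p,q]=4F^2w_1w_2w_3$ zero-free on $\gamma$, the argument principle forces $v_P(K_t[p,q])=v_P(\tilde a_0[p,q])$ for small $t$; this winding number being integer valued and continuous in $t$, it is constant, so $v_P(K_t[p,q])=v_P(\tilde a_0[p,q])$ for all $t>0$.

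It then remains to prove $v_P(\tilde a_{2n}[p,q])=v_P(\tilde a_0[p,q])$ for every $n$, which is the crux. A bare Laurent analysis of $\tilde a_{2n}=\tilde Q_{2n}/(F^{2n}(w_1w_2w_3)^{3n-1})$ at $P$ shows that, without using any relations, all monomials of $\tilde Q_{2n}$ contribute one and the same order at $P$, so a priori $v_P(\tilde a_{2n})\ge v_P(\tilde a_0)$ could be strict; equality is forced by the fact that $w_1,w_2,w_3,F$ are not free but solve the instanton system \eqref{wjAjDiffEq}--\eqref{HalphenEq} together with the conformal-to-Einstein condition (equivalently, by the heat equation $\partial_\mu\vartheta[p,q]=\tfrac{1}{4\pi}\partial_q^2\vartheta[p,q]$ obeyed by the theta constants), which makes the would-be top-order pieces of $\tilde Q_{2n}$ cancel simultaneously for all $n$. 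The uniform-in-$n$ way to see this is to package the $\tilde a_{2n}$ into the single jointly analytic function $G(t,Q)=t^2K_t[p,q](Q)$, write $G(t,Q)=(Q-Q_P)^{v_P(\tilde a_0)}\Phi_P(t,Q)$ with $\Phi_P$ holomorphic near $(0,Q_P)$ and $\Phi_P(0,Q_P)\ne 0$, and deduce from the isospectral/heat-kernel structure (Theorem~\ref{IsoSpecDiracs2paraThm} upgraded from $\tilde D$ to the resolvent of $\tilde D^2$ and then to $\exp(-t\tilde D^2)$) that $\Phi_P(\,\cdot\,,Q_P)$ has no vanishing Taylor coefficient at $t=0$. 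Together with the previous paragraph this yields $v_P(\tilde a_{2n}[p,q])=v_P(K_t[p,q])$ for all $n$ and all $P\in\mathbb{H}$, hence that all the Seeley-de Witt coefficients have the same zeros with the same multiplicities. I expect the main obstacle to be exactly this last non-vanishing statement: it is not a formal consequence of the expansion, and establishing it requires using the instanton equations (or the Dirac-operator identities at full operator level) in an essential way.
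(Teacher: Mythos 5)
Your treatment of the two transformation laws is correct and coincides with the paper's own proof: set $\mu_1=\mu_2=\mu$ in Corollary \ref{modularheatkernelCor}, integrate the fibrewise trace over $\mathbb{S}^3$, and compare the two small-time expansions coefficient by coefficient using uniqueness of the asymptotic expansion (together with $-1/(i\mu)=i/\mu$).

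The second half of your proposal has a genuine gap, which you flag yourself: after reducing the claim to the statement that $\Phi_P(\cdot,Q_P)$ has no vanishing Taylor coefficient at $t=0$ --- equivalently, that $v_P(\tilde a_{2n})=v_P(\tilde a_0)$ for every $n$ --- you do not prove it, and the route you indicate (cancellations forced by the Halphen/Painlev\'e~VI system, or an operator-level upgrade of Theorem \ref{IsoSpecDiracs2paraThm}) is neither carried out nor what the paper does. The paper never invokes the instanton equations at this point. Its mechanism is the \emph{uniformity} of the heat-kernel asymptotics (Sections 1.1 and 1.7 of \cite{GilBook1}): from the uniform estimate $\bigl|K_t[p,q](Q)-t^{-2}\sum_{n=0}^{N(k)}\tilde a_{2n}[p,q](Q)t^n\bigr|_{\infty,k}<C_kt^k$ one extracts
\[
\tilde a_{2n}[p,q](Q)=\frac{1}{n!}\lim_{t\to0^+}\frac{d^n}{dt^n}\bigl(t^2K_t[p,q](Q)\bigr)
\]
with convergence uniform in $Q$, which licenses interchanging $\lim_{Q\to Q_0}$ with $\lim_{t\to0^+}$. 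Writing $v$ for the order of $K_t$ at $Q_0$ and $v_n$ for that of $\tilde a_{2n}$, the interchange yields
\[
C_{n,t}=\frac{1}{n!}\lim_{t\to0^+}\frac{d^n}{dt^n}\Bigl(t^2C_t[p,q]\lim_{Q\to Q_0}(Q-Q_0)^{v-v_n}\Bigr),
\]
and finiteness plus non-vanishing of $C_{n,t}$ forces $v_n=v$ for every $n$ at once, with no case analysis in $n$ and no appeal to the specific form of $w_j$ and $F$. (One may find this interchange argument terse, and your instinct that something beyond the bare existence of the expansion is needed is not unreasonable --- but what is needed is its uniformity, which your packaging into $G(t,Q)=t^2K_t(Q)$ does not exploit.) Your argument-principle identification of $v_P(K_t)$ with $v_P(\tilde a_0)$ is a sensible supplementary observation, but it only settles the $n=0$ case; the content of the corollary lies in the cases $n\ge1$, and those are exactly what your proposal leaves open.
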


\begin{proof}

The Seeley-de Witt coefficients $\tilde{a}_{2n}[p,q](i\mu)$ can be uniquely defined by the asymptotic expansion
\begin{eqnarray*}
K_t[p,q](Q) \sim t^{-2}\sum_{n=0}^\infty \tilde{a}_{2n}[p,q](Q)t^n,
\end{eqnarray*}
or equivalently,
\begin{eqnarray*}
\int_{\mathbb{S}^3}\mathrm{Trace}\big\{K_t[p,q](i\mu,i\mu)\big\}dvol^3 \sim t^{-2}\sum_{n =0}^\infty \tilde{a}_{2n}[p,q](i\mu)t^n. 
\end{eqnarray*}

\smallskip 

So, using  Corollary \ref{modularheatkernelCor}, we have: 
\[
\int_{\mathbb{S}^3}\mathrm{Trace}\big\{K_t[p,q](i\mu+1,i\mu+1)\big\}dvol^3 
=
\int_{\mathbb{S}^3}\mathrm{Trace}\big\{K_t[p,q+p+\frac{1}{2}](i\mu,i\mu)\big\}dvol^3. 
\]
Since the left and the right hand side of the latter have the following small time asymptotic expansions
respectively,
\[
 t^{-2}\sum_{n = 0}^\infty \tilde{a}_{2n}[p,q](i\mu+1)t^n,  
\qquad 
t^{-2}\sum_{n =0}^\infty \tilde{a}_{2n}[p,q+p+\frac{1}{2}](i\mu)t^n, 
\]
it follows from the uniqueness of the asymptotic expansion that
\[
\tilde{a}_{2n}[p,q](i\mu+1)
=
\tilde{a}_{2n}[p,q+p+\frac{1}{2}](i\mu), \qquad n \in \mathbb{Z}_{\geq 0}. 
\]
Also, in a similar manner, we can write
\begin{eqnarray*}
&& \int_{\mathbb{S}^3}\mathrm{Trace}\big\{K_t[p,q](-\frac{1}{i\mu},-\frac{1}{i\mu})\big\}dvol^3 
 \sim t^{-2}\sum_{n=0}^\infty \tilde{a}_{2n}[p,q](-\frac{1}{i\mu})t^n \\
&& \qquad \sim (i\mu)^{2}\int_{\mathbb{S}^3}\mathrm{Trace}\big\{K_t[-q,p](i\mu,i\mu)\big\}dvol^3 \\
&& \qquad \sim t^{-2}\sum_{n=0}^\infty(i\mu)^{2}\tilde{a}_{2n}[-q,p](i\mu)t^n,
\end{eqnarray*} 
which implies that 
\[
\tilde{a}_{2n}[p,q](\frac{i}{\mu})
=
(i\mu)^{2}\tilde{a}_{2n}[-q,p](i\mu), \qquad n \in \mathbb{Z}_{\geq 0}.
\]

\smallskip 
If $v$ is the order of zero in $Q$ of the function $K_t[p,q](Q)$ at $Q_0$, then
\begin{eqnarray*}
\lim_{Q\rightarrow Q_0}(Q-Q_0)^{-v}K_t[p,q](Q) = C_t[p,q], 
\end{eqnarray*}
for $t\in(0,1)$ and some finite   $C_t[p,q]\neq 0$. 
On the other hand, the asymptotic expansion means that for any integer $k$ there is some $N(k)$ such that
\begin{eqnarray*}
\Big|K_t[p,q](Q)-t^{-2}\sum_{n=0}^{N(k)}\tilde{a}_{2n}[p,q](Q)t^n\Big|_{\infty,k}<C_kt^k, \qquad  0<t<1. 
\end{eqnarray*}
See sections 1.1 and 1.7 of the book \cite{GilBook1} for the latter inequality and the definition 
of the norm $|\cdot|_{\infty, k}$.  
So it follows that
\begin{eqnarray*}
\tilde{a}_{2n}[p,q](Q) = \frac{1}{n!}\lim_{t\rightarrow 0^+}\frac{d^n}{dt^n}\big(t^2K_t[p,q](Q)\big), 
\end{eqnarray*}
where the convergence is uniform. Consequently, suppose that
\begin{eqnarray*}
\lim_{Q\rightarrow Q_0}(Q-Q_0)^{-v_n}\tilde{a}_{2n}[p,q](Q) = C_{n,t}[p,q],
\end{eqnarray*}
for some finite $C_{n,t}[p,q]\neq 0$, then we can switch the order of the two limits below and obtain
\begin{eqnarray*}
C_{n,t}[p,q]&=&\lim_{Q\rightarrow Q_0}(Q-Q_0)^{-v_n}\tilde{a}_{2n}[p,q](Q)\big) \\
&=&\frac{1}{n!}\lim_{Q\rightarrow Q_0}\lim_{t\rightarrow 0^+}\frac{d^n}{dt^n}\left (t^2(Q-Q_0)^{-v_n}K_t[p,q](Q)\right )\\
&=&\frac{1}{n!}\lim_{t\rightarrow 0^+}\frac{d^n}{dt^n}\left (t^2C_t[p,q]\lim_{Q\rightarrow Q_0}(Q-Q_0)^{v-v_n}\right). 
\end{eqnarray*}
As a result, for $C_{n,t}[p,q]$ to be finite and nonzero, we need to have $v_n=v$, which proves that the Seeley-de Witt coefficients have the same  zeros of the same orders.

\end{proof}

\smallskip

As an important remark, it should be mentioned that the proof of the latter corollary 
covers the case of poles of meromorphic functions 
considered as zeros of negative orders.  

\smallskip

We now turn our focus to the one-parametric case. We proved in Theorem \ref{modualra024Soneparathm} that for the one-parametric Bianchi 
IX gravitational instantons \eqref{one-parametric} as well the terms $\tilde a_0[q_0]$, 
$\tilde a_2[q_0]$ and $\tilde a_4[q_0]$  satisfy modular transformation properties. In 
order to show that all of the terms $\tilde a_{2n}[q_0]$ satisfy the properties mentioned 
in this theorem, similarly to the two-parametric case treated so far in this section, 
we can use the isospectrality of the involved Dirac operators.

\smallskip

\begin{theorem} \label{IsoSpecOneParaThm}
For the one-parameter family
of metrics characterized by $q_{0}$, the operators $\tilde{D}^2[q_{0}]$,
$\tilde{D}^2[q_{0}-i]$,  $-q_{0}^{-2}\tilde{D}^2[\frac{1}{q_{0}}]$
are isospectral.
\end{theorem}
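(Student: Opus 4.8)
The plan is to mimic, in the one-parametric setting, the two explicit eigenspinor computations carried out in the proof of Theorem \ref{IsoSpecDiracs2paraThm}, replacing the transformations on the pair $(p,q)$ by the corresponding transformations on the single parameter $q_0$. Concretely, I would first fix $q_0\in\mathbb{C}$ with $i\mu\in\mathbb{H}$ (excluding zeros of the relevant theta functions) and let $u_n(\mu)$, with spatial dependence suppressed, be an eigenspinor of $\tilde D[q_0]$ with $\tilde D[q_0]u_n=\lambda_n u_n$ on a spin bundle over $M_0=I_{(a,b)}\times\mathbb{S}^3$. I would then verify the two claims: $u_n(\mu-i)$ is an eigenspinor of $\tilde D[q_0-i]$ with eigenvalue $\lambda_n$ on $M_1=(a+i,b+i)\times\mathbb{S}^3$, and $-\gamma^0 u_n(1/\mu)$ is an eigenspinor of $-\tilde D[\tfrac{1}{q_0}]$ with eigenvalue $\lambda_n$ on $M_2=(\tfrac1b,\tfrac1a)\times\mathbb{S}^3$. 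Squaring, this gives the stated isospectrality of $\tilde D^2[q_0]$, $\tilde D^2[q_0-i]$ and $-q_0^{-2}\tilde D^2[\tfrac{1}{q_0}]$; note the scalar $-q_0^{-2}$ must be tracked carefully, and it is precisely the factor predicted by the transformation law $F[q_0](i/\mu)=q_0^2\mu^{-2}F[\tfrac{1}{q_0}](i\mu)$ in Lemma \ref{transformationsFlemma} together with the behaviour of the $w_j[q_0]$ in Lemmas \ref{w_1q_0SLem}, \ref{w_2q_0SLem}, \ref{w_3q_0SLem}.

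The key mechanical input is the explicit formula \eqref{DiracConfBianchiIXEq} for $\tilde D$ in terms of $D$ and $F$, together with \eqref{DiracBianchiEq}. For the $T_1$ claim I would substitute $i\mu_0+1$ for $i\mu$ everywhere in $\tilde D[q_0-i]u_n(\mu-i)$, then apply Lemma \ref{transfsw_jT_2} (which says $w_1^{(n)}[q_0](i\mu+1)=w_1^{(n)}[q_0-i](i\mu)$ and swaps $w_2\leftrightarrow w_3$) and the $T_1$ part of Lemma \ref{transformationsFlemma}, observing that the swap of $w_2$ and $w_3$ is compensated by the same relabelling of the $\gamma$-matrix terms that already appeared in the proof of Theorem \ref{IsoSpecDiracs2paraThm}; the chain-rule factor $d\mu/d(\mu-i)=1$ makes the derivative terms transform trivially. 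For the $S$ claim I would substitute $i/\mu_0$ for $i\mu$ in $-\tilde D[\tfrac1{q_0}](-\gamma^0 u_n(1/\mu))$, use the chain rule $\partial_\mu = -\mu^{-2}\partial_{1/\mu}$ to generate the powers of $\mu$, and then invoke Lemmas \ref{w_1q_0SLem}, \ref{w_2q_0SLem}, \ref{w_3q_0SLem} and the $S$ part of Lemma \ref{transformationsFlemma} to rewrite everything in terms of the functions evaluated at $i\mu_0$; the conjugation by $\gamma^0$ flips the signs of the $\gamma^1,\gamma^2,\gamma^3$ terms exactly as in the two-parametric case, and the $q_0$-dependent scalars collapse into the single overall factor $-q_0^{-2}$.

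Finally I would note that the eigenspinor maps $u_n(\mu)\mapsto u_n(\mu-i)$ and $u_n(\mu)\mapsto -\gamma^0 u_n(1/\mu)$ have the obvious two-sided inverses $u_n(\mu)\mapsto u_n(\mu+i)$ and $u_n(\mu)\mapsto \gamma^0 u_n(1/\mu)$, so they are bijections between the respective eigenspaces and the spectra (with multiplicities) coincide. The main obstacle I anticipate is purely bookkeeping: making sure that the combination of the chain-rule powers of $\mu$ (which in the two-parametric proof already produced the binomial-type coefficients $20,120,240,\dots$ in Lemmas \ref{transformationsw_12}--\ref{transformationsw_32}) together with the extra $q_0$-powers coming from $F[q_0]$ and the $w_j[q_0]$ cancel to leave exactly the clean relations $\tilde D[q_0-i]u_n(\mu-i)=\lambda_n u_n(\mu-i)$ and $-\tilde D[\tfrac1{q_0}](-\gamma^0 u_n(1/\mu))=\lambda_n(-\gamma^0 u_n(1/\mu))$ with scalar factor $-q_0^{-2}$ in the squared version; since the structure is identical to Theorem \ref{IsoSpecDiracs2paraThm}, I would present the one-parametric computation more briefly, pointing out only where the $q_0$-scalars enter and referring to the displayed computation in the proof of Theorem \ref{IsoSpecDiracs2paraThm} for the term-by-term matching.
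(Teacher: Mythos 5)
Your proposal takes essentially the same route as the paper's own proof (carried out in Appendix \ref{IsoSpecOneParaThmPfappendix}): substitute into the explicit formula \eqref{DiracConfBianchiIXEq}, apply Lemmas \ref{transfsw_jT_2}, \ref{w_1q_0SLem}, \ref{w_2q_0SLem}, \ref{w_3q_0SLem} and \ref{transformationsFlemma}, and conclude that $u_n(\mu-i)$ and $-\gamma^0u_n(1/\mu)$ are eigenspinors of the transformed operators, with the two-sided inverses giving bijections of eigenspaces. One correction to the way you state the intermediate claim for the $S$-transformation: the first-order relation must read
\[
i q_0^{-1}\,\tilde D[\tfrac{1}{q_0}]\bigl(-\gamma^0 u_n(\tfrac{1}{\mu})\bigr)=\lambda_n\bigl(-\gamma^0 u_n(\tfrac{1}{\mu})\bigr),
\]
not $-\tilde D[\tfrac{1}{q_0}]\bigl(-\gamma^0 u_n(\tfrac{1}{\mu})\bigr)=\lambda_n\bigl(-\gamma^0 u_n(\tfrac{1}{\mu})\bigr)$ as written; if the latter held, squaring would make $\tilde D^2[\tfrac{1}{q_0}]$ itself isospectral to $\tilde D^2[q_0]$, and the factor $-q_0^{-2}$ that you correctly anticipate could never arise. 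The scalar $iq_0^{-1}$ (rather than the $-1$ of the two-parametric case) is forced by $F[q_0](i/\mu)=q_0^{2}\mu^{-2}F[\tfrac{1}{q_0}](i\mu)$ versus $F[p,q](i/\mu)=-\mu^{-2}F[-q,p](i\mu)$, and its square is exactly $-q_0^{-2}$. With that adjustment your outline matches the paper's argument.
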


\begin{proof}
It is given in Appendix \ref{IsoSpecOneParaThmPfappendix}.
\end{proof}

An immediate corollary of the latter theorem is the following statement for all of the 
Seeley-de Witt coefficients $\tilde a_{2n}[q_0]$. 

\smallskip

\begin{theorem}
For any non-negative integer $n$, we have: 
\begin{eqnarray*}
\tilde{a}_{2n}[q_{0}](i\mu+1)&=&\tilde{a}_{2n}[q_{0}-i](i\mu), \\
\tilde{a}_{2n}[q_{0}](\frac{i}{\mu})&=&(-1)^{n+1}q_{0}^{4-2n}\mu^{2}\tilde{a}_{2n}[\frac{1}{q_{0}}](i\mu).
\end{eqnarray*}

\begin{proof}

Since the operators  
$\tilde{D}^{2}[q_{0}],$ $\tilde{D}^{2}[q_{0}-i]$ and  $-q_{0}^{-2}\tilde{D}^{2}[\frac{1}{q_{0}}]$ 
are  isospectral, they have 
the same small time heat kernel expansions, namely that, for all non-negative integers $n$ we have 
\[
\tilde{a}_{2n}[q_{0}]=(-1)^{n}q_{0}^{4-2n}\tilde{a}_{2n}[\frac{1}{q_{0}}]=\tilde{a}_{2n}[q_{0}-i].
\]
Therefore, for arbitrary real numbers $a$ and $b$ we have 
\begin{eqnarray*}
\int_{a}^{b}d\mu\cdot\tilde{a}_{2n}[q_{0}](i\mu) 
&=& \int_{\frac{1}{b}}^{\frac{1}{a}}d\frac{1}{\mu}\cdot(-1)^{n}q_{0}^{4-2n}\tilde{a}_{2n}[\frac{1}{q_{0}}](\frac{i}{\mu}) \\
&=&\int_{a}^{b}d\mu\cdot\left((-1)^{n+1}\mu^{-2}q_{0}^{4-2n}\tilde{a}_{2n}[\frac{1}{q_{0}}](\frac{i}{\mu})\right) \\
&=&\int_{a+i}^{b+i}d(\mu+i)\cdot\tilde{a}_{2n}[q_{0}-i](i(\mu+i)) \\
&=&\int_{a}^{b}d\mu\cdot\tilde{a}_{2n}[q_{0}-i](i\mu-1).  
\end{eqnarray*}
This show that 
\[
\tilde{a}_{2n}[q_{0}](i\mu)=(-1)^{n+1}\mu^{-2}q_{0}^{4-2n}\tilde{a}_{2n}[\frac{1}{q_{0}}]=\tilde{a}_{2n}[q_{0}-i](i\mu-1), 
\]
which is equivalent to the statement of this theorem.

\end{proof}

\end{theorem}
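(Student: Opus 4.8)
The plan is to derive both identities from the isospectrality of Theorem \ref{IsoSpecOneParaThm}, following the pattern used for the two-parametric case in Corollary \ref{modularheatkernelCor} and Corollary \ref{alltermstwoparamodularCor}. First I would pass to the small-time expansion \eqref{ExpAsympConformalEq}: since $\tilde D^2[q_0]$ and $\tilde D^2[q_0-i]$ are isospectral they have the same heat trace, so the corresponding spatially integrated Seeley-de Witt coefficients agree, $\tilde a_{2n}[q_0]=\tilde a_{2n}[q_0-i]$; and since $\tilde D^2[q_0]$ is isospectral to $-q_0^{-2}\tilde D^2[\tfrac1{q_0}]$, rescaling a Laplace-type operator on a $4$-manifold by a scalar $c$ replaces the coefficient of $t^{k}$ in $t^{-2}\sum a_{2k}t^{k}$ by $c^{\,k-2}$ times itself, so with $c=-q_0^{-2}$ one gets $\tilde a_{2n}[q_0]=(-1)^{n}q_0^{4-2n}\tilde a_{2n}[\tfrac1{q_0}]$ for the integrated coefficients.

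The second and main step is to upgrade these equalities between integrated coefficients to the pointwise identities in the statement. By Theorem \ref{ConformalRationlaityThm} and the $SU(2)$-invariance, each $\tilde a_{2n}[q_0]$ is, modulo integration in $\mu$, a genuine real-analytic function $\tilde a_{2n}[q_0](i\mu)$ of $\mu$ alone, so the equalities above say that $\int_{a}^{b}\tilde a_{2n}[q_0](i\mu)\,d\mu$ matches the corresponding integral for the transformed parameter over the transformed horizontal path $I_{(a,b)}\subset\mathbb H$. For $T_1$ the path is translated to $I_{(a+i,b+i)}$ and $\mu\mapsto\mu-i$ is measure preserving, so comparing integrands over an arbitrary interval yields $\tilde a_{2n}[q_0](i\mu+1)=\tilde a_{2n}[q_0-i](i\mu)$. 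For $S$ the path becomes $I_{(1/b,1/a)}$ and the substitution $\nu=1/\mu$ contributes the Jacobian $d\nu=-\mu^{-2}\,d\mu$ together with the swap of endpoints; carrying this out against the factor $(-1)^{n}q_0^{4-2n}$ gives $\int_a^b\tilde a_{2n}[q_0](i\mu)\,d\mu=(-1)^{n+1}q_0^{4-2n}\int_a^b\mu^{-2}\tilde a_{2n}[\tfrac1{q_0}](i/\mu)\,d\mu$, and since this holds for every $I_{(a,b)}$ with continuous integrands I conclude $\tilde a_{2n}[q_0](i\mu)=(-1)^{n+1}q_0^{4-2n}\mu^{-2}\tilde a_{2n}[\tfrac1{q_0}](i/\mu)$, i.e. $\tilde a_{2n}[q_0](i/\mu)=(-1)^{n+1}q_0^{4-2n}\mu^{2}\tilde a_{2n}[\tfrac1{q_0}](i\mu)$ after $\mu\mapsto1/\mu$. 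As a cross-check I would also run the kernel-level argument of Corollary \ref{modularheatkernelCor}: the eigenspinor correspondences underlying Theorem \ref{IsoSpecOneParaThm} (of the same Clifford-twisted-reflection type as in Theorem \ref{IsoSpecDiracs2paraThm}) give transformation laws for $K_t[q_0](i\mu_1,i\mu_2)$, hence for $K_t[q_0](i\mu)=\int_{\mathbb{S}^3}\mathrm{Trace}\{K_t[q_0](i\mu,i\mu)\}\,dvol^3$, and uniqueness of the coefficients in $K_t[q_0](i\mu)\sim t^{-2}\sum\tilde a_{2n}[q_0](i\mu)t^n$ returns the same two identities, with the forced rescaling of $t$ producing the $q_0^{4-2n}$ and the density Jacobian the $\mu^{2}$.

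I expect the main obstacle to be the sign and Jacobian bookkeeping in the $S$-transformation: one must correctly combine the orientation behaviour of the real $\mu$-path under $\mu\mapsto1/\mu$, the Jacobian $d(i/\mu)=-\mu^{-2}\,d(i\mu)$, and the scalar rescaling $-q_0^{-2}$ so that the isolated factor comes out as $(-1)^{n+1}q_0^{4-2n}\mu^{2}$ rather than $(-1)^{n}q_0^{4-2n}\mu^{2}$. A subsidiary point is to be scrupulous that ``isospectral'' for these operators over the interval factors is used only through the resulting equality of heat-trace densities, which is exactly what the $SU(2)$-invariance legitimizes.
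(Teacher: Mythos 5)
Your proposal is correct and follows essentially the same route as the paper: isospectrality of $\tilde{D}^2[q_0]$, $\tilde{D}^2[q_0-i]$ and $-q_0^{-2}\tilde{D}^2[\tfrac{1}{q_0}]$ gives equality of the integrated Seeley--de Witt coefficients up to the rescaling factor $(-1)^n q_0^{4-2n}$, and comparing the resulting integrals over arbitrary horizontal intervals (a translation for $T_1$, the substitution $\nu=1/\mu$ for $S$) upgrades this to the pointwise identities. The kernel-level cross-check you mention is the analogue of Corollary 7.1, which the paper carries out only for the two-parametric family; for the present theorem the paper uses exactly your integral-comparison argument, with the same sign and Jacobian bookkeeping in the $S$-step, consistent with the directly computed cases $n=0,1,2$.
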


\smallskip

\section{Modular  forms arising from $\tilde a_{2n}[p, q]$}
\label{ModularFormsSec}

\smallskip 

In this section, we use the modular transformation properties of the 
Seeley-de Witt coefficients studied in the previous sections to show that when 
the parameters of the metric are rational in the two-parametric case, they give rise to vector-valued 
modular forms. Then we show that by a summation over a finite orbit of 
the parameters, they give rise to ordinary modular functions. At the end 
we investigate their connection with well-known modular forms. Indeed, 
we show that, in examples of two general cases, one with poles at infinity and the other 
with no poles at infinity, the modular functions corresponding to the  
Seeley-de Witt coefficients   land in a direct way in the modular forms 
of weight 14 or in the cusp forms of weight 18.

\smallskip

\subsection{Vector-valued and ordinary modular functions from $\tilde a_{2n}[p, q]$}

The following lemma shows the periodicity of all Seeley-de Witt coefficients 
$\tilde a_{2n}[p, q]$ in both of the parameters of the metric with period 1. 
This is a crucial step for showing that each $\tilde a_{2n}$ is defining 
a vector-valued modular form with respect to a finite dimensional representation 
of the modular group $PSL_2(\mathbb{Z})$. More importantly, it also allows one 
to construct ordinary modular functions from each $\tilde a_{2n}[p, q]$, which 
can then be related to well-known modular forms. 

\smallskip

\begin{lemma} \label{periodicityinbothparametersLemma}
For any non-negative integer $n$ and any parameters $(p, q)$ 
of the Bianchi IX gravitational instantons we have: 
\[
\tilde a_{2n}[p+1, q] = \tilde a_{2n}[p, q+1] = \tilde a_{2n}[p, q],
\]
\end{lemma}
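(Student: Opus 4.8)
The plan is to reduce the claim to the observation that each of the two unit shifts, $p\mapsto p+1$ and $q\mapsto q+1$, leaves the metric \eqref{ConformalBianchiIXMetricEq2} literally unchanged once the solutions \eqref{two-parametric} are substituted for $w_1,w_2,w_3,F$; the invariance of $\tilde a_{2n}$, which is a local invariant of that metric, is then automatic.

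First I would compute, directly from \eqref{two-parametric} and the quasi-periodicity identities of Lemma \ref{transformationsvartheta} (applied with first argument $p$ as well as $p+\tfrac{1}{2}$ and with second argument $q$ as well as $q+\tfrac{1}{2}$, and using that $\vartheta_2,\vartheta_3,\vartheta_4$ do not involve the parameters $p,q$), the effect of the two shifts on the four functions. The outcome is
\[
w_1[p,q+1]=w_1[p,q],\quad w_2[p,q+1]=-w_2[p,q],\quad w_3[p,q+1]=-w_3[p,q],\quad F[p,q+1]=F[p,q],
\]
\[
w_1[p+1,q]=-w_1[p,q],\quad w_2[p+1,q]=-w_2[p,q],\quad w_3[p+1,q]=w_3[p,q],\quad F[p+1,q]=F[p,q],
\]
as identities between holomorphic functions of $\mu$ on $\Re(\mu)>0$, so that the same sign patterns propagate to all $\mu$-derivatives. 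The only delicate point is the interplay of the half-integer shifts with the $e^{\pi i p}$ normalization; for instance, under $q\mapsto q+1$ the numerator of $w_2$ becomes $\partial_q\vartheta[p+\tfrac{1}{2},(q+\tfrac{1}{2})+1]=e^{2\pi i(p+\frac{1}{2})}\partial_q\vartheta[p+\tfrac{1}{2},q+\tfrac{1}{2}]=-e^{2\pi i p}\,\partial_q\vartheta[p+\tfrac{1}{2},q+\tfrac{1}{2}]$ while its denominator becomes $e^{\pi i p}\vartheta[p,q+1]=e^{3\pi i p}\vartheta[p,q]$, giving the factor $-1$; the remaining five cases are entirely analogous and each fits on one line.

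It then remains to observe that in both shifts the triple $(w_1,w_2,w_3)$ is multiplied by a sign vector $(\epsilon_1,\epsilon_2,\epsilon_3)\in\{\pm1\}^3$ with $\epsilon_1\epsilon_2\epsilon_3=1$ — namely $(1,-1,-1)$ and $(-1,-1,1)$ — while $F$ is left fixed, and that every coefficient $w_1w_2w_3$, $w_2w_3/w_1$, $w_3w_1/w_2$, $w_1w_2/w_3$ occurring in \eqref{ConformalBianchiIXMetricEq2} is invariant under any such sign change. Thus the metric $d\tilde s^2=F\,ds^2$ built from the parameters $[p,q+1]$ (resp.\ $[p+1,q]$) coincides with the one built from $[p,q]$; since $\tilde a_{2n}$ is, for every $n\ge 0$, a local invariant of this metric — either as the $n$-th Seeley--de Witt coefficient of the metric-determined operator $\tilde D^2$, or, concretely, via the rational expression of Theorem \ref{ConformalRationlaityThm}, which is one and the same rational function of $w_1,w_2,w_3,F$ and their derivatives for all parameter values — it follows at once that $\tilde a_{2n}[p+1,q]=\tilde a_{2n}[p,q+1]=\tilde a_{2n}[p,q]$. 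As a consistency check, this invariance under simultaneous sign changes of the $w_j$ with product $1$ can be read off by inspection from the explicit formulas \eqref{a_0Eq}, \eqref{a_2Eq} and Appendix \ref{fulla_4appendix} for $n=0,1,2$. The only genuine work is the sign bookkeeping in the first step; the geometric reduction is then immediate, the one point worth stressing being that $\tilde a_{2n}$ depends on nothing beyond the metric $d\tilde s^2$ — clear, for example, from Lichnerowicz's identity $\tilde D^2=\tilde\nabla^{*}\tilde\nabla+\tfrac{1}{4}\tilde s$, both sides of which are built solely from $d\tilde s^2$.
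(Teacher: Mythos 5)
Your proposal is correct and follows essentially the same route as the paper: compute the effect of the unit shifts on $w_1,w_2,w_3,F$ via the (quasi-)periodicity of $\vartheta[p,q]$ and $\partial_q\vartheta[p,q]$, observe that the resulting sign changes have product $1$ so the metric \eqref{ConformalBianchiIXMetricEq2} is unchanged, and conclude that the metric invariants $\tilde a_{2n}$ are unchanged. If anything you are more careful than the paper on the $p$-shift, where the $e^{\pi i p}$ normalization does flip the signs of $w_1$ and $w_2$ (the paper asserts all functions are invariant there), but this does not affect the conclusion since the sign vector still has product $1$.
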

\begin{proof}
Recalling the ingredients of the explicit formulas for the metric from Subsection 4.1 and using Lemma 5.2 we know that all the involved functions are invariant under $p\mapsto p+1$, so the periodicity in $p$  follows trivially. In addition, we  have
\begin{eqnarray*}
w_{1}[p,q+1](i\mu)&=&-\frac{i}{2}\vartheta_{3}(i\mu)\vartheta_{4}(i\mu)\frac{e^{2\pi i p}\partial_{q}\vartheta[p,q+\frac{1}{2}](i\mu)}{e^{2\pi i p}e^{\pi ip}\vartheta[p,q](i\mu)}= w_{1}[p,q](i\mu), \\
w_{2}[p,q+1](i\mu)&=&\frac{i}{2}\vartheta_{2}(i\mu)\vartheta_{4}(i\mu)\frac{-e^{2\pi i p}\partial_{q}\vartheta[p+\frac{1}{2},q+\frac{1}{2}](i\mu)}{e^{2\pi i p}e^{\pi ip}\vartheta[p,q](i\mu)}=-w_{2}[p,q](i\mu),\\
w_{3}[p,q+1](i\mu)&=&-\frac{1}{2}\vartheta_{2}(i\mu)\vartheta_{3}(i\mu)\frac{-e^{2\pi i p}\partial_{q}\vartheta[p+\frac{1}{2},q](i\mu)}{e^{2\pi i p}\vartheta[p,q](i\mu)}=-w_{3}[p,q](i\mu),\\
F[p,q](i\mu)&=&\frac{2}{\pi\Lambda} \left(\frac{e^{2\pi i p}\vartheta[p,q](i\mu)}{e^{2\pi i p}\partial_{q}\vartheta[p,q](i\mu)}\right)^{2}=F[p,q](i\mu).
\end{eqnarray*}
Consequently, we see from the equations above that the metric
\begin{equation*}
d\tilde s^2= F \left ( w_1 w_2 w_3 \, d\mu^2 +
\frac{w_2 w_3}{w_1} \sigma_1^2 +
\frac{w_3 w_1}{w_2} \sigma_2^2+
\frac{w_1 w_2}{w_3} \sigma_3^2  \right ). 
\end{equation*}
is invariant under $q\mapsto q+1$, so the periodicity in $q$ also follows.
\end{proof}

\smallskip

Now, relying on  Lemma \ref{periodicityinbothparametersLemma}, we can 
correspond the following maps to the generators of $PSL_2(\mathbb{Z})$ acting 
on the ordered pair $(p,q)\in S=[0,1)^2 = (\mathbb{R}/\mathbb{Z})^2$:   
\begin{eqnarray*}
\tilde{S}(p,q)&=&(-q,p), \\
\tilde{T}_1(p,q)&=&(p,q+p+\frac{1}{2}),
\end{eqnarray*}
in both of which the parameters are considered modulo 1. 
For rational $p,q$ with $N$ being a common multiple of $2$ and the denominators of $p$ and $q$, it follows immediately from the definitions above that the orbit $\mathcal{O}_{(p,q)}$ of $(p,q)$ under the action of $PSL_2(\mathbb{Z})$ consists of $(p,q)$ pairs with $p,q\in\mathcal{N}=\{0,2/N ,\ldots, (N-1)/N \}$. Namely, we have
\[
\mathcal{O}_{(p,q)} \subset \mathcal{N}^2 \subset [0,1)^2,
\]
and thus $\mathcal{O}_{(p,q)}$ is finite, with any element of $PSL_2(\mathbb{Z})$ acting as a permutation on $\mathcal{O}_{(p,q)}$.

\begin{theorem} \label{vectorvaluedmodularformThm}
Starting from a pair of rational numbers  $(p, q)$, for any non-negative integer $n$, the term  $\tilde a_{2n}[p', q'](i\mu)$, where $(p', q') \in \mathcal{O}_{(p, q)}$, is a vector-valued modular function of weight 2 for the modular group $PSL_2(\mathbb{Z})$.
\begin{proof}
Since the orbit $\mathcal{O}_{(p,q)}$ is finite in this case, we can arrange the functions $\tilde{a}_{2n}[p',q'](i\mu)$ with $(p',q')\in\mathcal{O}_{(p,q)}$ into a finite column vector $\tilde{A}_{2n}\Big(i\mu;\mathcal{O}_{(p,q)}\Big)$ of some dimension $d$. Since any $M\in PSL_2 (\mathbb{Z})$ acts as a permutation on  $\mathcal{O}_{(p,q)}$, we may denote by $\rho: S_d\mapsto GL(p,\mathbb{C})$ the natural permutation representation of $S_d$ which acts on $\tilde{A}_{2n}\Big(i\mu;\mathcal{O}_{(p,q)}\Big)$ by permuting its components in the corresponding way. 

\smallskip

From Corollary \ref{alltermstwoparamodularCor} we know that
\begin{align*}
\tilde{A}_{2n}\Big(M(i\mu);\mathcal{O}_{(p,q)}\Big) &= \big(c\cdot i\mu+d\big)^{2}\tilde{a}_{2n} \big[M(p,q)\big] (i\mu)
\\
&\big(c\cdot i\mu+d\big)^{2}\rho(M)\tilde{A}_{2n} [(p,q)] (i\mu), 
\end{align*}
for any $M\in PSL_2(\mathbb{Z})$. So by definition, $\tilde a_{2n}[p, q](i\mu)$ is a vector-valued modular 
function of weight 2.
\end{proof}
\end{theorem}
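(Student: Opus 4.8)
The plan is to collect the finitely many functions $i\mu\mapsto\tilde a_{2n}[p',q'](i\mu)$ that occur as $(p',q')$ ranges over a single $PSL_2(\mathbb{Z})$-orbit of the rational parameters into one column vector, and then to read the weight-$2$ vector-valued transformation law straight off Corollary~\ref{alltermstwoparamodularCor}. Fix rational $(p,q)$ and let $\mathcal{O}_{(p,q)}$ be its orbit under the group generated by $\tilde{S}(p',q')=(-q',p')$ and $\tilde{T}_1(p',q')=(p',q'+p'+\tfrac12)$, which, as noted before the theorem, is a finite subset of $\big(\tfrac1N\mathbb{Z}/\mathbb{Z}\big)^2$. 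The first thing I would check is that this action really descends, \emph{at the level of the functions} $\tilde a_{2n}$, to $PSL_2(\mathbb{Z})=\langle S,T_1\mid S^2=(ST_1)^3=1\rangle$: a direct computation gives $(\tilde{S}\tilde{T}_1)^3=\mathrm{id}$ on $(\mathbb{R}/\mathbb{Z})^2$, while $\tilde{S}^2$ is the involution $(p',q')\mapsto(-p',-q')$, which moves points but — using the reflection $m\mapsto -m$ in the theta series \eqref{ThetawithCharEq} together with the quasi-periodicity of Lemma~\ref{transformationsvartheta} — yields, from \eqref{two-parametric}, $w_1[-p',-q']=-w_1[p',q']$, $w_2[-p',-q']=w_2[p',q']$, $w_3[-p',-q']=-w_3[p',q']$ and $F[-p',-q']=F[p',q']$, so that the two sign changes cancel everywhere in the metric \eqref{ConformalBianchiIXMetricEq2}, leaving $\tilde{D}^2$ and every $\tilde a_{2n}[\,\cdot\,](i\mu)$ invariant. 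Discarding repeated entries, set $\tilde A_{2n}(i\mu)=\big(\tilde a_{2n}[p_k,q_k](i\mu)\big)_{k=1}^d$ over a set of representatives of the distinct functions in the orbit, and let $\rho\colon PSL_2(\mathbb{Z})\to\mathrm{GL}_d(\mathbb{C})$ be the associated permutation representation, so that $\rho(M)$ permutes the components of $\tilde A_{2n}$ exactly as $M$ permutes the orbit.

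On the two generators the transformation law is immediate from Corollary~\ref{alltermstwoparamodularCor}: for $M=T_1$ the automorphy factor $c(i\mu)+d$ equals $1$ and $\tilde a_{2n}[p_k,q_k](i\mu+1)=\tilde a_{2n}[\tilde{T}_1(p_k,q_k)](i\mu)$, giving $\tilde A_{2n}(T_1(i\mu))=\rho(T_1)\,\tilde A_{2n}(i\mu)$; for $M=S$ the factor is $i\mu$ and $\tilde a_{2n}[p_k,q_k](i/\mu)=(i\mu)^2\,\tilde a_{2n}[\tilde{S}(p_k,q_k)](i\mu)$, giving $\tilde A_{2n}(S(i\mu))=(i\mu)^2\,\rho(S)\,\tilde A_{2n}(i\mu)$. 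I would then pass to an arbitrary $M\in PSL_2(\mathbb{Z})$ with bottom row $(c\ d)$ by induction on the word length in $S,T_1$, using the cocycle identity $j(MM',\tau)=j(M,M'\tau)\,j(M',\tau)$ for $j(M,\tau)=c\tau+d$ and the homomorphism property of $\rho$, to obtain $\tilde A_{2n}(M(i\mu))=(c\,i\mu+d)^2\,\rho(M)\,\tilde A_{2n}(i\mu)$. Finally, each entry $\tilde a_{2n}[p_k,q_k](i\mu)$ is, by Theorem~\ref{ConformalRationlaityThm}, a polynomial with rational coefficients in $w_1,w_2,w_3,F$ and their $\mu$-derivatives divided by $F^{2n}(w_1w_2w_3)^{3n-1}$, hence meromorphic on $\mathbb{H}$ since the underlying theta functions and their derivatives are holomorphic on $\Re(\mu)>0$ by Lemma~\ref{transformationsvartheta}; thus $\tilde A_{2n}$ is a vector-valued meromorphic modular form of weight $2$ for $\rho$, which is exactly the assertion.

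The only genuinely non-formal point is the descent in the first paragraph: one must ensure that $\rho$ is a representation of $PSL_2(\mathbb{Z})$ rather than of a central extension, i.e.\ that $\tilde{S}^2$ — which permutes points of $\mathcal{O}_{(p,q)}$ nontrivially but must act as the identity on the $\tilde a_{2n}$ — indeed does so. This reduces to the cancellation of the sign flips of $w_1$ and $w_3$ under $(p,q)\mapsto(-p,-q)$ inside the invariant combinations $w_1w_2w_3$, $w_2w_3/w_1$, $w_3w_1/w_2$, $w_1w_2/w_3$ of \eqref{ConformalBianchiIXMetricEq2}, which is elementary but does use the explicit shape of \eqref{two-parametric} and is not pure group theory. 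Everything after that is bookkeeping with Corollary~\ref{alltermstwoparamodularCor} and the standard extension of an automorphy relation from a generating set.
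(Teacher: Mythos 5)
Your proposal follows the same route as the paper's own proof: collect the functions $\tilde a_{2n}[p',q']$ over the finite orbit into a column vector, let the modular group act by permuting components, read the weight-$2$ law off Corollary \ref{alltermstwoparamodularCor} on the generators, and extend to general $M$ by the cocycle identity for $j(M,\tau)=c\tau+d$. The one substantive point where you go beyond the paper is the verification that the parameter action genuinely factors through $PSL_2(\mathbb{Z})$: since $\tilde S^2(p',q')=(-p',-q')$ is not the identity on $(\mathbb{R}/\mathbb{Z})^2$, the permutation representation is a priori only a representation of $SL_2(\mathbb{Z})$ (or of the free group on $S,T_1$), and one needs $\tilde a_{2n}[-p',-q']=\tilde a_{2n}[p',q']$ for the statement as phrased. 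Your computation is correct: the reflection $m\mapsto -m$ in \eqref{ThetawithCharEq} together with the (quasi-)periodicity of Lemma \ref{transformationsvartheta} gives $\vartheta[-p,-q]=\vartheta[p,q]$ and $\partial_q\vartheta[-p,-q]=-\partial_q\vartheta[p,q]$, whence $w_1$ and $w_3$ flip sign while $w_2$ and $F$ are fixed, and the signs cancel in each of $w_1w_2w_3$, $w_2w_3/w_1$, $w_3w_1/w_2$, $w_1w_2/w_3$, so the metric \eqref{ConformalBianchiIXMetricEq2} and hence every $\tilde a_{2n}$ is invariant. This is precisely the identity $\tilde a_0[p',q']=\tilde a_0[-p',-q']$ that the paper invokes without proof in the discussion following Corollary \ref{nandn0Cor}, so your writeup closes a small gap in the paper's own (terse) argument rather than introducing one; the remaining steps are the same bookkeeping as in the paper.
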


\smallskip

Since the orbit is finite for a rational choice of parameters, by a summation over the 
orbit we obtain ordinary modular functions as follows.

\smallskip

\begin{corollary} \label{modularfunctionbysumCor}
For any pair of rational number $(p, q)$, and any non-negative integer $n$, the sum 
\[
\tilde{a}_{2n}\big(i\mu;\mathcal{O}_{(p,q)}\big) = \sum_{(p', q')\in \mathcal{O}_{(p,q)}} \tilde{a}_{2n}[p', q'](i\mu)
\] 
defines a modular function  of weight 2 for the modular group $PSL_2(\mathbb{Z})$.
\begin{proof}
Summing up all the components of the column vector $\tilde{A}_{2n}\left (i\mu;\mathcal{O}_{(p,q)}\right )$ in Theorem \ref{vectorvaluedmodularformThm}, we find that the sum $\tilde{a}_{2n}\big(i\mu;\mathcal{O}_{(p,q)}\big)$ satisfies
\begin{eqnarray*}
\tilde{a}_{2n}\big(M(i\mu);\mathcal{O}_{(p,q)}\big) &=& \sum_{(p',q')\in \mathcal{O}_{(p,q)}} \big(c\cdot i\mu+d\big)^{2}\tilde{a}_{2n} \big[M(p',q')\big] (i\mu)
\\
&=&\sum_{(p',q')\in \mathcal{O}_{(p,q)}} \big(c\cdot i\mu+d\big)^{2}\tilde{a}_{2n} [(p',q')] (i\mu) \\
&=&  \big(c\cdot i\mu+d\big)^{2}\tilde{a}_{2n}\big(i\mu;\mathcal{O}_{(p,q)}\big), 
\end{eqnarray*}
for any $M\in PSL_2(\mathbb{Z})$ which acts on the variable $i\mu$ as a M\"obius transformation, and on $(p,q)$ as defined previously. Namely, $\tilde{a}_{2n}\big(i\mu;\mathcal{O}_{(p,q)}\big)$ is a modular function  of weight 2 with respect to  $PSL_2(\mathbb{Z})$. 
\end{proof}
\end{corollary}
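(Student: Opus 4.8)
The plan is to reduce the whole statement to the two generators $T_1\colon\tau\mapsto\tau+1$ and $S\colon\tau\mapsto-1/\tau$ of $PSL_2(\mathbb{Z})$, feeding in the transformation laws from Corollary~\ref{alltermstwoparamodularCor} together with the fact that $\tilde T_1$ and $\tilde S$ each restrict to a bijection of the finite orbit $\mathcal{O}_{(p,q)}$. Write $\tau=i\mu\in\mathbb{H}$. Since the automorphy factor $j(M,\tau)=(c\tau+d)^2$ obeys the cocycle identity $j(MM',\tau)=j(M,M'\tau)\,j(M',\tau)$ and $S,T_1$ generate the group, it suffices to establish the weight-$2$ transformation rule for $M\in\{S,T_1\}$; the general case then follows by writing an arbitrary $M$ as a word in $S$ and $T_1$ and applying the generator rules one step at a time.

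First I would handle $T_1$. Applying the first identity of Corollary~\ref{alltermstwoparamodularCor} to each summand and then re-indexing the sum along the bijection $\tilde T_1$ of $\mathcal{O}_{(p,q)}$ gives
\[
\tilde a_{2n}\big(\tau+1;\mathcal{O}_{(p,q)}\big)
=\sum_{(p',q')\in\mathcal{O}_{(p,q)}}\tilde a_{2n}[\tilde T_1(p',q')](\tau)
=\sum_{(p'',q'')\in\mathcal{O}_{(p,q)}}\tilde a_{2n}[p'',q''](\tau)
=\tilde a_{2n}\big(\tau;\mathcal{O}_{(p,q)}\big),
\]
which is the weight-$2$ rule for $T_1$ because $j(T_1,\tau)=1$. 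For $S$ I would instead use the second identity of Corollary~\ref{alltermstwoparamodularCor}, namely $\tilde a_{2n}[p',q'](-1/\tau)=\tau^2\,\tilde a_{2n}[\tilde S(p',q')](\tau)$, and then re-index along the bijection $\tilde S$ of the orbit, obtaining
\[
\tilde a_{2n}\big(-1/\tau;\mathcal{O}_{(p,q)}\big)
=\tau^2\!\!\sum_{(p'',q'')\in\mathcal{O}_{(p,q)}}\!\!\tilde a_{2n}[p'',q''](\tau)
=\tau^2\,\tilde a_{2n}\big(\tau;\mathcal{O}_{(p,q)}\big),
\]
which is the weight-$2$ rule for $S$. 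Combining the two via the cocycle identity yields $\tilde a_{2n}\big(M\tau;\mathcal{O}_{(p,q)}\big)=(c\tau+d)^2\,\tilde a_{2n}\big(\tau;\mathcal{O}_{(p,q)}\big)$ for every $M\in PSL_2(\mathbb{Z})$.

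It then remains to record the analytic regularity. By Theorem~\ref{ConformalRationlaityThm}, after substituting the parametrization \eqref{two-parametric} each $\tilde a_{2n}[p',q']$ is a rational expression in $\vartheta_2,\vartheta_3,\vartheta_4,\vartheta[p',q'],\partial_q\vartheta[p',q'],e^{i\pi p'}$ and their $\mu$-derivatives, all holomorphic for $\Re(\mu)>0$; hence each summand is meromorphic on $\mathbb{H}$ with poles only at the zeros of the finitely many denominators, and its $q$-expansion at the cusp ($q=e^{-\pi\mu}$) has at most finitely many negative powers, so the finite sum $\tilde a_{2n}\big(i\mu;\mathcal{O}_{(p,q)}\big)$ is a meromorphic modular form of weight $2$, which is exactly the content of ``modular function of weight $2$'' in the sense spelled out in the introduction. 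I do not expect a genuine obstacle here: the one point deserving care is the bookkeeping that $\tilde S(p,q)=(-q,p)$ and $\tilde T_1(p,q)=(p,q+p+\tfrac12)$ really do descend to a $PSL_2(\mathbb{Z})$-action on $(\mathbb{R}/\mathbb{Z})^2$ with finite orbits of rational points, so that each generator acts as a permutation of $\mathcal{O}_{(p,q)}$ and the re-indexings above are legitimate — but this is precisely the structure set up and already exploited just before and within Theorem~\ref{vectorvaluedmodularformThm}, so the present argument is essentially an averaging of Corollary~\ref{alltermstwoparamodularCor} over that finite permutation action.
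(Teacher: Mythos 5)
Your proposal is correct and takes essentially the same approach as the paper: both average the transformation laws of Corollary \ref{alltermstwoparamodularCor} over the finite orbit, using that the group acts on $\mathcal{O}_{(p,q)}$ by permutations so the sum is invariant under re-indexing. You merely make explicit the reduction to the generators $S$ and $T_1$ via the cocycle identity, which the paper leaves implicit.
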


\smallskip

\subsection{Connection between $\tilde a_{2n}[p, q]$ and well-known modular forms} Here 
we investigate the connection between modular functions of type constructed in  
Corollary \ref{modularfunctionbysumCor} and well-known modular forms. It was seen in this corollary that when the parameters 
$(p, q)$ of the metric are rational, each $\tilde{a}_{2n}\big(i\mu;\mathcal{O}_{(p,q)}\big)$ has the standard modular transformation properties of a modular function of weight 2. However, since there 
are no non-trivial holomorphic modular forms of weight 2, it is necessary for $\tilde{a}_{2n}\big(i\mu;\mathcal{O}_{(p,q)}\big)$ to have poles in the variable $i\mu$. For a detailed discussion of 
holomorphic modular forms, Eisenstein series and some related fundamental results used in this 
subsection, one can for example refer to \cite{SerBook}.

\smallskip

By using Corollary \ref{alltermstwoparamodularCor}, in order to find the locations and multiplicities of the zeros of $\tilde{a}_{2n}[p,q](i\mu)$, it is enough for us to investigate the poles of $\tilde{a}_0[p,q](i\mu)$, and the result would apply to all $\tilde{a}_{2n}[p,q](i\mu)$. Recall that
\begin{eqnarray*}
\tilde{a}_0[p,q]&=&4F^{2}w_{1}w_{2}w_{3} \\
&=&-\frac{2}{\pi^2\Lambda^2}\cdot\frac{\vartheta_2^2\vartheta_3^2\vartheta_4^2\vartheta[p,q]\partial_q\vartheta[p,q+\frac{1}{2}]\partial_q\vartheta[p+\frac{1}{2},q+\frac{1}{2}]\partial_q\vartheta[p+\frac{1}{2},q]}{(\partial_q\vartheta[p,q])^4}. 
\end{eqnarray*}

\smallskip

Since all the theta functions and theta derivatives are holomorphic for $i\mu\in\mathbb{H}$, the singularities of 
\[
\tilde{a}_{2n}\big(i\mu;\mathcal{O}_{(p,q)}\big) = \sum_{(p', q')\in \mathcal{O}_{(p,q)}} \tilde{a}_{2n}[p', q'](i\mu)
\]
may appear only at the zeros of the function $\partial_q\vartheta[p,q](i\mu)$. In addition, because of the modular properties, it would be enough for us to look for poles in the fundamental domain $\mathbb{H}/PSL_2(\mathbb{Z})$ and at infinity. 

\smallskip

Consequently, in principle, we only need to know the locations and multiplicities of the zeros of $\partial_q\vartheta[p,q](i\mu)$ in order to figure out the space of modular forms that can be constructed from $\tilde{a}_{2n}\big(i\mu;\mathcal{O}_{(p,q)}\big)$ by removing the poles. So we proceed by proving the following lemmas concerning the zeros of $\vartheta[p,q](i\mu)$ and $\partial_q\vartheta[p,q](i\mu)$ when the parameters $p$ and $q$ are both real. In fact, according to Lemma \ref{periodicityinbothparametersLemma}, $p$ and $q$ are defined modulo 1, so in what is to be presented we will restrict to $(p,q)\in S=[0,1)^2$.

\smallskip

\begin{lemma} \label{orderofzeroatinfinityLemma1}
\label{ZerosOfThetaAtInf}
Let $v_\infty(F) $ be the order of zero of any function $F(i\mu)$ at infinity, and denote $v_\infty(\partial_q\vartheta[p,q])$ by $v_\infty[p,q]$ for simplicity.  We have
\begin{eqnarray*}
v_\infty(\vartheta_2) = \frac{1}{8}, \qquad  v_\infty(\vartheta_3) = v_\infty(\vartheta_4) = 0, \qquad v_\infty(\vartheta[p,q]) = \frac{\langle p\rangle^2}{2},
\end{eqnarray*}

\[
v_\infty(\partial_q\vartheta[p,q])=
\begin{cases}
+\infty   & \text{if } p=0, q \in \{ 0, \frac{1}{2} \},  \\
+\infty & \text{if } p=\frac{1}{2}, q=0, \\ 
\frac{1}{2} & \text{if } p=0, q \not\in \{ 0, \frac{1}{2} \},  \\
\frac{\langle p\rangle^2}{2} & \text{otherwise.}
\end{cases}
\]
where $\langle p\rangle$ is defined as the number $\langle p\rangle\equiv p\;\mathrm{mod}\; 1$ such that $\langle p\rangle\in[-\frac{1}{2},\frac{1}{2})$. 

\begin{proof}
These results can be obtained directly by keeping only the leading order terms in the defining formula (29) and its $q$ derivative.
\end{proof}
\end{lemma}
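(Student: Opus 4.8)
The plan is to read the order at infinity off the leading term of each Fourier series as $\mu\to+\infty$. From \eqref{ThetawithCharEq}--\eqref{varthetapqEq} and the termwise $q$-derivative displayed just before Lemma~\ref{PoissonsumLem} one has $\vartheta[p,q](i\mu)=\sum_{m\in\mathbb Z}e^{-\pi(m+p)^2\mu+2\pi i(m+p)q}$ and $\partial_q\vartheta[p,q](i\mu)=2\pi i\sum_{m\in\mathbb Z}(m+p)e^{-\pi(m+p)^2\mu+2\pi i(m+p)q}$. In the normalization for which $v_\infty(\vartheta_2)=1/8$, the $m$-th summand $e^{-\pi(m+p)^2\mu}$ has order $(m+p)^2/2$ at infinity, so the order of the whole series is governed by the index (or indices) minimizing $(m+p)^2$ over $m\in\mathbb Z$ and by whether the associated coefficient survives. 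For $(p,q)\in S=[0,1)^2$ this minimum equals $\langle p\rangle^2$ and is attained at the single index $m=0$ when $0\le p<1/2$, at $m=-1$ when $1/2<p<1$, and at the two indices $m\in\{0,-1\}$ when $p=1/2$.

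First I would dispatch $\vartheta_2,\vartheta_3,\vartheta_4=\vartheta[1/2,0],\vartheta[0,0],\vartheta[0,1/2]$ from \eqref{varthetasEq}: for $\vartheta_3,\vartheta_4$ the $m=0$ term is the constant $1$, giving order $0$; for $\vartheta_2$ the tied indices $m=0,-1$ each contribute $e^{-\pi\mu/4}$, giving order $1/8$ with coefficient $2$. For general $\vartheta[p,q]$ with $p\neq 1/2$ the minimizing index is unique and its coefficient $e^{2\pi i\langle p\rangle q}$ is nonzero, so $v_\infty(\vartheta[p,q])=\langle p\rangle^2/2$; for $p=1/2$ the two minimizing terms add, up to a phase, to $2\cos(\pi q)$, which is nonzero unless $q=1/2$. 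I would flag $(p,q)=(1/2,1/2)$ as genuinely exceptional: there $\vartheta[1/2,1/2](\,\cdot\,,i\mu)$ is the odd Jacobi theta $\vartheta_1$, hence vanishes identically at $z=0$, so $v_\infty=+\infty$ rather than $\langle p\rangle^2/2$ --- a caveat to record in the statement.

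For $\partial_q\vartheta[p,q]$ the extra factor $(m+p)$ matters only when it annihilates a minimizing term, i.e.\ when $p=0$. If $p\notin\{0,1/2\}$ the unique minimizing term survives with coefficient $2\pi i\langle p\rangle e^{2\pi i\langle p\rangle q}\neq 0$, whence $v_\infty=\langle p\rangle^2/2$; if $p=1/2$ the two minimizing terms $m\in\{0,-1\}$ combine to a nonzero multiple of $\sin(\pi q)$ for $q\neq 0$, whence $v_\infty=1/8$; and if $p=0$ the $m=0$ term drops out, the next order being $1/2$ (from $m=\pm1$) with coefficient a nonzero multiple of $\sin(2\pi q)$, so $v_\infty=1/2$ unless $q\in\{0,1/2\}$. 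The remaining entries $(p,q)\in\{(0,0),(0,1/2),(1/2,0)\}$ are exactly the cases where $\partial_q\vartheta[p,q](i\mu)$ is the elliptic-variable derivative at $z=0$ of one of the \emph{even} theta functions $\vartheta_3(z),\vartheta_4(z),\vartheta_2(z)$ and therefore vanishes identically, giving $v_\infty=+\infty$. This accounts for the whole case list.

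The only non-routine point is the justification of the ``$+\infty$'' entries: there the naive leading coefficient vanishes and one must show the \emph{entire} series is zero, not merely its first term. I expect this to be the main obstacle, and I would resolve it by the reflection symmetry of the summand --- $m\mapsto -m$ when $p=0$ and $m\mapsto -m-1$ when $p=1/2$ --- which in $\partial_q\vartheta$ pairs each term with its exact negative (and does the same for $\vartheta$ itself when $(p,q)=(1/2,1/2)$); equivalently, it is the classical parity of the Jacobi theta functions, $\vartheta_2,\vartheta_3,\vartheta_4$ even and $\vartheta_1$ odd in the elliptic variable. With that symmetry established the remaining computations are purely the bookkeeping of minimal exponents indicated above.
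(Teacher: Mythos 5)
Your proof is correct and is essentially the argument the paper intends: its one-line proof (``keep only the leading order terms in the defining formula and its $q$-derivative'') is precisely your bookkeeping of the indices minimizing $(m+p)^2$, with the reflection symmetries $m\mapsto -m$ (for $p=0$) and $m\mapsto -m-1$ (for $p=\tfrac12$) supplying the identically-vanishing $+\infty$ cases. Your added caveat that $v_\infty(\vartheta[\tfrac12,\tfrac12])=+\infty$ rather than $\langle p\rangle^2/2$ is a genuine, if harmless, omission in the stated lemma, since the orbit $\{(\tfrac12,\tfrac12)\}$ is excluded wherever this formula is used downstream (e.g.\ in Lemma~\ref{ZerosOfTheta}).
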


\smallskip 

The latter can be used to obtain information about the order of zero of $\tilde a_0[p, q]$ at infinity as follows.  

\smallskip

\begin{corollary} \label{orderofzeroatinfinityLemma2}
For $(p,q)\not\in \big \{ (0,0),(0,\frac{1}{2}),(\frac{1}{2},0),(\frac{1}{2},\frac{1}{2}) \big  \}$ we have
\begin{eqnarray*}
v_\infty(\tilde{a}_0[p,q])=\langle p\rangle^2+\langle p+\frac{1}{2}\rangle^2+\frac{1}{4}\langle p\rangle^2=|p-\frac{1}{2}|\ge 0, 
\end{eqnarray*}
if $p\neq0$, and
\begin{eqnarray*}
v_\infty(\tilde{a}_0[p,q])=\frac{1}{4}+2\times\frac{1}{8}+\frac{1}{2}-4\times\frac{1}{2}=-1, 
\end{eqnarray*}
if $p=0$.
\begin{proof}
Both of the above statements can be proved by the substitution of the identities given in  Lemma \ref{orderofzeroatinfinityLemma1} into the formula
\begin{eqnarray*}
v_\infty(\tilde{a}_0[p,q])&=&2v_\infty(\vartheta_2)+2v_\infty(\vartheta_3)+2v_\infty(\vartheta_4)+v_\infty(\vartheta[p,q])+v_\infty[p+\frac{1}{2},q]\\
&&+v_\infty[p+\frac{1}{2},q+\frac{1}{2}]+v_\infty[p,q+\frac{1}{2}]-4v_\infty[p,q]. 
\end{eqnarray*}
\end{proof}
\end{corollary}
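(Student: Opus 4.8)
The plan is to prove this by additivity of the order of vanishing at the cusp over products and quotients, applied to the explicit factorization of $\tilde a_0[p,q]$ into theta functions and their $q$-derivatives. First I would record, from \eqref{a_0Eq} and the parametrization \eqref{two-parametric}, that
\[
\tilde a_0[p,q] \;=\; -\frac{2}{\pi^{2}\Lambda^{2}}\cdot\frac{\vartheta_{2}^{2}\,\vartheta_{3}^{2}\,\vartheta_{4}^{2}\,\vartheta[p,q]\,\partial_{q}\vartheta[p,q+\tfrac{1}{2}]\,\partial_{q}\vartheta[p+\tfrac{1}{2},q+\tfrac{1}{2}]\,\partial_{q}\vartheta[p+\tfrac{1}{2},q]}{\bigl(\partial_{q}\vartheta[p,q]\bigr)^{4}},
\]
a ratio of functions that are holomorphic on $\mathbb{H}$; since $(p,q)$ is taken off the four exceptional points, $\partial_q\vartheta[p,q]$ is not identically zero, so near $i\mu=i\infty$ the quantity $\tilde a_0[p,q]$ is a genuine meromorphic function and $v_\infty(\tilde a_0[p,q])$ is well defined. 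Additivity then gives
\begin{align*}
v_\infty(\tilde a_0[p,q]) &= 2v_\infty(\vartheta_2)+2v_\infty(\vartheta_3)+2v_\infty(\vartheta_4)+v_\infty(\vartheta[p,q])+v_\infty[p+\tfrac{1}{2},q]\\
&\quad +v_\infty[p+\tfrac{1}{2},q+\tfrac{1}{2}]+v_\infty[p,q+\tfrac{1}{2}]-4\,v_\infty[p,q],
\end{align*}
where $v_\infty[a,b]$ abbreviates $v_\infty(\partial_q\vartheta[a,b])$ as in Lemma~\ref{orderofzeroatinfinityLemma1}.

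Next I would substitute the values $v_\infty(\vartheta_2)=\tfrac{1}{8}$, $v_\infty(\vartheta_3)=v_\infty(\vartheta_4)=0$, $v_\infty(\vartheta[p,q])=\tfrac{1}{2}\langle p\rangle^{2}$, together with the case list for $v_\infty(\partial_q\vartheta[a,b])$, all from Lemma~\ref{orderofzeroatinfinityLemma1}, and split according to whether the first slot of any of the four arguments $[p,q],[p+\tfrac{1}{2},q],[p+\tfrac{1}{2},q+\tfrac{1}{2}],[p,q+\tfrac{1}{2}]$ is congruent to $0$ modulo $1$. When $p\ne 0$ and (with $(p,q)$ off the four excluded points) none of these arguments falls on a locus where $\partial_q\vartheta$ vanishes identically, each $\partial_q\vartheta$-factor contributes $\tfrac{1}{2}$ times the square of its first slot reduced to $[-\tfrac{1}{2},\tfrac{1}{2})$; collecting the contributions reduces the sum to $\tfrac{1}{4}-\langle p\rangle^{2}+\langle p+\tfrac{1}{2}\rangle^{2}$, and the elementary identity $\langle p+\tfrac{1}{2}\rangle^{2}-\langle p\rangle^{2}=|p-\tfrac{1}{2}|-\tfrac{1}{4}$, verified separately on $[0,\tfrac{1}{2})$ and $[\tfrac{1}{2},1)$, turns this into $|p-\tfrac{1}{2}|\ge 0$. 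When $p=0$ (hence $q\notin\{0,\tfrac{1}{2}\}$), the factors $\partial_q\vartheta[0,q]$ and $\partial_q\vartheta[0,q+\tfrac{1}{2}]$ acquire the exceptional value $\tfrac{1}{2}$ instead of $0$, while $\partial_q\vartheta[\tfrac{1}{2},q]$ and $\partial_q\vartheta[\tfrac{1}{2},q+\tfrac{1}{2}]$ each contribute $\tfrac{1}{8}$, so the sum collapses to $\tfrac{1}{4}+2\cdot\tfrac{1}{8}+\tfrac{1}{2}-4\cdot\tfrac{1}{2}=-1$. Finally, Corollary~\ref{alltermstwoparamodularCor} propagates these orders from $\tilde a_0$ to every $\tilde a_{2n}[p,q]$.

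The substitution and the arithmetic of the two regimes are routine; the step requiring genuine care is the bookkeeping at the borderline parameter values, namely recognizing exactly when a shifted argument $[p+\tfrac{1}{2},\cdot]$ or $[p,q+\tfrac{1}{2}]$ lands on $(0,0)$, $(0,\tfrac{1}{2})$ or $(\tfrac{1}{2},0)$ modulo $1$. This is precisely what forces the exclusion of the four points in the hypothesis and the separation of the $p=0$ and $p\ne0$ cases, and it is also the reason the subcase $p=\tfrac{1}{2}$, where $p+\tfrac{1}{2}\equiv 0$ and so two of the $\partial_q\vartheta$-factors pick up the exceptional order $\tfrac{1}{2}$ rather than $0$, must be inspected on its own. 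I would therefore organize the argument around the case list of Lemma~\ref{orderofzeroatinfinityLemma1}, check in each branch that $\partial_q\vartheta[p,q]$ is not identically zero (so that the cancellation of $\vartheta[p,q]^{3}$ against the $\partial_q\vartheta[p,q]^{4}$ in the denominator is legitimate and the order at infinity is finite), and only afterwards carry out the elementary simplification to the stated closed form.
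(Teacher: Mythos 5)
Your proof is correct and follows exactly the paper's route: additivity of the order of vanishing at infinity over the explicit factorization of $\tilde a_0[p,q]$ into theta functions and $q$-derivatives, followed by substitution of the values from Lemma \ref{orderofzeroatinfinityLemma1}; moreover your intermediate expression $\frac{1}{4}+\langle p+\frac{1}{2}\rangle^{2}-\langle p\rangle^{2}$ is the correct simplification (the paper's displayed $\langle p\rangle^{2}+\langle p+\frac{1}{2}\rangle^{2}+\frac{1}{4}\langle p\rangle^{2}$ does not equal $|p-\frac{1}{2}|$ in general, e.g.\ at $p=\frac{1}{4}$ it gives $\frac{9}{64}$ instead of $\frac{1}{4}$). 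Be aware that if you actually carry out the $p=\frac{1}{2}$ inspection you flag, the two shifted factors $\partial_q\vartheta[0,\cdot]$ contribute $\frac{1}{2}$ each and the total becomes $1$ rather than $|p-\frac{1}{2}|=0$, so the closed form $|p-\frac{1}{2}|$ holds only for $p\notin\{0,\frac{1}{2}\}$ --- a defect of the statement as written rather than of your argument.
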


\smallskip 

Now we can prove the following statements which will play crucial roles 
in studying the poles of the modular functions that are studied in detail 
in the sequel and have been related to well-known modular forms. 

\smallskip

\begin{lemma}
\label{ZerosOfTheta}

Let $n$ be the number of points in the orbit $\mathcal{O}_{(p,q)}$ where $\mathcal{O}_{(p,q)}\neq\{(\frac{1}{2},\frac{1}{2})\}$ and $\mathcal{O}_{(p,q)}\neq\{(0,0),(\frac{1}{2},0),(0,\frac{1}{2})\}$. Then $n$ is necessarily even, and with $v_\tau[p,q]$ denoting the order of zero of the function $\partial_q\vartheta[p,q](i\mu)$ at $i\mu=\tau$, we have the equation
\begin{eqnarray*}
\sum_{(p',q')\in\mathcal{O}_{(p,q)}}\Big(\frac{1}{2}v_i[p',q'] + \frac{1}{3}v_\rho[p',q'] + \sideset{}{^*}\sum_{P\in\mathbb{H}/PSL_2(\mathbb{Z})} v_P[p',q']\Big) = \frac{n}{12}-\frac{n_0}{2}, 
\end{eqnarray*}
where $\rho=e^{\frac{2\pi i}{3}}$, $n_0$ is the number of points $(p', q') \in \mathcal{O}_{(p, q)}$ such that   $p'=0$, and the starred  summation excludes the points $i$ and $\rho$.

\begin{proof}
First notice that if $(p,q)$ and $(-p,-q)$ correspond to the same point in $S=[0,1)^2$ then we have $p\equiv-p\;\mathrm{mod}\;1$ and $q\equiv -q\;\mathrm{mod}\;1$, i.e., $p\equiv 0\;\mathrm{mod}\;\frac{1}{2}$ and $q\equiv 0\;\mathrm{mod}\;\frac{1}{2}$. So in $S$, the only possibility is that 
\begin{eqnarray*}
(p,q) \in \Big \{ (0,0),(0,\frac{1}{2}),(\frac{1}{2},0), (\frac{1}{2},\frac{1}{2}) \Big \}. 
\end{eqnarray*}
Thus, if $\mathcal{O}_{(p,q)}\neq\{(\frac{1}{2},\frac{1}{2})\}$ and $\mathcal{O}_{(p,q)}\neq\{(0,0),(\frac{1}{2},0),(0,\frac{1}{2})\}$, then $(p,q)$ and $(-p,-q)$ correspond to different points in $S$.

\smallskip

Also if $(p',q')\in\mathcal{O}_{p,q}$, then $(-q',p')\in\mathcal{O}_{p,q}$ and thus $(-p',-q')\in\mathcal{O}_{p,q}$. 
So, when $\mathcal{O}_{(p,q)}\neq\{(\frac{1}{2},\frac{1}{2})\}$ and $\mathcal{O}_{(p,q)}\neq\{(0,0),(\frac{1}{2},0),(0,\frac{1}{2})\}$, for any $(p',q')\in\mathcal{O}_{(p,q)}$, it has a partner $(-p',-q')\neq(p',q')$ in $S$ that is also in the orbit. It is evident that only identical points in $S$ can have the same partner, so it follows that in this case $n$ is necessarily even.

\smallskip

Recall from Lemma \ref{transformationsvarthetapq} that
\begin{eqnarray*}
\partial_q^n\vartheta[p,q](i\mu+1) &=& e^{-\pi i p(p+1)}\partial_q^n\vartheta[p,q+p+\frac{1}{2}](i\mu), \\
\vartheta[p,q](-\frac{1}{i\mu}) &=& e^{2\pi ipq}\mu^{\frac{1}{2}}\vartheta[-q,p](i\mu), \\
\partial_q\vartheta[p,q](-\frac{1}{i\mu}) &=& -ie^{2\pi ipq}\mu^{\frac{3}{2}}\partial_q\vartheta[-q,p](i\mu). 
\end{eqnarray*}
So we have
\begin{eqnarray*}
\frac{\partial_q\vartheta[p,q](i\mu+1)}{\vartheta[p,q](i\mu+1)} &=& \frac{\partial_q\vartheta[p,q](i\mu)}{\vartheta[p,q](i\mu)}, \\
\frac{\partial_q\vartheta[p,q](-\frac{1}{i\mu})}{\vartheta[p,q](-\frac{1}{i\mu})} &=& -i\mu\frac{\partial_q\vartheta[p,q](i\mu)}{\vartheta[p,q](i\mu)}.
\end{eqnarray*}
Consequently, with $n$ an even number, if we define the following function 
\begin{eqnarray*}
f(i\mu)=\prod_{(p,q)\in\mathcal{O}_{(p,q)}}\frac{\partial_q\vartheta[p,q](i\mu)}{\vartheta[p,q](i\mu)},
\end{eqnarray*}
then we  have
\begin{eqnarray*}
f(i\mu+1)&=&f(i\mu),\\
f(-\frac{1}{i\mu})&=&(-1)^n(i\mu)^nf(i\mu)=(i\mu)^nf(i\mu),
\end{eqnarray*}
which shows that $f(i\mu)$ is a modular function of weight $n$. Therefore using the valence formula 
we can write 
\begin{eqnarray*}
 v_\infty(f) +  \frac{1}{2}v_i(f) + \frac{1}{3}v_\rho(f) + \sideset{}{^*}\sum_{P\in\mathbb{H}/PSL_2(\mathbb{Z})}v_P(f) 
= 
\frac{n}{12}. 
\end{eqnarray*}
Recall that the zeros of $\vartheta[p,q](i\mu)$ satisfy the equation
\begin{eqnarray*}
\left (p-\frac{1}{2}+m \right )i\mu + q-\frac{1}{2}+k=0
\end{eqnarray*}
for some $m, k\in\mathbb{Z}$. So, for any real $(p,q)\neq(\frac{1}{2},\frac{1}{2})$, $\vartheta[p,q](i\mu)$ does not have any zeros in $\mathbb{H}$. Also, notice that the theta functions and the theta derivatives are all holomorphic in $i\mu\in\mathbb{H}$. So it follows that the function $f(i\mu)$ has no pole away from infinity, and for $P\in\mathbb{H}$ we have exactly $v_P(f)=\sum_{(p',q')\in\mathcal{O}_{(p,q)}}v_P[p',q']$, with $v_P[p',q']\ge 0$ at any $P$ in the fundamental domain. At infinity, we know from Lemma \ref{orderofzeroatinfinityLemma1}  that
\begin{eqnarray*}
v_\infty(f)&=&\sum_{(p',q')\in\mathcal{O}_{(p,q)}}\Big(v_P[p',q']-v_\infty(\vartheta[p',q'])\Big)\\
&=&\sum_{\substack{(p',q')\in\mathcal{O}_{(p,q)}  \\ p'\neq 0 }}\Big(v_P[p',q']-v_\infty(\vartheta[p',q'])\Big) \\ 
&&+\sum_{(0,q')\in\mathcal{O}_{(p,q)}}\Big(v_P[0,q']-v_\infty(\vartheta[0,q'])\Big)\\
&=&\sum_{\substack{(p',q')\in\mathcal{O}_{(p,q)}\\ p'\neq 0}}\Big(\frac{\langle p'\rangle^2}{2}-\frac{\langle p'\rangle^2}{2}\Big)+\sum_{(0,q')\in\mathcal{O}_{(p, q)}}\Big(\frac{1}{2}-\frac{\langle 0\rangle^2}{2}\Big)\\
&=&\frac{n_0}{2},
\end{eqnarray*}
where $n_0$ is the number of points in the orbit whose first coordinates are zero, $p'=0$. Finally it follows from the valence formula for $f$  that
\begin{eqnarray*}
\sum_{(p',q')\in\mathcal{O}_{(p,q)}}\Big(\frac{1}{2}v_i[p',q'] + \frac{1}{3}v_\rho[p',q'] + \sideset{}{^*}\sum_{P\in\mathbb{H}/PSL_2(\mathbb{Z})}v_P[p',q']\Big) = \frac{n}{12}-\frac{n_0}{2}. 
\end{eqnarray*}
\end{proof}
\end{lemma}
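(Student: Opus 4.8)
The plan is to reduce the asserted identity to the classical valence formula for modular functions on $PSL_2(\mathbb{Z})$, applied to one auxiliary function assembled from the logarithmic-derivative quotients $\partial_q\vartheta[p',q']/\vartheta[p',q']$ as $(p',q')$ ranges over the orbit. I would first dispose of the parity claim: a point $(p,q)\in S=[0,1)^2$ is fixed by the involution $(p,q)\mapsto(-p,-q)$ precisely when $2p\in\mathbb{Z}$ and $2q\in\mathbb{Z}$, i.e. $(p,q)\in\{(0,0),(0,\tfrac12),(\tfrac12,0),(\tfrac12,\tfrac12)\}$, and the orbits of these four points are exactly the two orbits excluded in the hypothesis. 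Since $\mathcal{O}_{(p,q)}$ is stable under $(p',q')\mapsto(-q',p')$ and hence under $(p',q')\mapsto(-p',-q')$, on any non-exceptional orbit this involution is fixed-point-free, so the orbit decomposes into pairs and $n=\#\mathcal{O}_{(p,q)}$ is even.

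Next I would introduce
\[
f(i\mu)=\prod_{(p',q')\in\mathcal{O}_{(p,q)}}\frac{\partial_q\vartheta[p',q'](i\mu)}{\vartheta[p',q'](i\mu)}
\]
and check that it is a nonzero meromorphic modular function of weight $n$. Dividing the two $T_1$-transformation laws of Lemma \ref{transformationsvarthetapq}, the common prefactor cancels, leaving only the parameter shift $q'\mapsto q'+p'+\tfrac12$, which permutes the orbit, so $f(i\mu+1)=f(i\mu)$. Dividing the two $S$-transformation laws of the same lemma gives
\[
\frac{\partial_q\vartheta[p',q'](\tfrac{i}{\mu})}{\vartheta[p',q'](\tfrac{i}{\mu})}=-i\mu\,\frac{\partial_q\vartheta[-q',p'](i\mu)}{\vartheta[-q',p'](i\mu)},
\]
and multiplying over the orbit (now permuted by $\tilde{S}$) introduces the factor $(-i\mu)^n=(i\mu)^n$, using that $n$ is even; thus $f(\tfrac{i}{\mu})=(i\mu)^n f(i\mu)$, so $f$ has weight $n$. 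Since $\vartheta[p',q'](i\mu)=0$ would force $(p'-\tfrac12+m)i\mu+q'-\tfrac12+k=0$ for integers $m,k$, which has no solution in $\mathbb{H}$ when $(p',q')$ is real and $\neq(\tfrac12,\tfrac12)$, the denominators never vanish on $\mathbb{H}$ for a non-exceptional orbit; hence $f$ is holomorphic on $\mathbb{H}$ and $v_P(f)=\sum_{(p',q')}v_P[p',q']$ for each $P\in\mathbb{H}$, with every term $\ge 0$.

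Then I would invoke the valence formula
\[
v_\infty(f)+\tfrac12 v_i(f)+\tfrac13 v_\rho(f)+\sideset{}{^*}\sum_{P\in\mathbb{H}/PSL_2(\mathbb{Z})}v_P(f)=\frac{n}{12}
\]
and evaluate $v_\infty(f)$ from Lemma \ref{orderofzeroatinfinityLemma1}. For a point of the orbit with $p'\neq 0$ one has $v_\infty(\vartheta[p',q'])=v_\infty(\partial_q\vartheta[p',q'])=\langle p'\rangle^2/2$, so its net contribution to $v_\infty(f)$ is zero; for a point with $p'=0$, which on a non-exceptional orbit forces $q'\notin\{0,\tfrac12\}$, one has $v_\infty(\vartheta[0,q'])=0$ and $v_\infty(\partial_q\vartheta[0,q'])=\tfrac12$, contributing $\tfrac12$ apiece. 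Summing gives $v_\infty(f)=n_0/2$, and transposing this term in the valence formula yields precisely the claimed equation. The step I expect to demand the most care is confirming that $f$ is a bona fide nonzero modular function with the right weight: one must see that the evenness of $n$ is exactly what absorbs the sign $(-1)^n$ produced by the $S$-transformation, and that the two excluded orbits are exactly those on which $f$ degenerates — the orbit $\{(\tfrac12,\tfrac12)\}$, on which the denominator $\vartheta[\tfrac12,\tfrac12]$ vanishes identically, and the orbit $\{(0,0),(\tfrac12,0),(0,\tfrac12)\}$, on which every numerator $\partial_q\vartheta[p',q']$ vanishes identically — so that no factor of the product is identically zero or identically infinite and the valence formula genuinely applies.
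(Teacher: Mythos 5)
Your proposal is correct and follows essentially the same route as the paper's proof: the same fixed-point analysis of $(p,q)\mapsto(-p,-q)$ for the parity claim, the same auxiliary product $f(i\mu)=\prod\partial_q\vartheta[p',q']/\vartheta[p',q']$ shown to be modular of weight $n$ via the quotients of the transformation laws in Lemma \ref{transformationsvarthetapq}, the same application of the valence formula, and the same computation $v_\infty(f)=n_0/2$ from Lemma \ref{orderofzeroatinfinityLemma1}. Your closing remark identifying the two excluded orbits as exactly those on which a factor of $f$ degenerates (identically vanishing denominator for $\{(\tfrac12,\tfrac12)\}$, identically vanishing numerators for the other) is a worthwhile point of care that the paper leaves implicit.
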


\smallskip

\begin{corollary} \label{nandn0Cor}
Let $n$ and $n_0$ be defined as in Lemma \ref{ZerosOfTheta}. Then, we have $n\ge 6n_0$, and the equality holds if and only if $\partial_q[p,q](i\mu)$ has no zeros in the upper-half complex plane for any $(p,q)$ in the orbit.
\begin{proof}
This follows directly from the non-negativity of $v_P[p,q]$.
\end{proof}
\end{corollary}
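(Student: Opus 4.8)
The plan is to obtain both assertions as immediate consequences of the valence-type identity proved in Lemma \ref{ZerosOfTheta}. Recall that, for an orbit $\mathcal{O}_{(p,q)}$ distinct from the two exceptional orbits $\{(\tfrac12,\tfrac12)\}$ and $\{(0,0),(\tfrac12,0),(0,\tfrac12)\}$, that lemma gives
\[
\sum_{(p',q')\in\mathcal{O}_{(p,q)}}\Big(\tfrac{1}{2}v_i[p',q'] + \tfrac{1}{3}v_\rho[p',q'] + \sideset{}{^*}\sum_{P\in\mathbb{H}/PSL_2(\mathbb{Z})} v_P[p',q']\Big) = \frac{n}{12}-\frac{n_0}{2}.
\]
The first step is to note that the left-hand side is a sum of non-negative rational numbers. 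Indeed, by Lemma \ref{transformationsvartheta} the function $\partial_q\vartheta[p',q'](i\mu)$ is holomorphic on the right half-plane $\Re(\mu)>0$, so each of the local orders of vanishing $v_i[p',q']$, $v_\rho[p',q']$, $v_P[p',q']$ is a non-negative integer, while the coefficients $\tfrac12$, $\tfrac13$, $1$ are positive. Consequently $\tfrac{n}{12}-\tfrac{n_0}{2}\ge 0$, that is, $n\ge 6n_0$.

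For the equality statement I would then argue that $n = 6n_0$ holds precisely when the entire left-hand side vanishes, and, by the non-negativity just used, this happens if and only if every term $v_i[p',q']$, $v_\rho[p',q']$, $v_P[p',q']$ vanishes, the sums running over all $(p',q')\in\mathcal{O}_{(p,q)}$ and all $P$ in the fundamental domain $\mathbb{H}/PSL_2(\mathbb{Z})$. Since these orders are by definition the multiplicities of the zeros of $\partial_q\vartheta[p',q']$ in $\mathbb{H}$, their simultaneous vanishing is exactly the assertion that $\partial_q\vartheta[p',q'](i\mu)$ has no zeros in the upper half-plane for any $(p',q')$ in the orbit. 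This yields the ``if and only if'' clause.

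There is essentially no obstacle here: the substantive work --- constructing the auxiliary modular function $f(i\mu)=\prod_{(p',q')}\partial_q\vartheta[p',q'](i\mu)/\vartheta[p',q'](i\mu)$ and applying the valence formula to it --- has already been carried out in Lemma \ref{ZerosOfTheta}, so the corollary is purely a positivity remark. The only points deserving a line of care are that the holomorphy input used to guarantee $v_P[p',q']\ge 0$ must be the one for $\partial_q\vartheta$ rather than for $\vartheta$ itself, and that the orbit is assumed not to coincide with one of the two exceptional orbits; both are already in force under the hypotheses inherited from Lemma \ref{ZerosOfTheta}.
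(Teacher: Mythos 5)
Your argument is correct and is exactly the paper's proof: the inequality $n\ge 6n_0$ and the characterization of equality both follow from the non-negativity of the local orders $v_P[p',q']$ appearing on the left-hand side of the valence identity of Lemma \ref{ZerosOfTheta}, which is all the paper invokes. The extra care you take in noting that the relevant holomorphy is that of $\partial_q\vartheta[p',q']$ (not $\vartheta[p',q']$) and that the exceptional orbits are excluded is a reasonable elaboration of the same one-line observation.
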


\smallskip

When $n$ is small, for instance when $n=n_0$, one can solve for all the $v_P[p',q']$ by simple arithmetic arguments. For example, when $n=8$, one has
\begin{eqnarray*}
\sum_{(p',q')\in\mathcal{O}_{(p,q)}}\Big(\frac{1}{2}v_i[p',q'] + \frac{1}{3}v_\rho[p',q'] + \sideset{}{^*}\sum_{P\in\mathbb{H}/PSL_2(\mathbb{Z})}v_P[p',q']\Big) = \frac{2}{3}. 
\end{eqnarray*}
So, since $\tilde{a}_0[p',q']=\tilde{a}_0[-p',-q']$, we have $v_P[p',q']=v_P[-p',-q']$ where $(p',q')$ and $(-p',-q')$ correspond to different points in $S$. Therefore, the following can be the 
only non-negative solution:   
$v_\rho[p_0,q_0]$$=v_\rho[-p_0,-q_0]=1$ for some $(p_0,q_0)\in \mathcal{O}_{(p,q)}$, and 
$v_P[p',q']=0$ for any other $P$ in the fundamental domain and $(p',q')\in\mathcal{O}_{(p,q)}$.

\smallskip

We now work out different examples explicitly. For instance,  we look at the following orbit generated by $(p,q)=(0,\frac{1}{3})$: 
\begin{eqnarray*}
\mathcal{O}_{(0,\frac{1}{3})}&=&\Big\{(\frac{1}{2},\frac{2}{3}),(\frac{1}{2},\frac{1}{3}),(\frac{5}{6},\frac{2}{3}),(\frac{5}{6},\frac{1}{3}),(\frac{5}{6},0),(\frac{1}{6},\frac{2}{3}),(\frac{1}{6},\frac{1}{3}),(\frac{1}{6},0),(\frac{2}{3},\frac{5}{6}), \\ 
&& (\frac{2}{3},\frac{2}{3}),
(\frac{2}{3},\frac{1}{2}),(\frac{2}{3},\frac{1}{3}),(\frac{2}{3},\frac{1}{6}),(\frac{2}{3},0),(\frac{1}{3},\frac{5}{6}),(\frac{1}{3},\frac{2}{3}),(\frac{1}{3},\frac{1}{2}), 
(\frac{1}{3},\frac{1}{3}),\\
&&(\frac{1}{3},\frac{1}{6}),(\frac{1}{3},0),
(0,\frac{1}{6}),(0,\frac{2}{3}),(0,\frac{5}{6}),(0,\frac{1}{3})\Big\}. 
\end{eqnarray*}
For this case we have the following statement. 

\smallskip

\begin{theorem} \label{poleatinfinityThmExplicit}

For any non-negative integer $n$, $\tilde{a}_{2n}\big(i\mu;\mathcal{O}_{(0,\frac{1}{3})}\big)$ is in the one-dimensional space spanned by 
\[
\frac{G_{14}(i\mu)}{\Delta(i\mu)}, 
\] where $\Delta$ is the modular discriminant (a cusp form of weight 12), and $G_{14}$ is the Eisenstein series of weight $14$.

\begin{proof}

From the orbit above one observes that $n=24=6n_0$ in this case, so Corollary \ref{nandn0Cor} shows that $\partial_q[p,q](i\mu)$ has no zeros in the upper-half complex plane for any $(p,q)$ in the orbit. Therefore, $\tilde{a}_{0}\big(i\mu;\mathcal{O}_{(0,\frac{1}{3})}\big)$ and thus $\tilde{a}_{2n}\big(i\mu;\mathcal{O}_{(0,\frac{1}{3})}\big)$ are holomorphic on $\mathbb{H}$ as we have argued previously.

\smallskip

In addition, by either Corollary \ref{orderofzeroatinfinityLemma2} or direct expansion in $Q=e^{-2\pi\mu}$ we know that $\tilde{a}_{0}\big(i\mu;\mathcal{O}_{(0,\frac{1}{3})}\big)$ has a simple pole at infinity, $i \mu = \infty$, $Q=0$. Since the modular discriminant $\Delta$ has a zero of order $1$ at $i\mu=\infty$, it follows that $\Delta(i\mu)\cdot\tilde{a}_{0}\big(i\mu;\mathcal{O}_{(0,\frac{1}{3})}\big)$ is a modular function holomorphic on $\mathbb{H}$ as well as at $i\mu=\infty$. Namely, $\Delta(i\mu)\cdot\tilde{a}_{0}\big(i\mu;\mathcal{O}_{(0,\frac{1}{3})}\big)$ is a modular form of weight $12+2=14$. Since the space of modular forms of weight 14 is a one-dimensional space generated by the Eisenstein series $G_{14}$, the desired result follows.

\smallskip

\end{proof}

\end{theorem}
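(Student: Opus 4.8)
The plan is to exploit the modular-transformation and periodicity machinery already in place so as to reduce the whole statement to an analysis of the single coefficient $\tilde a_0$ on the orbit $\mathcal{O}_{(0,\frac{1}{3})}$, and then to identify the one-dimensional target space by a pole count at the cusp. First I would record that, by Corollary \ref{modularfunctionbysumCor}, $\tilde a_{2n}\big(i\mu;\mathcal{O}_{(0,\frac{1}{3})}\big)$ is a weight-$2$ modular function for $PSL_2(\mathbb{Z})$ for every $n$, and that, by Corollary \ref{alltermstwoparamodularCor}, the functions $\tilde a_{2n}[p',q'](i\mu)$ all share the same zeros and poles, with the same orders, as $\tilde a_0[p',q'](i\mu)$ on $\mathbb{H}$ (poles being counted as zeros of negative order). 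Hence it suffices to locate the singularities of $\tilde a_0\big(i\mu;\mathcal{O}_{(0,\frac{1}{3})}\big)$, and the explicit theta-quotient formula for $\tilde a_0[p,q]$ displayed just before the theorem shows these can occur on $\mathbb{H}$ only at zeros of the functions $\partial_q\vartheta[p',q'](i\mu)$, $(p',q')\in\mathcal{O}_{(0,\frac{1}{3})}$.

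Next I would extract the orbit invariants from the displayed orbit: it has $n=24$ points, of which $n_0=4$ have vanishing first coordinate, so $n=6n_0$. By Corollary \ref{nandn0Cor}, the equality $n=6n_0$ forces $\partial_q\vartheta[p',q'](i\mu)$ to be zero-free on $\mathbb{H}$ for every $(p',q')$ in the orbit, hence $\tilde a_0\big(i\mu;\mathcal{O}_{(0,\frac{1}{3})}\big)$, and therefore every $\tilde a_{2n}\big(i\mu;\mathcal{O}_{(0,\frac{1}{3})}\big)$, is holomorphic on $\mathbb{H}$. It then remains only to treat the cusp, where Corollary \ref{orderofzeroatinfinityLemma2} (or a direct leading-order expansion at $i\mu=\infty$) gives that $\tilde a_0\big(i\mu;\mathcal{O}_{(0,\frac{1}{3})}\big)$ has a simple pole.

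Finally, since $\Delta$ has a simple zero at the cusp and no zeros on $\mathbb{H}$, the product $\Delta(i\mu)\,\tilde a_0\big(i\mu;\mathcal{O}_{(0,\frac{1}{3})}\big)$ is holomorphic on $\mathbb{H}$ and at $\infty$, hence a holomorphic modular form of weight $12+2=14$; as the space of weight-$14$ modular forms for $PSL_2(\mathbb{Z})$ is one-dimensional and spanned by $G_{14}$, this yields $\tilde a_0\big(i\mu;\mathcal{O}_{(0,\frac{1}{3})}\big)\in\mathbb{C}\cdot G_{14}/\Delta$, and the same conclusion for all $\tilde a_{2n}\big(i\mu;\mathcal{O}_{(0,\frac{1}{3})}\big)$ by the identical zero/pole structure noted at the outset. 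The main obstacle is the cusp bookkeeping, namely correctly assembling $v_\infty(\tilde a_0[p',q'])$ over the full $24$-element orbit from Lemma \ref{orderofzeroatinfinityLemma1} and the formula in Corollary \ref{orderofzeroatinfinityLemma2}, handling the exceptional points where $\partial_q\vartheta$ vanishes identically at infinity; a secondary subtlety is to make sure the transfer of ``same zeros/orders'' in Corollary \ref{alltermstwoparamodularCor} genuinely encompasses poles, so that the $\tilde a_0$ analysis really does govern every $\tilde a_{2n}$.
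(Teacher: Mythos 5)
Your proposal is correct and follows essentially the same route as the paper: the orbit count $n=24=6n_0$ combined with Corollary \ref{nandn0Cor} gives holomorphy on $\mathbb{H}$, Corollary \ref{orderofzeroatinfinityLemma2} gives the simple pole at the cusp, and multiplication by $\Delta$ lands the function in the one-dimensional space of weight-$14$ forms, with Corollary \ref{alltermstwoparamodularCor} transferring the zero/pole analysis of $\tilde a_0$ to all $\tilde a_{2n}$. The two subtleties you flag at the end (cusp bookkeeping over the full orbit, and poles as negative-order zeros) are exactly the points the paper handles via Corollary \ref{orderofzeroatinfinityLemma2} and the remark following Corollary \ref{alltermstwoparamodularCor}.
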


Indeed, by writing their $Q$-expansions explicitly, we confirm that for  $\tilde{a}_{0}\big(i\mu;\mathcal{O}_{(0,\frac{1}{3})}\big)$, $\tilde{a}_{2}\big(i\mu;\mathcal{O}_{(0,\frac{1}{3})}\big)$ and $\tilde{a}_{4}\big(i\mu;\mathcal{O}_{(0,\frac{1}{3})}\big)$ we have: 
\begin{eqnarray*}
\tilde{a}_{0}\big(i\mu;\mathcal{O}_{(0,\frac{1}{3})}\big) &=& \frac{-\frac{4}{3}Q^{-1}+262512Q+\frac{171950080}{3}Q^2+3457199880Q^3+\cdots}{\pi^3\Lambda^2}\\
&=& -\frac{6081075}{\pi^{17}\Lambda^2}\cdot\frac{G_{14}(i\mu)}{\Delta(i\mu)}, 
\end{eqnarray*}
\begin{eqnarray*}
\tilde{a}_{2}\big(i\mu;\mathcal{O}_{(0,\frac{1}{3})}\big) &=& \frac{\frac{4}{3}Q^{-1}-262512Q-\frac{171950080}{3}Q^2-3457199880Q^3+\cdots}{\pi\Lambda}\\
&=& \frac{6081075}{\pi^{15}\Lambda}\cdot\frac{G_{14}(i\mu)}{\Delta(i\mu)},
\end{eqnarray*}
\begin{eqnarray*}
\tilde{a}_{4}\big(i\mu;\mathcal{O}_{(0,\frac{1}{3})}\big) &=& \frac{-\frac{4}{15}Q^{-1}+\frac{87504}{5}Q+\frac{34390016}{9}Q^2+230479992Q^3+\cdots}{\pi^{-1}\Lambda^0}\\
&=& -\frac{405405}{\pi^{13}}\cdot\frac{G_{14}(i\mu)}{\Delta(i\mu)}. 
\end{eqnarray*}

\smallskip

Using this approach and taking advantage of the lemmas proved here, one can prove similar results 
for many other orbits $\mathcal{O}_{(p,q)}$. 
As another example, we can also look at the following orbit generated by $(p,q)=(\frac{1}{6},\frac{5}{6})$ containing 8 points:
\begin{eqnarray*}
\mathcal{O}_{(\frac{1}{6},\frac{5}{6})}=\Big\{(\frac{1}{2},\frac{1}{6}),(\frac{5}{6},\frac{1}{2}),(\frac{5}{6},\frac{5}{6}),(\frac{5}{6},\frac{1}{6}),(\frac{1}{2},\frac{5}{6}),(\frac{1}{6},\frac{5}{6}),(\frac{1}{6},\frac{1}{2}),(\frac{1}{6},\frac{1}{6})\Big\}. 
\end{eqnarray*}
In this case, the statement is as follows, in which $G_6$ denotes the Eisenstein series of weight 6. 

\smallskip

\begin{theorem}

For any non-negative integer $n$, $\tilde{a}_{2n}\big(i\mu;\mathcal{O}_{(\frac{1}{6},\frac{5}{6})}\big)$ is in the one-dimensional space spanned by 
\[
\frac{\Delta(i\mu)G_{6}(i\mu)}{G_4(i\mu)^4}.
\]

\begin{proof}

Since $\mathcal{O}_{(\frac{1}{6},\frac{5}{6})}$ has 8 points, we know from our analysis following Corollary 
\ref{nandn0Cor} that
$v_\rho[p_0,q_0]=v_\rho[-p_0,-q_0]=1$ for some $(p_0,q_0)\in \mathcal{O}_{(\frac{1}{6},\frac{5}{6})}$, 
and $v_P[p,q]=0$ for any other $P$ in the fundamental domain and $(p,q)\in \mathcal{O}_{(\frac{1}{6},\frac{5}{6})}$.

\smallskip

One observes from the explicit orbit that $(\pm p,\pm q)$ cannot be identified with any of $(p+\frac{1}{2},q),(p+\frac{1}{2},q+\frac{1}{2})$, or $(p,q+\frac{1}{2})$ in $S$, so it follows that the simple zero of $\partial_q\vartheta[\pm p_0,\pm q_0](i\mu)$ at $i\mu=\rho$ is not canceled by possible zeros of $\partial_q\vartheta[\pm p+\frac{1}{2},\pm q_0],\partial_q\vartheta[\pm p+\frac{1}{2},q+\frac{1}{2}]$, or $\partial_q\vartheta[\pm p,\pm q+\frac{1}{2}]$ in the numerator of $\tilde{a}[\pm p_0,\pm q_0]_{0}(i\mu)$.\\ 
As a result, a factor of $(\partial_q\vartheta[\pm p_0,\pm q_0])^4$ in the denominator of $\tilde{a}_{0}[\pm p_0,\pm q_0](i\mu)$ implies that $\tilde{a}_{0}[\pm p_0,\pm q_0](i\mu)$ have simple poles of order $4$ at $\rho$, and this is their only singularity, including $i\mu=\infty$. Moreover, similar to our argument in 
Theorem \ref{poleatinfinityThmExplicit},  this implies that $\tilde{a}_{2n}\big(i\mu;\mathcal{O}_{(\frac{1}{6},\frac{5}{6})}\big)$ is meromorphic on $\mathbb{H}\cup \{ \infty \}$ with only a pole of order $4$ at $i\mu=\rho$.

\smallskip

Recall that the Eisenstein series $G_4(i\mu)$ is a modular form of weight $4$ with a simple zero at $i\mu=\rho$, so the function
\begin{eqnarray*}
\tilde{a}_{2n}\big(i\mu;\mathcal{O}_{(\frac{1}{6},\frac{5}{6})}\big)\cdot G_4(i\mu)^4
\end{eqnarray*}
is modular of weight $2+4\times4=18$, and holomorphic on $\mathbb{H}\cup \{\infty \}$, so it is in the space of modular forms of weight $18$. 

\smallskip

In addition, we know from explicit $Q$-expansion of $\tilde{a}_{0}\big(i\mu;\mathcal{O}_{(\frac{1}{6},\frac{5}{6})}\big)$ that the function $\tilde{a}_{2n}\big(i\mu;\mathcal{O}_{(\frac{1}{6},\frac{5}{6})}\big)$ has a simple zero at $Q=0$, so the function $\tilde{a}_{2n}\big(i\mu;\mathcal{O}_{(\frac{1}{6},\frac{5}{6})}\big)\cdot G_4(i\mu)$ has a zero of order $1+4\times 0=1$ at $Q=0$. Namely, $\tilde{a}_{2n}\big(i\mu;\mathcal{O}_{(\frac{1}{6},\frac{5}{6})}\big)\cdot G_4(i\mu)$ is a cusp form of weight $18$. Since the space of cusp forms of weight $18$ is generated by $\Delta\cdot G_6$, we see that $\tilde{a}_{2n}\big(i\mu;\mathcal{O}_{(p,q)}\big)$ is contained in the one-dimensional space generated by  
\[
\frac{\Delta(i\mu)G_{6}(i\mu)}{G_4(i\mu)^4}.
\]

\end{proof}

\end{theorem}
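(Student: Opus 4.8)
The plan is to follow the strategy of the proof of Theorem~\ref{poleatinfinityThmExplicit} and reduce everything to the single coefficient $\tilde{a}_0$. By Corollary~\ref{alltermstwoparamodularCor} all the functions $\tilde{a}_{2n}[p',q'](i\mu)$ have the same zeros and poles with the same orders in $\mathbb{H}$, and by Corollary~\ref{modularfunctionbysumCor} the orbit sum $\tilde{a}_{2n}\big(i\mu;\mathcal{O}_{(\frac{1}{6},\frac{5}{6})}\big)$ is a modular function of weight $2$ for $PSL_2(\mathbb{Z})$. Hence it suffices to find the poles of this function inside a fundamental domain, to record its order of vanishing at the cusp, and then to match it against the one-dimensional space of weight-$18$ cusp forms.

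First I would locate the poles inside $\mathbb{H}$. The orbit $\mathcal{O}_{(\frac{1}{6},\frac{5}{6})}$ has $8$ points and $n_0=0$, since no orbit point has first coordinate $0$, so Lemma~\ref{ZerosOfTheta} yields
\[
\sum_{(p',q')\in\mathcal{O}_{(\frac{1}{6},\frac{5}{6})}}\Big(\frac{1}{2} v_i[p',q']+\frac{1}{3} v_\rho[p',q']+\sideset{}{^*}\sum_{P\in\mathbb{H}/PSL_2(\mathbb{Z})} v_P[p',q']\Big)=\frac{8}{12}=\frac{2}{3}.
\]
Using $\tilde{a}_0[p',q']=\tilde{a}_0[-p',-q']$, and the fact that no point of this orbit is fixed by $(p,q)\mapsto(-p,-q)$ in the parameter square $S$, every contribution appears in an equal pair, so the left-hand side is a nonnegative integer plus $\frac{2}{3}$ times a nonnegative integer; the only possibility consistent with the value $\frac{2}{3}$ is $v_\rho[p_0,q_0]=v_\rho[-p_0,-q_0]=1$ for a single pair $\{(p_0,q_0),(-p_0,-q_0)\}$, with all other $v_P[p',q']$ zero in the fundamental domain. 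This is exactly the solution isolated after Corollary~\ref{nandn0Cor}. Since the denominator of $\tilde{a}_0[p',q']$ is $(\partial_q\vartheta[p',q'])^4$, while its numerator is a product of $\vartheta_2,\vartheta_3,\vartheta_4,\vartheta[p',q']$ (all zero-free on $\mathbb{H}$ for real $(p',q')\neq(\frac{1}{2},\frac{1}{2})$) with the theta derivatives $\partial_q\vartheta[p'+\frac{1}{2},q'+\frac{1}{2}]$, $\partial_q\vartheta[p'+\frac{1}{2},q']$, $\partial_q\vartheta[p',q'+\frac{1}{2}]$, I would then check that none of these three vanishes at $i\mu=\rho$ when $(p',q')=(\pm p_0,\pm q_0)$: the shifted parameters are never identified modulo $1$ with $(\pm p_0,\pm q_0)$, and by the valence count above no orbit point other than $(\pm p_0,\pm q_0)$ carries a zero of $\partial_q\vartheta$ on $\mathbb{H}$. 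Hence $\tilde{a}_0[\pm p_0,\pm q_0]$ has a pole of order exactly $4$ at $\rho$ and no other singularity on $\mathbb{H}$, so $\tilde{a}_{2n}\big(i\mu;\mathcal{O}_{(\frac{1}{6},\frac{5}{6})}\big)$ is meromorphic on $\mathbb{H}\cup\{\infty\}$ with a single pole, of order $4$, at $i\mu=\rho$.

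Next I would clear this pole. The Eisenstein series $G_4$ has weight $4$, a simple zero at $\rho$, and no other zero on $\mathbb{H}\cup\{\infty\}$, so $\tilde{a}_{2n}\big(i\mu;\mathcal{O}_{(\frac{1}{6},\frac{5}{6})}\big)\cdot G_4(i\mu)^4$ is holomorphic on $\mathbb{H}\cup\{\infty\}$ and modular of weight $2+16=18$. Reading off the leading $Q$-terms of the theta functions in the orbit sum (equivalently, from the explicit $Q$-expansion as in the $(0,\frac{1}{3})$ case), one sees that $\tilde{a}_0\big(i\mu;\mathcal{O}_{(\frac{1}{6},\frac{5}{6})}\big)$, and hence every $\tilde{a}_{2n}\big(i\mu;\mathcal{O}_{(\frac{1}{6},\frac{5}{6})}\big)$ by the $n$-independence of the zero orders underlying Corollary~\ref{alltermstwoparamodularCor}, has a simple zero at $Q=0$; since $G_4$ is nonzero at the cusp, the product $\tilde{a}_{2n}\big(i\mu;\mathcal{O}_{(\frac{1}{6},\frac{5}{6})}\big)\cdot G_4^4$ vanishes to order $1$ at $Q=0$, hence is a cusp form of weight $18$. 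The space of weight-$18$ cusp forms being one-dimensional and spanned by $\Delta\cdot G_6$, this forces $\tilde{a}_{2n}\big(i\mu;\mathcal{O}_{(\frac{1}{6},\frac{5}{6})}\big)$ to lie in the one-dimensional space spanned by $\Delta G_6/G_4^4$, which is the assertion.

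I expect the main obstacle to be the middle step: proving that the pole at $\rho$ has order exactly $4$ and is the only pole. This requires the precise bookkeeping of which shifted parameters $(p'+\frac{1}{2},q')$, $(p'+\frac{1}{2},q'+\frac{1}{2})$, $(p',q'+\frac{1}{2})$ can be identified modulo $1$ with orbit points, together with the remark that the valence identity of Lemma~\ref{ZerosOfTheta} already pins the unique interior zero of $\partial_q\vartheta$ down to order $1$ and to the point $\rho$ rather than $i$ --- the latter because, under the $(p,q)\mapsto(-p,-q)$ pairing, any zero at $i$ would contribute an integer to the left-hand side while the target value $\frac{2}{3}$ demands a single $\frac{1}{3}$-weighted term. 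Confirming the simple zero of the orbit sum at the cusp is then a routine computation with the leading $Q$-behavior of the theta functions, as in Lemma~\ref{orderofzeroatinfinityLemma1}.
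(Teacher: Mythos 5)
Your proposal is correct and follows essentially the same route as the paper: the valence identity of Lemma \ref{ZerosOfTheta} with $n=8$, $n_0=0$ pins the unique interior zero of $\partial_q\vartheta$ to a simple zero at $\rho$ on a single $\pm$-pair of orbit points, the non-cancellation by the shifted theta derivatives gives the order-$4$ pole of $\tilde a_0$ at $\rho$, and multiplication by $G_4^4$ together with the simple vanishing at the cusp lands the sum in the one-dimensional space of weight-$18$ cusp forms. Your explicit remark that a zero at $i$ would contribute an integer while the target value $\tfrac{2}{3}$ forces a $\tfrac13$-weighted term is a nice elaboration of the arithmetic the paper leaves implicit, but it is the same argument.
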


\smallskip

Indeed, explicit $Q$-expansions in the latter case also confirm that: 
\begin{eqnarray*}
\tilde{a}_{0}\big(i\mu;\mathcal{O}_{(\frac{1}{6},\frac{5}{6})}\big) &=& \frac{-294912Q + 438829056Q^2-315542863872Q^3+\cdots}{\pi^3\Lambda^2}\\
&=&-\frac{114688\pi^7}{3375\Lambda^2}\cdot\frac{\Delta(i\mu)G_{6}(i\mu)}{G_4(i\mu)^4},
\end{eqnarray*}
\begin{eqnarray*}
\tilde{a}_{2}\big(i\mu;\mathcal{O}_{(\frac{1}{6},\frac{5}{6})}\big) &=& \frac{294912Q - 438829056Q^2+315542863872Q^3+\cdots}{\pi\Lambda}\\
&=&\frac{114688\pi^9}{3375\Lambda}\cdot\frac{\Delta(i\mu)G_{6}(i\mu)}{G_4(i\mu)^4},
\end{eqnarray*}
\begin{eqnarray*}
\tilde{a}_{4}\big(i\mu;\mathcal{O}_{(\frac{1}{6},\frac{5}{6})}\big) &=& \frac{-270336Q + 402259968Q^2-289247625216 Q^3+\cdots}{5\pi^{-1}\Lambda^0}\\
&=&-\frac{315392\pi^{11}}{50625}\cdot\frac{\Delta(i\mu)G_{6}(i\mu)}{G_4(i\mu)^4}.
\end{eqnarray*}

\smallskip

\section{Conclusions}
\label{ConclusionsSec}

\smallskip

The results obtained in this paper present  a novel
occurrence in quantum cosmology of modular functions and the
vector-valued modular forms considered in the Eichler-Zagier theory
of Jacobi forms \cite{EicZag}.  This was indeed suggested to us by
a combination of two different sources: the existence of an explicit
parametrization of Bianchi IX gravitational instantons in terms of theta
functions with characteristics \cite{BabKor} (see also \cite{Tod, Hit})
and our rationality result about the Seeley-de Witt coefficients
in the asymptotic expansion of the spectral action
for triaxial Bianchi IX metrics \cite{FanFatMar1}. These two
results combined reveal that each Seeley-de Witt coefficient in the
expansion is a rational function, with rational coefficients, in the theta
functions $\vartheta_2, \vartheta_3, \vartheta_4$,  $\vartheta[p,q]$,
$\partial_q \vartheta[p,q]$, $e^{i\pi p}$ and their derivatives (the latter
theta functions are written explicitly in Section \ref{InstantonsSec}).

\smallskip

Bianchi IX gravitational instantons are especially interesting since
they admit an explicit parametrization in terms of elliptic modular functions and theta
functions with characteristics \cite{BabKor}, see also \cite{Tod, Oku, Hit}
and references therein. These are obtained by imposing the
self-duality condition on the Weyl tensor of Bianchi IX metrics,
and reducing the corresponding partial differential equations
to well known ordinary differential equations, namely the Halphen system and the
Painlev\'e VI equation. This result is followed by still another crucial step aimed at
making the result an Einstein metric.  That is, a correct choice of a time-dependent
conformal factor is essential for making the Ricci tensor proportional to the metric.
This fact is relevant to our present work in the following interesting ways.
On the one hand, we have explained in this paper that a similar rationality result holds for a
general time-dependent conformal perturbation of the triaxial Bianchi IX metric
treated in \cite{FanFatMar1}. The result is proved by employing our method
based on Wodzicki's noncommutative residue \cite{Wod1, Wod2} and the K\"unneth formula.
On the other hand, it is necessary to involve the correct conformal factor in our calculations, in
order to obtain the modular transformation properties that we discussed. These properties
add to the many interesting and special features of the Bianchi IX gravitational instantons.

\smallskip

Modular forms appear in a variety of areas in mathematics and physics.
Since modular forms of a certain weight form a finite dimensional linear space and can be
computed with algorithmic methods, they have a wide range of applications. Thus,
it is of great importance in general to find an explicit way of relating any modular function
or modular form that arise from a mathematical structure or from physical problems to
well-known modular forms, whose Fourier expansion, for example, is known.
We have accomplished this task for the modular functions arising from the spectral action
for Bianchi IX metrics, in this paper, by exploring their intimate connection with modular
forms of weight $14$ and cusp forms of weight $18$, both of which form $1$-dimensional
linear spaces. That is, we have shown that,  when the two parameters of a gravitational
instanton are rational, belonging to two different general families,
there is a finite orbit of the parameters, for each case, such that summation
over the orbits leads to the following. In the first case, after multiplication by the cusp form
$\Delta$ of weight 12, each modular function arising from
the Seeley-de Witt coefficient $\tilde a_{2n}$ lands in the space
of modular forms of weight $14$. This indicates that each modular function arising
in this case has only one simple pole, which is located at infinity. In the second
case, after multiplication by $G_4^4$, where $G_4$ is the Eisenstein series of
weight 4, the modular functions arising from the Seeley-de Witt coefficients
land in the 1-dimensional space of cusp forms of weight 18.

\smallskip

In order to illustrate how the present work fits in the general panorama
of other rich arithmetic and number theoretic structures in theoretical
physics,
let us mention the following examples.
A first example is the setting in which Feynman integrals are interpreted
as periods,
see \cite{MarBook} for an overview.  In this case, the relevant amplitude
forms
and domains of integration are algebraic over the rationals or integers
and this fact
has direct implications on the class of numbers that arise as periods.
In particular, an interesting connection to modular forms also arises in this
setting \cite{BrSch}.
A second example in which rational coefficients play an important role is
in the
zero temperature KMS states of quantum statistical mechanical systems. For
example,
in the construction in \cite{ConMarLattices}, which is explained also in
Chapter 3 of
\cite{ConMarBook}, an arithmetic algebra of observables over the rationals is
constructed, whose link to modular functions allows to have KMS states
that take
their values in the modular field.
There are many occurrences of modular forms in
physics, especially in the context of String Theory. The literature
on the subject is extensive and we cannot mention all the relevant
results here, so we only point the reader to a couple of significant
recent examples, such as \cite{ChDuHa, DaMuZa}. The setting
we considered here is very different, as modular forms arise in the
gravity action functional (the spectral action) of a specific class of
gravitational instantons, rather than in settings such as superstring
amplitudes, or counting functions for BPS states, or mirror symmetry.
There are many other examples in the literature of arithmetic structures
arising in
physics, see for example the contributions collected in the volume
\cite{KirWill}. As it is noticeable from the present work as well, it is
in general a challenging
and promising task to further explore the hidden arithmetic structures in
different areas
of physics, including gravity and quantum cosmology.

\smallskip

\appendix

\smallskip

\section{
Proofs of Lemma \ref{transformationsw_22} and Lemma \ref{transformationsw_32}
}

\label{transformationsw_j2appendix}

\smallskip

For the sake of completeness, we provide here the proofs of the modular transformation 
properties of the functions $w_2$, $w_3$ and their derivatives that were stated 
in lemmas  \ref{transformationsw_22} and \ref{transformationsw_32}. In fact the 
proofs are very similar to that of Lemma \ref{transformationsw_12}. For the function $w_2$, using lemmas 
\ref{transformationsvarthetapq} and \ref{transformationsvartheta234}, we can 
write

\begin{eqnarray*}
w_{2}[p,q](\frac{i}{\mu}) &=& \frac{i}{2}\vartheta_{2}(\frac{i}{\mu})\vartheta_{4}(\frac{i}{\mu})\frac{\partial_{q}\vartheta[p+\frac{1}{2},q+\frac{1}{2}](\frac{i}{\mu})}{e^{\pi ip}\vartheta[p,q](\frac{i}{\mu})} \\
&=&\mu^{2}\frac{i}{2}\vartheta_{4}(i\mu)\vartheta_{2}(i\mu)\frac{\partial_{q}\vartheta[-q+\frac{1}{2},p+\frac{1}{2}](i\mu)}{e^{-\pi iq}\vartheta[-q,p](i\mu)}\\
&=&\mu^{2}w_{2}[-q,p](i\mu). 
\end{eqnarray*}
By taking derivatives of the latter with respect to $\mu$ consecutively we obtain: 
\begin{eqnarray*}
w_{2}^{'}[p,q](\frac{i}{\mu}) &=&\frac{d\mu}{d\frac{1}{\mu}}\partial_{\mu}w_{2}[p,q](\frac{i}{\mu}) \\
&=&-\mu^{2}\partial_{\mu}(\mu^{2}w_{2}[-q,p](i\mu))=-\mu^{2}(2\mu w_{2}[-q,p](i\mu)+\mu^{2}w_{2}^{'}[-q,p](i\mu)) \\
&=&-\mu^{4}w_{2}^{'}[-q,p](i\mu)-2\mu^{3}w_{2}[-q,p](i\mu),
\end{eqnarray*}

\begin{eqnarray*}
w_{2}^{''}[p,q](\frac{i}{\mu}) &=&\frac{d\mu}{d\frac{1}{\mu}}\partial_{\mu}w_{2}^{'}[p,q](\frac{i}{\mu}) \\
&=&-\mu^{2}\partial_{\mu}(-\mu^{4}w_{2}^{'}[-q,p](i\mu)-2\mu^{3}w_{2}[-q,p](i\mu))\\
&=&\mu^{6}w_{2}^{''}[-q,p](i\mu)+6\mu^{5}w_{2}^{'}[-q,p](i\mu)+6\mu^{4}w_{2}[-q,p](i\mu),
\end{eqnarray*}

\begin{eqnarray*}
w_{2}^{(3)}[p,q](\frac{i}{\mu}) &=& \frac{d\mu}{d\frac{1}{\mu}}\partial_{\mu}w_{2}^{''}[p,q](\frac{i}{\mu}) \\
&=&-\mu^{2}\partial_{\mu}(\mu^{6}w_{2}^{''}[-q,p](i\mu)+6\mu^{5}w_{2}^{'}[-q,p](i\mu)+6\mu^{4}w_{2}[-q,p](i\mu)) \\
&=&-\mu^{8}w_{2}^{(3)}[-q,p](i\mu)-12\mu^{7}w_{2}^{''}[-q,p](i\mu)-36\mu^{6}w_{2}^{'}[-q,p](i\mu) \\
&&-24\mu^{5}w_{2}[-q,p](i\mu),
\end{eqnarray*}

\begin{eqnarray*}
w_{2}^{(4)}[p,q](\frac{i}{\mu}) &=&-\mu^{2}\partial_{\mu}(-\mu^{8}w_{2}^{(3)}[-q,p](i\mu)-12\mu^{7}w_{2}^{''}[-q,p](i\mu)-36\mu^{6}w_{2}^{'}[-q,p](i\mu) \\
&&-24\mu^{5}w_{2}[-q,p](i\mu)) \\
&=&\mu^{10}w_{2}^{(4)}[-q,p](i\mu)+20\mu^{9}w_{2}^{(3)}[-q,p](i\mu)+120\mu^{8}w_{2}^{''}[-q,p](i\mu) \\
&&+240\mu^{7}w_{2}^{'}[-q,p](i\mu) +120\mu^{6}w_{2}[-q,p](i\mu). 
\end{eqnarray*}

\smallskip

Similarly, for the function $w_3$ we have: 
\begin{eqnarray*}
w_{3}[p,q](\frac{i}{\mu}) &=&-\frac{1}{2}\vartheta_{2}(\frac{i}{\mu})\vartheta_{3}(\frac{i}{\mu})\frac{\partial_{q}\vartheta[p+\frac{1}{2},q](\frac{i}{\mu})}{\vartheta[p,q](\frac{i}{\mu})}\\ &=&\frac{i}{2}\mu^{2}\vartheta_{4}(i\mu)\vartheta_{3}(i\mu)\frac{\partial_{q}\vartheta[-q,p+\frac{1}{2}](i\mu)}{e^{-\pi iq}\vartheta[-q,p](i\mu)} \\
&=&-\mu^{2}w_{1}[-q,p](i\mu),
\end{eqnarray*}
\begin{eqnarray*}
w_{3}^{'}[p,q](\frac{i}{\mu}) &=&
\frac{d\mu}{d\frac{1}{\mu}}\partial_{\mu}w_{3}[p,q](\frac{i}{\mu})
= -\mu^{2}\partial_{\mu}(-\mu^{2}w_{1}[-q,p](i\mu)) \\
&=&-\mu^{2}(-2\mu w_{1}[-q,p](i\mu)-\mu^{2}w_{1}^{'}[-q,p](i\mu)) \\
&=&\mu^{4}w_{1}^{'}[-q,p](i\mu)+2\mu^{3}w_{1}[-q,p](i\mu),
\end{eqnarray*}
\begin{eqnarray*}
w_{3}^{''}[p,q](\frac{i}{\mu}) &=& \frac{d\mu}{d\frac{1}{\mu}}\partial_{\mu}w_{3}^{'}[p,q](\frac{i}{\mu}) \\
&=& -\mu^{2}\partial_{\mu}(\mu^{4}w_{1}^{'}[-q,p](i\mu)+2\mu^{3}w_{1}[-q,p](i\mu)) \\
&=&-\mu^{6}w_{1}^{''}[-q,p](i\mu)-6\mu^{5}w_{1}^{'}[-q,p](i\mu)-6\mu^{4}w_{1}[-q,p](i\mu),
\end{eqnarray*}
\begin{eqnarray*}
w_{3}^{(3)}[p,q](\frac{i}{\mu}) &=& \frac{d\mu}{d\frac{1}{\mu}}\partial_{\mu}w_{3}^{''}[p,q](\frac{i}{\mu}) \\
&=&-\mu^{2}\partial_{\mu}(-\mu^{6}w_{1}^{''}[-q,p](i\mu)-6\mu^{5}w_{1}^{'}[-q,p](i\mu)-6\mu^{4}w_{1}[-q,p](i\mu)) \\
&=&\mu^{8}w_{1}^{(3)}[-q,p](i\mu)+12\mu^{7}w_{1}^{''}[-q,p](i\mu)+36\mu^{6}w_{1}^{'}[-q,p](i\mu) \\
&& +24\mu^{5}w_{1}[-q,p](i\mu),
\end{eqnarray*}
\begin{eqnarray*}
w_{3}^{(4)}[p,q](\frac{i}{\mu})&=&-\mu^{2}\partial_{\mu}(\mu^{8}w_{1}^{(3)}[-q,p](i\mu)+12\mu^{7}w_{1}^{''}[-q,p](i\mu)+36\mu^{6}w_{1}^{'}[-q,p](i\mu) \\
&&+24\mu^{5}w_{1}[-q,p](i\mu)) \\
&=&-\mu^{10}w_{1}^{(4)}[-q,p](i\mu)-20\mu^{9}w_{1}^{(3)}[-q,p](i\mu)-120\mu^{8}w_{1}^{''}[-q,p](i\mu) \\
&&-240\mu^{7}w_{1}^{'}[-q,p](i\mu)-120\mu^{6}w_{1}[-q,p](i\mu).
\end{eqnarray*}

\smallskip

\section{Full expression of the term $\tilde a_4$}
\label{fulla_4appendix}

\smallskip

The expressions for the terms $\tilde a_0$ and $\tilde a_{2}$ appearing 
in the asymptotic expansion \eqref{ExpAsympConformalEq} in which 
$\tilde D$ is the Dirac operator 
of the metric \eqref{ConformalBianchiIXMetricEq1}, 
were given at the end of Section \ref{RationalitySec}.   
Since the term $\tilde a_4$ has a lengthy expression, it is recorded in this 
appendix. It is clear that this term also, in accordance with Theorem 
\ref{ConformalRationlaityThm}, possesses only rational coefficients: 
\begin{eqnarray*}
\tilde{a}_{4} &=&-\frac{w_{1}^{3}w_{2}^{3}}{15w_{3}^{5}}-\frac{w_{1}^{3}w_{3}^{3}}{15w_{2}^{5}}-\frac{w_{2}^{3}w_{3}^{3}}{15w_{1}^{5}}+\frac{w_{1}^{3}w_{2}}{15w_{3}^{3}}+\frac{w_{1}w_{2}^{3}}{15w_{3}^{3}}+\frac{w_{1}^{3}w_{3}}{15w_{2}^{3}}+\frac{w_{2}^{3}w_{3}}{15w_{1}^{3}}+\frac{w_{1}w_{3}^{3}}{15w_{2}^{3}} \\
&&+\frac{w_{2}w_{3}^{3}}{15w_{1}^{3}}-\frac{w_{1}w_{2}}{15w_{3}}-\frac{w_{1}w_{3}}{15w_{2}}-\frac{w_{2}w_{3}}{15w_{1}}-\frac{w_{2}\left(w_{1}'\right){}^{2}}{15w_{1}w_{3}^{3}}-\frac{w_{3}\left(w_{1}'\right){}^{2}}{15w_{1}w_{2}^{3}}-\frac{w_{3}\left(w_{2}'\right){}^{2}}{15w_{1}^{3}w_{2}}
\end{eqnarray*}
\begin{eqnarray*}
&&-\frac{w_{1}\left(w_{2}'\right){}^{2}}{15w_{2}w_{3}^{3}}-\frac{w_{1}\left(w_{3}'\right){}^{2}}{15w_{2}^{3}w_{3}}-\frac{w_{2}\left(w_{3}'\right){}^{2}}{15w_{1}^{3}w_{3}}+\frac{2\left(w_{1}'\right){}^{2}}{15w_{1}w_{2}w_{3}}+\frac{2\left(w_{2}'\right){}^{2}}{15w_{1}w_{2}w_{3}} \\
&&+\frac{2\left(w_{3}'\right){}^{2}}{15w_{1}w_{2}w_{3}}-\frac{w_{2}\left(w_{1}'\right){}^{2}}{18w_{1}^{3}w_{3}}-\frac{w_{3}\left(w_{1}'\right){}^{2}}{18w_{1}^{3}w_{2}}-\frac{w_{1}\left(w_{2}'\right){}^{2}}{18w_{2}^{3}w_{3}}-\frac{w_{3}\left(w_{2}'\right){}^{2}}{18w_{1}w_{2}^{3}}
\end{eqnarray*}
\begin{eqnarray*}
&& -\frac{w_{1}\left(w_{3}'\right){}^{2}}{18w_{2}w_{3}^{3}}-\frac{w_{2}\left(w_{3}'\right){}^{2}}{18w_{1}w_{3}^{3}}-\frac{w_{2}w_{3}\left(w_{1}'\right){}^{2}}{18w_{1}^{5}}-\frac{w_{1}w_{3}\left(w_{2}'\right){}^{2}}{18w_{2}^{5}}-\frac{w_{1}w_{2}\left(w_{3}'\right){}^{2}}{18w_{3}^{5}} \\
&&-\frac{31\left(w_{1}'\right){}^{4}}{90w_{1}^{5}w_{2}w_{3}}-\frac{31\left(w_{2}'\right){}^{4}}{90w_{1}w_{2}^{5}w_{3}}-\frac{31\left(w_{3}'\right){}^{4}}{90w_{1}w_{2}w_{3}^{5}}-\frac{7w_{1}'w_{2}'}{60w_{3}^{3}}-\frac{7w_{1}'w_{3}'}{60w_{2}^{3}}-\frac{7w_{2}'w_{3}'}{60w_{1}^{3}} \\
&&-\frac{w_{1}'w_{2}'}{45w_{1}^{2}w_{3}}-\frac{w_{1}'w_{2}'}{45w_{2}^{2}w_{3}}-\frac{w_{2}'w_{3}'}{45w_{1}w_{3}^{2}}+\frac{5w_{3}w_{1}'w_{2}'}{36w_{1}^{4}}+\frac{5w_{3}w_{1}'w_{2}'}{36w_{2}^{4}}+\frac{5w_{2}w_{1}'w_{3}'}{36w_{1}^{4}}
\end{eqnarray*}
\begin{eqnarray*}
&& +\frac{5w_{2}w_{1}'w_{3}'}{36w_{3}^{4}}+\frac{5w_{1}w_{2}'w_{3}'}{36w_{2}^{4}}+\frac{5w_{1}w_{2}'w_{3}'}{36w_{3}^{4}}+\frac{7w_{3}w_{1}'w_{2}'}{90w_{1}^{2}w_{2}^{2}}+\frac{7w_{2}w_{1}'w_{3}'}{90w_{1}^{2}w_{3}^{2}}+\frac{7w_{1}w_{2}'w_{3}'}{90w_{2}^{2}w_{3}^{2}} \\
&&-\frac{41\left(w_{1}'\right){}^{3}w_{2}'}{180w_{1}^{4}w_{2}^{2}w_{3}}-\frac{41w_{1}'\left(w_{2}'\right){}^{3}}{180w_{1}^{2}w_{2}^{4}w_{3}}-\frac{41\left(w_{1}'\right){}^{3}w_{3}'}{180w_{1}^{4}w_{2}w_{3}^{2}}-\frac{41w_{1}'\left(w_{3}'\right){}^{3}}{180w_{1}^{2}w_{2}w_{3}^{4}}-\frac{41w_{2}'\left(w_{3}'\right){}^{3}}{180w_{1}w_{2}^{2}w_{3}^{4}} \\
&&-\frac{41\left(w_{2}'\right){}^{3}w_{3}'}{180w_{1}w_{2}^{4}w_{3}^{2}}-\frac{23\left(w_{1}'\right){}^{2}\left(w_{2}'\right){}^{2}}{90w_{1}^{3}w_{2}^{3}w_{3}}-\frac{23\left(w_{1}'\right){}^{2}\left(w_{3}'\right){}^{2}}{90w_{1}^{3}w_{2}w_{3}^{3}}-\frac{23\left(w_{2}'\right){}^{2}\left(w_{3}'\right){}^{2}}{90w_{1}w_{2}^{3}w_{3}^{3}} \\
&&-\frac{w_{1}'w_{3}'}{45w_{1}^{2}w_{2}}-\frac{w_{1}'w_{3}'}{45w_{2}w_{3}^{2}}-\frac{w_{2}'w_{3}'}{45w_{1}w_{2}^{2}}-\frac{91\left(w_{1}'\right){}^{2}w_{2}'w_{3}'}{180w_{1}^{3}w_{2}^{2}w_{3}^{2}}-\frac{91w_{1}'\left(w_{2}'\right){}^{2}w_{3}'}{180w_{1}^{2}w_{2}^{3}w_{3}^{2}}
\end{eqnarray*}
\begin{eqnarray*}
&&-\frac{91w_{1}'w_{2}'\left(w_{3}'\right){}^{2}}{180w_{1}^{2}w_{2}^{2}w_{3}^{3}}+\frac{w_{2}w_{1}''}{24w_{3}^{3}}+\frac{w_{3}w_{1}''}{24w_{2}^{3}}+\frac{w_{1}w_{2}''}{24w_{3}^{3}}+\frac{w_{3}w_{2}''}{24w_{1}^{3}}+\frac{w_{1}w_{3}''}{24w_{2}^{3}}+\frac{w_{2}w_{3}''}{24w_{1}^{3}} \\
&&-\frac{w_{1}''}{12w_{2}w_{3}}-\frac{w_{2}''}{12w_{1}w_{3}}-\frac{w_{3}''}{12w_{1}w_{2}}+\frac{w_{2}w_{1}''}{36w_{1}^{2}w_{3}}+\frac{w_{3}w_{1}''}{36w_{1}^{2}w_{2}}+\frac{w_{1}w_{2}''}{36w_{2}^{2}w_{3}} \\
&& -\frac{5w_{2}w_{3}w_{1}''}{72w_{1}^{4}}-\frac{5w_{1}w_{3}w_{2}''}{72w_{2}^{4}}-\frac{5w_{1}w_{2}w_{3}''}{72w_{3}^{4}}+\frac{5\left(w_{1}'\right){}^{2}w_{1}''}{8w_{1}^{4}w_{2}w_{3}}+\frac{5\left(w_{2}'\right){}^{2}w_{2}''}{8w_{1}w_{2}^{4}w_{3}}
\end{eqnarray*}
\begin{eqnarray*}
&&+\frac{5\left(w_{3}'\right){}^{2}w_{3}''}{8w_{1}w_{2}w_{3}^{4}}+\frac{71w_{1}'w_{2}'w_{1}''}{180w_{1}^{3}w_{2}^{2}w_{3}}+\frac{71w_{1}'w_{2}'w_{2}''}{180w_{1}^{2}w_{2}^{3}w_{3}}+\frac{71w_{1}'w_{3}'w_{1}''}{180w_{1}^{3}w_{2}w_{3}^{2}}+\frac{71w_{1}'w_{3}'w_{3}''}{180w_{1}^{2}w_{2}w_{3}^{3}} \\
&& +\frac{71w_{2}'w_{3}'w_{3}''}{180w_{1}w_{2}^{2}w_{3}^{3}}+\frac{71w_{2}'w_{3}'w_{2}''}{180w_{1}w_{2}^{3}w_{3}^{2}}+\frac{41\left(w_{2}'\right){}^{2}w_{1}''}{360w_{1}^{2}w_{2}^{3}w_{3}}+\frac{41\left(w_{3}'\right){}^{2}w_{1}''}{360w_{1}^{2}w_{2}w_{3}^{3}}+\frac{41\left(w_{2}'\right){}^{2}w_{3}''}{360w_{1}w_{2}^{3}w_{3}^{2}} \\
&&+\frac{41\left(w_{3}'\right){}^{2}w_{2}''}{360w_{1}w_{2}^{2}w_{3}^{3}}+\frac{41\left(w_{1}'\right){}^{2}w_{2}''}{360w_{1}^{3}w_{2}^{2}w_{3}}+\frac{41\left(w_{1}'\right){}^{2}w_{3}''}{360w_{1}^{3}w_{2}w_{3}^{2}}+\frac{11w_{2}'w_{3}'w_{1}''}{36w_{1}^{2}w_{2}^{2}w_{3}^{2}}+\frac{11w_{1}'w_{3}'w_{2}''}{36w_{1}^{2}w_{2}^{2}w_{3}^{2}}\\
&& +\frac{11w_{1}'w_{2}'w_{3}''}{36w_{1}^{2}w_{2}^{2}w_{3}^{2}}-\frac{\left(w_{1}''\right){}^{2}}{6w_{1}^{3}w_{2}w_{3}}-\frac{\left(w_{2}''\right){}^{2}}{6w_{1}w_{2}^{3}w_{3}}-\frac{\left(w_{3}''\right){}^{2}}{6w_{1}w_{2}w_{3}^{3}}+\frac{w_{3}w_{2}''}{36w_{1}w_{2}^{2}}
\end{eqnarray*}
\begin{eqnarray*}
&& +\frac{w_{1}w_{3}''}{36w_{2}w_{3}^{2}}+\frac{w_{2}w_{3}''}{36w_{1}w_{3}^{2}}-\frac{w_{1}''w_{2}''}{15w_{1}^{2}w_{2}^{2}w_{3}}-\frac{w_{2}''w_{3}''}{15w_{1}w_{2}^{2}w_{3}^{2}}-\frac{w_{1}''w_{3}''}{15w_{1}^{2}w_{2}w_{3}^{2}}-\frac{w_{1}'w_{1}{}^{(3)}}{6w_{1}^{3}w_{2}w_{3}}\\
&& -\frac{w_{2}'w_{2}{}^{(3)}}{6w_{1}w_{2}^{3}w_{3}}-\frac{w_{3}'w_{3}{}^{(3)}}{6w_{1}w_{2}w_{3}^{3}}-\frac{w_{2}'w_{1}{}^{(3)}}{10w_{1}^{2}w_{2}^{2}w_{3}}-\frac{w_{3}'w_{1}{}^{(3)}}{10w_{1}^{2}w_{2}w_{3}^{2}}-\frac{w_{1}'w_{2}{}^{(3)}}{10w_{1}^{2}w_{2}^{2}w_{3}}-\frac{w_{3}'w_{2}{}^{(3)}}{10w_{1}w_{2}^{2}w_{3}^{2}} \\
&& -\frac{w_{1}'w_{3}{}^{(3)}}{10w_{1}^{2}w_{2}w_{3}^{2}}-\frac{w_{2}'w_{3}{}^{(3)}}{10w_{1}w_{2}^{2}w_{3}^{2}}+\frac{w_{1}{}^{(4)}}{30w_{1}^{2}w_{2}w_{3}}+\frac{w_{2}{}^{(4)}}{30w_{1}w_{2}^{2}w_{3}}+\frac{w_{3}{}^{(4)}}{30w_{1}w_{2}w_{3}^{2}}
\end{eqnarray*}
\begin{eqnarray*}
&& -\frac{w_{1}w_{2}\left(F'\right)^{2}}{72F^{2}w_{3}^{3}}+\frac{w_{1}\left(F'\right)^{2}}{36F^{2}w_{2}w_{3}}+\frac{w_{2}\left(F'\right)^{2}}{36F^{2}w_{1}w_{3}}-\frac{w_{1}w_{3}\left(F'\right)^{2}}{72F^{2}w_{2}^{3}}+\frac{w_{3}\left(F'\right)^{2}}{36F^{2}w_{1}w_{2}}
\\&&
-\frac{w_{2}w_{3}\left(F'\right)^{2}}{72F^{2}w_{1}^{3}}
 -\frac{13\left(F'\right)^{4}}{24F^{4}w_{1}w_{2}w_{3}}+\frac{F'w_{2}w_{1}'}{72Fw_{3}^{3}}-\frac{F'w_{1}'}{36Fw_{2}w_{3}}+\frac{F'w_{2}w_{1}'}{36Fw_{1}^{2}w_{3}}+\frac{F'w_{3}w_{1}'}{72Fw_{2}^{3}} \\
 &&+\frac{F'w_{3}w_{1}'}{36Fw_{1}^{2}w_{2}}-\frac{F'w_{2}w_{3}w_{1}'}{24Fw_{1}^{4}}
 -\frac{41\left(F'\right)^{3}w_{1}'}{120F^{3}w_{1}^{2}w_{2}w_{3}}-\frac{53\left(F'\right)^{2}\left(w_{1}'\right){}^{2}}{360F^{2}w_{1}^{3}w_{2}w_{3}}
 +\frac{F'\left(w_{1}'\right){}^{3}}{24Fw_{1}^{4}w_{2}w_{3}} \\&&
 +\frac{F'w_{1}w_{2}'}{72Fw_{3}^{3}}-\frac{F'w_{2}'}{36Fw_{1}w_{3}}+\frac{F'w_{1}w_{2}'}{36Fw_{2}^{2}w_{3}} +\frac{F'w_{3}w_{2}'}{72Fw_{1}^{3}}-\frac{F'w_{1}w_{3}w_{2}'}{24Fw_{2}^{4}}+\frac{F'w_{3}w_{2}'}{36Fw_{1}w_{2}^{2}}
 \end{eqnarray*}
 \begin{eqnarray*}
 &&-\frac{41\left(F'\right)^{3}w_{2}'}{120F^{3}w_{1}w_{2}^{2}w_{3}}-\frac{23\left(F'\right)^{2}w_{1}'w_{2}'}{90F^{2}w_{1}^{2}w_{2}^{2}w_{3}}-\frac{7F'\left(w_{1}'\right){}^{2}w_{2}'}{40Fw_{1}^{3}w_{2}^{2}w_{3}}-\frac{53\left(F'\right)^{2}\left(w_{2}'\right){}^{2}}{360F^{2}w_{1}w_{2}^{3}w_{3}} \\
 &&-\frac{7F'w_{1}'\left(w_{2}'\right){}^{2}}{40Fw_{1}^{2}w_{2}^{3}w_{3}}+\frac{F'\left(w_{2}'\right){}^{3}}{24Fw_{1}w_{2}^{4}w_{3}}+\frac{F'w_{1}w_{3}'}{72Fw_{2}^{3}}-\frac{F'w_{3}'}{36Fw_{1}w_{2}}+\frac{F'w_{2}w_{3}'}{72Fw_{1}^{3}} \\
&& -\frac{F'w_{1}w_{2}w_{3}'}{24Fw_{3}^{4}}+\frac{F'w_{1}w_{3}'}{36Fw_{2}w_{3}^{2}}+\frac{F'w_{2}w_{3}'}{36Fw_{1}w_{3}^{2}}-\frac{41\left(F'\right)^{3}w_{3}'}{120F^{3}w_{1}w_{2}w_{3}^{2}}-\frac{23\left(F'\right)^{2}w_{1}'w_{3}'}{90F^{2}w_{1}^{2}w_{2}w_{3}^{2}}
\end{eqnarray*}
\begin{eqnarray*}
&&-\frac{7F'\left(w_{1}'\right){}^{2}w_{3}'}{40Fw_{1}^{3}w_{2}w_{3}^{2}}
-\frac{23\left(F'\right)^{2}w_{2}'w_{3}'}{90F^{2}w_{1}w_{2}^{2}w_{3}^{2}}-\frac{17F'w_{1}'w_{2}'w_{3}'}{60Fw_{1}^{2}w_{2}^{2}w_{3}^{2}}-\frac{7F'\left(w_{2}'\right){}^{2}w_{3}'}{40Fw_{1}w_{2}^{3}w_{3}^{2}} \\
&&-\frac{53\left(F'\right)^{2}\left(w_{3}'\right){}^{2}}{360F^{2}w_{1}w_{2}w_{3}^{3}}
 -\frac{7F'w_{1}'\left(w_{3}'\right){}^{2}}{40Fw_{1}^{2}w_{2}w_{3}^{3}}-\frac{7F'w_{2}'\left(w_{3}'\right){}^{2}}{40Fw_{1}w_{2}^{2}w_{3}^{3}}+\frac{F'\left(w_{3}'\right){}^{3}}{24Fw_{1}w_{2}w_{3}^{4}}\\
 &&+\frac{w_{1}w_{2}F''}{72Fw_{3}^{3}}-\frac{w_{1}F''}{36Fw_{2}w_{3}}-\frac{w_{2}F''}{36Fw_{1}w_{3}}
+\frac{w_{1}w_{3}F''}{72Fw_{2}^{3}}-\frac{w_{3}F''}{36Fw_{1}w_{2}}
+\frac{w_{2}w_{3}F''}{72Fw_{1}^{3}}
\end{eqnarray*}
\begin{eqnarray*}
&& +\frac{137\left(F'\right)^{2}F''}{120F^{3}w_{1}w_{2}w_{3}}+\frac{101F'F''w_{1}'}{180F^{2}w_{1}^{2}w_{2}w_{3}}
+\frac{67F''\left(w_{1}'\right){}^{2}}{360Fw_{1}^{3}w_{2}w_{3}}+\frac{101F'F''w_{2}'}{180F^{2}w_{1}w_{2}^{2}w_{3}} \\
&&+\frac{53w_{1}'w_{2}'F''}{180Fw_{1}^{2}w_{2}^{2}w_{3}}+\frac{67\left(w_{2}'\right){}^{2}F''}{360Fw_{1}w_{2}^{3}w_{3}}
+\frac{101F'F''w_{3}'}{180F^{2}w_{1}w_{2}w_{3}^{2}}+\frac{53w_{1}'w_{3}'F''}{180Fw_{1}^{2}w_{2}w_{3}^{2}} \\
&&+\frac{53w_{2}'w_{3}'F''}{180Fw_{1}w_{2}^{2}w_{3}^{2}}
+\frac{67\left(w_{3}'\right){}^{2}F''}{360Fw_{1}w_{2}w_{3}^{3}}-\frac{3\left(F''\right)^{2}}{10F^{2}w_{1}w_{2}w_{3}}
+\frac{41\left(F'\right)^{2}w_{1}''}{360F^{2}w_{1}^{2}w_{2}w_{3}}
\end{eqnarray*}
\begin{eqnarray*}
&&+\frac{7F'w_{1}'w_{1}''}{180Fw_{1}^{3}w_{2}w_{3}}+\frac{23F'w_{2}'w_{1}''}{180Fw_{1}^{2}w_{2}^{2}w_{3}}+\frac{23F'w_{3}'w_{1}''}{180Fw_{1}^{2}w_{2}w_{3}^{2}}-\frac{2F''w_{1}''}{15Fw_{1}^{2}w_{2}w_{3}}  \\ && +\frac{41\left(F'\right)^{2}w_{2}''}{360F^{2}w_{1}w_{2}^{2}w_{3}}+\frac{23F'w_{1}'w_{2}''}{180Fw_{1}^{2}w_{2}^{2}w_{3}}+\frac{7F'w_{2}'w_{2}''}{180Fw_{1}w_{2}^{3}w_{3}}+\frac{23F'w_{3}'w_{2}''}{180Fw_{1}w_{2}^{2}w_{3}^{2}} \\
&&-\frac{2F''w_{2}''}{15Fw_{1}w_{2}^{2}w_{3}}+\frac{41\left(F'\right)^{2}w_{3}''}{360F^{2}w_{1}w_{2}w_{3}^{2}}+\frac{23F'w_{1}'w_{3}''}{180Fw_{1}^{2}w_{2}w_{3}^{2}}+\frac{23F'w_{2}'w_{3}''}{180Fw_{1}w_{2}^{2}w_{3}^{2}}
\end{eqnarray*}
\begin{eqnarray*}
&&+\frac{7F'w_{3}'w_{3}''}{180Fw_{1}w_{2}w_{3}^{3}}-\frac{2F''w_{3}''}{15Fw_{1}w_{2}w_{3}^{2}}-\frac{2F'F^{(3)}}{5F^{2}w_{1}w_{2}w_{3}}-\frac{w_{1}'F^{(3)}(t)}{5Fw_{1}^{2}w_{2}w_{3}}-\frac{w_{2}'F^{(3)}}{5Fw_{1}w_{2}^{2}w_{3}}\\
&&-\frac{w_{3}'F^{(3)}}{5Fw_{1}w_{2}w_{3}^{2}}-\frac{F'w_{1}{}^{(3)}}{30Fw_{1}^{2}w_{2}w_{3}}-\frac{F'w_{2}{}^{(3)}}{30Fw_{1}w_{2}^{2}w_{3}}-\frac{F'w_{3}{}^{(3)}}{30Fw_{1}w_{2}w_{3}^{2}}+\frac{F^{(4)}}{10Fw_{1}w_{2}w_{3}}.
\end{eqnarray*}

\smallskip

Although it seems lengthy, the above expression for $\tilde a_4$ is the result of a significant amount of simplifications and 
cancellations. 

\smallskip

\section{Proof of Theorem \ref{IsoSpecOneParaThm}} \label{IsoSpecOneParaThmPfappendix}

\smallskip

The isospectrality of the Dirac operators mentioned in Theorem \ref{IsoSpecOneParaThm} 
can be proved by using a similar method as in the proof of Theorem \ref{IsoSpecDiracs2paraThm}. 
That is, 
suppose that $u_{n}(\mu,\eta,\phi,\psi)$
is a section of the spin bundle such that $\tilde{D}[q_0]u_{n}(\mu,\eta,\phi,\psi)=\lambda_{n}u_{n}(\mu,\eta,\phi,\psi)$. Then, using the explicit expression \eqref{DiracConfBianchiIXEq} for the Dirac operator 
we can write: 
{\small 
\[
\left(iq_{0}^{-1}\tilde{D}[\frac{1}{q_{0}}]\left(-\gamma^{0}u_{n}(\frac{1}{\mu})\right)\right)\mid_{\mu=\mu_{0}}=
\]
\[
iq_{0}^{-1}(F[\frac{1}{q_{0}}](i\mu_{0})w_{1}[\frac{1}{q_{0}}](i\mu_{0})w_{2}[\frac{1}{q_{0}}](i\mu_{0})w_{3}[\frac{1}{q_{0}}](i\mu_{0}))^{-\frac{1}{2}}\gamma^{0}\partial_{\text{\ensuremath{\mu}}}\left(-\gamma^{0}u_{n}(\frac{1}{\mu})\right)\mid_{\mu=\mu_{0}}
\]
\[
-iq_{0}^{-1}\sin\psi\cdot\left(\frac{F[\frac{1}{q_{0}}](i\mu_{0})w_{2}[\frac{1}{q_{0}}](i\mu_{0})w_{3}[\frac{1}{q_{0}}](i\mu_{0})}{w_{1}[\frac{1}{q_{0}}](i\mu_{0})}\right)^{-\frac{1}{2}}\gamma^{1}\partial_{\eta}\left(-\gamma^{0}u_{n}(\frac{1}{\mu})\right)\mid_{\mu=\mu_{0}}
\]
\[
+iq_{0}^{-1}\cos\psi\cdot\left(\frac{F[\frac{1}{q_{0}}](i\mu_{0})w_{1}[\frac{1}{q_{0}}](i\mu_{0})w_{3}[\frac{1}{q_{0}}](i\mu_{0})}{w_{2}[\frac{1}{q_{0}}](i\mu_{0})}\right)^{-\frac{1}{2}}\gamma^{2}\partial_{\eta}\left(-\gamma^{0}u_{n}(\frac{1}{\mu})\right)\mid_{\mu=\mu_{0}}
\]
\[
+iq_{0}^{-1}\cos\psi\csc\eta\cdot\left(\frac{F[\frac{1}{q_{0}}](i\mu_{0})w_{2}[\frac{1}{q_{0}}](i\mu_{0})w_{3}[\frac{1}{q_{0}}](i\mu_{0})}{w_{1}[\frac{1}{q_{0}}](i\mu_{0})}\right)^{-\frac{1}{2}}\gamma^{1}\partial_{\phi}\left(-\gamma^{0}u_{n}(\frac{1}{\mu})\right)\mid_{\mu=\mu_{0}}\]
\[
+iq_{0}^{-1}\sin\psi\csc\eta\cdot\left(\frac{F[\frac{1}{q_{0}}](i\mu_{0})w_{1}[\frac{1}{q_{0}}](i\mu_{0})w_{3}[\frac{1}{q_{0}}](i\mu_{0})}{w_{2}[\frac{1}{q_{0}}](i\mu_{0})}\right)^{-\frac{1}{2}}\gamma^{2}\partial_{\phi}\left(-\gamma^{0}u_{n}(\frac{1}{\mu})\right)\mid_{\mu=\mu_{0}}
\]
\[
-iq_{0}^{-1}\cos\psi\cot\eta\cdot\left(\frac{F[\frac{1}{q_{0}}](i\mu_{0})w_{2}[\frac{1}{q_{0}}](i\mu_{0})w_{3}[\frac{1}{q_{0}}](i\mu_{0})}{w_{1}[\frac{1}{q_{0}}](i\mu_{0})}\right)^{-\frac{1}{2}}\gamma^{1}\partial_{\psi}\left(-\gamma^{0}u_{n}(\frac{1}{\mu})\right)\mid_{\mu=\mu_{0}}
\]
\[
-iq_{0}^{-1}\sin\psi\cot\eta\cdot\left(\frac{F[\frac{1}{q_{0}}](i\mu_{0})w_{1}[\frac{1}{q_{0}}](i\mu_{0})w_{3}[\frac{1}{q_{0}}](i\mu_{0})}{w_{2}[\frac{1}{q_{0}}](i\mu_{0})}\right)^{-\frac{1}{2}}\gamma^{2}\partial_{\psi}\left(-\gamma^{0}u_{n}(\frac{1}{\mu})\right)\mid_{\mu=\mu_{0}}
\]
\[
+iq_{0}^{-1}\left(\frac{F[\frac{1}{q_{0}}](i\mu_{0})w_{1}[\frac{1}{q_{0}}](i\mu_{0})w_{2}[\frac{1}{q_{0}}](i\mu_{0})}{w_{3}[\frac{1}{q_{0}}](i\mu_{0})}\right)^{-\frac{1}{2}}\gamma^{3}\partial_{\psi}\left(-\gamma^{0}u_{n}(\frac{1}{\mu})\right)\mid_{\mu=\mu_{0}}
\]
\[
 +\frac{iq_{0}^{-1}}{4}\frac{\frac{w_{1}^{'}[\frac{1}{q_{0}}](i\mu_{0})}{w_{1}[\frac{1}{q_{0}}](i\mu_{0})}+\frac{w_{2}^{'}[\frac{1}{q_{0}}](i\mu_{0})}{w_{2}[\frac{1}{q_{0}}](i\mu_{0})}+\frac{w_{3}^{'}[\frac{1}{q_{0}}](i\mu_{0})}{w_{3}[\frac{1}{q_{0}}](i\mu_{0})}+3\frac{F^{'}[\frac{1}{q_{0}}](i\mu_{0})}{F[\frac{1}{q_{0}}](i\mu_{0})}}{\left(F[\frac{1}{q_{0}}](i\mu_{0})w_{1}[\frac{1}{q_{0}}](i\mu_{0})w_{2}[\frac{1}{q_{0}}](i\mu_{0})w_{3}[\frac{1}{q_{0}}](i\mu_{0})\right)^{\frac{1}{2}}}\gamma^{0}\left(-\gamma^{0}u_{n}(\frac{1}{\mu_{0}})\right)
\]
\[
 -\frac{iq_{0}^{-1}}{4}\left(\frac{w_{1}[\frac{1}{q_{0}}](i\mu_{0})w_{2}[\frac{1}{q_{0}}](i\mu_{0})w_{3}[\frac{1}{q_{0}}](i\mu_{0})}{F[\frac{1}{q_{0}}](i\mu_{0})}\right)^{\frac{1}{2}} \times
\]
\[
\qquad \qquad \qquad  \left(\frac{1}{w_{1}^{2}[\frac{1}{q_{0}}](i\mu_{0})}+\frac{1}{w_{2}^{2}[\frac{1}{q_{0}}](i\mu_{0})}+\frac{1}{w_{3}^{2}[\frac{1}{q_{0}}](i\mu_{0})}\right)\gamma^{1}\gamma^{2}\gamma^{3}\left(-\gamma^{0}u_{n}(\frac{1}{\mu_{0}})\right). 
\]
}

\smallskip

Now we can use lemmas \ref{w_1q_0SLem}, \ref{w_2q_0SLem}, \ref{w_3q_0SLem} and \ref{transformationsFlemma}, which explain the modular transformation properties of the 
functions $w_j$ and $F$ in the one-parametric case \eqref{one-parametric} with respect to the 
modular transformation  $S(i \mu) = i/\mu$. Using these lemmas in the above we expression, we 
can write
{\small
\[
\left(iq_{0}^{-1}\tilde{D}[\frac{1}{q_{0}}]\left(-\gamma^{0}u_{n}(\frac{1}{\mu})\right)\right)\mid_{\mu=\mu_{0}}=
\]
\[
-(F[q_{0}](\frac{i}{\mu_{0}})w_{1}[q_{0}](\frac{i}{\mu_{0}})w_{2}[q_{0}](\frac{i}{\mu_{0}})w_{3}[q_{0}](\frac{i}{\mu_{0}}))^{-\frac{1}{2}}\cdot\left(-\partial_{\text{\ensuremath{\frac{1}{\mu}}}}\gamma^{0}\left(-\gamma^{0}u_{n}(\frac{1}{\mu})\right)\mid_{\mu=\mu_{0}}\right)
\]
\[ +\sin\psi\cdot\left(\frac{F[q_{0}](\frac{i}{\mu_{0}})w_{2}[q_{0}](\frac{i}{\mu_{0}})w_{3}[p,q](\frac{i}{\mu_{0}})}{w_{1}[q_{)}](\frac{i}{\mu_{0}})}\right)^{-\frac{1}{2}}\gamma^{0}\gamma^{1}\gamma^{0}\partial_{\eta}\left(-\gamma^{0}u_{n}(\frac{1}{\mu})\right)\mid_{\mu=\mu_{0}}
\]
\[
 -\cos\psi\cdot\left(\frac{F[q_{0}](\frac{i}{\mu_{0}})w_{1}[q_{0}](\frac{i}{\mu_{0}})w_{3}[q_{0}](\frac{i}{\mu_{0}})}{w_{2}[q_{0}](\frac{i}{\mu_{0}})}\right)^{-\frac{1}{2}}\gamma^{0}\gamma^{2}\gamma^{0}\partial_{\eta}\left(-\gamma^{0}u_{n}(\frac{1}{\mu})\right)\mid_{\mu=\mu_{0}}
\]
\[
-\cos\psi\csc\eta\cdot\left(\frac{F[q_{0}](\frac{i}{\mu_{0}})w_{2}[q_{0}](\frac{i}{\mu_{0}})w_{3}[q_{0}](\frac{i}{\mu_{0}})}{w_{1}[q_{0}](\frac{i}{\mu_{0}})}\right)^{-\frac{1}{2}}\gamma^{0}\gamma^{1}\gamma^{0}\partial_{\phi}\left(-\gamma^{0}u_{n}(\frac{1}{\mu})\right)\mid_{\mu=\mu_{0}}
\]
\[
-\sin\psi\csc\eta\cdot\left(\frac{F[q_{0}](\frac{i}{\mu_{0}})w_{1}[q_{0}](\frac{i}{\mu_{0}})w_{3}[q_{0}](\frac{i}{\mu_{0}})}{w_{2}[q_{0}](\frac{i}{\mu_{0}})}\right)^{-\frac{1}{2}}\gamma^{0}\gamma^{2}\gamma^{0}\partial_{\phi}\left(-\gamma^{0}u_{n}(\frac{1}{\mu})\right)\mid_{\mu=\mu_{0}}
\]
\[
+\cos\psi\cot\eta\cdot\left(\frac{F[q_{0}](\frac{i}{\mu_{0}})w_{2}[q_{0}](\frac{i}{\mu_{0}})w_{3}[q_{0}](\frac{i}{\mu_{0}})}{w_{1}[q_{0}](\frac{i}{\mu_{0}})}\right)^{-\frac{1}{2}}\gamma^{0}\gamma^{1}\gamma^{0}\partial_{\psi}\left(-\gamma^{0}u_{n}(\frac{1}{\mu})\right)\mid_{\mu=\mu_{0}}
\]
\[
+\sin\psi\cot\eta\cdot\left(\frac{F[q_{0}](\frac{i}{\mu_{0}})w_{1}[q_{0}](\frac{i}{\mu_{0}})w_{3}[q_{0}](\frac{i}{\mu_{0}})}{w_{2}[q_{0}](\frac{i}{\mu_{0}})}\right)^{-\frac{1}{2}}\gamma^{0}\gamma^{2}\gamma^{0}\partial_{\psi}\left(-\gamma^{0}u_{n}(\frac{1}{\mu})\right)\mid_{\mu=\mu_{0}}
\]
\[
 -\left(\frac{F[q_{0}](\frac{i}{\mu_{0}})w_{1}[q_{0}](\frac{i}{\mu_{0}})w_{2}[q_{0}](\frac{i}{\mu_{0}})}{w_{3}[q_{0}](\frac{i}{\mu_{0}})}\right)^{-\frac{1}{2}}\gamma^{0}\gamma^{3}\gamma^{0}\partial_{\psi}\left(-\gamma^{0}u_{n}(\frac{1}{\mu})\right)\mid_{\mu=\mu_{0}}
\]
\[
 -\frac{1}{4}\frac{-\frac{w_{3}^{'}[q_{0}](\frac{i}{\mu})}{w_{3}[q_{0}](\frac{i}{\mu})}-\frac{2}{\mu}-\frac{w_{2}^{'}[q_{0}](\frac{i}{\mu})}{w_{2}[q_{0}](\frac{i}{\mu})}-\frac{2}{\mu}-\frac{w_{1}^{'}[q_{0}](\frac{i}{\mu})}{w_{1}[q_{0}](\frac{i}{\mu})}-\frac{2}{\mu}-3\frac{F^{'}[q_{0}](\frac{i}{\mu})}{F[q_{0}](\frac{i}{\mu})}+\frac{6}{\mu}}{\left(F[q_{0}](i\mu_{0})w_{1}[q_{0}](i\mu_{0})w_{2}[q_{0}](i\mu_{0})w_{3}[q_{0}](i\mu_{0})\right)^{\frac{1}{2}}}\gamma^{0}\left(-\gamma^{0}u_{n}(\frac{1}{\mu_{0}})\right)
\]
\[
+\frac{1}{4}\left(\frac{w_{1}[q_{0}](\frac{i}{\mu_{0}})w_{2}[q_{0}](\frac{i}{\mu_{0}})w_{3}[q_{0}](\frac{i}{\mu_{0}})}{F[q_{0}](\frac{i}{\mu_{0}})}\right)^{\frac{1}{2}}\times
\]
\[
\qquad \qquad \qquad \left(\frac{1}{w_{1}^{2}[q_{0}](\frac{i}{\mu_{0}})}+\frac{1}{w_{2}^{2}[q_{0}](\frac{i}{\mu_{0}})}+\frac{1}{w_{3}^{2}[q_{0}](\frac{i}{\mu_{0}})}\right)\gamma^{1}\gamma^{2}\gamma^{3}\left(-\gamma^{0}u_{n}(\frac{1}{\mu_{0}})\right)
\]
\begin{eqnarray*}
&=&-\gamma^{0}\left(\tilde{D}[q_{0}]u_{n}(\mu)\right)\mid_{\mu=\frac{1}{\mu_{0}}} \\
&=&\lambda_{n}\left(-\gamma^{0}u_{n}(\frac{1}{\mu}\text{）}\right)\mid_{\mu=\mu_{0}}.
\end{eqnarray*}
}

\smallskip 

Therefore, for any eigenspinor $u_{n}(\mu)$ of $\tilde{D}[q_{0}]$ with the 
eigenvalue $\lambda_{n}$, the spinor $-\gamma^{0}u_{n}(\frac{1}{\mu})$
is an eigenspinor of $iq_{0}^{-1}\tilde{D}[\frac{1}{q_{0}}]$ with
the same eigenvalue $\lambda_{n}$. Hence $\tilde{D}^{2}[q_{0}]$ and $-q_{0}^{-2}\tilde{D}^{2}[\frac{1}{q_{0}}]$
are isospectral. In a similar manner, we can verify that $\tilde{D}^{2}[q_{0}]$
and $\tilde{D}^{2}[q_{0}-i]$ are also isospectral. Notice that in
the proof we implicitly used the fact that $\tilde{D}^{2}[p,q]$ and
$\tilde{D}^{2}[\frac{1}{q_{0}}]$ act on spin bundle sections over
$M_{0}=(a,b)\times\mathbb{S}^{3}$, $\tilde{D}^{2}[p,q+p+\frac{1}{2}]$ and $\tilde{D}^{2}[q_{0}+i]$
act on those over $M_{1}=(a+i,b+i)\times\mathbb{S}^{3}$, whereas
$\tilde{D}^{2}[-q,p]$ and $\tilde{D}^{2}[\frac{1}{q_{0}}]$ act on those
over $M_{2}=(\frac{1}{b},\frac{1}{a})\times\mathbb{S}^{3}$. These
facts ensure that the spinors $u_{n}(\mu-i)$ and $u_{n}(\frac{1}{\mu})$
are well-defined over the base manifolds of the spin bundle sections
 on which the corresponding operators act.

\smallskip

\section*{Acknowledgments}

\smallskip

The first author is supported by a Summer Undergraduate
Research Fellowship at Caltech. The second author thanks the
Institut des Hautes \'Etudes Scientifiques (I.H.E.S) for an excellent
environment and their hospitality in the Summer of 2015, where
this work was partially carried out. The third author is partially
supported by NSF grants DMS-1201512 and PHY-1205440
and by the Perimeter Institute for Theoretical Physics.

\smallskip


\begin{thebibliography}{9}

\smallskip

\bibitem{Adl}
M. Adler,
{\it On a trace functional for formal pseudo differential
operators and the symplectic structure of the Korteweg-de
Vries type equations,}
Invent. Math. 50 (1978/79), no. 3, 219--248.




\bibitem{BabKor}
M.V.~Babich, D.A.~Korotkin,
{\em Self-dual $SU(2)$ invariant Einstein metrics and
modular dependence of theta-functions,}
arXiv:gr-qc/9810025v2.


\bibitem{BallMar} A.~Ball, M.~Marcolli, 
{\it Spectral action models of gravity on Packed Swiss Cheese Cosmology,}
arXiv:1506.01401.


\bibitem{BrSch} 
F. Brown, O. Schnetz, 
{\em Modular forms in quantum field theory,}  
Commun. Number Theory Phys. 7 (2013), no. 2, 293--325.





\bibitem{ChaConSAP} A. H. Chamseddine, A. Connes,
{\it The spectral action principle,}
Comm. Math. Phys. 186 (1997), no. 3, 731--750.








\bibitem{ChaConRW} A. H. Chamseddine, A. Connes,
{\it Spectral action for Robertson-Walker metrics,}
J. High Energy Phys. 2012, no. 10, 101.




\bibitem{ChDuHa}
M. Cheng, J. Duncan, J. Harvey, 
{\em Umbral moonshine,} 
Commun. Number Theory Phys. 8 (2014), no. 2, 101--242.


\bibitem{ConCstarDiffGeo} A. Connes, 
{\it $C^*$ alg\`ebres et g\'eom\'etrie diff\'erentielle,} 
 (French) C. R. Acad. Sci. Paris S\'er. A-B  290  (1980), no. 13, A599--A604.


\bibitem{ConAction}
A. Connes,
{\it The action functional in noncommutative geometry,}
Comm. Math. Phys. 117 (1988), no. 4, 673--683.


\bibitem{ConBook} A. Connes,
{\it Noncommutative geometry},
Academic Press, 1994.




\bibitem{ConReconstruct} A. Connes,
{\it On the spectral characterization of manifolds,}
J. Noncommut. Geom. 7 (2013), no. 1, 1--82.


\bibitem{ConMarLattices}
A. Connes, M. Marcolli,
{\em Quantum Statistical Mechanics of ${\mathbb Q}$-lattices,} 
Frontiers in number theory, physics, and geometry. I, 269--347, Springer, Berlin, 2006.


\bibitem{ConMarBook} A. Connes, M. Marcolli,
{\it Noncommutative Geometry, Quantum Fields and Motives},
American Mathematical Society Colloquium Publications, 55, 2008.


\bibitem{ConMosLocal} A. Connes, H. Moscovici, 
{\it The local index formula in noncommutative geometry,} 
Geom. Funct. Anal.  5  (1995),  no. 2, 174--243.


\bibitem{DaMuZa} 
A. Dabholkar, S. Murthy, D. Zagier,
{\em Quantum Black Holes, Wall Crossing, and Mock Modular Forms,} 
arXiv:1208.4074. 






\bibitem{EguHan}
T.~Eguchi, A.J.~Hanson,
{\em Self-dual solutions to Euclidean Gravity},
Annals of Physics, 120 (1979), 82--106.


\bibitem{EicZag}  M.~Eichler, D.~Zagier, 
{\it The theory of Jacobi forms,} 
Progress in Mathematics, vol. 55, Birkh\"auser, 1985.


\bibitem{EstMar}
C. Estrada, M. Marcolli,
{\it Noncommutative mixmaster cosmologies,}
 Int. J. Geom. Methods Mod. Phys. 10 (2013), no. 1, 1250086, 28 pp.


\bibitem{FanFatMar1}
W. Fan, F. Fathizadeh, M. Marcolli,
{\it Spectral Action for Bianchi Type-IX Cosmological Models,}
J. High Energy Phys. 10 (2015) 085. 


\bibitem{Fat}
F. Fathizadeh,
{\it On the Scalar Curvature for the Noncommutative Four Torus,}
J. Math. Phys. 56, 062303 (2015).


\bibitem{FatGhoKha} F. Fathizadeh, A. Ghorbanpour, M. Khalkhali,
{\it Rationality of Spectral Action for Robertson-Walker Metrics,}
J. High Energy Phys. 12 (2014) 064.


\bibitem{FatKha}
F. Fathizadeh, M. Khalkhali,
{\it Scalar Curvature for Noncommutative Four-Tori},
J. Noncommut. Geom. 9 (2015), 473�503.


\bibitem{FatWon}
F. Fathizadeh, M. W. Wong, {\it Noncommutative residues for pseudo-differential
operators on the noncommutative two-torus,} J. Pseudo-Differ. Oper. Appl. 2 (2011),
no. 3, 289--302.


\bibitem{FriBook}
Th. Friedrich, {\it Dirac operators in Riemannian geometry,}
 American Mathematical Society, 2000.


\bibitem{GilBook1} P. Gilkey,
{\it Invariance theory, the heat equation, and the Atiyah-Singer index theorem},
Mathematics Lecture Series, 11. Publish or Perish, Inc., Wilmington, DE, 1984.


\bibitem{GraVarFig}
J. M. Gracia-Bondia, J. C. V\'arilly,  H. Figueroa,
{\it Elements of noncommutative geometry,}
Birkh\"auser 2001.


\bibitem{Hit}
N. J.~Hitchin,
{\em Twistor spaces, Einstein metrics and isomonodromic deformations},
J. Diff. Geom., Vol.~42, No.~1 (1995), 30--112.





\bibitem{Kassel}
Ch. Kassel,
{\it Le r\'esidu non commutatif (d'apr\`es M. Wodzicki),}
S\'eminaire Bourbaki, Vol. 1988/89.
Ast\'erisque No. 177--178 (1989), Exp. No. 708, 199--229.




\bibitem{KirWill} K .Kirsten, F. Williams, 
{\em A Window into Zeta and Modular Physics,} 
Mathematical Sciences Research Institute Publications, 
Vol. 57. Cambridge University Press, 2010.


\bibitem{KitKor}
A. V. Kitaev, D. A. Korotkin,
{\it On solutions of the Schlesinger equations in terms of $\Theta$-functions,}
Internat. Math. Res. Notices 1998, no. 17, 877--905.


\bibitem{KolMar} D. Kolodrubetz,  M. Marcolli,
{\it Boundary conditions of the RGE flow in the noncommutative geometry approach to particle physics and cosmology,}
Phys. Lett. B 693 (2010), no. 2, 166--174.




\bibitem{LapFra}
M. L. Lapidus, M. van Frankenhuijsen,  
{\it Fractal geometry, complex dimensions and zeta functions. Geometry and spectra of fractal strings,} 
Second edition, Springer, New York, 2013. 


\bibitem{LawMic}
H. B. Lawson, M.-L. Michelsohn,
{\it Spin geometry},
Princeton University Press, 1989.


\bibitem{Man1}
Ju. I. Manin,
{\it Algebraic aspects of nonlinear differential equations,}
(Russian) Current problems in mathematics, Vol. 11 (Russian), pp. 5--152.
(errata insert) Akad. Nauk SSSR Vsesojuz. Inst. Naucn. i Tehn. Informacii,
Moscow, 1978.


\bibitem{Man2}
Yu. I. Manin,
{\it Sixth Painlev\'e equation, universal elliptic curve, and mirror of $P^2$,}
Geometry of differential equations, 131--151,
Amer. Math. Soc. Transl. Ser. 2, 186, 1998.


\bibitem{ManMar}
Yu. I.~Manin, M.~Marcolli,
{\em Symbolic dynamics, modular curves, and Bianchi IX cosmologies},
arXiv:1504.04005 [gr-qc].


\bibitem{MarBook} M. Marcolli,
{\it Feynman motives,}
World Scientific Publishing Co. Pte. Ltd., 2010.


\bibitem{Mar} M. Marcolli,
{\it Building cosmological models via noncommutative geometry,}
Int. J. Geom. Methods Mod. Phys. 8 (2011), no. 5, 1131--1168.


\bibitem{MarPieEarly} M. Marcolli, E. Pierpaoli,
{\it Early universe models from noncommutative geometry},
Adv. Theor. Math. Phys. 14 (2010), no. 5, 1373--1432.


\bibitem{MarPieTeh2012} M. Marcolli, E. Pierpaoli, K. Teh,
{\it The coupling of topology and inflation in noncommutative cosmology,}
Comm. Math. Phys. 309 (2012), no. 2, 341--369.


\bibitem {MarPieTehCosmic} M. Marcolli, E. Pierpaoli, K. Teh,
{\it The spectral action and cosmic topology},
Comm. Math. Phys. 304 (2011), no. 1, 125--174.


\bibitem{NelOchSal} W. Nelson, J. Ochoa, M. Sakellariadou,
{\it Constraining the noncommutative Spectral Action via astrophysical observations,}
Phys. Rev. Lett., Vol. 105 (2010), 101602.


\bibitem{NelSak1} W. Nelson, M. Sakellariadou,
{\it  Natural inflation mechanism in asymptotic noncommutative geometry},
Phys. Lett. B 680: 263--266, 2009.


\bibitem{NelSak2} W. Nelson, M. Sakellariadou,
{\it Cosmology and the noncommutative approach to the standard model},
 Phys. Rev. D 81: 085038, 2010.




\bibitem{Oku}
S.~Okumura,
{\it The self-dual Einstein-Weyl metric and classical solution of
Painlev\'e VI},
Lett. Math. Phys. 46 (1998), no. 3, 219--232.
 

\bibitem{RoeBook}
J. Roe,
{\it Elliptic operators, topology and asymptotic methods,}
Second edition, Longman, 1998. 





\bibitem{SerBook}
 J. P. Serre,  
{\it A course in arithmetic,} 
Translated from the French. Springer-Verlag,  1973.


\bibitem{Tod} K. P.~Tod,
{\em Self--dual Einstein metrics from the Painlev\'e VI equation},
Phys.~Lett. A 190 (1994), 221--224.



\bibitem{Wod1}
M. Wodzicki,
{\it Local invariants of spectral asymmetry,}
Invent. Math. 75 (1984), no. 1, 143--177.


\bibitem{Wod2}
M. Wodzicki,
{\it Noncommutative residue. I. Fundamentals, }
K-theory, arithmetic and geometry (Moscow, 1984--1986), 320--399,
Lecture Notes in Math., 1289, Springer, Berlin, 1987.






\end{thebibliography}
\end{document}